          \newtheorem{theorem}{Theorem}[section]
      \newtheorem{proposition}[theorem]{Proposition}
      \newtheorem{corollary}[theorem]{Corollary}
      \newtheorem{lemma}[theorem]{Lemma}
      \newcommand{\BB}{{\mathbb B}}
      \newcommand{\CC}{{\mathbb C}}
      \newcommand{\NN}{{\mathbb N}}
      \newcommand{\DD}{{\mathbb D}}
      \newcommand{\RR}{{\mathbb R}}
      \newcommand{\FF}{{\mathbb F}}
      \newcommand{\TT}{{\mathbb T}}
      \newcommand{\cA}{{\mathcal A}}
      \newcommand{\cG}{{\mathcal G}}
      \newcommand{\cH}{{\mathcal H}}
      \newcommand{\cK}{{\mathcal K}}
      \newcommand{\cM}{{\mathcal M}}
      \newcommand{\cN}{{\mathcal N}}
      \newcommand{\cP}{{\mathcal P}}
      \newcommand{\cR}{{\mathcal R}}
      \newcommand{\cU}{{\mathcal U}}
      \newcommand{\cV}{{\mathcal V}}
      \newdimen\expt
      \def\boxit#1{\setbox0\hbox{$\displaystyle{#1}$}
            \hbox{\lower.4\expt
       \hbox{\lower3\expt\hbox{\lower\dp0
            \hbox{\vbox{\hrule height.4\expt
       \hbox{\vrule width.4\expt\hskip3\expt
            \vbox{\vskip3\expt\box0\vskip2\expt}%
       \hskip3\expt\vrule width.4\expt}\hrule height.4\expt}}}}}}
\begin{document}
       \pagestyle{myheadings}
      \markboth{ Gelu Popescu}{   Composition operators on   noncommutative  Hardy spaces  }
      %\pagestyle{plain}
      %\begin{flushright}
       % \it Date of this draft: \today
      %\end{flushright}
      %\bigskip

      \title [   Composition operators on   noncommutative  Hardy spaces  ]
      {     Composition operators on   noncommutative  Hardy spaces
      }
        \author{Gelu Popescu}
     %\date{\today}
\date{April 14, 2010}
      \thanks{Research supported in part by an NSF grant}
      \subjclass[2000]{Primary:  47B33;  46L52; Secondary:  32A10; 32A35}
      \keywords{Composition operator; Noncommutative Hardy
      space;   Fock space; Creation operator; Free holomorphic
      function; Free pluriharmonic function;   Compact operator; Spectrum;
       Similarity; Noncommutative variety}

      \address{Department of Mathematics, The University of Texas
      at San Antonio \\ San Antonio, TX 78249, USA}
      \email{\tt gelu.popescu@utsa.edu}

\bigskip

\begin{abstract}
 In this paper we initiate the study of  composition operators  on the noncommutative Hardy space
  $H^2_{\bf ball}$, which is  the Hilbert space
of all free holomorphic functions
 of the
form
$$f(X_1,\ldots, X_n)=\sum_{k=0}^\infty \sum_{|\alpha|=k}
a_\alpha  X_\alpha, \qquad   \sum_{\alpha\in
\FF_n^+}|a_\alpha|^2<\infty,
$$
where the convergence is in the operator norm  topology for all
$(X_1,\ldots, X_n)$ in the noncommutative operatorial  ball
$[B(\cH)^n]_1$ and  $B(\cH)$ is the algebra of all bounded linear
operators on  a Hilbert space $\cH$.
  When the symbol
$\varphi$ is a free holomorphic self-map of $[B(\cH)^n]_1$, we show
that the composition operator
$$C_\varphi f:=f\circ\varphi,\qquad f\in H^2_{\bf ball},$$
is
bounded on $H^2_{\bf ball}$. Several classical results about
 composition operators (boundedness, norm estimates, spectral properties, compactness, similarity)
  have free analogues in our noncommutative multivariable setting.
The most prominent  feature of this paper is the interaction between
 the
noncommutative  analytic function theory in the unit ball of
$B(\cH)^n$, the  operator  algebras generated by the left creation
operators   on the full Fock space with $n$ generators, and the
classical complex function theory in the unit ball of $\CC^n$.

In a more general setting, we establish basic properties concerning the
  composition
operators acting on Fock spaces  associated with noncommutative varieties
$\cV_{\cP_0}(\cH)\subseteq [B(\cH)^n]_1$ generated by sets $\cP_0$
of  noncommutative polynomials in $n$ indeterminates  such  that
$p(0)=0$, $p\in \cP_0$.
In particular, when
$\cP_0$ consists of the commutators $X_iX_j-X_jX_i$ for $i,j=1,\ldots,n$, we show that
 many of our  results have
commutative counterparts  for composition operators on the symmetric
Fock space  and, consequently,  on  spaces of analytic functions in
the unit ball of $\CC^n$.
\end{abstract}

      \maketitle

\bigskip
\bigskip

\section*{Contents}
{\it

\quad Introduction

\begin{enumerate}
   \item[1.]     Noncommutative  Littlewood subordination principle
   \item[2.]   Composition operators on  the noncommutative Hardy
   space $H^2_{\bf ball}$
   \item[3.]  Noncommutative Wolff theorem for free holomorphic self-maps of $[B(\cH)^n]_1$
 \item[4.]    Composition operators and their adjoints
\item[5.]   Compact  composition operators on $H^2_{\bf ball}$
\item[6.]  Schr\" oder equation  for noncommutative  power series and spectra  of composition operators
 \item[7.]    Composition operators on Fock spaces associated to noncommutative
 varieties
 \end{enumerate}

\quad References

}

\newpage

\bigskip

\section*{Introduction}

An important consequence of Littlewood's subordination principle (\cite{L}, \cite{Du})
is the boundedness of the composition operator $C_\varphi$ on the Hardy space $H^2(\DD)$,
when $\varphi:\DD\to \DD$ is an analytic self-map of the open unit disc $\DD:=\{z\in \CC:\ |z|<1\}$
 and $C_\varphi f:=f\circ\varphi$. This result was the starting point  of the modern  theory
 of composition operators on spaces of analytic functions, which has been developed since the
 $1960$'s through the fundamental work of
Ryff (\cite{Ry}), Nordgren (\cite{Nor1}, \cite{Nor2}), Schwartz
(\cite{Schw}), Shapiro (\cite{Sha}), Cowen (\cite{Cow}) and many
others (see \cite{Sha-book}, \cite{CoMac}, \cite{BS}, and the
references therein).  They  answered basic questions about
 composition operators such as boundedness, compactness, spectra, cyclicity,  revealing
a beautiful   interaction  between   operator theory and complex
function  theory. In the multivariable setting, when $\varphi$ is a
holomorphic self-map of the open unit ball $$\BB_n:=\{z=(z_1,\ldots,
z_n)\in \CC^n:\ \|z\|_2<1\},$$ the composition operator $C_\varphi$
is  no longer  a bounded operator on the Hardy space  $H^2(\BB_n)$.
However, significant work was done concerning the spectra of
automorphism-induced composition operators and compact  composition
operators on $H^2(\BB_n)$ by MacCluer (\cite{MC1}, \cite{MC2},
\cite{MC3}) and others (see \cite{CoMac} and its references). The
study of composition operators on the Hardy space $H^2(\BB_n)$ is
close connected to the several variable function theory in the
 unit ball of $\CC^n$ (\cite{Ru2}). There is an extensive  literature on composition
  operators on  other spaces of analytic functions in  several variables (see \cite{CoMac}).

For the interested reader, we mention two  very nice  books on composition operators:
 Shapiro's monograph \cite{Sha-book}, which is an excellent account of composition operators
  on $H^2(\DD)$  and    the monograph \cite{CoMac} by Cowen and MacCluer,  which
  is a comprehensive
treatment of
composition operators on spaces of analytic functions in one or several variables.

It is our hope that  the present paper will  open a new chapter  in the theory of composition
 operators. The goal is to   initiate the study of composition operators on the
noncommutative Hardy space $H^2_{\bf ball}$ (which will be
introduced shortly)  and, more generally,  on
   subspaces of the full Fock space with $n$ generators  associated to noncommutative
 varieties.
The most prominent  feature of this paper is the interplay between
 the
noncommutative  analytic function theory in the unit ball of
$B(\cH)^n$, the  operator  algebras generated by the left creation
operators $S_1,\ldots, S_n$  on the full Fock space with $n$
generators: the Cuntz-Toeplitz algebra $C^*(S_1,\ldots, S_n)$
(\cite{Cu}),  the noncommutative disk algebra $\cA_n$ and the
analytic Toeplitz algebra $F_n^\infty$  (\cite{Po-von},
\cite{Po-funct},  \cite{Po-analytic}, \cite{Po-disc}), as well as
the classical function theory in the unit ball of $\CC^n$
(\cite{Ru2}).
  To present our results we need some notation and preliminaries on free holomorphic functions.

Initiated in \cite{Po-holomorphic}, the theory of free holomorphic
(resp.~pluriharmonic) functions on the unit ball of $B(\cH)^n$,
where $B(\cH)$ is the algebra of all bounded linear operators on  a
Hilbert space $\cH$,
   has been developed   very
recently (see   \cite{Po-pluri-maj}, \cite{Po-unitary},
\cite{Po-hyperbolic}, \cite{Po-pluriharmonic},
  \cite{Po-automorphism}, \cite{Po-holomorphic2}) in the attempt  to
provide a framework for the study of arbitrary
 $n$-tuples of operators on a Hilbert space.
  Several classical
 results from complex analysis and  hyperbolic geometry   have
 free analogues in
 this  noncommutative multivariable setting. Related to our work, we
 mention  the papers \cite{HKMS},   \cite{MuSo2},
 \cite{MuSo3}, and  \cite{V}, where several aspects of the theory
 of noncommutative analytic functions are considered in various
 settings.
We recall that  the algebra $H_{{\bf ball}}$~ of free holomorphic
functions on the open operatorial  $n$-ball of radius one is defined
 as the set of all power series $\sum_{\alpha\in
\FF_n^+}a_\alpha Z_\alpha$ with radius of convergence $\geq 1$,
i.e.,
 $\{a_\alpha\}_{\alpha\in \FF_n^+}$ are complex numbers  with
$\limsup_{k\to\infty} \left(\sum_{|\alpha|=k}
|a_\alpha|^2\right)^{1/2k}\leq 1,$
 where $\FF_n^+$ is the
free semigroup with $n$ generators  $g_1,\ldots, g_n$ and the identity $g_0$.  The length
of $\alpha\in \FF_n^+$ is defined by $|\alpha|:=0$ if $\alpha=g_0$
and $|\alpha|:=k$ if
 $\alpha=g_{i_1}\cdots g_{i_k}$, where $i_1,\ldots, i_k\in \{1,\ldots, n\}$.
If $(X_1,\ldots, X_n)\in B(\cH)^n$, we denote $X_\alpha:=
X_{i_1}\cdots X_{i_k}$ and $X_{g_0}:=I_\cH$.  A free holomorphic
function on  the open ball
$$
[B(\cH)^n]_1:=\left\{ (X_1,\ldots, X_n)\in B(\cH)^n: \
\|X_1X_n^*+\cdots + X_nX_n^*\|^{1/2}<1\right\},
$$
 is
the representation of an element $f\in H_{{\bf ball}}$~ on
the Hilbert space $\cH$, that is,  the mapping
$$
[B(\cH)^n]_1\ni (X_1,\ldots, X_n)\mapsto f(X_1,\ldots,
X_n):=\sum_{k=0}^\infty \sum_{|\alpha|=k}
 a_\alpha X_\alpha\in
B(\cH),
$$
where  the convergence is in the operator norm topology.
   Due to the fact that a free holomorphic function is
uniquely determined by its representation on an infinite dimensional
Hilbert space,  throughout this paper, we  identify  a free
holomorphic function   with its representation on a   separable
infinite dimensional Hilbert space.

 A free holomorphic function $f$ on $[B(\cH)^n]_1$ is
bounded if $
 \|f\|_\infty:=\sup  \|f(X)\|<\infty,
  $
where the supremum is taken over all $X\in [B(\cH)^n]_1$ and $\cH$
is an infinite dimensional Hilbert space. Let $H^\infty_{\bf ball}$
be the set of all bounded free holomorphic functions and  let
$A_{\bf ball}$ be the set of all elements $f\in H^\infty_{\bf ball}$
such that the mapping
$$[B(\cH)^n]_1\ni (X_1,\ldots, X_n)\mapsto f(X_1,\ldots, X_n)\in B(\cH)$$
 has a continuous extension to the closed unit ball $[B(\cH)^n]^-_1$.
We  showed in \cite{Po-holomorphic} that $H^\infty_{\bf ball}$  and
$A_{\bf ball}$ are Banach algebras under pointwise multiplication
and the norm $\|\cdot \|_\infty$, which can be identified, via the
noncommutative Poisson transform (\cite{Po-poisson}),
 with the
noncommutative analytic Toeplitz algebra $F_n^\infty$ and the
noncommutative disc algebra $\cA_n$, respectively.

 If $f:[B(\cH)^n]_1\to B(\cH)$  and
$\varphi:[B(\cH)^n]_1\to [B(\cH)^n]_1$ are  free holomorphic
functions then $f\circ \varphi$ is a free holomorphic function on
$[B(\cH)^n]_1$ (see \cite{Po-automorphism}), defined by
$$
(f\circ \varphi)(X_1,\ldots, X_n)=\sum_{k=0}^\infty
\sum_{|\alpha|=k}  a_\alpha \varphi_\alpha (X_1,\ldots, X_n),\qquad
(X_1,\ldots, X_n)\in [B(\cH)^n]_1,
$$
where $\varphi=(\varphi_1,\ldots, \varphi_n)$ and the convergence is
in the operator norm topology. The noncommutative Hardy space
$H^2_{\bf ball}$ is the Hilbert space of all free holomorphic
functions on $ [B(\cH)^n]_1 $
 of the
form
$$f(X_1,\ldots, X_n)=\sum_{k=0}^\infty \sum_{|\alpha|=k}
a_\alpha  X_\alpha, \qquad   \sum_{\alpha\in
\FF_n^+}|a_\alpha|^2<\infty,
$$ with the inner product
$ \left< f,g\right>:=\sum_{k=0}^\infty \sum_{|\alpha|=k}a_\alpha
{\overline b}_\alpha,$ where  $g=\sum_{k=0}^\infty \sum_{|\alpha|=k}
b_\alpha  X_\alpha$ is
 another free holomorphic function  in $H^2_{\bf ball}$.
 The main question that we answer in this paper is whether $f\circ
 \varphi\in H^2_{\bf ball}$ for any $f\in H^2_{\bf ball}$ and whether the
 corresponding composition operator is bounded. This will be the
 starting point in   our attempt to develop a theory of compositions operators on
 noncommutative Hardy spaces. We are interested in extracting properties of the composition
operator $C_\varphi$ (boundedness, spectral properties, compactness)
from the operatorial or dynamical properties of the model boundary
function $\widetilde\varphi:=\text{SOT-}\lim_{r\to1}
\varphi(rS_1,\ldots, rS_n)\in F_n^\infty\otimes \CC^n$ or the scalar
representation of $\varphi$, i.e., the holomorphic  function
$\BB_n\ni \lambda\mapsto \varphi(\lambda)\in \BB_n$.

In Section 1, we characterize the free holomorphic self-maps of
$[B(\cH)^n]_1$ in terms of the model boundary functions with respect
to the left creation operators on the full Fock space $F^2(H_n)$.
This will be used, together with  the natural identification of
$H_{\bf ball}^2$ with $F^2(H_n)$, to  provide a  noncommutative
Littlewood subordination theorem for the Hardy space $H^2_{\bf
ball}$. More precisely, we show that if
 $\varphi$  is   a   free holomorphic self-map of  the ball
$[B(\cH)^n]_1$ such that $\varphi(0)=0$ and  $f\in H^2 _{\bf ball}$,
then  $ f\circ \varphi \in H^2_{\bf ball}$   and $\|f\circ
\varphi\|_2\leq \|f\|_2$.

Section 2 contains the core material on boundedness of compositions
operators on the noncommutative Hardy space $H^2_{\bf ball}$  and
estimates for their norms. An important role in our investigation
will be played by the characterization of  $H^2_{\bf ball}$ in terms
of  pluriharmonic majorants (\cite{Po-pluri-maj})  and the
Herglotz-Riesz type representation  for positive free pluriharmonic
functions (\cite{Po-pluriharmonic}). The key result of this section
asserts that if  $\varphi$ is a   free holomorphic automorphism of
the  noncommutative ball $[B(\cH)^n]_1$ (see
\cite{Po-automorphism}), then
$$
\left(\frac{1-\|\varphi(0)\|}{1+\|\varphi(0)\|}\right)^{1/2}\|f\|\leq
\|C_\varphi f\|\leq
\left(\frac{1+\|\varphi(0)\|}{1-\|\varphi(0)\|}\right)^{1/2}\|f\|
$$
for any $f\in H^2_{\bf ball}$. Moreover,  these inequalities are
best possible and we have a formula for the norm of $C_\varphi$.
Combining this result with the noncommutative Littlewood
subordination theorem from the previous section,  we obtain the main
result which asserts that, for any    free holomorphic self-map
$\varphi$ of $[B(\cH)^n]_1$, the composition $C_\varphi f:=f\circ
\varphi$ is a bounded operator on $ H^2_{\bf ball}$ and
$$
\frac{1}{(1-\|\varphi(0)\|^2)^{1/2}}\leq \|C_\varphi\|\leq
\left(\frac{1+\|\varphi(0)\|}{1-\|\varphi(0)\|}\right)^{1/2}.
$$
This leads to an extension of Cowen's (\cite{Cow}) one-variable
spectral radius formula for composition operators   to our
noncommutative multivariable setting. More precisely, we obtain
$$r(C_\varphi)=\lim_{k\to\infty}(1-\|\varphi^{[k]}(0)\|)^{-1/2k}, $$
where $\varphi^{[k]}$ is the $k$-iterate of $\varphi$. Another
consequence of the above-mentioned result is that $C_\varphi$ is
similar to a contraction if and only if
 there is $\xi\in \BB_n$ such that $\varphi(\xi)=\xi$. This will
 also show that similarity   of composition operators on $H^2_{\bf
 ball}$ to contractions is equivalent to power (resp.~ polynomial) boundedness.
 This is interesting in light of Pisier's (\cite{Pi}) famous example
 of a polynomially bounded operator which is not similar to a
 contraction, and Paulsen's  (\cite{Pa}) result that every
 completely polynomially bounded operator is  similar to a
 contraction. For more information on similarity problems we refer
 the reader
 to \cite{Pa-book} and \cite{Pi-book}.

In Section 3, extending  the classical result obtained by Wolff
(\cite{Wo1}, \cite{Wo2}) and  MacCluer's version for  $\BB_n$ (see
\cite{MC1}), we provide a noncommutative analogue of  Wolff's
theorem for free holomorphic self-maps of $[B(\cH)^n]_1$. We show
that if
  $\varphi:[B(\cH)^n]_1\to [B(\cH)^n]_1$ is a free
holomorphic function such that its scalar representation  has  no
fixed points in $\BB_n$, then there is a unique point $\zeta\in
\partial \BB_n$ (the Denjoy-Wolff point of $\varphi$)  such that  each  noncommutative ellipsoid ${\bf
E}_c(\zeta)$ (see Section 3 for the definition)  is mapped into
 itself by every iterate of the symbol  $\varphi$. We also show that
  the spectral radius of a composition  operator   on $H^2_{\bf ball}$  is $1$ when  the symbol
   is  elliptic or parabolic, which extends some of Cowen's results
   (\cite{Cow}) from the single variable case.

In Section 4, we obtain a formula for  the adjoint of  a composition
operator  on $H^2_{\bf ball}$. It is shown that if
$\varphi=(\varphi_1, \ldots, \varphi_n)$  is a   free holomorphic
self-map of  the noncommutative ball $[B(\cH)^n]_1$, then
$$
C_\varphi^*f =\sum_{\alpha\in \FF_n^+} \left<
f,\varphi_\alpha\right> e_\alpha,
$$
where  $f$ and $\varphi_1, \ldots, \varphi_n$ are seen as elements
of the Fock space $F^2(H_n)$. As a consequence we prove that
$C_\varphi$ is normal   if and only if
$$
\varphi(X_1,\ldots, X_n)=[X_1,\ldots, X_n] A
$$
for some normal scalar matrix   $A\in M_{n\times n}$  with
$\|A\|\leq 1$, where $[X_1,\ldots, X_n]$ is seen as a row
contraction. This
 leads to characterizations of self-adjoint or
 unitary composition operators  on $H^2_{\bf ball}$. A nice
 connection between Fredholm composition operators on $H^2_{\bf
 ball}$ and the automorphisms of the open unit ball $\BB_n$ is also
 presented.

 In Section 5, we study compact composition operators  on the noncommutative Hardy space
  $H^2_{\bf ball}$.  Using  some of Shapiro's arguments from the single
  variable case (see \cite{Sha}) in our setting  as well as  some  results  from Section 4,
   we obtain a formula for the
  essential norm of the
composition operator $C_\varphi$ on $H^2_{\bf ball}$. In particular,
this implies that $C_\varphi$  is a compact operator if and only if
$$
\lim_{k\to\infty} \sup_{f\in H^2_{\bf ball}, \|f\|_2\leq 1}
\sum_{|\alpha|\geq k}|\left<f,\varphi_\alpha\right>|^2 =0.
$$
Moreover, we show that if $C_\varphi$ is  a compact  operator on
$H^2_{\bf ball}$, then the scalar representation of $\varphi$ is a
holomorphic self-map of $\BB_n$ which
\begin{enumerate}
\item[(i)]
 cannot have
 finite angular derivative at any point of $\partial \BB_n$, and
 \item[(ii)]has exactly one fixed
point in the open ball $\BB_n$.
\end{enumerate}
As a consequence, we deduce that every  compact composition operator
on $H^2_{\bf ball}$ is similar to a contraction. In the end of this
section, we prove that the  set of compact composition operators on
$H^2_{\bf ball}$ is arcwise connected in  the set    of all
composition operators with respect to the operator norm topology.

In Section 6,   we consider  a noncommutative multivariable
extension of Schr\" oder  equation (\cite{Schr}) which is used to
obtain results concerning the spectrum of composition operators on
$H^2_{\bf ball}$ (see Theorem \ref{inclusions}). Combining these
results with those from Section 5,  we determine the spectra of
compact composition operators on $H^2_{\bf ball}$. More precisely,
if  $\varphi$ is a free holomorphic self-map of  the noncommutative
ball $[B(\cH)^n]_1$ and $C_\varphi$ is a compact composition
operator on $H^2_{\bf ball}$, then   the scalar representation of
$\varphi$ has a unique fix point $\xi\in \BB_n$  and   the spectrum
$ \sigma(C_\varphi)$ consists of \ $0$, $1$, and
 all
possible products of the eigenvalues of the matrix
$$
\left[\left<\psi_i,e_j\right>\right]_{n\times n},
$$
 where $\psi=(\psi_1,\ldots, \psi_n):=\Phi_\xi\circ \varphi \circ
\Phi_\xi$ and  $\Phi_\xi$ is the involutive free holomorphic
automorphism of $[B(\cH)^n]_1$ associated with $\xi$, the functions
$\psi_1, \ldots, \psi_n$ are seen as elements of the Fock space
$F^2(H_n)$, and   the Hilbert space $H_n$ has $e_1$, $e_2$, $\dots, e_n$ as   orthonormal
basis.

In Section 7, we consider composition operators on Fock spaces
associated to noncommutative varieties in unit ball $[B(\cH)^n]_1$.
 Given a  set $\cP_0$ of noncommutative polynomials in
$n$ indeterminates  such  that $p(0)=0$, $p\in \cP_0$,  we define a
noncommutative variety $\cV_{\cP_0}(\cH)\subseteq [B(\cH)^n]_1$  by
setting
$$
\cV_{\cP_0}(\cH):=\{(X_1,\ldots, X_n)\in [B(\cH)^n]_1:\
p(X_1,\ldots, X_n)=0 \text{ for all } p\in \cP_0\}.
$$
   According to \cite{Po-varieties}, there  is  a universal model $(B_1,\ldots,
B_n)$ associated with  the noncommutative variety
$\cV_{\cP_0}(\cH)$, where $B_i=P_{\cN_{\cP_0}} S_i|_{\cN_{\cP_0}}$
and $\cN_{\cP_0}$ is a subspace of the full Fock space $F^2(H_n)$.
 Let $F_n^\infty
(\cV_{\cP_0})$ be the $w^*$-closed algebra generated by $B_1,\ldots,
B_n$ and the identity. Using the results from Section 2 and the
noncommutative commutant lifting theorem \cite{Po-isometric} (see
\cite{SzF-book} for the classical case $n=1$), we show that given
any $\widetilde\psi\in F_n^\infty (\cV_{\cP_0})\otimes \CC^n$  with
$\|\widetilde\psi\|\leq 1$, one can define a composition operator
$C_{\widetilde\psi} : \cN_{\cP_0}\to \cN_{\cP_0}$, which turns out
to be bounded. Many results from  the previous sections  have
analogues in this more general setting. In particular, if
$\cP_c:=\{X_iX_j-X_jX_i:\ i,j=1,\ldots,n\}$, then $\cN_{\cP_c} $
coincides  with the symmetric Fock space.
 As a consequence, many of our results have commutative counterparts  for
composition operators on the symmetric Fock space  and on spaces of
analytic functions in the unit ball  $\BB_n$  of $\CC^n$.

\bigskip

\section{ Noncommutative  Littlewood subordination principle }

In this section,  we characterize the free holomorphic self-maps of
the unit ball  $[B(\cH)^n]_1$ in terms of the model boundary
functions with respect to the left creation operators on the full
Fock space $F^2(H_n)$. This will be used  to  provide a
noncommutative Littlewood subordination theorem for the Hardy space
$H^2_{\bf ball}$.

Let $H_n$ be an $n$-dimensional complex  Hilbert space with
orthonormal
      basis
      $e_1$, $e_2$, $\dots,e_n$, where $n\in\{1,2,\dots\}$.
       We consider the full Fock space  of $H_n$ defined by
      $$F^2(H_n):=\CC 1\oplus \bigoplus_{k\geq 1} H_n^{\otimes k},$$
      where   $H_n^{\otimes k}$ is the (Hilbert)
      tensor product of $k$ copies of $H_n$.
       We denote $e_\alpha:=
e_{i_1}\otimes\cdots \otimes  e_{i_k}$  if $\alpha=g_{i_1}\cdots
g_{i_k}$, where $i_1,\ldots, i_k\in \{1,\ldots,n\}$, and
$e_{g_0}:=1$. Note that $\{e_\alpha\}_{\alpha\in \FF_n^+}$ is an
orthonormal basis for $F^2(H_n)$.
Define the left  (resp.~right) creation
      operators  $S_i$ (resp.~$R_i$), $i=1,\ldots,n$, acting on $F^2(H_n)$  by
      setting
      $$
       S_i\varphi:=e_i\otimes\varphi, \qquad  \varphi\in F^2(H_n),
      $$
       (resp.~$
       R_i\varphi:=\varphi\otimes e_i$). Note that $S_iR_j=R_jS_i$ for $i,j\in \{1,\ldots, n\}$.
The noncommutative disc algebra $\cA_n$ (resp.~$\cR_n$) is the norm
closed algebra generated by the left (resp.~right) creation
operators and the identity. The   noncommutative analytic Toeplitz
algebra $F_n^\infty$ (resp.~$R_n^\infty$)
 is the the weakly
closed version of $\cA_n$ (resp.~$\cR_n$). These algebras were
introduced in \cite{Po-von} in connection with a noncommutative
version of the classical  von Neumann inequality (\cite{von}).

Let $C^*(S_1,\ldots, S_n)$ be the Cuntz-Toeplitz
$C^*$-algebra generated by the left creation operators (see
\cite{Cu}). The noncommutative Poisson transform at
 $X:=(X_1,\ldots, X_n)\in [B(\cH)^n]_1^-$ is the unital completely contractive  linear map
 ${\bf P}_X:C^*(S_1,\ldots, S_n)\to B(\cH)$ defined by
 \begin{equation*}
 {\bf P}_X[f]:=\lim_{r\to 1} K_{rX}^* (f \otimes I_\cH)K_{rX}, \qquad f\in C^*(S_1,\ldots,
 S_n),
\end{equation*}
 where the limit exists in the operator norm topology of $B(\cH)$.
Here,
$
K_{rX} :\cH\to    F^2(H_n)\otimes \cH$, \
 $0< r\leq 1$,
is the  noncommutative Poisson  kernel defined by
\begin{equation*}
K_{rX}h:= \sum_{k=0}^\infty \sum_{|\alpha|=k} e_\alpha\otimes r^{|\alpha|}
\Delta_{rX} X_\alpha^*h,\qquad h\in \cH,
\end{equation*}
where   $\Delta_{rX}:=(I_\cH-r^2X_1X_1^*-\cdots -r^2
X_nX_n^*)^{1/2}$. We recall that
 $$
 {\bf P}_X[S_\alpha S_\beta^*]=X_\alpha X_\beta^*, \qquad \alpha,\beta\in \FF_n^+.
 $$
 When $X:=(X_1,\ldots, X_n)$  is a pure row contraction, i.e.
 $ \text{\rm SOT-}\lim\limits_{k\to\infty} \sum_{|\alpha|=k}
X_\alpha X_\alpha^*=0$, then
   we have $${\bf P}_X[f]=K_X^*(f\otimes I_\cH)K_X, \qquad f\in C^*(S_1,\ldots,
 S_n)\ \text{ or } \ f\in F_n^\infty.
   $$
   Under an appropriate modification of the Poisson kernel ($e_\alpha$ becomes $e_{\widetilde \alpha}$ where $\widetilde \alpha= g_{i_k}\cdots g_{i_k}$ is the reverse of
   $\alpha=g_{i_1}\cdots g_{i_k}\in\FF_n^+$), similar results hold for $C^*(R_1,\ldots, R_n)$ of $R_n^\infty$. For simplicity, we use the same notation for the noncommutative Poisson transform.
We refer to \cite{Po-poisson}, \cite{Po-curvature},  and
\cite{Po-unitary} for more on noncommutative Poisson transforms on
$C^*$-algebras generated by isometries.

According to \cite{Po-holomorphic} and \cite{Po-pluriharmonic}, the
noncommutative Hardy space
  $H_{\text{\bf ball}}^\infty $ (see the introduction)  can be identified with    the noncommutative
analytic Toeplitz algebra $F_n^\infty$. More precisely, a bounded
free holomorphic function $\psi$ on $[B(\cH)^n]_1$ is uniquely determined by its {\it
(model) boundary function} $\widetilde \psi(S_1,\ldots, S_n)\in
F_n^\infty $ defined by
$$\widetilde \psi=\widetilde \psi(S_1,\ldots, S_n):=\text{\rm SOT-}\lim_{r\to 1}
\psi(rS_1,\ldots, rS_n). $$ Moreover, $\psi$ is  the noncommutative
Poisson transform of $\widetilde \psi(S_1,\ldots, S_n)$ at
$X:=(X_1,\ldots, X_n)\in [B(\cH)^n]_1$, i.e.,
$$
\psi(X_1,\ldots, X_n)={\bf P}_X [\widetilde \psi(S_1,\ldots, S_n)].
$$
Similar results hold for bounded free holomorphic functions on the
noncommutative ball  $[B(\cH)^n]_1$ with operator-valued
coefficients. There are also versions of these results when the boundary function is taken with respect to the right creation operators $R_1,\ldots, R_n$.

Throughout this paper, we deal with free holomorphic self-maps of
the unit ball  $[B(\cH)^n]_1$. The following results gives us, in
particular, a characterization  of these maps in terms of the model boundary
functions with respect to the left creation operators on the full
Fock space $F^2(H_n)$. For simplicity, $[X_1,\ldots, X_n]$ denotes
either the $n$-tuple $(X_1,\ldots, X_n)\in B(\cH)^n$ or the operator
row matrix $[ X_1\, \cdots \, X_n]$ acting from $\cH^{(n)}$, the
direct sum  of $n$ copies of  a Hilbert space $\cH$, to $\cH$.

\begin{theorem}\label{strict}
Let $\varphi:[B(\cH)^n]_1 \to [B(\cH)^m]_1^-$ be   a    free
holomorphic function.  Then  the following statements hold.
\begin{enumerate}
\item[(i)] Either
$\varphi\left([B(\cH)^n]_1\right)\subseteq  [B(\cH)^m]_1$
 or there exists $\zeta\in \partial\BB_m$ such that $\varphi(X)=\zeta$ for all $X\in [B(\cH)^n]_1$.
 \item[(ii)] $\varphi$ is constant if and only if
 $\|\varphi(0)\|=\|\varphi\|_\infty$.
\item[(iii)] If  $\varphi$ is non-constant and $\varphi_r(X):=\varphi(rX)$, $X\in [B(\cH)^n]_1$, then
 the map $[0,1)\ni r\mapsto \|\varphi_r\|_\infty$ is strictly
increasing.
\item[(iv)] If $\widetilde \varphi$ is the  boundary function  of $\varphi$
with respect to $S_1,\ldots, S_n$, then
$\varphi\left([B(\cH)^n]_1\right)\subseteq [B(\cH)^m]_1$ if and only
if either $\widetilde \varphi=\zeta I$ for some $\zeta\in \BB_n$ or
$\widetilde \varphi$ is  non-constant  with $\|\widetilde
\varphi\|\leq 1$.
\end{enumerate}
\end{theorem}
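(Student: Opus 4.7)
The plan is to prove the four parts in the order (ii), (iii), (i), (iv), each leveraging its predecessor. The engine is the following scalar Schwarz lemma at the origin: if $g\in H^\infty_{\bf ball}$ is scalar-valued and $|g(0)|=\|g\|_\infty$, then $g$ is constant. For $g=\sum_\alpha b_\alpha X_\alpha$ with boundary function $\widetilde g=\sum b_\alpha S_\alpha\in F_n^\infty$, one has $|b_{g_0}|^2\le\sum_\alpha|b_\alpha|^2=\|\widetilde g\cdot 1\|_{F^2}^2\le\|\widetilde g\|^2=\|g\|_\infty^2=|b_{g_0}|^2$, forcing $b_\alpha=0$ for $|\alpha|\ge 1$.

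For (ii), the forward direction is immediate. For the converse, put $M:=\|\varphi(0)\|=\|\varphi\|_\infty$, $a_i:=\varphi_i(0)$, and set $g(X):=\sum_{i=1}^m \overline{a_i}\,\varphi_i(X)$. By Cauchy--Schwarz on the row $\varphi(X)$ against the column $\overline{\varphi(0)}^{\,T}\in\CC^m$, $\|g(X)\|\le\|\varphi(X)\|\cdot\|\varphi(0)\|\le M^2$, while $g(0)=M^2$. The scalar Schwarz lemma yields $g\equiv M^2$, so for every $h_0\in\cH$ the vector $v:=(\overline{a_i}h_0)_{i=1}^m\in\cH^{(m)}$, of norm $M\|h_0\|$, satisfies $\varphi(X)v=M^2 h_0$, making $v$ a norming vector for the operator $\varphi(X)$ (whose norm is $\le M$). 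The equality case of Cauchy--Schwarz then gives $\varphi(X)^*\varphi(X)v=M^2 v$, which unpacks to $\varphi_i(X)^*h_0=\overline{a_i}h_0$ for every $h_0\in\cH$, i.e.\ $\varphi_i(X)=a_i=\varphi_i(0)$, so $\varphi$ is constant.

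For (iii), the noncommutative Poisson transform ${\bf P}_{qS_1,\ldots,qS_n}$ sends $S_\alpha\mapsto q^{|\alpha|}S_\alpha$ and is unital completely contractive, so $\widetilde\varphi_{r_1}={\bf P}_{qS}[\widetilde\varphi_{r_2}]$ with $q=r_1/r_2$, yielding monotonicity $\|\widetilde\varphi_{r_1}\|\le\|\widetilde\varphi_{r_2}\|$. For strict monotonicity, introduce the Banach-space--valued holomorphic function $F(z):=\sum_k z^k\sum_{|\alpha|=k}a_\alpha S_\alpha$ on $\DD$; the gauge unitaries $U_\theta$ on $F^2(H_n)$ defined by $U_\theta e_\alpha:=e^{i\theta|\alpha|}e_\alpha$ satisfy $U_\theta S_i U_\theta^*=e^{i\theta}S_i$, whence $\|F(re^{i\theta})\|=\|F(r)\|$. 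Hadamard's three-circle theorem (Hahn--Banach reduction to the scalar case) makes $\log\|F(r)\|$ convex in $\log r$; combined with monotonicity, an equality $\|F(r_1)\|=\|F(r_2)\|$ for $0\le r_1<r_2<1$ forces $\|F(\cdot)\|$ to be constant on $[0,r_2]$. In particular $\|\varphi_{r_2}(0)\|=\|\varphi(0)\|=\|\varphi_{r_2}\|_\infty$, and (ii) applied to $\varphi_{r_2}$ makes $\varphi_{r_2}$, hence $\varphi$, constant.

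Part (i) is the contrapositive of (iii) after rescaling: a non-constant $\varphi$ with $\|\varphi(X_0)\|=1=\|\varphi\|_\infty$ for some $X_0\in[B(\cH)^n]_1$ would, via $X_0=rY_0$ with $r=(1+\|X_0\|)/2<1$ and $Y_0\in[B(\cH)^n]_1$, yield $\|\varphi_r\|_\infty\ge\|\varphi(X_0)\|=\|\varphi\|_\infty$, contradicting strict monotonicity; so the alternative $\varphi\equiv\zeta\in\partial\BB_m$ must hold. Part (iv) follows by combining (i) with the identification $\|\varphi\|_\infty=\|\widetilde\varphi\|$: the two alternatives on $\widetilde\varphi$ correspond to $\varphi$ being constant with value in $\BB_m$ or non-constant with $\|\varphi\|_\infty\le 1$, and in the latter case (i) places the range inside $[B(\cH)^m]_1$. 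The principal obstacle is Step (ii): extracting the full operator identity $\varphi(X)\equiv\varphi(0)$ from the scalar identity $\sum_i\overline{a_i}\varphi_i(X)\equiv M^2$, which hinges on the Cauchy--Schwarz equality case for the row operator $\varphi(X)$ with the precisely chosen norming vector $v\in\cH^{(m)}$.
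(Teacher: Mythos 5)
Your proof is correct, but it runs along a genuinely different track from the paper's. The paper proves (i) first: it splits on $\|\varphi(0)\|$ versus $\|\varphi\|_\infty$, invokes the operator-valued maximum principle (Proposition 5.2 of the automorphisms paper) for the case $\|\varphi(0)\|<\|\varphi\|_\infty=1$, and in the case $\|\varphi(0)\|=1$ rotates $\varphi(0)$ to $(1,0,\ldots,0)$ by a unitary and applies the scalar maximum principle (Theorem 5.1 there) to the first component; (ii) is then read off from that argument, (iii) follows from Proposition 5.2 together with the noncommutative von Neumann inequality, and (iv) from (ii)--(iii). You reverse the logical order: you prove (ii) directly by pairing the row $\varphi$ against $\overline{\varphi(0)}$ --- which replaces the unitary rotation --- using a one-line Fock-space proof of the scalar maximum principle followed by the Cauchy--Schwarz equality analysis with the norming vector $v=(\overline{a_i}h_0)_i$; you obtain (iii) from Poisson-transform monotonicity plus gauge invariance plus the Hadamard three-circle theorem applied to the operator-valued holomorphic function $z\mapsto\varphi(zS)$, where the paper instead quotes the maximum principle; and you then deduce (i) as essentially the contrapositive of (iii). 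Your route is self-contained modulo classical complex analysis (it does not need Proposition 5.2 or Theorem 5.1 of \cite{Po-automorphism} as black boxes) and yields a small bonus, namely the log-convexity of $r\mapsto\|\varphi_r\|_\infty$; the paper's route is shorter given the cited machinery. Two small points you should make explicit: the degenerate case $M=\|\varphi(0)\|=0$ in (ii), where the conclusion is immediate since then $\|\varphi\|_\infty=0$ (your division by $M^2$ requires $M\neq0$); and, in (i), the observation that a constant free holomorphic function necessarily takes a scalar value $\zeta I$ with $\zeta\in\CC^m$, so that the constant alternative really lands on a point of $\partial\BB_m$.
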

\begin{proof}
If $\|\varphi\|_\infty<1$, then (i) holds. Assume that
$\|\varphi\|_\infty=1$. In this case, if $\|\varphi(0)\|<1$ then,
according to the maximum principle for free holomorphic functions
(see Proposition 5.2 from \cite{Po-automorphism}), we have
$\|\varphi(X)\|<1$ for all $X\in [B(\cH)^n]_1$.  It remains  to
consider the case when $\|\varphi(0)\|=1$. Set
$\zeta=[\zeta_1,\ldots, \zeta_m]:=\varphi(0)\in \partial \BB_m$ and
let $U\in M_{m\times m}$ be a unitary matrix such that
$[\zeta_1,\ldots, \zeta_m]U=\xi_1:=[1,0,\ldots, 0]\in \partial
\BB_m$. Let $\varphi_U(X):=[X_1,\ldots, X_m]U$ and note that
$g:=\varphi_U\circ\varphi:[B(\cH)^n]_1 \to [B(\cH)^m]_1^-$ is    a
free holomorphic function with $g(0)=\xi_1$. Setting $g=(g_1,\ldots,
g_m)$, we deduce that  $g_i$ are free holomorphic functions  with
$g_1(0)=1$ and $g_i(0)=0$ if $i=2,\ldots,m$.  Applying Theorem 5.1
from \cite{Po-automorphism} to $g_1$, we deduce that $g_1(X)=1$ for
all $X\in [B(\cH)^n]_1$. Hence $g_2=\cdots=g_m=0$. This implies that
that $\varphi(X)=\zeta$ for all $X\in [B(\cH)^n]_1$, and completes the
proof of item (i). Since the direct implication in item (ii) is
obvious, we assume that $\|\varphi(0)\|=\|\varphi\|_\infty$ and
$\|\varphi\|_\infty=1$. The rest of the  proof  of (ii) is contained in
the proof of item (i).

  To prove  item (iii),  assume that $\varphi$ is non-constant. Due
  to part (ii), we must have $\|\varphi(0)\|<\|\varphi\|_\infty$.
  Using again Proposition
5.2 from \cite{Po-automorphism}), we have
$\|\varphi(X)\|<\|\varphi\|_\infty$ for all $X\in [B(\cH)^n]_1$. Let
$0\leq r_1<r_2<1$. We recall that,  if $r\in[0,1)$, then the
boundary function $\widetilde \varphi_r$ is in  $\cA_n\otimes
M_{1\times m}$, where   $\cA_n$ is the noncommutative disc algebra
and $\|\varphi_r\|_\infty=\|\widetilde
\varphi_r\|=\|\varphi_r(S_1,\ldots, rS_n)\|$.  Using the
noncommutative von Neumann inequality (see \cite{Po-von}) and
applying the above-mentioned result to $\varphi_{r_2}$ and
$(X_1,\ldots, X_n):=(\frac{r_1}{r_2}S_1,\ldots,
\frac{r_1}{r_2}S_n)$, we obtain
$$
\|\varphi_{r_1}\|_\infty=\|\varphi_{r_1}(S_1,\ldots,
S_n)\|=\left\|\varphi_{r_2}\left(\frac{r_1}{r_2}S_1,\ldots,
\frac{r_1}{r_2}S_n\right)\right\|<\|\varphi_{r_2}(S_1,\ldots,
S_n)\|=\|\varphi_{r_2}\|_\infty,
$$
which shows that (iii) holds.

Now we prove (iv). If $\varphi\left([B(\cH)^n]_1\right)\subseteq
[B(\cH)^m]_1$, then $\|\widetilde \varphi\|=\|\varphi\|_\infty\leq
1$ and the result  follows. Conversely, assume that $\|\widetilde
\varphi\|\leq 1$ and  $\widetilde \varphi$ is not of the form $\zeta
I$ for some $\zeta\in \BB_n$. Then $\varphi$ is not a constant and
due to (ii) we have $\|\varphi(0)\|<\|\varphi\|_\infty$. Using now
item  (iii), we deduce that the map $[0,1)\ni r\mapsto
\|\varphi_r\|_\infty$ is strictly increasing.  If $X:=(X_1,\ldots,
X_n)\in [B(\cH)^n]_1$, then there is $r\in [0,1)$ such that
$\|X\|<r$. Consequently, due to the noncommutative von Neumann
inequality, we have
$$
\|\varphi(X_1,\ldots, X_n)\|\leq \|\varphi(rS_1,\ldots,
rS_n)\|=\|\varphi_r\|_\infty<1.
$$
The proof is complete.
\end{proof}

Note that if $f\in H_{\bf ball}$, then $f\in H^2_{\bf ball}$ if and only
$\sup_{r\in [0,1)} \|f(rS_1,\ldots, rS_n)1\|<\infty$. Moreover, in this case, we have
$$
\|f\|_2=\lim_{r\to 1}\|f(rS_1,\ldots, rS_n)1\|
=\sup_{r\in [0,1)} \|f(rS_1,\ldots, rS_n)1\|.
$$
If  $f=\sum_{k=0}^\infty \sum_{|\alpha|=k} a_\alpha
X_\alpha$ and $g=\sum_{k=0}^\infty \sum_{|\alpha|=k} b_\alpha
X_\alpha$ are in $H^2_{\bf ball}$, then
\begin{equation*}
\begin{split}
\left< f,g\right>&=\lim_{r\to 1}\left<f(rS_1,\ldots,
rS_n)1,g(rS_1,\ldots, rS_n)1\right>_{F^2(H_n)} =\left<
\sum_{\alpha\in \FF_n^+} a_\alpha e_\alpha, \sum_{\alpha\in \FF_n^+}
b_\alpha e_\alpha\right>_{F^2(H_n)}.
\end{split}
\end{equation*}
Consequently, the noncommutative Hardy space $H^2_{\bf ball}$   can
be identified with the full Fock space $F^2(H_n)$, via the unitary
operator $\cU:H^2_{\bf ball}\to F^2(H_n)$ defined by the mapping
$$ H^2_{\bf ball}\ni
\sum_{k=0}^\infty \sum_{|\alpha|=k} a_\alpha  X_\alpha  \mapsto
\sum_{k=0}^\infty \sum_{|\alpha|=k} a_\alpha e_\alpha\in F^2(H_n).
$$
This identification will be used throughout the paper whenever
necessary. We recall  from \cite{Po-automorphism}  that
 if $f:[B(\cH)^n]_1\to B(\cH)$  and
$\varphi:[B(\cH)^n]_1\to [B(\cH)^n]_1$ are  free holomorphic
functions then $f\circ \varphi$ is a free holomorphic function on
$[B(\cH)^n]_1$ defined by
$$
(f\circ \varphi)(X_1,\ldots, X_n)=\sum_{k=0}^\infty
\sum_{|\alpha|=k}  a_\alpha \varphi_\alpha (X_1,\ldots, X_n),\qquad
(X_1,\ldots, X_n)\in [B(\cH)^n]_1,
$$
where the convergence is in the operator norm topology and
$\varphi=(\varphi_1,\ldots, \varphi_n)$.

We can prove now the following noncommutative Littlewood
subordination theorem for the Hardy space $H^2_{\bf ball}$, which will play an important role in this paper.

\begin{theorem}\label{comp1}
Let  $\varphi$ be  a   free holomorphic self-map of  the ball
$[B(\cH)^n]_1$ such that $\varphi(0)=0$, and let $f\in H^2 _{\bf
ball}$. Then  $ f\circ \varphi \in H^2_{\bf ball}$   and $\|f\circ
\varphi\|_2\leq \|f\|_2$.
\end{theorem}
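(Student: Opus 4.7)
The plan is to use the identification $H^2_{\bf ball}\cong F^2(H_n)$ together with the formula $\|g\|_2 = \sup_{r\in[0,1)}\|g(rS_1,\ldots,rS_n)1\|$ recalled just before the statement. This reduces matters to proving the single operator-theoretic estimate $\|(f\circ\varphi)(rS_1,\ldots,rS_n)1\|\leq\|f\|_2$ for every $r\in[0,1)$. Since $\varphi$ is a self-map and $\|(rS_1,\ldots,rS_n)\|=r<1$, the tuple $T=(T_1,\ldots,T_n):=\varphi(rS_1,\ldots,rS_n)$ lies in $[B(F^2(H_n))^n]_1$, so $[T_1,\ldots,T_n]$ is a strict row contraction. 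The composition formula reviewed just above the theorem then gives $(f\circ\varphi)(rS_1,\ldots,rS_n)=f(T)$, and the series $f(T)=\sum_\alpha a_\alpha T_\alpha$ converges in operator norm because $\|T\|<1$ does not exceed the radius of convergence of $f$.

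The key consequence of the assumption $\varphi(0)=0$ is a vacuum-annihilation property: each $T_i$ has the convergent expansion $\sum_{|\gamma|\geq 1} a_\gamma^{(i)} r^{|\gamma|} S_\gamma$, with no constant term because $\varphi_i(0)=0$, hence $T_i^* 1 = 0$ since $S_\gamma^* 1 = 0$ for every $|\gamma|\geq 1$. This is the noncommutative analogue of the Schwarz input that drives the classical Littlewood subordination principle.

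Using this, I would prove $\|p(T)1\|^2 \leq \|p\|_2^2$ for every noncommutative polynomial $p$ by induction on the degree. Writing $p = p(0) + \sum_{i=1}^n X_i q_i$ with $\deg q_i < \deg p$, one gets $p(T)1 = p(0)\cdot 1 + \sum_i T_i q_i(T)1$. The orthogonality $T_i^* 1 = 0$ makes the cross terms in $\|p(T)1\|^2$ vanish, so $\|p(T)1\|^2 = |p(0)|^2 + \bigl\|\sum_i T_i q_i(T)1\bigr\|^2$. Row contractivity of $[T_1,\ldots,T_n]$ dominates the tail by $\sum_i \|q_i(T)1\|^2$, and the inductive hypothesis closes the estimate to $|p(0)|^2+\sum_i\|q_i\|_2^2=\|p\|_2^2$. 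To pass to a general $f\in H^2_{\bf ball}$, I approximate by polynomial partial sums $f^{[N]}:=\sum_{|\alpha|\leq N}a_\alpha X_\alpha$: one has $\|f^{[N]}\|_2\leq\|f\|_2$ and $f^{[N]}(T)\to f(T)$ in operator norm (since $\|T\|<1$), so $\|f(T)1\|\leq\|f\|_2$, and taking the supremum over $r\in[0,1)$ yields both $f\circ\varphi\in H^2_{\bf ball}$ and the desired norm bound.

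The main obstacle is isolating the right algebraic identity $T_i^*1=0$ and pairing it with row contractivity so that the inductive step telescopes cleanly without accruing extra constants; once that identity is in hand, the remaining passage between the polynomial regime and $H^2_{\bf ball}$ is routine bookkeeping.
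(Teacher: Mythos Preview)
Your proof is correct and follows essentially the same Littlewood-type argument as the paper: decompose a polynomial as $p(0)+\sum_i X_i q_i$ (the paper writes this as $p(0)+\sum_i S_i(S_i^*p)$), use $\varphi(0)=0$ for orthogonality to constants and row contractivity to peel off one layer, iterate, then approximate general $f$ by polynomial truncations. The only cosmetic difference is that the paper works directly with the boundary function $\widetilde\varphi=(\widetilde\varphi_1,\ldots,\widetilde\varphi_n)$ and unrolls the recursion explicitly, whereas you evaluate at $\varphi(rS_1,\ldots,rS_n)$ for each $r<1$, run an induction on degree, and take the supremum over $r$ at the end.
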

 \begin{proof}
 Let $\varphi:=(\varphi_1,\ldots, \varphi_n)$ be  a    free holomorphic self-map of  the ball
$[B(\cH)^n]_1 $ such that $\varphi(0)=0$, and let $\widetilde \varphi=(\widetilde \varphi_1,\ldots, \widetilde \varphi_n)\in F_n^\infty\otimes \CC^n$ be the model boundary  function with respect to the left creation operators $S_1,\ldots, S_n$.
Thus $\widetilde \varphi_i:=\text{\rm SOT-}\lim_{r\to 1} \varphi_i(rS_1,\ldots, rS_n)$ for $i=1,\ldots,n$.
Let $\cP_n$ be the set of all polynomials in $F^2(H_n)$ and define $C_{\widetilde \varphi}:\cP_n\to F^2(H_n)$ by setting
$$
C_{\widetilde \varphi}\left(\sum_{|\alpha|\leq
m} a_\alpha e_\alpha\right)
:=\sum_{|\alpha|\leq
m} a_\alpha \widetilde \varphi_\alpha(1).
$$
If  $q:=\sum_{|\alpha|\leq
m} a_\alpha X_\alpha$ is a polynomial in $H^2_{\bf ball}$, then
 $p:=\cU q=\sum_{|\alpha|\leq
m} a_\alpha e_\alpha$ is a polynomial in $F^2(H_n)$. Note that
$p=p(0)+\sum_{i=1}^n S_i(S_i^*p)$, where $p(0)=P_\CC p=a_0:=a_{g_0}$. Hence,
we deduce that
 $$C_{\widetilde \varphi} p=a_0 +\sum_{i=1}^n \widetilde\varphi_i C_{\widetilde \varphi}(S_i^*p).
  $$
  Since $\varphi (0)=0$,  the vector
$\sum_{i=1}^n \widetilde\varphi_i C_{\widetilde \varphi}(S_i^*p)$ is
orthogonal to the constants in $F^2(H_n)$. Consequently, using the fact that
$[\widetilde \varphi_1,\ldots, \widetilde \varphi_n]$ is
a row contraction, we have
\begin{equation*}
\begin{split}
\| C_{\widetilde \varphi} p\|_2^2&=|a_0|^2+\left\|\sum_{i=1}^n \widetilde \varphi_i C_{\widetilde \varphi}(S_i^*p)\right\|^2 \\
&\leq  |a_0|^2+ \left\|\oplus_{i=1}^n C_{\widetilde \varphi}(S_i^*p)
\right\|^2.
\end{split}
\end{equation*}
 Note that, for each $i=1,\ldots, n$, we have
 $$
 C_{\widetilde\varphi} (S_i^*p)=(S_i^*p)(0)+\sum_{j=1}^n \widetilde\varphi_j C_{\widetilde\varphi}(S_j^*S_i^*p).
 $$
 Hence,  using again that $\varphi(0)=0$ and that  $[\widetilde \varphi_1,\ldots, \widetilde \varphi_n]$ is a row contraction,  we deduce that

 \begin{equation*}
\begin{split}
\left\|\bigoplus_{i=1}^n C_{\widetilde \varphi}(S_i^*p)
\right\|^2 &= \left\|\bigoplus_{i=1}^n (S_i^*p)(0)
\right\|^2
 + \left\|\bigoplus_{i=1}^n \left(\sum_{j=1}^n
\widetilde\varphi_j C_{\widetilde\varphi}(S_j^*S_i^*p)\right)
\right\|^2 \\
& \leq
\sum_{|\alpha|=1} |a_\alpha|^2 +\sum_{i=1}^n \left\|\sum_{j=1}^n \widetilde\varphi_j C_{\widetilde\varphi}(S_j^*S_i^*p)\right\|^2\\
&\leq \sum_{|\alpha|=1} |a_\alpha|^2 + \left\|
\bigoplus_{|\beta|=2}C_{\widetilde\varphi}(S_\beta^*p)
\right\|^2.
\end{split}
\end{equation*}
 Similarly,  for any $k\in \{1,\ldots, m+1\}$, we obtain
 $$
 \left\|\bigoplus_{|\beta|=k-1}
C_{\widetilde\varphi}(S_\beta^*p)
\right\|^2 \leq \sum_{|\alpha|=k-1} |a_\alpha|^2 +
\left\|\bigoplus_{|\beta|=k} C_{\widetilde\varphi}(S_\beta^*p)
\right\|^2.
$$
 Using these relations and the fact that $S_\gamma^*p=0$ for $|\gamma|\geq m+1$, we obtain
 \begin{equation*}
 \| C_{\widetilde\varphi} p\|_2^2\leq \sum_{|\alpha|\leq m} |a_\alpha|^2=\|p\|^2_2.
 \end{equation*}
 Since $\cU C_\varphi \cU^{-1}p=C_{\widetilde\varphi}p$, we deduce that
 \begin{equation}
  \label{Cp}
  \|C_\varphi q\|_2\leq \|q\|_2\quad \text{ for any polynomial }\
  q\in H^2_{\bf ball}.
  \end{equation}
Now, we prove  that $f\circ \varphi$ is in
$H^2_{\bf ball}$  for any $f\in H^2_{\bf ball}$ and
 $\|f\circ \varphi\|_2\leq \|f\|_2$.
Let $f(X_1,\ldots, X_n)=\sum_{k=0}^\infty \sum_{|\alpha|=k} c_\alpha
X_\alpha$ be  a free holomorphic function in $H^2_{\bf ball}$. Then
$f\circ \varphi$ is a free holomorphic function on $[B(\cH)^n]_1$,
defined by
$$
(f\circ \varphi)(X_1,\ldots, X_n)=\sum_{k=0}^\infty
\sum_{|\alpha|=k}  c_\alpha \varphi_\alpha (X_1,\ldots, X_n),\qquad
(X_1,\ldots, X_n)\in [B(\cH)^n]_1,
$$
where the convergence is in the operator norm topology.  In
particular, we have
\begin{equation}
\label{circ} (f\circ \varphi)(rS_1,\ldots, rS_n)1=\sum_{k=0}^\infty
\sum_{|\alpha|=k}  c_\alpha \varphi_\alpha (rS_1,\ldots, rS_n)1,
\end{equation}
where the convergence is in $F^2(H_n)$. On the other hand, setting
$p_m(X_1,\ldots, X_n):=\sum_{k=0}^m \sum_{|\alpha|=k}  c_\alpha
X_\alpha$, we have $p_m\to f$ in $H^2_{\bf ball}$ as $m\to\infty$.
Therefore, $\{p_m\}$ is a Cauchy sequence in $H^2_{\bf ball}$. Due
to relation \eqref{Cp}, we have
$$
\|p_m\circ\varphi- p_k\circ \varphi\|_2\leq \|p_m-p_k\|_2,\qquad
m,k\in \NN.
$$
Hence, $\{p_m\circ\varphi\}$ is a Cauchy sequence  sequence in
$H^2_{\bf ball}$ and, consequently, there is $g\in H^2_{\bf ball}$
such that $p_m\circ \varphi\to g$ in  $H^2_{\bf ball}$. Hence, for
each $r\in [0,1)$, we have
$$\lim_{m\to\infty} (p_m\circ \varphi)(rS_1,\ldots, rS_n)1=g(rS_1,\ldots, rS_n)1.
$$
Combining this relation with \eqref{circ}, we get
$$
g(rS_1,\ldots, rS_n)1=(f\circ \varphi)(rS_1,\ldots, rS_n)1,\qquad
r\in[0,1).
$$
Since $f\circ \varphi$ and $g$ are free holomorphic functions, we
deduce that
 $f\circ \varphi=g\in H^2_{\bf ball}$. Now, since
 $p_m\circ \varphi\to f\circ \varphi$ in  $H^2_{\bf ball}$, relation \eqref{Cp} implies
$\|f\circ \varphi\|_2\leq \|f\|_2 $ for any  $f\in H^2_{\bf ball}$.
The proof is complete.
\end{proof}

 If in addition to the hypothesis of Theorem \ref{comp1}, we assume that $\varphi$ is  inner,
  i.e. the boundary function $\widetilde\varphi$  is an isometry, then we can prove the following result.

\begin{theorem}\label{comp1-iso}
Let  $\varphi$ be  an inner    free holomorphic self-map of  the ball
$[B(\cH)^n]_1$ such that $\varphi(0)=0$. Then the composition operator $C_\varphi$ is an isometry on $ H^2 _{\bf ball}$.
\end{theorem}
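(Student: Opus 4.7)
The plan is to revisit the proof of Theorem \ref{comp1} and observe that under the inner hypothesis every inequality in that argument becomes an equality. The hypothesis ``$\widetilde\varphi$ is an isometry'' means that the row operator $[\widetilde\varphi_1, \ldots, \widetilde\varphi_n]:F^2(H_n)^{(n)}\to F^2(H_n)$ satisfies $\widetilde\varphi^*\widetilde\varphi=I$, which is equivalent to the Cuntz-type orthogonality relations $\widetilde\varphi_i^*\widetilde\varphi_j=\delta_{ij}I$ for all $i,j\in\{1,\ldots,n\}$. This implies the identity
$$
\Bigl\|\sum_{i=1}^n \widetilde\varphi_i v_i\Bigr\|^2 = \sum_{i=1}^n \|v_i\|^2
\qquad \text{for all } v_1,\ldots,v_n\in F^2(H_n),
$$
which is precisely the inequality exploited (with $\leq$) in the proof of Theorem \ref{comp1}.

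With this in hand, I would run the iterative estimate from the proof of Theorem \ref{comp1} for an arbitrary polynomial $p=\sum_{|\alpha|\le m}a_\alpha e_\alpha\in F^2(H_n)$. Using $\varphi(0)=0$, the vector $\sum_{i=1}^n\widetilde\varphi_i C_{\widetilde\varphi}(S_i^*p)$ is orthogonal to the constants, so Pythagoras gives an exact equality at each level; and the Cuntz relations turn each $\leq$ in the subsequent inequalities into $=$. Iterating $m+1$ times and using $S_\gamma^*p=0$ for $|\gamma|>m$, one obtains
$$
\|C_{\widetilde\varphi}p\|_2^2 = \sum_{|\alpha|\le m}|a_\alpha|^2 = \|p\|_2^2.
$$
Via the unitary $\cU:H^2_{\bf ball}\to F^2(H_n)$ and $\cU C_\varphi\cU^{-1}=C_{\widetilde\varphi}$ on polynomials, this reads $\|C_\varphi q\|_2=\|q\|_2$ for every polynomial $q\in H^2_{\bf ball}$.

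Finally, I would pass from polynomials to all of $H^2_{\bf ball}$ by density: the polynomials are dense, $C_\varphi$ is bounded (in fact a contraction) by Theorem \ref{comp1}, and the isometric identity on the dense subspace extends by continuity. Therefore $C_\varphi$ is an isometry on $H^2_{\bf ball}$.

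The only real obstacle is verifying that the precise meaning of ``$\widetilde\varphi$ is an isometry'' is indeed the row-isometry condition $\widetilde\varphi_i^*\widetilde\varphi_j=\delta_{ij}I$ (this is the natural one since $\|\widetilde\varphi\|$ in Theorem \ref{strict}(iv) is the operator row-norm); once that is fixed, the proof reduces to tracking the equality case of the one inequality used in Theorem \ref{comp1}. As an alternative presentation that bypasses the induction, one may simply note that $C_{\widetilde\varphi}e_\alpha=\widetilde\varphi_\alpha(1)$ and compute $\langle\widetilde\varphi_\alpha(1),\widetilde\varphi_\beta(1)\rangle=\langle\widetilde\varphi_\beta^*\widetilde\varphi_\alpha(1),1\rangle$; the Cuntz relations collapse this to $\delta_{\alpha,\beta}$ (when neither word is a prefix of the other one gets $0$ directly; when one is a proper prefix of the other, one gets $\varphi_\gamma(0)=0$ since $\varphi(0)=0$), exhibiting $\{C_\varphi e_\alpha\}_{\alpha\in\FF_n^+}$ as an orthonormal family and yielding the isometry property at once.
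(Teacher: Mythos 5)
Your proposal is correct, and in fact it contains two arguments: the ``alternative presentation'' you sketch at the end --- computing $\langle\widetilde\varphi_\alpha 1,\widetilde\varphi_\beta 1\rangle=\langle\widetilde\varphi_\beta^*\widetilde\varphi_\alpha 1,1\rangle$, using the relations $\widetilde\varphi_i^*\widetilde\varphi_j=\delta_{ij}I$ together with $\varphi_\gamma(0)=0$ to conclude that $\{\varphi_\alpha\}_{\alpha\in\FF_n^+}$ is orthonormal, and then passing to general $f$ by taking partial sums --- is precisely the proof in the paper. Your primary route is a genuine variant: instead of exhibiting the orthonormal family, you rerun the inductive estimate from Theorem \ref{comp1} and observe that the single inequality used there (the row-contraction bound $\|\sum_i\widetilde\varphi_i v_i\|^2\le\sum_i\|v_i\|^2$) becomes an equality when $[\widetilde\varphi_1,\ldots,\widetilde\varphi_n]$ is an isometry, while the Pythagorean splittings are already equalities because $\varphi(0)=0$ forces each $\sum_i\widetilde\varphi_i C_{\widetilde\varphi}(S_i^*p)$ to be orthogonal to the constants. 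That gives $\|C_{\widetilde\varphi}p\|_2=\|p\|_2$ on polynomials, and the extension to all of $H^2_{\bf ball}$ is legitimate since Theorem \ref{comp1} already supplies boundedness of $C_\varphi$ and the convergence $p_m\circ\varphi\to f\circ\varphi$ in $H^2_{\bf ball}$. Both arguments rest on the same two facts (the Cuntz-type relations and the vanishing of the constant terms); the paper's version is slightly cleaner in that the orthonormality of $\{\varphi_\alpha\}$ is reusable elsewhere (e.g.\ in Corollary \ref{contr}(iii) and Proposition \ref{comp-trace}(i)), whereas your equality-tracking version makes transparent exactly where the contraction case loses norm. Your reading of ``inner'' as the row-isometry condition $\widetilde\varphi_i^*\widetilde\varphi_j=\delta_{ij}I_{F^2(H_n)}$ is the one the paper uses.
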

 \begin{proof} Let $\widetilde\varphi:=[\widetilde \varphi_1,\ldots, \widetilde \varphi_n]$ be the boundary function of $\varphi$ with respect to the left creation opeartors.
  Note that due to the fact that $\varphi(0)=0$, we have  $\left< 1, \widetilde \varphi_\alpha 1\right>=0$ for any $\alpha\in \FF_n^+$ with $|\alpha|\geq 1$. On the other hand, since $[\widetilde \varphi_1,\ldots, \widetilde \varphi_n]$  is an isometry, we have $\widetilde \varphi_i^* \widetilde \varphi_j=\delta_{ij}I_{F^2(H_n)}$ for $i,j\in \{1,\ldots, n\}$.
Consequently,
\begin{equation*}
\begin{split}
\left< \varphi_\alpha, \varphi_\beta\right>_{H^2_{\bf ball}}&=
\left<\widetilde \varphi_\alpha 1, \widetilde \varphi_\beta 1\right>
=
\begin{cases}
\left<\widetilde \varphi_\gamma 1,  1\right>\quad &\text{ if } \alpha=\beta \gamma\\
1 \quad &\text{ if } \alpha=\beta \\
\left< 1, \widetilde \varphi_\gamma 1\right>\quad &\text{ if } \beta =\alpha \gamma
\end{cases}
\\
&=\begin{cases} 1 &\text{ if } \alpha=\beta\\
0&\text{ if } \alpha\neq\beta.
\end{cases}
\end{split}
\end{equation*}
This shows that $\{\varphi_\alpha\}_{\alpha\in \FF_n^+}$ is an orthonormal set in $H^2_{\bf ball}$.
 If $f=\sum_{k=0}^\infty \sum_{|\alpha|=k} c_\alpha  X_\alpha$ is in $H^2_{\bf ball}$, then
 setting $p_m(X_1,\ldots,
X_n):=\sum_{k=0}^m \sum_{|\alpha|=k}  c_\alpha X_\alpha$, we have
$p_m\to f$ in $H^2_{\bf ball}$, as $m\to\infty$. Note that
\begin{equation}
\label{isom}
\|p_m\circ \varphi\|_2^2=\left<\sum_{k=0}^m
\sum_{|\alpha|=k} c_\alpha  \varphi_\alpha, \sum_{k=0}^m
\sum_{|\beta|=k} c_\beta \varphi_\beta\right>= \sum_{k=0}^m
\sum_{|\alpha|=k} |c_\alpha|^2=\|p_m\|_2^2.
\end{equation}
Consequently, $\{p_m\circ\varphi\}$ is a Cauchy sequence sequence in
$H^2_{\bf ball}$ and  there is $g\in H^2_{\bf ball}$ such that
$p_m\circ \varphi\to g$ in  $H^2_{\bf ball}$. Hence,
  we deduce that
$$g(rS_1,\ldots, rS_n)1=\lim_{m\to\infty} (p_m\circ \varphi)(rS_1,\ldots, rS_n)1
=(f\circ \varphi)(rS_1,\ldots, rS_n)1,\qquad r\in[0,1).
$$
Since $f\circ \varphi$ and $g$ are free holomorphic functions, the
identity theorem for free holomorphic functions implies
 $f\circ \varphi=g$.
Therefore,  relation \eqref{isom} implies that $C_\varphi$ is an
isometry and the proof is complete.
\end{proof}

\bigskip

\section{Composition operators on the noncommutative Hardy
   space $H^2_{\bf ball}$}

This section contains the core material on the  boundedness of
compositions operators on the noncommutative Hardy space $H^2_{\bf
ball}$  and the estimates of  their norms. We also characterize the
similarity of composition operators on $H^2_{\bf ball}$ to
contractions.

Let $\theta$ be an analytic function on the open disc $\DD$. It is
well-known that  the map $\varphi:\DD\to \RR^+$ defined by
$\varphi(\lambda):=|\theta(\lambda)|^2$ is subharmonic. A  classical
result on harmonic majorants (see Section 2.6 in \cite{Du}) states
that $\theta$ is in the Hardy space $H^2(\DD)$ if and only if
$\varphi$ has a harmonic majorant. Moreover, the least harmonic
majorant of $\varphi$ is given by the Herglotz-Riesz (\cite{Her},
\cite{Ri}) formula
$$
h(\lambda)=\frac{1}{2\pi}\int_0^{2\pi}
\frac{e^{it}+\lambda}{e^{it}-\lambda}|\theta(e^{it})|^2 dt,\quad
\lambda\in \DD.
$$
In \cite{Po-pluri-maj}, we obtained free analogues of these results.
Since these results play an important role in our investigation we
shall recall them.

We say that a map $h:[B(\cH)^n]_1\to B(\cH)$ is a self-adjoint {\it free
pluriharmonic function} on $[B(\cH)^n]_1$ if $h=\Re f:=\frac{1}{2}(f^*+f)$
for some free holomorphic function $f$ on $[B(\cH)^n]_1$. An
arbitrary free pluriharmonic function is a linear combination of
self-adjoint free pluriharmonic functions.
  A  {\it
pluriharmonic   curve} in
 $ C^*(S_1,\ldots, S_n)$ is a  map $\varphi:[0,1)\to
 \overline{ \cA_n+
 \cA_n}^{\|\cdot\|}$
satisfying the Poisson mean value property, i.e.,
\begin{equation*}
  \varphi(r)= {\bf P}_{\frac{r}{t} S}[\varphi(t)]\quad \text{
for } \ 0\leq r<t<1,
\end{equation*}
where $S:=(S_1,\ldots, S_n)$ and  ${\bf P}_X[u]$ is the noncommutative Poisson transform of $u$
at $X$.
 According to
\cite{Po-pluriharmonic}, there exists a one-to-one correspondence
$u\mapsto \varphi$ between the set of all free  pluriharmonic
functions on the noncommutative ball $[B(\cH)^n]_1$, and the set of
all pluriharmonic curves $\varphi:[0,1)\to \overline{ \cA_n^*+
\cA_n}^{\|\cdot\|}$.
   Moreover, we have
$$u(X)={\bf P}_{\frac{1}{r} X}[\varphi(r)]\quad \text{
for } \   X\in [B(\cH)^n]_r \ \text{ and } \ r\in (0,1),
$$
and $\varphi(r)=u(rS_1,\ldots, rS_n)$ if $r\in [0,1)$. We say that a
map $\psi:[0,1)\to \overline{ \cA_n+
 \cA_n}^{\|\cdot\|}$  is self-adjoint  if  $\psi(r)=\psi(r)^*$ for  $r\in
 [0,1)$. We call $\psi$
  a {\it  sub-pluriharmonic} curve   provided that  for each $\gamma\in (0,1)$
 and each self-adjoint  pluriharmonic  curve
  $\varphi: [0,\gamma]\to  \overline{ \cA_n+
 \cA_n}^{\|\cdot\|}$,
  if
 $\psi(\gamma)\leq \varphi(\gamma)$,   then
 $
\psi(r)\leq \varphi(r)\ \text{ for any } \ r\in [0,\gamma].
 $
We proved  that a self-adjoint  map $g:[0,1)\to
\overline{ \cA_n^*+ \cA_n}^{\|\cdot\|}$   is a sub-pluriharmonic
curve in   $C^*(S_1,\ldots, S_n)$  if and only if
$$
g(r)\leq {\bf P}_{\frac{r}{\gamma}S}[g(\gamma)]\qquad \text{ for  }\
0\leq r<\gamma<1.
$$
 We obtained a characterization  for  the class of all
sub-pluriharmonic curves
  that admit free
pluriharmonic majorants, and   proved the existence of the least
pluriharmonic majorant. We mention that all these results can be written for sub-pluriharmonic curves in $C^*(R_1,\ldots, R_n)$, where $R_1,\ldots, R_n$ are the right creation operators on the full Fock space.

 In \cite{Po-pluri-maj}, we showed
that, for any
 free holomorphic function   $\Theta$ on the noncommutative ball $[B(\cH)^n]_1$, the mapping
 $$
 \varphi:[0,1)\to C^*(R_1,\ldots, R_n),\quad
 \varphi(r)=\Theta(rR_1,\ldots, rR_n)^*\Theta(rR_1,\ldots, rR_n),
 $$
is a sub-pluriharmonic curve in the Cuntz-Toeplitz algebra
  generated by the right creation operators $R_1,\ldots, R_n$. We proved that a free holomorphic
function $\Theta$   is in the noncommutative Hardy space $H^2_{\bf
ball}$ if and only if $\varphi$  has a pluriharmonic majorant. In
this case, the least pluriharmonic majorant $\psi$  for $\varphi$ is
given by $ \psi(r):=\Re W(rR_1, \ldots rR_n)$,
$r\in[0,1), $
  where $W$ is the free holomorphic function  having the
  Herglotz-Riesz
  type representation
  \begin{equation*}
  W(X_1,\ldots,
  X_n)=({\mu}_\theta\otimes \text{\rm id})\left[\left(I+\sum_{i=1}^n R_i^*\otimes
  X_i\right)\left(I-\sum_{i=1}^n R_i^*\otimes
  X_i\right)^{-1}\right]
  \end{equation*}
for $(X_1,\ldots, X_n)\in [B(\cH)^n]_1$, where
$\mu_\theta:\cR_n^*+\cR_n\to  \CC$ is a   positive linear map
uniquely determined
 by   the function $\Theta$.

Now, we need to recall from \cite{Po-automorphism} some basic facts
concerning the free holomorphic automorphisms of the noncommutative
ball $[B(\cH)^n]_1$. A map $\varphi:[B(\cH)^n]_{1}\to [B(\cH)^n]_{1}$ is
called free biholomorphic if $\varphi$  is  free homolorphic, one-to-one
and onto, and has  free holomorphic inverse. The automorphism group
of $[B(\cH)^n]_{1}$, denoted by $Aut([B(\cH)^n]_1)$, consists of all
free biholomorphic functions of $[B(\cH)^n]_{1}$. It is clear that
$Aut([B(\cH)^n]_1)$ is  a group with respect to the composition of
free holomorphic functions.
We used  the theory of noncommutative
characteristic functions for row contractions (\cite{Po-charact}) to
find all the involutive free holomorphic automorphisms of
$[B(\cH)^n]_1$, which turned out to be of the form
\begin{equation*}
 \Phi_\lambda(X_1,\ldots, X_n)=- \Theta_\lambda(X_1,\ldots, X_n), \qquad (X_1,\ldots,
 X_n)\in [B(\cH)^n]_1,
\end{equation*}
for some $\lambda=[\lambda_1,\ldots, \lambda_n]\in \BB_n$, where
$\Theta_\lambda$ is the characteristic function  of the row
contraction $\lambda$, acting as an operator from $\CC^n$  to $\CC$.
We recall that the characteristic function of the row contraction
$\lambda$ is the boundary
function (with respect to $R_1,\ldots, R_n$)
$$\widetilde{\Theta}_\lambda:=\text{SOT-}\lim_{r\to1}
{\Theta}_\lambda(rR_1,\ldots, rR_n)$$
 of the free holomorphic
function $\Theta_\lambda:[B(\cH)^n]_1\to [B(\cH)^n]_1$ given by
\begin{equation*}
  \Theta_\lambda(X_1,\ldots, X_n):=-{
\lambda}+\Delta_{ \lambda}\left(I_\cH-\sum_{i=1}^n \bar{{
\lambda}}_i X_i\right)^{-1} [X_1,\ldots, X_n] \Delta_{{\lambda}^*}
\end{equation*}
for $(X_1,\ldots, X_n)\in [B(\cH)^n]_1$, where
$\Delta_\lambda=(1-\|\lambda\|_2^2)^{1/2} I_\CC$ and
$\Delta_{\lambda^*}=(I_\cK-\lambda^*\lambda)^{1/2}$.  For
simplicity, we  used the notation $ {\lambda}:=[ {\lambda}_1
I_\cG,\ldots, {\lambda}_n I_\cG]$ for the row contraction acting
from $\cG^{(n)}$ to $\cG$, where $\cG$ is a Hilbert space.

In \cite{Po-automorphism}, we proved that if
  $\lambda:=(\lambda_1,\ldots, \lambda_n)\in
\BB_n\backslash \{0\}$ and  $\gamma:=\frac{1}{\|\lambda\|_2}$, then
$\Phi_\lambda:=-\Theta_\lambda$ is a free holomorphic function on
$[B(\cH)^n]_\gamma$ which has the following properties:
\begin{enumerate}
\item[(i)]
$\Phi_\lambda (0)=\lambda$ and $\Phi_\lambda(\lambda)=0$;
\item[(ii)] the identities
\begin{equation}\label{E1}
\begin{split}
I_{\cH}-\Phi_\lambda(X)\Phi_\lambda(Y)^*&= \Delta_{\lambda}(I- X
\lambda^*)^{-1}(I-  X   Y^*)(I-  \lambda  Y^*)^{-1}
\Delta_{\lambda},\\
I_{\cH\otimes \CC^n}-\Phi_\lambda(X)^*\Phi_\lambda(Y)&=
\Delta_{\lambda^*}(I-{X}^*\lambda)^{-1}(I-{X}^* Y)(I-{\lambda}^*
Y)^{-1} \Delta_{\lambda^*},
\end{split}
\end{equation}
hold  for all  $X$ and $Y$ in  $[B(\cH)^n]_\gamma$;

\item[(iii)] $\Phi_\lambda$ is an involution, i.e., $\Phi_\lambda(\Phi_\lambda(X))=X$
for any $X\in [B(\cH)^n]_\gamma$;
\item[(iv)] $\Phi_\lambda$ is a free holomorphic automorphism of the
noncommutative unit ball $[B(\cH)^n]_1$;
\item[(v)] $\Phi_\lambda$ is a homeomorphism of $[B(\cH)^n]_1^-$
onto $[B(\cH)^n]_1^-$.
\item[(vi)] ${\Phi}_\lambda$ is  inner,    i.e.,  the boundary function $\widetilde{\Phi}_\lambda$ is  an
isometry.
\end{enumerate}
Moreover, we determined all the free holomorphic automorphisms of
the noncommutative ball $[B(\cH)^n]_1$ by  showing  that if $\Phi\in
Aut([B(\cH)^n]_1)$ and $\lambda:=\Phi (0)$, then there is a unitary
operator $U$ on $\CC^n$ such that
$$
\Phi= \Phi_\lambda\circ \Phi_U,
$$
where $$  \Phi_U(X_1,\ldots X_n):=[X_1,\ldots, X_n]U , \qquad
(X_1,\ldots, X_n)\in [B(\cH)^n]_1.
$$

We have now all the ingredients  to prove the  key result of this section.

\begin{theorem}\label{comp2}
If $\varphi$ is a   free holomorphic  automorphism of  the  noncommutative ball
$[B(\cH)^n]_1$, then $C_\varphi f\in H^2_{\bf ball}$ for all $f\in H^2_{\bf ball}$, and
$$
\left(\frac{1-\|\varphi(0)\|}{1+\|\varphi(0)\|}\right)^{1/2}\|f\|\leq \|C_\varphi f\|\leq
\left(\frac{1+\|\varphi(0)\|}{1-\|\varphi(0)\|}\right)^{1/2}\|f\|
$$
for all $f\in H^2_{\bf ball}$. Moreover,  these inequalities are
best possible and
$$
\|C_\varphi\|=\left(\frac{1+\|\varphi(0)\|}{1-\|\varphi(0)\|}\right)^{1/2}.
$$
\end{theorem}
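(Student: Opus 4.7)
The plan is to reduce to the involutive case, use pluriharmonic majorants plus Herglotz--Riesz for the upper bound, the involution for the lower bound, and reproducing kernels for sharpness.

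\emph{Reduction.} Since $\varphi\in \mathrm{Aut}([B(\cH)^n]_1)$ with $\lambda:=\varphi(0)$, the structure theorem from \cite{Po-automorphism} recalled above gives $\varphi=\Phi_\lambda\circ\Phi_U$ for some unitary $U\in M_{n\times n}$. The map $\Phi_U$ has boundary function $[S_1,\ldots,S_n]U$, which is an isometry, so $\Phi_U$ is inner, and $\Phi_U(0)=0$. Theorem \ref{comp1-iso} therefore says $C_{\Phi_U}$ is an isometry on $H^2_{\bf ball}$. Using $C_\varphi=C_{\Phi_U}C_{\Phi_\lambda}$ and $\|\varphi(0)\|=\|\lambda\|$, it suffices to prove the theorem for $\varphi=\Phi_\lambda$.

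\emph{Upper bound via pluriharmonic majorants.} By the results from \cite{Po-pluri-maj} recalled above, $f\in H^2_{\bf ball}$ exactly when the sub-pluriharmonic curve $r\mapsto f(rR_1,\ldots,rR_n)^*f(rR_1,\ldots,rR_n)$ has a pluriharmonic majorant; writing $h_f$ for the least one, we have $h_f(0)=\|f\|_2^2$, and $h_f=\Re W_f$ on $[B(\cH)^n]_1$ with
$$
W_f(X)=(\mu_f\otimes\text{\rm id})\!\left[\Bigl(I+\sum_{i=1}^n R_i^*\otimes X_i\Bigr)\Bigl(I-\sum_{i=1}^n R_i^*\otimes X_i\Bigr)^{-1}\right],
$$
for a positive linear functional $\mu_f:\cR_n^*+\cR_n\to\CC$ with $\mu_f(I)=\|f\|_2^2$. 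Composing with the free holomorphic automorphism $\Phi_\lambda$ preserves pluriharmonicity, so $h_f\circ\Phi_\lambda$ is a pluriharmonic majorant of $|f\circ\Phi_\lambda|^2$; hence $f\circ\Phi_\lambda\in H^2_{\bf ball}$ and
$$
\|C_{\Phi_\lambda}f\|_2^{\,2}\le (h_f\circ\Phi_\lambda)(0)=h_f(\lambda).
$$
At the scalar point $\lambda\in\BB_n$ one has $\bigl\|\sum R_i^*\lambda_i\bigr\|=\|\lambda\|$, so a geometric-series estimate gives
$\bigl\|(I+\sum R_i^*\lambda_i)(I-\sum R_i^*\lambda_i)^{-1}\bigr\|\le \frac{1+\|\lambda\|}{1-\|\lambda\|}$. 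Together with positivity of $\mu_f$ and $\mu_f(I)=\|f\|_2^2$, this yields $0\le h_f(\lambda)\le \frac{1+\|\lambda\|}{1-\|\lambda\|}\|f\|_2^2$, which is the claimed upper bound.

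\emph{Lower bound.} Since $\Phi_\lambda$ is involutive, $C_{\Phi_\lambda}^{\,2}=I$. Applying the upper bound to $C_{\Phi_\lambda}f$ in place of $f$ gives
$$
\|f\|_2=\|C_{\Phi_\lambda}^{\,2}f\|_2\le \Bigl(\tfrac{1+\|\lambda\|}{1-\|\lambda\|}\Bigr)^{1/2}\|C_{\Phi_\lambda}f\|_2,
$$
which rearranges to the lower inequality.

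\emph{Sharpness and the norm formula.} The identification $H^2_{\bf ball}\simeq F^2(H_n)$ shows that evaluation at a scalar $\mu\in\BB_n$ is implemented by the reproducing kernel $k_\mu=\sum_\alpha\bar\mu_\alpha e_\alpha$ with $\|k_\mu\|_2^{\,2}=(1-\|\mu\|^2)^{-1}$, and a direct computation (or the adjoint formula of Section~4) gives $C_{\Phi_\lambda}^*k_\mu=k_{\Phi_\lambda(\mu)}$. The scalar specialization of the identity \eqref{E1} yields
$$
1-\|\Phi_\lambda(\mu)\|^2=\frac{(1-\|\lambda\|^2)(1-\|\mu\|^2)}{|1-\langle\mu,\lambda\rangle|^2}.
$$
Choosing $\mu=-t\lambda/\|\lambda\|$ and letting $t\to 1^-$ gives
$$
\frac{\|C_{\Phi_\lambda}^*k_\mu\|_2^{\,2}}{\|k_\mu\|_2^{\,2}}=\frac{1-\|\mu\|^2}{1-\|\Phi_\lambda(\mu)\|^2}=\frac{|1-\langle\mu,\lambda\rangle|^2}{1-\|\lambda\|^2}\longrightarrow \frac{1+\|\lambda\|}{1-\|\lambda\|},
$$
so $\|C_{\Phi_\lambda}\|\ge\bigl(\tfrac{1+\|\lambda\|}{1-\|\lambda\|}\bigr)^{1/2}$, matching the upper bound.

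The main obstacle is the step bounding $h_f(\lambda)$: one must use that $\mu_f$ is a \emph{positive} functional (so $h_f\ge 0$ and the scalar Poisson-type kernel is controlled by $\tfrac{1+\|\lambda\|}{1-\|\lambda\|}$ rather than only by the operator norm of $W_f(\lambda)$), and that $h_f(0)=\mu_f(I)=\|f\|_2^2$. All other ingredients are either already established in the preceding sections or amount to formal manipulations with \eqref{E1} and the involutivity of $\Phi_\lambda$.
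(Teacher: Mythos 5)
Your proposal is correct in outline and shares the paper's overall architecture (pluriharmonic majorants and the Herglotz--Riesz representation for the upper bound, the involution $\Phi_\lambda\circ\Phi_\lambda=\mathrm{id}$ for the lower bound, the kernel vectors $z_\mu$ together with \eqref{E1} for sharpness, and the factorization $\varphi=\Phi_\lambda\circ\Phi_U$ with $C_{\Phi_U}$ an isometry), but the mechanism you use for the upper bound is genuinely different. The paper never composes the majorant with the automorphism; instead it applies the noncommutative Poisson transform at the \emph{isometric} boundary tuple $[\widetilde\varphi_1,\ldots,\widetilde\varphi_n]$, where innerness makes the transform multiplicative on $p(rR)^*p(rR)$ (relation \eqref{P-tra}), and then evaluates at the vacuum and invokes the Harnack-type inequality for positive free pluriharmonic functions. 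Your route --- push the least majorant $h_f=\Re W_f$ forward by $\Phi_\lambda$ and evaluate at $0$ --- is cleaner in that it only uses that $\Phi_\lambda$ is a free holomorphic self-map, but it is correspondingly less general: the paper's argument applies verbatim to every inner self-map (and is reused in Theorem \ref{comp3}), whereas yours is tied to automorphisms. Your direct geometric-series estimate of the scalar Herglotz kernel is a legitimate substitute for the Harnack-type theorem.

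Two points need tightening. First, the sentence ``composing with $\Phi_\lambda$ preserves pluriharmonicity, \emph{so} $h_f\circ\Phi_\lambda$ is a pluriharmonic majorant of $|f\circ\Phi_\lambda|^2$'' hides the real work: the majorant property you start from is an inequality between \emph{curves}, $f(rR)^*f(rR)\le \Re W_f(rR)$, and to conclude $(f\circ\Phi_\lambda)(rR)^*(f\circ\Phi_\lambda)(rR)\le (h_f\circ\Phi_\lambda)(rR)$ you need the pointwise operator inequality $f(Y)^*f(Y)\le h_f(Y)$ at the points $Y=\Phi_\lambda(rR_1,\ldots,rR_n)$. That upgrade is true, but it requires applying the unital completely positive Poisson transform ${\bf P}_{Y/r}$ to the curve inequality and using the Schwarz inequality ${\bf P}_{Y/r}[a^*a]\ge {\bf P}_{Y/r}[a]^*{\bf P}_{Y/r}[a]$; you should say this, since it is the only place where the passage from $r\mapsto rR$ to general points of the ball is justified. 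Second, the theorem asserts that \emph{both} inequalities are best possible; you verify sharpness only of the upper one. The lower one follows by the same involution trick (take $g_k$ with $\|g_k\|=1$ and $\|C_{\Phi_\lambda}g_k\|\to\|C_{\Phi_\lambda}\|$ and set $f_k:=C_{\Phi_\lambda}g_k$), as in the paper, but it should be stated.
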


\begin{proof}
Let $\varphi:=(\varphi_1,\ldots, \varphi_n)$ be an inner
free holomorphic self-map of  the  noncommutative ball
$[B(\cH)^n]_1$. Then the boundary function with respect to the right
creation operators $R_1,\ldots, R_n$, i.e.,
$$\widetilde \varphi:=(\widetilde\varphi_1
,\ldots, \widetilde\varphi_n ),\quad \text{  where }\quad \widetilde
\varphi_i:=\text{\rm SOT-}\lim_{r\to 1}\varphi_i(rR_1,\ldots, rR_n),
$$
 is an isometry. Consequently,
$\widetilde\varphi_i ^*\widetilde\varphi_j =\delta_{ij}
I_{F^2(H_n)}$ for $i,j\in \{1,\ldots,n\}$. Recall that $R_1,\ldots,
R_n$ are isometries with orthogonal ranges, so $R_i^*R_j=
\delta_{ij} I_{F^2(H_n)}$ for $i,j\in \{1,\ldots,n\}$. Consequently,
we have
$$
R_\alpha^* R_\beta=\begin{cases} R_\gamma\quad &\text{ if } \beta=\alpha \gamma\\
I \quad &\text{ if } \alpha=\beta \\
R_\gamma^* \quad &\text{ if } \alpha=\beta\gamma
\end{cases}
\quad \text{ and } \quad
\widetilde\varphi_\alpha^* \widetilde\varphi_\beta=
\begin{cases} \widetilde\varphi_\gamma\quad &\text{ if } \beta=\alpha \gamma\\
I \quad &\text{ if } \alpha=\beta \\
\widetilde\varphi_\gamma^* \quad &\text{ if } \alpha=\beta\gamma.
\end{cases}
$$
 Fix a noncommutative
polynomial $p(X_1,\ldots, X_n):=\sum_{|\alpha|\leq m}a_\alpha
r^{|\alpha|} X_\alpha$. Note that, using the above-mentioned relations and
applying the noncommutative Poisson transform (with respect to $R_1,\ldots, R_n$) at
$[\widetilde\varphi_1 ,\ldots, \widetilde\varphi_n]$, we obtain
\begin{equation} \label{P-tra}
{\bf P}_{[\widetilde\varphi_1 ,\ldots,
\widetilde\varphi_n]}\left[p(rR_1,\ldots, rR_n)^*p(rR_1,\ldots,
rR_n)\right]=p(r\widetilde\varphi_1,\ldots, r\widetilde\varphi_n)^*
p(r\widetilde\varphi_1,\ldots, r\widetilde\varphi_n)
\end{equation}
for any $r\in[0,1)$. Since
 $ p \in H^2_{\bf ball}$, Theorem 2.3 from \cite{Po-pluri-maj} shows that    the map
$$
[0,1)\ni r\mapsto p(rR_1,\ldots, rR_n)^*p(rR_1,\ldots, rR_n)\in
C^*(R_1,\ldots, R_n)
$$
has a pluriharmonic majorant.   In this case, the least
pluriharmonic majorant   is given by
\begin{equation*}
\label{==} [0,1)\ni r\mapsto \Re W(rR_1, \ldots rR_n)\in
C^*(R_1,\ldots, R_n),
\end{equation*}
  where $W$ is the free holomorphic function on $[B(\cH)^n]_1$ having the
  Herglotz-Riesz
  type representation
  \begin{equation}\label{Herg}
  W(X_1,\ldots,
  X_n)=({\mu}_p\otimes \text{\rm id})\left[\left(I+\sum_{i=1}^n R_i^*\otimes
  X_i\right)\left(I-\sum_{i=1}^n R_i^*\otimes
  X_i\right)^{-1}\right]
  \end{equation}
for $(X_1,\ldots, X_n)\in [B(\cH)^n]_1$, where
$\mu_p:\cR_n^*+\cR_n\to  \CC$ is the completely positive linear
map uniquely determined by   the equation
\begin{equation}
\label{def-mu}  \mu_p(R_{\widetilde \alpha}^*) := \lim_{r\to
1}\left<p(rR_1,\ldots, rR_n)^*  S_{\widetilde \alpha}^*
p(rR_1,\ldots, rR_n) 1,   1\right>
\end{equation}
for $\alpha\in \FF_n^+$, where $\tilde\alpha$ is  the
reverse of $\alpha\in \FF_n^+$, i.e.,
  $\tilde \alpha= g_{i_k}\cdots g_{i_k}$ if
   $\alpha=g_{i_1}\cdots g_{i_k}\in\FF_n^+$.
 Therefore, we have
$$
p(rR_1,\ldots, rR_n)^*p(rR_1,\ldots, rR_n)\leq \Re
W(rR_1, \ldots, rR_n)
$$
for any $r\in [0,1)$. Hence,  using relation  \eqref{P-tra} and  the
fact that  the noncommutative Poisson transform is a completely
positive map, we deduce that
$$
p(r\widetilde\varphi_1,\ldots, r\widetilde\varphi_n)^*
p(r\widetilde\varphi_1,\ldots, r\widetilde\varphi_n) \leq \Re  W(r\widetilde\varphi_1, \ldots, r\widetilde\varphi_n)
$$
for any $r\in [0,1)$. The latter relation implies
\begin{equation*}
\begin{split}
\|p(r\widetilde\varphi_1,\ldots, r\widetilde\varphi_n)1\|^2 &\leq
\left<\text{\rm Re}\, W(r\widetilde\varphi_1, \ldots,
\widetilde\varphi_n) 1,1\right>=\Re W (r\varphi_1(0),\ldots,
\varphi_n(0)).
\end{split}
\end{equation*}
On the other hand, according to  the Harnak type theorem for
positive free pluriharmonic functions (see \cite{Po-hyperbolic}), we
have
$$
\text{\rm Re}\, W (r\varphi_1(0),\ldots, \varphi_n(0))\leq \Re W(0) \frac{1+r \|\varphi(0)\|}{1-r \|\varphi(0)\|}.
$$
Combining the latter two inequalities and taking $r\to 1$, we deduce
that
\begin{equation} \label{ine1}
\|p\circ \varphi\|_2^2= \|p(\widetilde\varphi_1,\ldots,
\widetilde\varphi_n)1\|^2\leq \Re W(0) \frac{1+
\|\varphi(0)\|}{1- \|\varphi(0)\|}.
\end{equation}
Using the Herglotz-Riesz
   representation \eqref{Herg}  and  relation \eqref{def-mu},
   we obtain
   $$
   W(0)=\mu_p (I) =\lim_{r\to 1}\|p(rR_1,\ldots,
   rR_n)1\|^2=\|p\|_2^2.
   $$
Hence, and using relation \eqref{ine1}, we have
\begin{equation} \label{ine2}
\|p\circ \varphi\|_2\leq \|p\|_2\,\left(\frac{1+ \|\varphi(0)\|}{1-
\|\varphi(0)\|}\right)^{1/2}
\end{equation}
for any noncommutative polynomial $p\in H^2_{\bf ball}$. Let $f(X_1,\ldots,
X_n)=\sum_{k=0}^\infty \sum_{|\alpha|=k}  c_\alpha X_\alpha$ be  a
free holomorphic function in $H^2_{\bf ball}$. Then $f\circ \varphi$
is a free holomorphic function on $[B(\cH)^n]_1$  and
\begin{equation}
\label{circ2} (f\circ \varphi)(rS_1,\ldots, rS_n)1=\sum_{k=0}^\infty
\sum_{|\alpha|=k}
 c_\alpha \varphi_\alpha (rS_1,\ldots, rS_n)1,
\end{equation}
where the convergence is in $F^2(H_n)$. Setting $p_m(X_1,\ldots,
X_n):=\sum_{k=0}^m \sum_{|\alpha|=k}  c_\alpha X_\alpha$, we have
$p_m\to f$ in $H^2_{\bf ball}$ as $m\to\infty$. Therefore,
$\{p_m\}$ is
 a Cauchy sequence in $H^2_{\bf ball}$. Due to relation \eqref{ine2}, we have
$$
\|p_m\circ\varphi- p_k\circ \varphi\|_2\leq \left(\frac{1+
\|\varphi(0)\|}{1- \|\varphi(0)\|}\right)^{1/2}\|p_m-p_k\|_2,\qquad
m,k\in \NN.
$$
Consequently, $\{p_m\circ\varphi\}$ is a Cauchy sequence
sequence in $H^2_{\bf ball}$ and  there is $g\in H^2_{\bf ball}$
such that $p_m\circ \varphi\to g$ in  $H^2_{\bf ball}$ as $m\to\infty$. Hence, and
using relation \eqref{circ2}, we deduce that
$$g(rS_1,\ldots, rS_n)1=\lim_{m\to\infty} (p_m\circ \varphi)(rS_1,\ldots, rS_n)1
=(f\circ \varphi)(rS_1,\ldots, rS_n)1,\qquad r\in[0,1).
$$
Since $f\circ \varphi$ and $g$ are free holomorphic functions, the
identity theorem for free holomorphic functions implies
 $f\circ \varphi=g$. Using that fact that  $p_m\circ \varphi\to f\circ \varphi$ in  $H^2_{\bf ball}$ and
 relation \eqref{ine2}, we obtain
 \begin{equation} \label{ine3}
\|f\circ \varphi\|_2\leq \left(\frac{1+ \|\varphi(0)\|}{1-
\|\varphi(0)\|}\right)^{1/2} \|f\|_2,\qquad f\in H^2_{\bf ball}.
\end{equation}
Since any free holomorphic automorphism of $[B(\cH)^n]_1$ is  inner,
i.e., its boundary function  with respect to $R_1,\ldots, R_n$ is an
isometry, the result above implies the right-hand inequality of the
theorem.

Now, we prove the left-hand inequality.  For each
$\mu:=(\mu_1,\ldots, \mu_n)\in \BB_n$, we define  the vector
$z_\mu:=\sum_{k=0}\sum_{|\alpha|=k} \overline{\mu}_\alpha e_\alpha$,
where $\mu_\alpha:=\mu_{i_1}\cdots \mu_{i_p}$ if
$\alpha=g_{i_1}\cdots g_{i_p}\in \FF_n^+$ and $i_1,\ldots, i_p\in
\{1,\ldots, n\}$, and $\mu_{g_0}=1$. Note that $z_\mu\in F^2(H_n)$
and $Z_\mu(X):=\sum_{k=0}\sum_{|\alpha|=k} \overline{\mu}_\alpha
X_\alpha$ is in $H^2_{\bf ball}$. Since $C_\varphi$ is a bounded
operator on $H^2_{\bf ball}$, we have
$$(C_\varphi^*Z_\mu)(X)=\sum_{k=0}\sum_{|\alpha|=k} b_\alpha X_\alpha, \qquad X\in [B(\cH)^n]_1,
 $$
 for some coefficients $b_\alpha\in \CC$ with $\sum_{\alpha\in \FF_n^+} |b_\alpha|^2<\infty$.
Since the monomials $\{X_\alpha\}_{\alpha\in \FF_n^+}$ form an
othonormal basis for $H^2_{\bf ball}$, for each $\alpha\in \FF_n^+$, we have
\begin{equation*}
\begin{split}
b_\alpha&=\left< C_\varphi^*Z_\mu, X_\alpha\right>=\left< Z_\mu, C_\varphi (X_\alpha)\right>\\
&=\left< z_\mu, \varphi_\alpha(S_1,\ldots, S_n)1\right>=\left<\varphi_\alpha(S_1,\ldots, S_n)^* z_\mu,  1\right>.
\end{split}
\end{equation*}
Since $S_i^*z_\mu=\overline{\mu}_i z_\mu$, one can see that
$\varphi_\alpha(S_1,\ldots, S_n)^* z_\mu=\overline{\varphi_\alpha(\mu)} z_\mu$.
Consequently, we deduce that $b_\alpha=\overline{\varphi_\alpha(\mu)}$, $\alpha\in \FF_n^+$, and
\begin{equation}\label{CZ}
C_\varphi^*Z_\mu=\sum_{k=0}\sum_{|\alpha|=k} \overline{\varphi_\alpha(\mu)}X_\alpha=Z_{\varphi(\mu)},\qquad  \mu:=(\mu_1,\ldots, \mu_n)\in \BB_n.
\end{equation}
A straightforward computation shows that
$$
\|C_\varphi^*Z_\mu\|=\|z_{\varphi(\mu)}\|
=\left(\frac{1}{1-\|\varphi(\mu)\|^2}\right)^{1/2}.
$$
Now, we assume that $\varphi=\Phi_\lambda\in Aut([B(\cH)^n]_1)$.
Then, using relation \eqref{E1},  we deduce that
\begin{equation*}
\begin{split}
\|C_{\Phi_\lambda} \| &=\|C_{\Phi_\lambda}^*\|\geq \frac{\|C_{\Phi_\lambda}^* Z_\mu\|}{\|Z_\mu\|}\\
&=\left(\frac{1-\|\mu\|^2}{1-\|\Phi_\lambda(\mu)\|^2}\right)^{1/2}
=\left(\frac{|1-\left<\mu,\lambda\right>|^2}{1-\|\lambda\|^2}\right)^{1/2}
\end{split}
\end{equation*}
for any $\mu\in \BB_n$.  Taking $\mu\to
-\frac{\overline{\lambda}}{\|\lambda\|}$ and using the fact that
$\Phi_\lambda(0)=\lambda$, we obtain
$$
\|C_{\Phi_\lambda} \|\geq
\left(\frac{1+\|\Phi_\lambda(0)\|}{1-\|\Phi_\lambda(0)\|}\right)^{1/2}.
$$
Combining this inequality with  relation \eqref{ine3}, we obtain
\begin{equation}
\label{CPH} \|C_{\Phi_\lambda}
\|=\left(\frac{1+\|\Phi_\lambda(0)\|}{1-\|\Phi_\lambda(0)\|}\right)^{1/2},
\end{equation}
which also shows that the right-hand inequality in the theorem is
sharp.

Now, we assume that $\varphi \in Aut([B(\cH)^n]_1)$ with $\varphi
(0)=\lambda$. Then, due to \cite{Po-automorphism}, we have $\varphi=
\Phi_\lambda\circ \Phi_U$, where $U\in B(\CC^n)$ is a unitary
operator.  Since $\Phi_U$ is inner and $\Phi_U(0)=0$, Theorem
\ref{comp1-iso} shows that $C_{\Phi_U}$ is an isometry.
Consequently, using relation \eqref{CPH}  and the fact that
$C_\varphi=C_{\Phi_U} C_{\Phi_\lambda}$, we deduce that
$$
\|C_\varphi\|=\left(\frac{1+\|\varphi(0)\|}{1-\|\varphi(0)\|}\right)^{1/2}.
$$

 Taking into account that $\Phi_\lambda\circ \Phi_\lambda=id$, we deduce that
\begin{equation*}
\begin{split}
\|f\|\leq \|C_{\Phi_\lambda}\|\|C_{\Phi_\lambda}f\|
\leq \left(\frac{1+\|\Phi_\lambda(0)\|}{1-\|\Phi_\lambda(0)\|}\right)^{1/2}
\|C_{\Phi_\lambda}f\|
\end{split}
\end{equation*}
for any $f\in H^2_{\bf ball}$. Now, we assume that $\varphi \in
Aut([B(\cH)^n]_1)$ with $\varphi (0)=\lambda$. As above, $\varphi=
\Phi_\lambda\circ \Phi_U$ and $C_\varphi=C_{\Phi_U}
C_{\Phi_\lambda}$. Since $C_{\Phi_U}$ is an isometry, the latter
inequality implies
$$
\|C_\varphi f\|=\|C_{\Phi_\lambda}
 C_{\Phi_U}f\|\geq
 \left(\frac{1-\|\varphi(0)\|}{1+\|\varphi(0)\|}\right)^{1/2}\|f\|,
$$
which shows that the left-hand inequality  of the theorem holds. To
prove that this inequality is sharp, let $g_k\in H^2_{\bf ball}$
with $\|g_k\|_2=1$ and $\|C_{\Phi_\lambda}\|=\lim_{k\to\infty}
\|C_{\Phi_\lambda} g_k\|$. Set $f_k:=C_{\Phi_\lambda} g_k$ and note
that  the inequality $
\left(\frac{1-\|\Phi_\lambda(0)\|}{1+\|\Phi_\lambda(0)\|}\right)^{1/2}
\|f_k\|\leq  \|C_{\Phi_\lambda}f_k\|$ is equivalent to $
\|C_{\Phi_\lambda} g_k\|\leq
\left(\frac{1+\|\Phi_\lambda(0)\|}{1-\|\Phi_\lambda(0)\|}\right)^{1/2},
$ which is sharp due to \eqref{CPH}, and proves our assertion. The
proof is complete.
\end{proof}

\begin{theorem}\label{comp3}
If $\varphi$ is an  inner  free holomorphic self-map of the
noncommutative ball $[B(\cH)^n]_1$, then  $C_\varphi f\in H^2_{\bf
ball}$ for all $f\in H^2_{\bf ball}$, and
$$
\left(\frac{1-\|\varphi(0)\|}{1+\|\varphi(0)\|}\right)^{1/2}\|f\|\leq
\|C_\varphi f\|\leq
\left(\frac{1+\|\varphi(0)\|}{1-\|\varphi(0)\|}\right)^{1/2}\|f\|
$$
for any $f\in H^2_{\bf ball}$. Moreover, these inequalities are best
possible and
$$
\|C_\varphi\|=\left(\frac{1+\|\varphi(0)\|}{1-\|\varphi(0)\|}\right)^{1/2}.
$$
\end{theorem}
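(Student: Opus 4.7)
The plan is to reduce Theorem \ref{comp3} to Theorem \ref{comp2} via the standard decomposition trick for inner maps. Set $\lambda:=\varphi(0)$; if $\lambda=0$ then Theorem \ref{comp1-iso} already gives that $C_\varphi$ is an isometry and all claims are trivial, so assume $\lambda\ne 0$. Define $\psi:=\Phi_\lambda\circ\varphi$. Since $\Phi_\lambda$ is an involutive automorphism of $[B(\cH)^n]_1$, we have $\varphi=\Phi_\lambda\circ\psi$ and hence $C_\varphi=C_\psi\circ C_{\Phi_\lambda}$; moreover $\psi(0)=\Phi_\lambda(\lambda)=0$. Notice that the upper bound in the theorem is in fact already proved: the Herglotz--Riesz / pluriharmonic-majorant argument in the first half of the proof of Theorem \ref{comp2} used only that $\widetilde\varphi$ is an isometry, not that $\varphi$ is an automorphism. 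So only the lower bound and the sharpness of both bounds require new work.

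The key new step is to show that $\psi$ is itself inner. Once that is in hand, Theorem \ref{comp1-iso} says $C_\psi$ is an isometry on $H^2_{\bf ball}$, so
\begin{equation*}
\|C_\varphi f\|_2 = \|C_\psi(C_{\Phi_\lambda}f)\|_2 = \|C_{\Phi_\lambda}f\|_2
\end{equation*}
for every $f\in H^2_{\bf ball}$, and applying Theorem \ref{comp2} to the automorphism $\Phi_\lambda$ (for which $\|\Phi_\lambda(0)\|=\|\lambda\|=\|\varphi(0)\|$) transfers the two-sided estimate, the norm formula, and the sharpness of both inequalities directly to $C_\varphi$.

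To prove $\psi$ inner, I would first identify $\widetilde\psi=\Phi_\lambda(\widetilde\varphi)$ at the boundary: $\Phi_\lambda=-\Theta_\lambda$ is built from the inverse $(I-\sum_j\bar\lambda_j Y_j)^{-1}$ and polynomial expressions in $Y_1,\ldots,Y_n$, and since $\|\sum_j\bar\lambda_j\widetilde\varphi_j\|\le\|\lambda\|<1$ this inverse is bounded, so uniform boundedness for $r$ near $1$ together with the SOT convergence $\varphi_j(rR_1,\ldots,rR_n)\to\widetilde\varphi_j$ lets the SOT limit pass through $\Phi_\lambda$. Then identity \eqref{E1} with $X=Y=\widetilde\varphi$ (extended to the closed row-ball by continuity, since $\|\lambda\|<1$) gives
\begin{equation*}
I-\widetilde\psi^*\widetilde\psi = \Delta_{\lambda^*}(I-\widetilde\varphi^*\lambda)^{-1}(I-\widetilde\varphi^*\widetilde\varphi)(I-\lambda^*\widetilde\varphi)^{-1}\Delta_{\lambda^*} = 0,
\end{equation*}
because $\widetilde\varphi^*\widetilde\varphi=I$ by the inner hypothesis on $\varphi$. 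The main obstacle I expect is precisely this technical step, namely carefully justifying $\widetilde\psi=\Phi_\lambda(\widetilde\varphi)$ and that \eqref{E1} remains valid at the boundary point $\widetilde\varphi$; once that is secured, the theorem reduces entirely to Theorems \ref{comp1-iso} and \ref{comp2}.
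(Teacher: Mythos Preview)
Your proposal follows essentially the same route as the paper: set $\psi=\Phi_\lambda\circ\varphi$ with $\lambda=\varphi(0)$, show $\psi$ is inner with $\psi(0)=0$, invoke Theorem~\ref{comp1-iso} to get $C_\psi$ isometric, deduce $\|C_\varphi f\|=\|C_{\Phi_\lambda}f\|$, and read off all the conclusions from Theorem~\ref{comp2}. The one difference is in how you establish that $\psi$ is inner: the paper simply cites the general fact (Theorem~1.2 of \cite{Po-holomorphic2}) that the composition of inner free holomorphic self-maps is inner, whereas you argue directly from identity~\eqref{E1}.

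Your direct argument is fine, and in fact your ``main obstacle'' is milder than you fear: identity~\eqref{E1} is stated for all $X,Y\in[B(\cH)^n]_\gamma$ with $\gamma=1/\|\lambda\|>1$, so it already applies at $X=Y=\widetilde\varphi$ (an operator tuple of norm $\le 1$) with no continuity extension needed. The identification $\widetilde\psi=\Phi_\lambda(\widetilde\varphi)$ also goes through as you sketch, since $\Phi_\lambda$ is built from products and the inverse $(I-\sum_j\bar\lambda_jY_j)^{-1}$, both SOT-continuous on bounded sets, and $\|\varphi(rR)\|\le 1$ uniformly in $r$. So your route and the paper's coincide except that you reprove, in this special case, a lemma the paper imports.
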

\begin{proof}
First,   we consider the case when $\varphi$ is an   inner  free
holomorphic self-map of the  noncommutative ball $[B(\cH)^n]_1$ with
$\varphi(0)=0$. Then Theorem \ref{comp1-iso} shows that  the
composition operator $C_\varphi$ is an isometry on $ H^2 _{\bf
ball}$ and, therefore, the  theorem holds.

Now, we consider the case when $\lambda:=\varphi(0)\neq 0$. Since
$\varphi$ is a free holomorphic self-map of the noncommutative ball
$[B(\cH)^n]_1$, we must have $\|\lambda\|_2<1$. Let $\Phi_\lambda$
be the corresponding involutive free holomorphic automorphism of
$[B(\cH)^n]_1$ and let $\Psi:=\Phi_\lambda \circ \varphi$. Since
$\Phi_\lambda$ is inner and the composition of inner free
holomorphic functions is inner (see  Theorem 1.2 from
\cite{Po-holomorphic2}), we deduce that $\Psi$ is also inner. Since
$\Psi(0)=0$, the first part of the proof implies
$$\|C_\Psi f\|=\|f\|,\qquad f\in H^2_{\bf ball}.
$$
Consequently, using Theorem \ref{comp2} and the fact that
$\Phi_\lambda\circ \Phi_\lambda=id$, we get
\begin{equation}\label{vaca}
\begin{split}
\|C_\varphi f\|&=\|C_\Psi C_{\Phi_\lambda}f\|=\|C_{\Phi_\lambda}f\|
\leq \left(\frac{1+\|\Phi_\lambda(0)\|}{1-\|\Phi_\lambda(0)\|}\right)^{1/2}\|f\|\\
&=\left(\frac{1+\|\varphi(0)\|}{1-\|\varphi(0)\|}\right)^{1/2}\|f\|
\end{split}
\end{equation}
for any $f\in H^2_{\bf ball}$. Similarly, one can show that
$$
\|C_\varphi f\|=\|C_{\Phi_\lambda}f\|\geq \left(\frac{1-\|\Phi_\lambda(0)\|}{1+\|\Phi_\lambda(0)\|}\right)^{1/2}\|f\|
=\left(\frac{1-\|\varphi(0)\|}{1+\|\varphi(0)\|}\right)^{1/2}\|f\|
$$
for any $f\in H^2_{\bf ball}$. Therefore, the  inequalities in the
theorem hold. Now, we show that they are sharp. According to Theorem
\ref{comp2}, we can find $f_k\in H^2_{\bf ball}$ with $\|f_k\|_2=1$
such that
$$
\lim_{k\to \infty} \|C_{\Phi_\lambda} f_k\|=\left(\frac{1+\|\Phi_\lambda(0)\|}{1-\|\Phi_\lambda(0)\|}\right)^{1/2}.
$$
Hence,  using relation \eqref{vaca} and the fact that
$\Phi_\lambda(0)=\varphi(0)$, we obtain
$$
\lim_{k\to \infty}\|C_\varphi f_k\|=
\lim_{k\to \infty} \|C_{\Phi_\lambda} f_k\|=
\left(\frac{1-\|\varphi(0)\|}{1+\|\varphi(0)\|}\right)^{1/2},
$$
which shows that the right-hand inequality in the theorem is sharp.
Similarly, one can  show that the left-hand inequality is also
sharp. The proof is complete.
\end{proof}

Now, we can prove the main result of this section.

\begin{theorem}\label{comp4}
If $\varphi$ is a    free holomorphic self-map of  the ball
$[B(\cH)^n]_1$, then the composition operator  $C_\varphi f:=f\circ
\varphi$ is a bounded  on $ H^2_{\bf ball}$. Moreover,
$$
\frac{1}{(1-\|\varphi(0)\|^2)^{1/2}}\leq \sup_{\lambda\in
\BB_n}\left(\frac{1-\|\lambda\|^2}{1-\|\varphi(\lambda)\|^2}\right)^{1/2}\leq
\|C_\varphi\|\leq
\left(\frac{1+\|\varphi(0)\|}{1-\|\varphi(0)\|}\right)^{1/2}.
$$
\end{theorem}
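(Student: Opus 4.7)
\smallskip

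\textbf{Plan of proof.} The idea is to factor a general free holomorphic self-map through an involutive automorphism of the ball, so that Theorem \ref{comp1} (Littlewood subordination, which handles the case $\varphi(0)=0$) can be combined with Theorem \ref{comp3} (which handles the inner automorphism $\Phi_\lambda$). Set $\lambda:=\varphi(0)\in\BB_n$ and define
$$
\psi:=\Phi_\lambda\circ\varphi.
$$
Since $\Phi_\lambda$ is a free holomorphic automorphism of $[B(\cH)^n]_1$ and $\varphi$ is a free holomorphic self-map of $[B(\cH)^n]_1$, the composition $\psi$ is a free holomorphic self-map of $[B(\cH)^n]_1$, and $\psi(0)=\Phi_\lambda(\lambda)=0$. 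Using the involutive property $\Phi_\lambda\circ\Phi_\lambda=\mathrm{id}$, one obtains $\varphi=\Phi_\lambda\circ\psi$ and therefore, at the level of composition operators,
$$
C_\varphi=C_\psi\,C_{\Phi_\lambda}.
$$

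\textbf{Upper bound.} By Theorem \ref{comp1} applied to $\psi$, we have $f\circ\psi\in H^2_{\bf ball}$ and $\|C_\psi f\|_2\leq\|f\|_2$ for all $f\in H^2_{\bf ball}$, so $C_\psi$ is a contraction. By Theorem \ref{comp3} (equivalently, Theorem \ref{comp2} applied to the involutive automorphism $\Phi_\lambda$), the operator $C_{\Phi_\lambda}$ is bounded with norm $\left(\frac{1+\|\lambda\|}{1-\|\lambda\|}\right)^{1/2}$. Multiplying these estimates yields the boundedness of $C_\varphi$ and
$$
\|C_\varphi\|\leq\|C_\psi\|\,\|C_{\Phi_\lambda}\|\leq\left(\frac{1+\|\varphi(0)\|}{1-\|\varphi(0)\|}\right)^{1/2}.
$$

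\textbf{Lower bound.} I would exploit the reproducing-kernel-type vectors $Z_\mu$ introduced in the proof of Theorem \ref{comp2}: for each $\mu\in\BB_n$ the vector $Z_\mu\in H^2_{\bf ball}$ satisfies $\|Z_\mu\|^2=(1-\|\mu\|^2)^{-1}$, and the identity $C_\varphi^*Z_\mu=Z_{\varphi(\mu)}$, established there by the eigen-equation $S_i^*z_\mu=\overline{\mu}_i z_\mu$, applies to any free holomorphic self-map of the ball (the argument does not use that $\varphi$ is an automorphism). Consequently
$$
\|C_\varphi\|=\|C_\varphi^*\|\geq\frac{\|C_\varphi^*Z_\mu\|}{\|Z_\mu\|}=\left(\frac{1-\|\mu\|^2}{1-\|\varphi(\mu)\|^2}\right)^{1/2}
$$
for every $\mu\in\BB_n$. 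Taking the supremum over $\mu$ gives the middle inequality, and specializing to $\mu=0$ (so that $\varphi(\mu)=\varphi(0)$ and $1-\|\mu\|^2=1$) gives the leftmost inequality.

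\textbf{Where the real work lies.} The statement is essentially a corollary once the factorization $\varphi=\Phi_\lambda\circ\psi$ is in hand, because Theorems \ref{comp1} and \ref{comp3} do the heavy lifting. The only point that deserves care is verifying that $\psi=\Phi_\lambda\circ\varphi$ is a \emph{free holomorphic} self-map of $[B(\cH)^n]_1$ whose image stays strictly inside the open ball, so that both Theorem \ref{comp1} and the composition rule for free holomorphic functions apply; this follows from Theorem \ref{strict}(i) together with the fact that $\Phi_\lambda$ maps $[B(\cH)^n]_1$ biholomorphically onto itself. No genuine obstacle is expected beyond this bookkeeping.
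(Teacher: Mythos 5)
Your proposal is correct and follows essentially the same route as the paper: factor $\varphi=\Phi_\lambda\circ\psi$ with $\psi=\Phi_\lambda\circ\varphi$, get the upper bound from $\|C_\varphi\|\leq\|C_\psi\|\,\|C_{\Phi_\lambda}\|$ via Theorem \ref{comp1} and Theorem \ref{comp2}, and get the lower bounds from $C_\varphi^*Z_\mu=Z_{\varphi(\mu)}$ exactly as in the proof of Theorem \ref{comp2}. Your closing observation that the kernel-vector identity does not require $\varphi$ to be an automorphism is precisely the point the paper relies on when it says "as in the proof of Theorem \ref{comp2}."
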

 \begin{proof}
 If $\varphi(0)=0$, then the right-hand inequality follows from
 the noncommutative Littlewood subordination  principle of  Theorem \ref{comp1}.
Now, we consider the case when $\lambda:=\varphi(0)\neq 0$.
 Since  $\|\lambda\|_2<1$, let  $\Phi_\lambda$ be the corresponding  involutive free holomorphic automorphism of $[B(\cH)^n]_1$ and let $\Psi:=\Phi_\lambda \circ \varphi$.
Since $\Psi$ is   a free holomorphic self-map of  the ball
$[B(\cH)^n]_1$ with $\Psi(0)=0$, Theorem  \ref{comp1} implies
$\|C_\Psi\|\leq 1$. Using Theorem \ref{comp2} and the fact that
$\Phi_\lambda\circ \Phi_\lambda=id$, we deduce that
\begin{equation*}
\begin{split}
\|C_\varphi \|&=\|C_\Psi C_{\Phi_\lambda}\|\leq \|C_\Psi\|\|C_{\Phi_\lambda}\|\\
&\leq \left(\frac{1+\|\varphi(0)\|}{1-\|\varphi(0)\|}\right)^{1/2}.
\end{split}
\end{equation*}
 On the other hand, as in the proof of Theorem \ref{comp2}, we have
 \begin{equation*}
\begin{split}
\|C_{\varphi} \| &=\|C_{\varphi}^*\|\geq \frac{\|C_{\varphi}^*
Z_\mu\|}{\|Z_\mu\|}
=\left(\frac{1-\|\mu\|^2}{1-\|{\varphi}(\mu)\|^2}\right)^{1/2}
\end{split}
\end{equation*}
for any $\mu\in \BB_n$.  Hence, we deduce the left-hand inequality.
The proof is complete.
 \end{proof}

 Under the identification of the noncommutative Hardy  space  $H^2_{\bf ball}$ with the full Fock space  $F^2(H_n)$,
via the unitary operator $\cU:H^2_{\bf ball}\to F^2(H_n)$ defined by
$$
H^2_{\bf ball}\ni F\mapsto f:=\lim_{r\to 1} F(rS_1,\ldots,rS_n)1\in
F^2(H_n),
$$
the composition operator $C_\varphi:H^2_{\bf ball}\to H^2_{\bf
ball}$ associated with $\varphi$,  a free holomorphic self-map  of  $[B(\cH)^n]_1$,
  can be identified with the composition operator
   $C_{ \widetilde\varphi}:F^2(H_n)\to F^2(H_n)$
    defined by
 \begin{equation}
 \label{box}
 C_{ \widetilde \varphi}\left(\sum_{k=0}^\infty \sum_{|\alpha|=k}a_\alpha
 e_\alpha\right):=\lim_{r\to 1}
 \sum_{k=0}^\infty \sum_{|\alpha|=k} a_\alpha
 \varphi_\alpha(rS_1,\ldots, rS_n)1
 \end{equation}
for any $\sum_{k=0}^\infty \sum_{|\alpha|=k}a_\alpha
 e_\alpha\in F^2(H_n)$. Indeed, note that $C_{\widetilde\varphi}=\cU C_\varphi \cU^{-1}$.

A consequence of  Theorem \ref{comp4} is the following result.
\begin{corollary} \label{comp5} If $\varphi$ is a  free holomorphic self-map of  the
ball $[B(\cH)^n]_1$, then the composition operator
$C_{\widetilde\varphi}:F^2(H_n)\to F^2(H_n)$ satisfies the equation
$$
 C_{  \widetilde\varphi}\left(\sum_{k=0}^\infty \sum_{|\alpha|=k}a_\alpha e_\alpha\right)=
 \sum_{k=0}^\infty \sum_{|\alpha|=k} a_\alpha (\widetilde\varphi_\alpha
 1),
 $$
where  the convergence of the series is in $F^2(H_n)$ and
$\widetilde \varphi:=\text{\rm SOT-}\lim_{r\to 1}
 \varphi(rS_1,\ldots,rS_n)$ is the boundary function  of $\varphi$ with respect to
 the left creation operators $S_1,\ldots, S_n$.
 \end{corollary}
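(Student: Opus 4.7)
The plan is to prove this corollary by reducing to the polynomial case and then extending by density, relying on the boundedness of $C_\varphi$ established in Theorem \ref{comp4}.

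First, I would verify the identity on polynomials. Given $p = \sum_{|\alpha|\leq m} a_\alpha e_\alpha \in F^2(H_n)$, the definition in \eqref{box} gives
\[
C_{\widetilde\varphi}\, p = \lim_{r\to 1}\sum_{|\alpha|\leq m} a_\alpha\, \varphi_\alpha(rS_1,\ldots, rS_n)\,1.
\]
Since this is a finite sum, I may interchange the limit with the summation. For each fixed $\alpha = g_{i_1}\cdots g_{i_k}$, the function $\varphi_\alpha(rS_1,\ldots,rS_n) = \varphi_{i_1}(rS_1,\ldots,rS_n)\cdots \varphi_{i_k}(rS_1,\ldots,rS_n)$, and since SOT-limits of uniformly bounded operators behave well under finite products, we have $\text{SOT-}\lim_{r\to 1}\varphi_\alpha(rS_1,\ldots,rS_n) = \widetilde\varphi_\alpha$. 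Applying this to the vacuum vector $1\in F^2(H_n)$ yields
\[
C_{\widetilde\varphi}\, p = \sum_{|\alpha|\leq m} a_\alpha\, \widetilde\varphi_\alpha 1,
\]
so the corollary holds on the dense subspace of polynomials.

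Next, I would extend by continuity. Let $f=\sum_{k=0}^\infty\sum_{|\alpha|=k} a_\alpha e_\alpha\in F^2(H_n)$ and let $p_m := \sum_{|\alpha|\leq m} a_\alpha e_\alpha$, so that $p_m\to f$ in $F^2(H_n)$. By Theorem \ref{comp4}, $C_\varphi$ is bounded on $H^2_{\bf ball}$, hence $C_{\widetilde\varphi} = \cU C_\varphi \cU^{-1}$ is a bounded operator on $F^2(H_n)$. Therefore $C_{\widetilde\varphi}\, p_m \to C_{\widetilde\varphi}\, f$ in $F^2(H_n)$. Combined with the polynomial case, this gives
\[
C_{\widetilde\varphi}\, f = \lim_{m\to\infty} \sum_{|\alpha|\leq m} a_\alpha\, \widetilde\varphi_\alpha 1 = \sum_{k=0}^\infty \sum_{|\alpha|=k} a_\alpha (\widetilde\varphi_\alpha 1),
\]
with convergence in the norm of $F^2(H_n)$, which is exactly the assertion.

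There is no real obstacle here: the entire content of the statement already sits in the boundedness of $C_{\widetilde\varphi}$ (supplied by Theorem \ref{comp4}) together with the consistency, on polynomials, between the definition in \eqref{box} and the boundary-function expression. The only mild subtlety worth pausing on is the identification $\text{SOT-}\lim_{r\to 1}\varphi_\alpha(rS_1,\ldots,rS_n) = \widetilde\varphi_\alpha$ for multi-indices $\alpha$ of length $\geq 2$; this follows from the definition of $\widetilde\varphi_i$ as an SOT-limit for $i=1,\ldots,n$ together with uniform norm bounds on $\varphi_i(rS_1,\ldots,rS_n)$ for $r\in[0,1)$ (a consequence of Theorem \ref{strict}(iii) applied to each coordinate), which permits taking SOT-limits through finite products.
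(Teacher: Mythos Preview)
Your proof is correct and follows essentially the same approach as the paper: both use Theorem~\ref{comp4} to obtain boundedness of $C_{\widetilde\varphi}$, verify the formula on polynomials via the SOT convergence $\varphi_\alpha(rS_1,\ldots,rS_n)\to\widetilde\varphi_\alpha$, and then pass to general $f$ by density. The paper organizes the extension step slightly differently (it writes out the Cauchy estimate for the partial sums $\sum_{p\le|\alpha|\le m}a_\alpha\widetilde\varphi_\alpha 1$ explicitly and then takes the $r\to1$ limit directly), but the content is the same; one minor remark is that the uniform bound on $\varphi_i(rS_1,\ldots,rS_n)$ follows simply from $\|\varphi\|_\infty\le 1$ rather than from Theorem~\ref{strict}(iii).
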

 \begin{proof}
 Let $\widetilde\varphi:=(\widetilde\varphi_1,\ldots, \widetilde\varphi_n)$ be the boundary of $\widetilde\varphi$ and
 let $f=\sum_{k=0}^\infty \sum_{|\alpha|=k} a_\alpha  X_\alpha$ be in $H^2_{\bf
 ball}$.
 Due to Theorem \ref{comp4} and the identification of $H^2_{\bf
 ball}$ with $F^2(H_n)$, we have
 \begin{equation}
 \label{til}
 \left\|\sum_{p\leq |\alpha|\leq m} a_\alpha \widetilde
 \varphi_\alpha
 1\right\|\leq
 \left(\frac{1+\|\varphi(0)\|}{1-\|\varphi(0)\|}\right)^{1/2}
 \left(\sum_{p\leq |\alpha|\leq m} |a_\alpha|^2\right)^{1/2}
 \end{equation}
for any $p,m\in \NN$, $p\leq m$. Consequently, since $f\in H^2_{\bf
ball}$, the sequence
 $\left\{\sum_{k=0}^m \sum_{|\alpha|=k}a_\alpha
\widetilde \varphi_\alpha 1\right\}_{m=1}^\infty$
 is Cauchy in
$F^2(H_n)$ and therefore
 convergent to an element in $F^2(H_n)$. Hence, and using relation
 \eqref{til}, we deduce that
 \begin{equation*}
\left\|\sum_{k=0}^\infty \sum_{|\alpha|=k}a_\alpha \widetilde
 \varphi_\alpha
 1\right\|\leq
 \left(\frac{1+\|\varphi(0)\|}{1-\|\varphi(0)\|}\right)^{1/2}\|f\|.
\end{equation*}
Similarly, one can show that $\sum_{k=0}^\infty \sum_{|\alpha|=k}
a_\alpha
 \varphi_\alpha(rS_1,\ldots, rS_n)1$ is in $F^2(H_n)$  and
\begin{equation*}
 \left\|\sum_{k=0}^\infty \sum_{|\alpha|=k}a_\alpha
\varphi_\alpha(rS_1,\ldots, rS_n)1\right\|\leq
 \left(\frac{1+\|\varphi(0)\|}{1-\|\varphi(0)\|}\right)^{1/2}\|f\|
\end{equation*}
 for each $r\in [0,1)$.
Consequently, taking into account   that $\widetilde
\varphi:=\text{\rm SOT-}\lim_{r\to 1}
 \varphi(rS_1,\ldots,rS_n)$, a simple approximation argument shows
 that
 $$
 \lim_{r\to 1}\sum_{k=0}^\infty \sum_{|\alpha|=k}a_\alpha
\varphi_\alpha(rS_1,\ldots, rS_n)1=\sum_{k=0}^\infty
\sum_{|\alpha|=k}a_\alpha \widetilde
 \varphi_\alpha
 1
 $$
 in $F^2(H_n)$, which  together with relation \eqref{box} completes the proof.
\end{proof}

 In this paper,  we will use  either one of the representations $C_\varphi$ or $C_{\widetilde\varphi}$
 for the composition operator with symbol $\varphi$.

\begin{corollary} \label{contr} Let $\varphi=(\varphi_1,\ldots, \varphi_n)$ be a   free holomorphic self-map of  the  noncommutative ball
$[B(\cH)^n]_1$ and let $C_\varphi$ be the composition operator on $H^2_{\bf ball}$.  Then  the following statements hold.
\begin{enumerate}
\item[(i)] $\|C_\varphi\|\geq 1$.
\item[(ii)] $C_\varphi$ is a contraction  if and only if $\varphi(0)=0$.
\item[(iii)]
$C_\varphi$ is an isometry if and only if
$\{\varphi_\alpha\}_{\alpha\in \FF_n}$ is an orthonormal   set in
$H^2_{\bf ball}$.
\end{enumerate}
\end{corollary}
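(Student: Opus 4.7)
The plan is to verify the three items sequentially, each reducing to a quick application of results already established.

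For item (i), I would observe that the constant function $\mathbf{1}\in H^2_{\bf ball}$, defined by $\mathbf{1}(X_1,\ldots,X_n):=I$, satisfies $C_\varphi\mathbf{1}=\mathbf{1}\circ\varphi=\mathbf{1}$. Since $\|\mathbf{1}\|_2=1$, this gives $\|C_\varphi\|\geq \|C_\varphi\mathbf{1}\|_2/\|\mathbf{1}\|_2=1$.

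For item (ii), the forward implication is the content of the noncommutative Littlewood subordination principle, Theorem \ref{comp1}: if $\varphi(0)=0$ then $\|C_\varphi f\|_2\leq \|f\|_2$ for every $f\in H^2_{\bf ball}$, i.e.\ $C_\varphi$ is a contraction. For the converse, I invoke the lower estimate from Theorem \ref{comp4}, which gives
\[
\frac{1}{(1-\|\varphi(0)\|^2)^{1/2}}\leq \|C_\varphi\|.
\]
If $C_\varphi$ is a contraction, then $\|C_\varphi\|\leq 1$, so $(1-\|\varphi(0)\|^2)^{1/2}\geq 1$, forcing $\varphi(0)=0$.

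For item (iii), the key observation is that under the identification $H^2_{\bf ball}\cong F^2(H_n)$, the monomials $\{X_\alpha\}_{\alpha\in\FF_n^+}$ form an orthonormal basis, and $C_\varphi X_\alpha=\varphi_\alpha$ (with the convention $\varphi_{g_0}=1$). A bounded linear operator on a Hilbert space is an isometry if and only if it sends some orthonormal basis to an orthonormal set, which is the equivalence we need. Explicitly, for $f=\sum_\alpha a_\alpha X_\alpha\in H^2_{\bf ball}$, by boundedness of $C_\varphi$ (Theorem \ref{comp4}) and linearity, $C_\varphi f=\sum_\alpha a_\alpha \varphi_\alpha$ in $H^2_{\bf ball}$; orthonormality of $\{\varphi_\alpha\}$ then yields $\|C_\varphi f\|_2^2=\sum_\alpha|a_\alpha|^2=\|f\|_2^2$, while the reverse implication follows immediately from $\langle \varphi_\alpha,\varphi_\beta\rangle=\langle C_\varphi X_\alpha,C_\varphi X_\beta\rangle$.

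There is no real obstacle here; the proposition packages consequences of the preceding machinery, with Theorem \ref{comp4} doing the heavy lifting for (ii) and the standard orthonormal-basis characterization of isometries doing the work for (iii). The only technical point to be careful about is the legitimacy of passing limits through $C_\varphi$ in (iii), which is guaranteed by the already-established boundedness of $C_\varphi$.
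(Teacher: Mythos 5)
Your proposal is correct and follows essentially the same route as the paper: $C_\varphi \mathbf{1}=\mathbf{1}$ for (i), the lower bound of Theorem \ref{comp4} (together with the Littlewood subordination principle / the upper bound) for (ii), and the computation $\langle\varphi_\alpha,\varphi_\beta\rangle=\langle C_\varphi X_\alpha,C_\varphi X_\beta\rangle$ combined with Parseval for (iii). The only cosmetic difference is that you cite Theorem \ref{comp1} for the forward direction of (ii) where the paper reuses Theorem \ref{comp4}; these are interchangeable here.
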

\begin{proof} Since $C_\varphi 1=1$, we have $\|C_\varphi\|\geq 1$.
To prove part (ii), note that if $\|C_\varphi\|=1$, then according
to Theorem \ref{comp4}, we have
$$
\frac{1}{(1-\|\varphi(0)\|^2)^{1/2}} \leq \|C_\varphi\|=1,
$$
which implies $\varphi(0)=0$. Conversely, if $\varphi(0)=0$, the
same theorem implies $\|C_\varphi\|=1$. Now, assume that $C_\varphi$
is an isometry. Then
$$
\delta_{\alpha,\beta}=\left< C_\varphi (X_\alpha),C_\varphi( X_\beta)
\right>=\left<\varphi_\alpha, \varphi_\beta\right>,\qquad \alpha,
\beta\in \FF_n^+.
$$
 Conversely, assume that  $\{\varphi_\alpha\}_{\alpha\in \FF_n}$ is an orthonormal   set in
$H^2_{\bf ball}$. Then, for any $f=\sum_{k=0}^\infty\sum_{|\alpha|=k }
a_\alpha X_\alpha$ in the Hardy space $H^2_{\bf ball}$, we have
$$
\|C_\varphi f\|^2=\|\sum_{k=0}^\infty\sum_{|\alpha|=k } a_\alpha
\varphi_\alpha\|^2=\sum_{k=0}^\infty\sum_{|\alpha|=k }
|a_\alpha|^2=\|f\|^2.
$$
The proof is complete.
\end{proof}

Halmos' famous similarity problem \cite{H2}
   asked  whether any polynomially bounded operator is similar to a contraction.
   This long standing problem was answered  by Pisier
 \cite{Pi} in  a remarkable paper where  he shows that there are  polynomially
  bounded operator which are  not similar to  contractions.
  In what follows we show that, for compositions operators on $H^2_{\bf ball}$, similarity to contractions is equivalent  polynomial boundedness.

\begin{theorem}
\label{similarity}
Let $\varphi$ be a   free holomorphic self-map of  the  noncommutative ball
$[B(\cH)^n]_1$ and let $C_\varphi$ be the composition operator on $H^2_{\bf ball}$. Then the  following statements are equivalent:
\begin{enumerate}
\item[(i)] $C_\varphi$ is similar to a contraction;
\item[(ii)]  $C_\varphi$ is  polynomially bounded;
\item[(iii)] $C_\varphi$ is power bounded;
\item[(iv)] there is $\xi\in \BB_n$ such that $\varphi(\xi)=\xi$.
\end{enumerate}

\end{theorem}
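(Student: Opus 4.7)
The plan is to establish the cycle $(i) \Rightarrow (ii) \Rightarrow (iii) \Rightarrow (iv) \Rightarrow (i)$, with the substance concentrated in the last two implications. The first two are standard: if $C_\varphi = S^{-1} T S$ with $T$ a contraction on some Hilbert space, then $\|p(C_\varphi)\| \leq \|S\|\|S^{-1}\|\|p(T)\| \leq \|S\|\|S^{-1}\| \|p\|_\infty$ for every polynomial $p$ by the von Neumann inequality, which is (ii); specialising to $p(z) = z^k$ yields (iii).

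For $(iv) \Rightarrow (i)$, assume $\varphi(\xi) = \xi$ for some $\xi \in \BB_n$. Let $\Phi_\xi$ be the involutive free holomorphic automorphism of $[B(\cH)^n]_1$ satisfying $\Phi_\xi(0) = \xi$ and $\Phi_\xi(\xi) = 0$, and set $\psi := \Phi_\xi \circ \varphi \circ \Phi_\xi$. Then $\psi$ is a free holomorphic self-map of $[B(\cH)^n]_1$ with $\psi(0) = \Phi_\xi(\varphi(\xi)) = \Phi_\xi(\xi) = 0$, so the noncommutative Littlewood subordination theorem (Theorem \ref{comp1}) yields $\|C_\psi\| \leq 1$. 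The composition law $C_f C_g = C_{g \circ f}$ together with the involutive identity $\Phi_\xi \circ \Phi_\xi = \text{id}$ gives $C_{\Phi_\xi}^2 = I$ and $C_{\Phi_\xi} C_\psi C_{\Phi_\xi} = C_{\Phi_\xi \circ \psi \circ \Phi_\xi} = C_\varphi$, exhibiting $C_\varphi$ as similar to the contraction $C_\psi$.

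The main obstacle is $(iii) \Rightarrow (iv)$, which I would prove by contrapositive. The essential input is the identity $C_\varphi^k = C_{\varphi^{[k]}}$ (immediate from $(f \circ \varphi)\circ \varphi = f \circ \varphi^{[2]}$ and induction), which combined with the lower bound of Theorem \ref{comp4} gives
$$
\|C_\varphi^k\| = \|C_{\varphi^{[k]}}\| \;\geq\; \bigl(1 - \|\varphi^{[k]}(0)\|^2\bigr)^{-1/2}.
$$
Suppose $\varphi$ has no fixed point in $\BB_n$. Then the scalar representation $\varphi|_{\BB_n}:\BB_n \to \BB_n$ is a holomorphic self-map of the unit ball of $\CC^n$ without fixed point, and MacCluer's Denjoy--Wolff theorem for $\BB_n$ (\cite{MC1}) produces a point $\zeta \in \partial \BB_n$ with $\varphi^{[k]}(0) \to \zeta$, whence $\|\varphi^{[k]}(0)\| \to 1$ and the displayed lower bound forces $\|C_\varphi^k\| \to \infty$, contradicting (iii). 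The delicate step is the appeal to Denjoy--Wolff convergence at scalar points; note that although the paper develops a genuinely noncommutative Wolff theorem in Section 3, for this implication the classical ball version applied to the scalar representation is enough, since only the behaviour of the orbit $\{\varphi^{[k]}(0)\}$ enters the contradiction.
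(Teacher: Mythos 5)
Your proposal is correct and follows essentially the same route as the paper: the easy implications via the von Neumann inequality, $(iii)\Rightarrow(iv)$ by combining $C_\varphi^k=C_{\varphi^{[k]}}$ with the lower bound of Theorem \ref{comp4} and MacCluer's Denjoy--Wolff theorem for the scalar representation, and $(iv)\Rightarrow(i)$ by conjugating with $C_{\Phi_\xi}$ and invoking Theorem \ref{comp1}. The only cosmetic difference is that you make the implication $(ii)\Rightarrow(iii)$ explicit by specialising to $p(z)=z^k$, which the paper leaves implicit.
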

\begin{proof} The fact that an operator similar to a contraction is
power bounded and   polynomially bounded  is  a consequence of the well-known von-Neumann inequality (\cite{von}). We
prove that $(iii)\implies  (iv)$. Assume that $C_\varphi$ is power
bounded, i.e., there is a constant $M>0$ such that
$\|C_\varphi^k\|\leq M$ for any $k\in \NN$. Note that the scalar representation of $\varphi$, i.e. $\BB_n\ni \lambda\mapsto \varphi(\lambda)\in \BB_n$, is  a holomorphic self-map of $\BB_n$.
Suppose there is no
$\xi\in \BB_n$ such that $\varphi(\xi)=\xi$. Then, due to MacCluer's
result \cite{MC1}, there is $\gamma\in \partial \BB_n$, called the
Denjoy-Wolff point of  the   map $\BB_n \ni \lambda\mapsto
\varphi(\lambda)\in \BB_n$, such that  the sequence of iterates
$\varphi^{[k]}:=\varphi\circ\cdots \circ \varphi$ converges to
$\gamma$ uniformly  on any compact subset of $\BB_n$. In particular,
we have $\|\varphi^{[k]}(0)\|\to 1$ as $k\to\infty$. On the other
hand, Theorem \ref{comp4} implies
$$
\|C_\varphi^k\|=\|C_{\varphi^{[k]}}\|\geq
\frac{1}{(1-\|\varphi^{[k]}(0)\|^2)^{1/2}}.
$$
Consequently, $\|C_\varphi^k\|\to \infty$ as $k\to\infty$, which
contradicts the fact that $C_\varphi$ is a power bounded operator.
Therefore, item (iv) holds. Finally, to prove that $(iv)\implies
(i)$, assume that there is $\xi\in \BB_n$ such that
$\varphi(\xi)=\xi$. Set $\Psi:=\Phi_\xi\circ \varphi\circ \Phi_\xi$,
where $\Phi_\xi$ is the involutive free holomorphic automorphism of
$[B(\cH)^n]_1$ associated with $\xi$.  Note that  $\Psi$ is a
bounded free holomorphic function on $[B(\cH)^n]_1$ and $\Psi(0)=0$.
Due to Theorem \ref{comp1}, we have $\|C_\Psi\|\leq 1$. On the other
hand, since $\Phi_\xi\circ \Phi_\xi=id$ and $C_\varphi=
C_{\Phi_\xi}^{-1} C_\Psi C_{\Phi_\xi}$, the result follows. The
proof is complete.
\end{proof}

\begin{corollary} \label{sp1} Let $\varphi$ be a   free holomorphic self-map of  the  noncommutative ball
$[B(\cH)^n]_1$ and let $C_\varphi$ be the composition operator on $H^2_{\bf ball}$. If there is $\xi\in \BB_n$ such that
$\varphi(\xi)=\xi$, then  the spectral radius of $C_\varphi$ is $
1$.
\end{corollary}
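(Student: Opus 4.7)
The plan is to combine the similarity-to-contraction result just proved (Theorem \ref{similarity}) with the elementary observation that the constant function $1$ is always fixed by $C_\varphi$, so $1$ lies in the point spectrum of $C_\varphi$ and thus in its spectrum.

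First I would record the easy lower bound: since $\varphi$ is a free holomorphic self-map of $[B(\cH)^n]_1$, the constant function $\mathbf{1}\in H^2_{\bf ball}$ satisfies $C_\varphi \mathbf{1}=\mathbf{1}\circ\varphi=\mathbf{1}$, so $1$ is an eigenvalue of $C_\varphi$, whence $r(C_\varphi)\geq 1$.

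Next I would extract the upper bound from the implication $(\mathrm{iv})\Rightarrow(\mathrm{i})$ in Theorem \ref{similarity}. Given the hypothesis $\varphi(\xi)=\xi$ with $\xi\in \BB_n$, set $\Psi:=\Phi_\xi\circ\varphi\circ\Phi_\xi$; then $\Psi(0)=0$, Theorem \ref{comp1} gives $\|C_\Psi\|\leq 1$, and
\[
C_\varphi = C_{\Phi_\xi}^{-1}\, C_\Psi\, C_{\Phi_\xi}.
\]
Since the spectral radius is a similarity invariant, $r(C_\varphi)=r(C_\Psi)\leq \|C_\Psi\|\leq 1$.

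Combining the two inequalities yields $r(C_\varphi)=1$. There is no real obstacle here: the work has already been done in Theorem \ref{comp1} (Littlewood subordination, giving the contractivity of $C_\Psi$) and in the construction of the involutive automorphism $\Phi_\xi$ recalled in Section~2. The corollary is essentially a direct read-off of the similarity statement together with the trivial fixed vector $\mathbf{1}$.
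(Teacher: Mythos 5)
Your proof is correct and follows essentially the same route as the paper: conjugate by $C_{\Phi_\xi}$ to reduce to $\Psi=\Phi_\xi\circ\varphi\circ\Phi_\xi$ with $\Psi(0)=0$ and invoke the Littlewood subordination theorem for the bound $r(C_\varphi)=r(C_\Psi)\leq\|C_\Psi\|\leq 1$. The only cosmetic difference is the lower bound, which you get from the fixed vector $\mathbf{1}$ while the paper notes $\|C_{\Psi^{[k]}}\|=1$ for every $k$ and applies the spectral radius formula; both arguments rest on the same facts.
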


\begin{proof} According to the proof of Theorem \ref{similarity},
$C_\varphi$ is similar to a composition operator $C_\Psi$ with
$\Psi(0)=0$. Since $\Psi^{[k]}(0)=0$, Theorem \ref{comp1} implies
$\|C_{\Psi^{[k]}}\|=1$ for any $k\in \NN$. Consequently, we have
$$
r(C_\varphi)=r(C_\Psi)=\lim_{k\to\infty}\|C_{\Psi^{[k]}}\|^{1/k}=1.
$$
The proof is complete.
\end{proof}

\begin{corollary}\label{isometry} Let $\varphi$ be an inner  free holomorphic self-map of  the  noncommutative ball
$[B(\cH)^n]_1$ and let $C_\varphi$ be the composition operator on $H^2_{\bf ball}$.  Then the following statements hold.
 \begin{enumerate}
 \item[(i)]
 $C_\varphi$ is an isometry if and only if $\varphi(0)=0$.
 \item[(ii)] $C_\varphi$ is  similar to an isometry  if and only if there is $\xi\in \BB_n$ such that $\varphi(\xi)=\xi$.
     \end{enumerate}
\end{corollary}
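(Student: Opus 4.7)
The plan is to derive both parts directly from results already established earlier in the excerpt, without doing any new hard work.

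For item (i), the ``if'' direction is immediate: when $\varphi$ is inner and $\varphi(0)=0$, Theorem \ref{comp1-iso} already says that $C_\varphi$ is an isometry on $H^2_{\bf ball}$. For the converse, if $C_\varphi$ is an isometry then in particular it is a contraction, so Corollary \ref{contr}(ii) forces $\varphi(0)=0$. (Alternatively, one may invoke Theorem \ref{comp3}: since $\varphi$ is inner, $\|C_\varphi\|=\bigl((1+\|\varphi(0)\|)/(1-\|\varphi(0)\|)\bigr)^{1/2}$, and this equals $1$ only when $\varphi(0)=0$.)

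For item (ii), the ``if'' direction proceeds by conjugating to the centered case. Given $\xi\in\BB_n$ with $\varphi(\xi)=\xi$, set
$$
\Psi:=\Phi_\xi\circ\varphi\circ\Phi_\xi,
$$
where $\Phi_\xi$ is the involutive free holomorphic automorphism of $[B(\cH)^n]_1$ associated with $\xi$. Since $\Phi_\xi(0)=\xi$ and $\Phi_\xi$ is an involution, one checks that $\Psi(0)=0$. Moreover, $\Phi_\xi$ is inner and the composition of inner free holomorphic self-maps is inner (the fact from Theorem 1.2 of \cite{Po-holomorphic2} already used in the proof of Theorem \ref{comp3}), so $\Psi$ is inner. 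By part (i), $C_\Psi$ is an isometry. Using $\Phi_\xi\circ\Phi_\xi=\text{id}$, one has $C_\varphi=C_{\Phi_\xi}^{-1}C_\Psi C_{\Phi_\xi}$, which shows $C_\varphi$ is similar to an isometry.

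For the converse of (ii), if $C_\varphi$ is similar to an isometry $V$, say $C_\varphi=T^{-1}VT$ with $T$ invertible, then $\|C_\varphi^k\|\leq\|T^{-1}\|\,\|V^k\|\,\|T\|=\|T^{-1}\|\,\|T\|$ for every $k\in\NN$, so $C_\varphi$ is power bounded. Theorem \ref{similarity} (specifically the implication (iii)$\Rightarrow$(iv)) then produces the required fixed point $\xi\in\BB_n$ with $\varphi(\xi)=\xi$. There is no substantive obstacle here; the only thing to be careful about is the noncommutative inner/composition fact needed for $\Psi$, which is already cited in the proof of Theorem \ref{comp3} and therefore available.
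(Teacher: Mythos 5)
Your proposal is correct and follows essentially the same route as the paper: part (i) via Theorem \ref{comp1-iso} and the norm formula of Theorem \ref{comp3} (your use of Corollary \ref{contr}(ii) for the converse is an equivalent shortcut), and part (ii) by conjugating with $C_{\Phi_\xi}$ in one direction and invoking Theorem \ref{similarity} in the other. The only cosmetic difference is that for the forward implication of (ii) the paper passes through ``similar to an isometry $\Rightarrow$ similar to a contraction'' and uses (i)$\Rightarrow$(iv) of Theorem \ref{similarity}, whereas you verify power boundedness directly and use (iii)$\Rightarrow$(iv); both are immediate.
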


\begin{proof}  Assume that $C_\varphi$ is an isometry.  Due to  Theorem \ref{comp3}, we have
$$
1=\|C_\varphi\|=\left(\frac{1+\|\varphi(0)\|}{1-\|\varphi(0)\|}\right)^{1/2}.
$$
Consequently, $\varphi(0)=0$. The converse follows also from Theorem
\ref{comp3}. Therefore, item (i) holds. The direct implication in
item (ii) follows from Theorem \ref{similarity}. To prove the
converse, assume that there is $\xi\in \BB_n$ such that
$\varphi(\xi)=\xi$  and set $\Psi:=\Phi_\xi\circ \varphi\circ \Phi_\xi$, where
$\Phi_\xi$ is the involutive free holomorphic automorphism of
$[B(\cH)^n]_1$ associated with $\xi$.

According to \cite{Po-holomorphic2}, the composition of inner free
holomorphic functions on $[B(\cH)^n]_1$ is inner. Consequently,
$\Psi$ is  an inner free holomorphic function
  and $\Psi(0)=0$. Due to part (i), the composition operator
  $C_\Psi$ is an isometry.
 Since   $C_\varphi=
C_{\Phi_\xi}^{-1} C_\Psi C_{\Phi_\xi}$, the result follows.
\end{proof}

The following result is  an extension  to our
noncommutative multivariable setting  of Cowen's (\cite{Cow}) one-variable
spectral radius formula for composition operators.

\begin{theorem} \label{spectral radius}
Let $\varphi $  be a   free holomorphic self-map of  the
noncommutative ball $[B(\cH)^n]_1$ and let $C_\varphi$ be the
composition operator on $H^2_{\bf ball}$. Then
 the spectral radius of $C_\varphi$ satisfies the relation
$$r(C_\varphi)=\lim_{k\to\infty}(1-\|\varphi^{[k]}(0)\|)^{-1/2k}. $$
Moreover,
$$
r(C_\varphi)=\lim_{k\to\infty}\left(\frac{1-\|\varphi^{[k]}(0)\|}{1-\|\varphi^{[k+1]}(0)\|}\right)^{1/2}
$$
if the latter limit exists.
\end{theorem}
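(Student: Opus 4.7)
The plan is to deduce the formula by sandwiching $\|C_\varphi^k\|^{1/k}$ between two expressions that both behave like $(1-\|\varphi^{[k]}(0)\|)^{-1/2k}$, and then invoking Gelfand's spectral radius formula $r(C_\varphi)=\lim_{k\to\infty}\|C_\varphi^k\|^{1/k}$.

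First I would observe that $C_\varphi^k = C_{\varphi^{[k]}}$ and that each iterate $\varphi^{[k]}$ is itself a free holomorphic self-map of $[B(\cH)^n]_1$. Applying Theorem \ref{comp4} to $\varphi^{[k]}$ in place of $\varphi$ gives
$$
\frac{1}{(1-\|\varphi^{[k]}(0)\|^2)^{1/2}} \leq \|C_\varphi^k\| \leq \left(\frac{1+\|\varphi^{[k]}(0)\|}{1-\|\varphi^{[k]}(0)\|}\right)^{1/2}.
$$
Since $1\leq 1+\|\varphi^{[k]}(0)\|\leq 2$, both the upper and the lower bounds are comparable (within a factor of $2$) to $(1-\|\varphi^{[k]}(0)\|)^{-1/2}$. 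Taking $k$th roots therefore yields
$$
2^{-1/2k}\,(1-\|\varphi^{[k]}(0)\|)^{-1/2k} \leq \|C_\varphi^k\|^{1/k} \leq 2^{1/2k}\,(1-\|\varphi^{[k]}(0)\|)^{-1/2k}.
$$
Since $2^{\pm 1/2k}\to 1$, the two outer sequences have the same asymptotic behavior, so the squeezed middle quantity $\|C_\varphi^k\|^{1/k}$ has a limit if and only if $(1-\|\varphi^{[k]}(0)\|)^{-1/2k}$ does, and they coincide. By Gelfand's formula $r(C_\varphi)=\lim_{k\to\infty}\|C_\varphi^k\|^{1/k}$ always exists, so the limit on the right exists and equals $r(C_\varphi)$, which proves the first formula.

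For the second formula, set $a_k := (1-\|\varphi^{[k]}(0)\|)^{-1}$, so that the first formula reads $r(C_\varphi)=\lim_{k\to\infty} a_k^{1/2k}$. The assumed limit equals $\lim_{k\to\infty}(a_{k+1}/a_k)^{1/2}$. Now I would invoke the classical fact that if $a_{k+1}/a_k\to L^2$, then $a_k^{1/k}\to L^2$ as well, so $a_k^{1/2k}\to L$; this yields the claimed identity.

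There is no serious obstacle — the proof is essentially a one-step reduction to Theorem \ref{comp4} applied to iterates — but one should be careful that the squeezing constants $2^{\pm 1/2k}$ genuinely tend to $1$ so that the limits match exactly rather than up to a constant factor, and that Gelfand's formula is used (rather than trying to show $\lim a_k^{1/2k}$ exists by a separate argument).
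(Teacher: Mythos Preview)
Your proof is correct and follows essentially the same approach as the paper: both apply Theorem~\ref{comp4} to the iterates $\varphi^{[k]}$ to sandwich $\|C_\varphi^k\|^{1/k}$ and then pass to the limit, with the second formula following from the standard ratio-to-root fact (which the paper expresses via the telescoping product $\prod_{p=0}^{k-1}\frac{1-\|\varphi^{[p]}(0)\|}{1-\|\varphi^{[p+1]}(0)\|}$). Your presentation is slightly more explicit about the squeezing constants and the invocation of Gelfand's formula, but the argument is the same.
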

\begin{proof}
 Note that  Theorem \ref{comp4} implies
\begin{equation*}
\begin{split}
\left(\frac{1}{1-\|\varphi^{[k]}(0)\|^2}\right)^{1/2k} &\leq
\|C_\varphi^k\|^{1/k} \leq
\left(\frac{1+\|\varphi^{[k]}(0)\|}{1-\|\varphi^{[k]}(0)\|}\right)^{1/2k}
\leq \left(\frac{2}{1-\|\varphi^{[k]}(0)\|}\right)^{1/2k}
\end{split}
\end{equation*}
Taking $k\to \infty$, we obtain the  first formula for the spectral radius
of $C_\varphi$.  To prove the second formula, note that
\begin{equation*}
\begin{split}
r(C_\varphi)&=\lim_{k\to\infty}(1-\|\varphi^{[k]}(0)\|)^{-1/2k}\\
&=\lim_{k\to\infty}\left(\prod_{p=0}^{k-1}\frac{1-\|\varphi^{[p]}(0)\|}
{1-\|\varphi^{[p+1]}(0)\|}\right)^{1/2k}\\
&=\lim_{k\to\infty}\left(\frac{1-\|\varphi^{[k]}(0)\|}
{1-\|\varphi^{[k+1]}(0)\|}\right)^{1/2}
\end{split}
\end{equation*}
if the latter limit exists.
The proof is complete.
\end{proof}

\section{Noncommutative Wolff theorem for free holomorphic self-maps of $[B(\cH)^n]_1$}

In this section, we use Julia type lemma for free holomorphic functions (\cite{Po-holomorphic2}) and the ideas from  the classical result obtained
by Wolff (\cite{Wo1}, \cite{Wo2}) and  MacCluer's extension to
$\BB_n$ (see \cite{MC1}), to provide a noncommutative analogue of
Wolff's theorem for free holomorphic self-maps of $[B(\cH)^n]_1$. We
also show that
  the spectral radius of a composition  operator   on $H^2_{\bf ball}$  is $1$ when  the symbol
   is  elliptic or parabolic, which extends some of Cowen's results
   (\cite{Cow}) from the single variable case.

 Julia's lemma  (\cite{J})
says that if $f:\DD\to \DD$ is an
 analytic function and there is a sequence $\{z_k\}\subset \DD$
 with $z_k\to 1$,  $f(z_k)\to 1$, and such that
 $\frac{1-|f(z_k)|}{1-|z_k|}$ is bounded, then $f$ maps each disc in
 $\DD$ tangent to $\partial \DD$ at   $1$  into a disc of the same
 kind.
 Wolff (\cite{Wo1}, \cite{Wo2}) used this result to show that if $f$
 has no fixed points in $\DD$, then there is a unique point $\xi\in \partial \DD$ such that
 any closed disc in   $\DD$ which is tangent to $\partial \DD$ at $\xi$ is mapped into
 itself by every iterate of $f$.
 The  Denjoy-Wolff theorem (\cite{Wo1}, \cite{De}) asserts that, under the above-mentioned
 conditions, the sequence of iterates of $f$  converges uniformly
  on compact subsets of $\DD$ to the constant map $g(z)=\xi$, $z\in \DD$. The point $\xi$
   is called the Denjoy-Wolff point of $f$.
 This result was extended to the unit ball of $\CC^n$ by MacCluer \cite{MC1}.

If $A,B\in B(\cK)$ are selfadjoint  operators, we say that $A<B$ if
$B-A$ is positive and invertible, i.e., there exists a constant
$\gamma>0$ such that $\left<(B-A)h,h\right>\geq \gamma\|h\|^2$ for
any $h\in \cK$. Note that  $T\in B(\cK)$ is a strict contraction
($\|T\|<1$) if and only if $TT^*<I$. For $0<c<1$ and
$\xi_1=(1,0,\ldots, 0)$, we define the noncommutative  ellipsoid
$$
{\bf E}_c(\xi_1):=\left\{ (X_1,\ldots, X_n)\in B(\cH)^n:\
\frac{[X_1-(1-c)I][X_1^*-(1-c)]I]}{c^2}+ \frac{X_2X_2^*}{c}+\cdots
+\frac{X_nX_n^*}{c}<I\right\}
$$
with center at $(1-c)\xi_1$ and containing $\xi_1$ in its boundary. If $\xi\in \BB_n$ we define the noncommutative ellipsoid ${\bf E}_c(\xi)$ centered at $(1-c)\xi$ and containing $\xi$ in its boundary in a similar manner.

 In \cite{Po-holomorphic2}, we obtained a Julia type  lemma for free holomorphic
functions.
 Let $F:[B(\cH)^n]_1\to [B(\cH)^m]_1$ be a free
holomorphic function   and $F=(F_1,\ldots, F_m)$. Let $\{z_k\}$ be a
sequence of points in $ \BB_n$  such that $\lim_{k\to \infty} z_k
=(1,0,\ldots, 0)\in
\partial\BB_n$, $\lim_{k\to \infty} F(z_k) =(1,0,\ldots, 0)\in
\partial\BB_m$, and
$$
\lim_{k\to \infty} \frac{1-\|F(z_k)\|^2}{1-\|z_k\|^2}= L <\infty. $$
  Then $L>0$ and
$$
(I-F_1(X)^*)(I-F(X)F(X)^*)^{-1} (I-F_1(X))\leq L
(I-X_1^*)(I-XX^*)^{-1}(I-X_1)
$$
for any $X=(X_1,\ldots, X_n)\in [B(\cH)^n]_1$.
   Moreover, if $0<c<1$, then
$$ F({\bf E}_c(\xi_1)) \subset {\bf E}_\gamma(\xi_1),
\quad \text{where }\ \gamma :=\frac{Lc}{1+Lc-c}.
$$

In what follows
we provide a noncommutative analogue of  Wolff's  theorem for free holomorphic self-maps of $[B(\cH)^n]_1$.

\begin{theorem} \label{Wolff}  Let $\varphi:[B(\cH)^n]_1\to [B(\cH)^n]_1$ be a free
holomorphic function such that its scalar representation  has  no  fixed points in $\BB_n$. Then there is a unique point $\zeta\in \partial \BB_n$ such that  each  noncommutative ellipsoid ${\bf E}_c(\zeta)$, $c\in (0,1)$,  is mapped into
 itself by every iterate of $\varphi$.
 \end{theorem}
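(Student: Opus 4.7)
The plan is to adapt Wolff's classical argument to this noncommutative setting, relying crucially on the Julia-type lemma for free holomorphic functions from \cite{Po-holomorphic2} that was recalled just above the theorem. The candidate Denjoy--Wolff point $\zeta$ will be produced as the boundary accumulation of fixed points of the shrunken maps $\varphi_r:=r\varphi$, $r\in(0,1)$.

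First, for each $r\in(0,1)$ the scalar representation of $\varphi_r$ maps the closed ball $\overline{\BB_n}$ into $\BB_n$, so by Brouwer's fixed point theorem it admits a fixed point $z_r\in\BB_n$, equivalently $\varphi(z_r)=z_r/r$. Since $\varphi$ has no interior fixed point, any accumulation point of $\{z_r\}$ inside $\BB_n$ would contradict the hypothesis; hence $\|z_r\|\to 1$ as $r\to 1^-$. Passing to a subsequence $r_k\to 1$, set $\zeta:=\lim_k z_{r_k}\in\partial\BB_n$, and note that $\varphi(z_{r_k})=z_{r_k}/r_k\to\zeta$ as well.

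Next, I would compute the Julia ratio. Writing $s_k:=\|z_{r_k}\|\leq r_k$,
\[
\frac{1-\|\varphi(z_{r_k})\|^2}{1-\|z_{r_k}\|^2}=\frac{r_k^2-s_k^2}{r_k^2(1-s_k^2)}<1,
\]
the inequality holding since $r_k<1$. Extracting a further subsequence, I may assume this ratio converges to some $L\in[0,1]$. A unitary $U\in M_{n\times n}$ carries $\zeta$ to $\xi_1=(1,0,\ldots,0)$, and conjugating $\varphi$ by the free holomorphic automorphism $\Phi_U(X)=[X_1,\ldots,X_n]U$ reduces matters to the case $\zeta=\xi_1$, sending ${\bf E}_c(\zeta)$ to ${\bf E}_c(\xi_1)$. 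The Julia-type lemma then yields
\[
\varphi({\bf E}_c(\xi_1))\subseteq {\bf E}_{Lc/(1+Lc-c)}(\xi_1)\subseteq {\bf E}_c(\xi_1),\qquad c\in(0,1),
\]
where the last inclusion uses $L\leq 1$. Iterating gives $\varphi^{[k]}({\bf E}_c(\zeta))\subseteq {\bf E}_c(\zeta)$ for every $k\geq 1$.

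For uniqueness, suppose $\zeta'\in\partial\BB_n\setminus\{\zeta\}$ enjoyed the same invariance property. At the scalar level, ${\bf E}_c(\zeta')\cap\BB_n$ is a Euclidean ellipsoid tangent to $\partial\BB_n$ only at $\zeta'$, so for $c$ sufficiently small its closure inside $\overline{\BB_n}$ misses $\zeta$. Any interior point $z_0$ of this set would then have all its scalar iterates trapped in the closure of ${\bf E}_c(\zeta')$, contradicting MacCluer's multivariable Denjoy--Wolff theorem, which forces the iterates of the fixed-point-free scalar self-map of $\BB_n$ to converge uniformly on compacta to a single boundary point. The main obstacle I anticipate is verifying the hypotheses of the Julia-type lemma along the chosen subsequence---namely that $z_{r_k}$ and $\varphi(z_{r_k})$ approach the same boundary point and that the Julia ratio has a finite limit $L\leq 1$---together with carrying out the unitary reduction to $\xi_1$ cleanly within the class of free holomorphic functions; the uniqueness then follows from the classical Denjoy--Wolff theorem applied to the scalar representation.
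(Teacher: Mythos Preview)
Your existence argument follows the paper's approach almost exactly: fixed points of $r\varphi$ via Brouwer, boundary accumulation to $\zeta$, Julia ratio bounded by $1$, reduction to $\xi_1$, then the Julia-type lemma from \cite{Po-holomorphic2}. One small technical point: the scalar representation $\varphi^{\CC}$ is a priori defined only on $\BB_n$, so Brouwer should be applied to $r\varphi^{\CC}$ on the closed ball of radius $r$ (which it maps into itself since $\|\varphi^{\CC}(\lambda)\|<1$), not on $\overline{\BB_n}$ as you write; this is precisely what the paper does.

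Your uniqueness argument, however, takes a different route from the paper's. The paper gives a self-contained geometric argument: if both $\zeta$ and $\zeta'$ enjoy the invariance property, choose $c,c'\in(0,1)$ so that the scalar ellipsoids ${\bf E}^{\CC}_c(\zeta)$ and ${\bf E}^{\CC}_{c'}(\zeta')$ are internally tangent at some point $\xi\in\BB_n$; then $\varphi^{\CC}(\xi)$ lies in $\overline{{\bf E}^{\CC}_c(\zeta)}\cap\overline{{\bf E}^{\CC}_{c'}(\zeta')}=\{\xi\}$, producing an interior fixed point and contradicting the hypothesis directly. Your approach via MacCluer's Denjoy--Wolff convergence theorem is also valid but invokes a deeper result (locally uniform convergence of iterates to a single boundary point) where the paper needs only elementary geometry of ellipsoids. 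Note too that your argument as written needs one more sentence to close: trapping iterates near $\zeta'$ forces the Denjoy--Wolff limit to be $\zeta'$, while the symmetric trapping near $\zeta$ forces it to be $\zeta$, and it is this that yields the contradiction.
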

 \begin{proof}  Let $r_k\in (0,1)$ be a convergent sequence to $1$. Define the map
 $\psi_k:[B(\cH)^n]_{r_k}^-\to [B(\cH)^n]_{r_k}^-$  by
 $\psi_k:=r_k \varphi(X)$, $X \in [B(\cH)^n]_{r_k}^-$, and note that $g_k$ is a
  free holomorphic  function in $[B(\cH)^n]_{r_k}^-$. Consequently,
  its scalar representation $\chi_k:[\CC^n]_{r_k}^-\to [\CC^n]_{r_k}^-$, defined by
  $\chi_k(\lambda):=\psi_k(\lambda)$, $\lambda\in [\CC^n]_{r_k}^-$, is
    holomorphic in $[\CC^n]_{r_k}^-$. According to  Brouwer  fixed point theorem
    there exists $\lambda_k\in [\CC^n]_{r_k}^-$ such that $\chi(\lambda_k)=\lambda_k$.
    Hence, $\varphi(\lambda_k)=\frac{\lambda_k}{r_k}$. Passing to a subsequence and
    taking into account that  the scalar representation of $\varphi$ has no fixed point
     in $\BB_n$, we may assume that $\lambda_k\to \zeta\in \partial \BB_n$.
 This implies that  $\varphi(\lambda_k)\to \zeta$ and
 $$
 \frac{1-\|\varphi(\lambda_k)\|^2}{1-\|\lambda_k\|^2}
 =\frac{1-\frac{1}{r_k^2}\|\lambda_k\|^2}
 {1-\|\lambda_k\|^2}<1.
 $$
 Consequently, we may assume that
$$
 \lim_{k\to\infty} \frac{1-\|\varphi(\lambda_k)\|^2}{1-\|\lambda_k\|^2}=L\leq 1.
 $$
 Without loss of generality, we may also assume that $\zeta=\xi_1:=(1,0,\ldots, 0)\in \partial\BB_n$.
  Using the above-mentioned Julia type  lemma for free holomorphic
functions, we deduce that $L>0$ and
\begin{equation}
\label{EcE}
 \varphi({\bf
E}_c(\xi_1)) \subset {\bf E}_\gamma(\xi_1), \quad \text{where }\
\gamma :=\frac{Lc}{1+Lc-c}.
\end{equation}
 Note that
$X\in {\bf E}_c(\xi_1)$ if and only if
 $$
(I-X_1)(I-X_1^*)<\frac{c}{1-c}(I-XX^*).
$$
Since $L\leq 1$, it is easy to see that  $\gamma\leq c$, which
implies $E_\gamma(\xi_1)\subseteq E_c(\xi_1)$. Combining this with
relation \eqref{EcE}, we obtain  $\varphi({\bf E}_c(\xi_1))\subseteq
{\bf E}_c(\xi_1)$ for any $c\in (0,1)$, which proves the first part
of the theorem.

To prove  the uniqueness, assume that there two distinct points $\zeta,\zeta'\in\partial \BB_n$ such that $\varphi({\bf E}_c(\zeta))\subseteq {\bf E}_c(\zeta)$  and $\varphi({\bf E}_c(\zeta'))\subseteq {\bf E}_c(\zeta')$for any $c\in (0,1)$. Let ${\bf E}^{\CC}_c(\zeta)$ be the scalar representation of  the noncommutative ellipsoid  ${\bf E}_c(\zeta)$ and let $\varphi^\CC$ be the scalar representation of $\varphi$. Choose $c,c'\in (0,1)$ such that ${\bf E}^{\CC}_c(\zeta)$ and ${\bf E}^{\CC}_{c'}(\zeta')$ are tangent to each other at some point $\xi\in \BB_n$. Note that $\varphi^{\CC}(\xi)\in \overline{{\bf E}^{\CC}_c(\zeta)}\cap
\overline{{\bf E}^{\CC}_{c'}(\zeta')}=\{\xi\}$, which contradicts the hypothesis.
The proof is complete.
\end{proof}

The point $\zeta$ of Theorem \ref{Wolff} is called the Denjoy-Wolff point of $\varphi$.
We remark that Theorem \ref{Wolff}  shows that
$$
0<\liminf_{z\to \zeta}
\frac{1-\|\varphi(z)\|^2}{1-\|z\|^2}=\alpha\leq 1.
$$
The number $\alpha$ is called the dilatation coefficient of $\varphi$. When $n=1$, $\alpha$ is the angular derivative of $\varphi$ at $\zeta$.

Combining Theorem \ref{Wolff} with Julia type  lemma for free holomorphic
functions \cite{Po-holomorphic2}, we obtain the following result.

\begin{theorem} Let $\varphi:[B(\cH)^n]_1\to [B(\cH)^n]_1$ be a free
holomorphic function  with Denjoy-Wolff point  $\zeta\in \partial
\BB_n$ and dilatation coefficient $\alpha$. Then, for any $X\in
[B(\cH)^n]_1$,
\begin{equation*}
 \begin{split}
[I-  \zeta  \varphi(X)^*][I-  \varphi(X)   \varphi(X)^*]^{-1}[I-
\varphi(X)
\zeta^*]
\leq  \alpha(I-\zeta X^*)(I-XX^*)^{-1} (I-X \zeta^*).
\end{split}
\end{equation*}

\end{theorem}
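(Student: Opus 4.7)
The strategy is to reduce the sought inequality to the Julia type lemma for free holomorphic functions recalled in this section from \cite{Po-holomorphic2}, via a unitary change of coordinates that moves the Denjoy-Wolff point $\zeta$ to the canonical boundary point $\xi_1:=(1,0,\ldots,0)$. I would choose a unitary matrix $U\in M_{n\times n}$ whose first column equals $\bar\zeta^T$; then unitarity and $\|\zeta\|_2=1$ give $\zeta U=\xi_1$, $\xi_1 U^*=\zeta$ and $U\xi_1^*=\zeta^*$. Set
$$\widetilde\varphi(X):=\varphi(XU^*)U,\qquad X\in[B(\cH)^n]_1.$$
Then $\widetilde\varphi$ is a free holomorphic self-map of $[B(\cH)^n]_1$ whose Denjoy-Wolff point is $\xi_1$ and whose dilatation coefficient again equals $\alpha$; both features follow from unitary invariance of the operator norm and of the Wolff invariant ellipsoids of Theorem \ref{Wolff}.

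I would then extract a sequence $\{z_k\}\subset\BB_n$ with $z_k\to\xi_1$, $\widetilde\varphi(z_k)\to\xi_1$ and
$$\lim_{k\to\infty}\frac{1-\|\widetilde\varphi(z_k)\|^2}{1-\|z_k\|^2}=\alpha.$$
The first and third conditions come directly from the definition of $\alpha$ as a liminf. For the second condition: boundedness of the ratio forces $\|\widetilde\varphi(z_k)\|\to 1$, and a non-tangential choice (for instance $z_k=r_k\xi_1$ with $r_k\uparrow 1$) eventually lies in an ellipsoid ${\bf E}^{\CC}_c(\xi_1)$, so by Theorem \ref{Wolff} applied to $\widetilde\varphi$ one has $\widetilde\varphi(z_k)\in {\bf E}^{\CC}_c(\xi_1)$; since $\overline{{\bf E}^{\CC}_c(\xi_1)}\cap\partial\BB_n=\{\xi_1\}$, any subsequential limit of $\widetilde\varphi(z_k)$ on $\partial\BB_n$ must be $\xi_1$.

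Applying the Julia type lemma from \cite{Po-holomorphic2} to $\widetilde\varphi$ with this sequence (so that the constant $L$ there equals $\alpha$) yields, for every $X\in[B(\cH)^n]_1$,
$$(I-\xi_1\widetilde\varphi(X)^*)(I-\widetilde\varphi(X)\widetilde\varphi(X)^*)^{-1}(I-\widetilde\varphi(X)\xi_1^*)\leq \alpha(I-\xi_1 X^*)(I-XX^*)^{-1}(I-X\xi_1^*).$$
Substituting $X=YU$ and using $\widetilde\varphi(YU)=\varphi(Y)U$ together with $\xi_1U^*=\zeta$ and $U\xi_1^*=\zeta^*$, the six identities
$$\xi_1\widetilde\varphi(X)^*=\zeta\varphi(Y)^*,\quad \widetilde\varphi(X)\xi_1^*=\varphi(Y)\zeta^*,\quad \widetilde\varphi(X)\widetilde\varphi(X)^*=\varphi(Y)\varphi(Y)^*,$$
$$\xi_1 X^*=\zeta Y^*,\quad X\xi_1^*=Y\zeta^*,\quad XX^*=YY^*$$
translate the displayed inequality into precisely the claimed one (after renaming $Y\mapsto X$, which ranges over all of $[B(\cH)^n]_1$ by unitarity of $U$).

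The main obstacle is the second step: realizing the liminf $\alpha$ along a sequence on which $\widetilde\varphi(z_k)$ also converges to $\xi_1$. Non-tangential sequences handle this cleanly as above; should the infimum only be approached in a more pathological manner, a safe workaround is to prove the inequality for values $L>\alpha$ arbitrarily close to $\alpha$ and then pass to the limit, using the fact that the right-hand side depends continuously on $L$.
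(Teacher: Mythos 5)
Your reduction to the canonical boundary point $\xi_1$ is sound and is exactly what the paper intends (the paper offers no argument beyond ``combining Theorem \ref{Wolff} with the Julia type lemma''): the conjugation $\widetilde\varphi=\Phi_U\circ\varphi\circ\Phi_{U^*}$, the six substitution identities, and the observation that the Julia type lemma applied to $\widetilde\varphi$ is precisely the rotated form of the claimed inequality are all correct. The one substantive analytic point is the one you flag yourself, and your treatment does not close it. The Julia type lemma needs a \emph{single} sequence $\{z_k\}$ with $z_k\to\xi_1$, $\widetilde\varphi(z_k)\to\xi_1$, and $\frac{1-\|\widetilde\varphi(z_k)\|^2}{1-\|z_k\|^2}\to\alpha$, where $\alpha$ is the \emph{unrestricted} liminf. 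A sequence realizing that liminf need not be non-tangential, so your ellipsoid argument (which applies only to sequences eventually contained in a fixed ${\bf E}^{\CC}_c(\xi_1)$) does not apply to it; conversely, the radial sequence $z_k=r_k\xi_1$ does satisfy $\widetilde\varphi(z_k)\to\xi_1$, but its ratio converges (along a subsequence) only to the radial liminf $L_{\mathrm{rad}}\geq\alpha$, and the inequality with constant $L_{\mathrm{rad}}$ is strictly weaker than the one claimed unless $L_{\mathrm{rad}}=\alpha$. Your fallback is circular: to get the inequality for $L>\alpha$ via the lemma you again need a sequence with ratio limit at most $L$ on which $\widetilde\varphi$ tends to $\xi_1$, and producing such sequences for $L$ arbitrarily close to $\alpha$ is exactly the missing step.

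The gap can be closed in either of two short ways, and one of them has to be said. (a) Take a minimizing sequence and pass to a subsequence along which $\widetilde\varphi(z_k)\to\eta\in\partial\BB_n$ (possible because the bounded ratio forces $\|\widetilde\varphi(z_k)\|\to1$); apply the Julia type lemma to $\Phi_V\circ\widetilde\varphi$, where $V$ is a unitary sending $\eta$ to $\xi_1$, to conclude that $\widetilde\varphi$ maps ${\bf E}^{\CC}_c(\xi_1)$ into an ellipsoid anchored at $\eta$ which shrinks to $\{\eta\}$ as $c\to0$; since by Theorem \ref{Wolff} the same images lie in ${\bf E}^{\CC}_c(\xi_1)$, which shrinks to $\{\xi_1\}$, and small ellipsoids at distinct boundary points are disjoint, this forces $\eta=\xi_1$, so the minimizing subsequence itself satisfies all three hypotheses with $L=\alpha$. (b) Invoke the Julia--Carath\'eodory theorem in $\BB_n$ (Rudin \cite{Ru2}), already used elsewhere in the paper, for the scalar representation of $\widetilde\varphi$: it identifies the unrestricted liminf with the radial limit, so $L_{\mathrm{rad}}=\alpha$ and your radial sequence then works exactly as you wrote it.
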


Let $\varphi:[B(\cH)^n]_1\to [B(\cH)^n]_1$ be a free holomorphic self-map.
Following the classical case,
 $\varphi$ will be called:
\begin{enumerate}
\item[(i)] {\it elliptic} if $\varphi$ fixes a point in $\BB_n$;
\item[(ii)] {\it parabolic} if $\varphi$ has no fixed points in $\BB_n$ and dilatation coefficient $\alpha=1$;
\item[(iii)] {\it hyperbolic} if $\varphi$ has no fixed points in $\BB_n$ and dilatation coefficient $\alpha<1$.
\end{enumerate}

In the single variable case, when $\varphi:\DD\to \DD$,  Cowen
(\cite{Cow})  proved that the spectral radius of the composition
operator $C_\varphi$ on $H^2(\DD)$ is $1$ if $\varphi$ is elliptic
or parabolic, and $\frac{1}{\sqrt{\alpha}}$ if $\varphi$ is
hyperbolic.
 We  can
  extend his result to  composition operators on $H^2_{\bf ball}$ when the symbol $\varphi$ is elliptic or parabolic.

\begin{theorem}\label{rad=1}  Let $\varphi $ be a   free
holomorphic self-map of  the  noncommutative ball $[B(\cH)^n]_1$. If $\varphi$ is elliptic or parabolic, then the spectral radius of the
composition operator $C_\varphi$ on $H^2_{\bf ball}$ is $1$.
\end{theorem}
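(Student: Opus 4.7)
Since the constant function $1$ satisfies $C_\varphi 1 = 1$, the scalar $1$ is an eigenvalue of $C_\varphi$ in either regime, so $r(C_\varphi) \geq 1$ in both cases. My plan is to establish the matching upper bound $r(C_\varphi) \leq 1$ case by case.

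The elliptic case is essentially free: by hypothesis there is $\xi \in \BB_n$ with $\varphi(\xi) = \xi$, and Corollary \ref{sp1} then gives $r(C_\varphi) = 1$ at once.

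For the parabolic case, I invoke the spectral radius formula of Theorem \ref{spectral radius},
$$r(C_\varphi)=\lim_{k\to\infty}(1-\|z_k\|)^{-1/2k},\qquad z_k:=\varphi^{[k]}(0),$$
which reduces the problem to proving $\tfrac{1}{k}\log\tfrac{1}{1-\|z_k\|}\to 0$. Equivalently, in terms of the invariant Bergman distance $\beta(0,z)=\tfrac12\log\tfrac{1+\|z\|}{1-\|z\|}$ on $\BB_n$, I want $\beta(0,z_k)/k\to 0$. The main tool is the Julia-type inequality for free holomorphic functions stated just before this theorem: specialized to the scalar iterate $X=z_k$, it becomes the Julia-quotient iteration
$$J_{k+1}\leq \alpha\, J_k,\qquad J_k:=\frac{|1-\langle z_k,\zeta\rangle|^2}{1-\|z_k\|^2},$$
and since $\alpha=1$ in the parabolic regime, the sequence $\{J_k\}$ is non-increasing and bounded. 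Combining this with Theorem \ref{Wolff} (which forces $z_k\to\zeta$) and the Schwarz-Pick contraction of $\beta$ by the scalar representation of $\varphi$, the hyperbolic step sizes $d_k:=\beta(z_k,z_{k+1})$ are non-increasing; I would argue that $d_k\to 0$ in the parabolic case, after which Cesaro averaging gives $\beta(0,z_k)\leq \sum_{j<k}d_j=o(k)$, closing the argument.

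The principal obstacle is establishing $d_k\to 0$ from the hypothesis $\alpha=1$. I plan to do this by a Julia–Carathéodory-type argument: if $\lim d_k=L>0$, then the orbit advances a fixed positive hyperbolic amount at every step, forcing $1-\|z_k\|$ to decay at a geometric rate $e^{-2L}$ and hence making the dilatation coefficient strictly less than $1$, which would contradict the parabolic assumption. Once $d_k\to 0$ is secured, the Cesaro estimate yields $(1-\|z_k\|)^{1/k}\to 1$ and Theorem \ref{spectral radius} delivers $r(C_\varphi)\leq 1$, finishing the proof.
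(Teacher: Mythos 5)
Your treatment of the elliptic case and of the lower bound $r(C_\varphi)\geq 1$ matches the paper, and you correctly identify the spectral radius formula of Theorem \ref{spectral radius} as the tool for the parabolic case. But the parabolic argument has a genuine gap: the whole strategy rests on proving $d_k:=\beta(z_k,z_{k+1})\to 0$, and this is false for parabolic maps in general. A parabolic automorphism (for instance the self-map corresponding to a translation $w\mapsto w+1$ of the half-plane, transported to the ball; such maps exist as free holomorphic automorphisms of $[B(\cH)^n]_1$) has no interior fixed point and dilatation coefficient $\alpha=1$, yet its hyperbolic step $d_k$ is a \emph{positive constant}. Your proposed contradiction --- that $\lim d_k=L>0$ would force $1-\|z_k\|$ to decay at the geometric rate $e^{-2L}$ --- is also false: a positive hyperbolic step only bounds how far the orbit can travel, giving the \emph{lower} bound $1-\|z_k\|\geq Ce^{-2d_0k}$, and in the parabolic-automorphism example the orbit approaches $\zeta$ tangentially, so that $1-\|z_k\|$ decays only like $k^{-2}$ while $d_k\equiv L>0$. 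Consequently $\sum_{j<k}d_j\sim Lk$ is not $o(k)$, and even if one accepts the rest of your outline the Ces\`aro estimate $\beta(0,z_k)\leq \beta(0,z_0)+\sum_{j<k}d_j$ would only yield $r(C_\varphi)\leq e^{L}$. (The Julia-quotient monotonicity $J_{k+1}\leq J_k$ that you derive is correct but is never actually used to close the argument.)

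The repair is to bypass the hyperbolic step sizes entirely and apply the dilatation coefficient directly to the orbit, which is what the paper does: since $z_k\to\zeta$ by MacCluer's Denjoy--Wolff theorem and $\alpha=\liminf_{z\to\zeta}\frac{1-\|\varphi(z)\|^2}{1-\|z\|^2}=1$ in the parabolic case, one gets $\liminf_{k}\frac{1-\|z_{k+1}\|}{1-\|z_k\|}\geq 1$, hence $\limsup_{k}\frac{1-\|z_k\|}{1-\|z_{k+1}\|}\leq 1$. The telescoping-product computation in the proof of Theorem \ref{spectral radius} then gives
$r(C_\varphi)\leq\limsup_{k\to\infty}\bigl(\tfrac{1-\|z_k\|}{1-\|z_{k+1}\|}\bigr)^{1/2}\leq 1$,
and $C_\varphi 1=1$ supplies the reverse inequality. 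Note that this argument makes no claim about $d_k$ and works equally well for zero and positive hyperbolic step.
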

\begin{proof} The case when $\varphi$ is elliptic was considered in Corollary \ref{sp1}. Now,
we assume that  $\varphi$ is parabolic and let  $\zeta\in \partial \BB_n$ be the corresponding
  Denjoy-Wolff point.
 According to MacCluer version \cite{MC1}  of Denjoy-Wolff theorem, the iterates of
  the scalar representation of $\varphi$  converge uniformly to $\zeta$ on compact subsets
  of $\BB_n$. In particular, we have $\varphi^{[k]}(0)\to \zeta$ as $k\to\infty$.
 Since the  dilatation coefficient of $\varphi$ is $1$,  we must have
 $\liminf_{k\to \infty}\left(\frac{1-\|\varphi^{[k+1]}(0)\|}{1-\|\varphi^{[k]}(0)\|}\right)^{1/2}\geq 1.
$
Consequently,
 as in  the proof of  Theorem \ref{spectral radius}, we deduce that
$$
r(C_\varphi)\leq \limsup_{k\to\infty}\left(\frac{1-\|\varphi^{[k]}(0)\|}
{1-\|\varphi^{[k+1]}(0)\|}\right)^{1/2}\leq 1.
$$
Taking into account that  $C_\varphi 1=1$, the result follows.
\end{proof}

To calculate the spectral radius of a composition  operator   on $H^2_{\bf ball}$ when
 the symbol is hyperbolic remains an open problem. Another open problem is  to
 find a Denjoy-Wolff type theorem (see \cite{De}, \cite{Wo1})  for free holomorphic self-maps of
  $[B(\cH)^n]_1$.

\bigskip
\section{Composition operators and their adjoints}

In this section, we obtain a formula for  the adjoint of  a
composition operator on $H^2_{\bf ball}$.   As a consequence we
characterize the normal composition operators   on $H^2_{\bf ball}$.
We also present a nice
 connection between Fredholm composition operators on $H^2_{\bf
 ball}$ and the automorphisms of the open unit ball $\BB_n$.

\begin{proposition} \label{adjoint}
Let $\varphi=(\varphi_1, \ldots, \varphi_n)$  be a   free
holomorphic self-map of  the  noncommutative ball $[B(\cH)^n]_1$.
Then the  adjoint of  the composition $C_\varphi$ on $H^2_{\bf
ball}$ satisfies the relation
$$
(C_\varphi^*f)(X_1,\ldots, X_n) =\sum_{\alpha\in \FF_n^+} \left<
f,\varphi_\alpha\right> X_\alpha,\qquad f\in H^2_{\bf ball}.
$$
\end{proposition}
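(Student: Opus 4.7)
The plan is to expand $C_\varphi^* f$ with respect to the orthonormal basis $\{X_\alpha\}_{\alpha\in\FF_n^+}$ of $H^2_{\bf ball}$ (which, under $\cU$, corresponds to $\{e_\alpha\}$ in $F^2(H_n)$) and compute its Fourier coefficients directly from the definition of adjoint.

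First I would verify that $C_\varphi X_\alpha = \varphi_\alpha$ as elements of $H^2_{\bf ball}$. Writing $X_\alpha$ for $\alpha = g_{i_1}\cdots g_{i_k}$ as the monomial $(Y_1,\ldots,Y_n)\mapsto Y_{i_1}\cdots Y_{i_k}$, the composition definition recalled in Section 1 gives
\begin{equation*}
(X_\alpha\circ\varphi)(Y_1,\ldots,Y_n)=\varphi_{i_1}(Y)\cdots\varphi_{i_k}(Y)=\varphi_\alpha(Y),
\end{equation*}
so indeed $C_\varphi X_\alpha=\varphi_\alpha$. Since each $\varphi_i$ is a bounded free holomorphic function (as $\|\varphi(Y)\|<1$ on $[B(\cH)^n]_1$), each $\varphi_\alpha$ lies in $H^\infty_{\bf ball}\subseteq H^2_{\bf ball}$, so the pairing $\langle f,\varphi_\alpha\rangle$ makes sense. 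Theorem \ref{comp4} guarantees $C_\varphi$ is bounded, hence $C_\varphi^*$ is a bounded operator on $H^2_{\bf ball}$.

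Next, for any $f\in H^2_{\bf ball}$, I would use the fact that $\{X_\alpha\}_{\alpha\in\FF_n^+}$ is an orthonormal basis (this comes from the identification $\cU:H^2_{\bf ball}\to F^2(H_n)$ sending $X_\alpha$ to $e_\alpha$) to write
\begin{equation*}
C_\varphi^* f=\sum_{\alpha\in\FF_n^+}\langle C_\varphi^* f,X_\alpha\rangle\,X_\alpha.
\end{equation*}
By the definition of the adjoint and the first step,
\begin{equation*}
\langle C_\varphi^* f,X_\alpha\rangle=\langle f,C_\varphi X_\alpha\rangle=\langle f,\varphi_\alpha\rangle,
\end{equation*}
which upon substitution yields the claimed formula. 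Convergence of the series in $H^2_{\bf ball}$ is automatic from Parseval's identity applied to $C_\varphi^* f\in H^2_{\bf ball}$.

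I do not expect any real obstacle here: the entire statement is a direct unraveling of the action of $C_\varphi$ on the monomial basis together with the standard Fourier expansion in the orthonormal basis $\{X_\alpha\}$. The only point deserving a line of justification is confirming that $\varphi_\alpha$ really is the image $C_\varphi X_\alpha$ (which requires the already-established fact that $f\circ\varphi$ is a free holomorphic function with the expected series expansion, cited from \cite{Po-automorphism}) and that all $\varphi_\alpha$ belong to $H^2_{\bf ball}$, so that the inner products in the statement are well-defined.
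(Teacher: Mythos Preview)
Your proof is correct and follows essentially the same approach as the paper's own proof: expand $C_\varphi^* f$ in the orthonormal basis $\{X_\alpha\}$ and compute each coefficient via $\langle C_\varphi^* f, X_\alpha\rangle = \langle f, C_\varphi X_\alpha\rangle = \langle f, \varphi_\alpha\rangle$. You include a bit more justification (that $C_\varphi X_\alpha=\varphi_\alpha$ and that $\varphi_\alpha\in H^2_{\bf ball}$), but the argument is the same.
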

\begin{proof} According   to Theorem \ref{comp4},  then composition operator $C_\varphi$ is  bounded
 on the Hardy space $H^2_{\bf ball}$.
If $f=\sum_{k=0}^\infty \sum_{|\alpha|=k} c_\alpha  X_\alpha$ is in $H^2_{\bf ball}$,
then,
$$
C_\varphi^*f=\sum_{k=0}\sum_{|\alpha|=k} b_\alpha X_\alpha, \qquad X\in [B(\cH)^n]_1,
 $$
 for some coefficients $b_\alpha\in \CC$ with $\sum_{\alpha\in \FF_n^+} |b_\alpha|^2<\infty$.
Since the monomials $\{X_\alpha\}_{\alpha\in \FF_n^+}$ form an othonormal basis for
$H^2_{\bf ball}$, we have
\begin{equation*}
\begin{split}
b_\alpha&=\left< C_\varphi^* f, X_\alpha\right>=\left<f, C_\varphi (X_\alpha)\right>=\left<f,
 \varphi_\alpha\right>, \qquad \alpha\in \FF_n^+.
\end{split}
\end{equation*}
The proof is complete.
\end{proof}

We remark that under the identification  of $H^2_{\bf ball}$ with
the Fock space $F^2(H_n)$, the operator $C_\varphi$ is unitarily
equivalent to $C_{\widetilde\varphi}$ (see Corollary \ref{comp5})
and
$$
C_{\widetilde\varphi}g=\sum_{\alpha\in \FF_n^+} \left< g,
\widetilde\varphi_\alpha(1)\right> e_\alpha,\qquad g\in F^2(H_n).
$$
By abuse of notation, we also write $ C_\varphi^*f=\sum_{\alpha\in
\FF_n^+}\left< f,\varphi_\alpha\right> e_\alpha,$ where $f,
\varphi_1,\ldots, \varphi_n$  are seen as elements in the Fock space
$F^2(H_n)$.

\begin{theorem}\label{normal}
Let $\varphi$ be a   free holomorphic self-map of  the
noncommutative ball $[B(\cH)^n]_1$.   Then  the composition operator
$C_\varphi$ on $H^2_{\bf ball}$ is normal   if and only if
$$
\varphi(X_1,\ldots, X_n)=[X_1,\ldots, X_n] A
$$
for some normal  scalar matrix  $A\in M_{n\times n}$  with
$\|A\|\leq 1$.
\end{theorem}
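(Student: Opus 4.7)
The plan is to prove both implications by realizing $C_\varphi$, through the unitary $\cU:H^2_{\bf ball}\to F^2(H_n)$ of Corollary \ref{comp5}, as an explicit operator built from the coefficients of $\varphi$, so that $C_\varphi e_\alpha=\varphi_\alpha$ in $F^2(H_n)$. For the ``if'' direction, suppose $\varphi(X)=[X_1,\ldots,X_n]A$ with $A=[a_{ij}]$ normal and $\|A\|\le 1$, so $\varphi_j=\sum_i a_{ij}X_i$. Multiplying out $\varphi_\alpha=\varphi_{i_1}\cdots\varphi_{i_k}$ for $\alpha=g_{i_1}\cdots g_{i_k}$ and passing to $F^2(H_n)$, one finds $\varphi_\alpha=A^{\otimes k}(e_\alpha)\in H_n^{\otimes k}$, where $A$ acts on $H_n$ by $Ae_j=\sum_i a_{ij}e_i$. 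This yields the orthogonal decomposition
$$
C_\varphi=\bigoplus_{k\ge 0}A^{\otimes k}\quad\text{on}\quad F^2(H_n)=\bigoplus_{k\ge 0}H_n^{\otimes k},
$$
so $C_\varphi^*C_\varphi=\bigoplus_k(A^*A)^{\otimes k}$ and $C_\varphi C_\varphi^*=\bigoplus_k(AA^*)^{\otimes k}$ coincide because $A$ is normal.

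For the converse, assume $C_\varphi$ is normal. The first step I would take is to show $\varphi(0)=0$. Proposition \ref{adjoint} gives $C_\varphi^*1=\sum_\alpha\overline{\varphi_\alpha(0)}\,e_\alpha$, and the multiplicative identity $\varphi_\alpha(0)=\varphi_{i_1}(0)\cdots\varphi_{i_k}(0)$ produces
$$
\|C_\varphi^*1\|^2=\sum_{k\ge 0}\|\varphi(0)\|_2^{2k}=\frac{1}{1-\|\varphi(0)\|_2^2},
$$
which normality forces to equal $\|C_\varphi 1\|^2=1$; hence $\varphi(0)=0$. Once this holds, each $\varphi_\alpha$ begins in degree $|\alpha|$, so Proposition \ref{adjoint} shows $C_\varphi^*(H_n)\subseteq H_n$; writing $C_\varphi^*|_{H_n}=V^*$, the operator $V:H_n\to H_n$ is precisely the linear part $Ve_j:=P_1\varphi_j$ of the symbol, where $P_1$ denotes the orthogonal projection of $F^2(H_n)$ onto $H_n$.

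It remains to extract linearity of $\varphi$ and normality of $V$. Since $P_1C_\varphi v=Vv$ for $v\in H_n$, Pythagoras gives
$$
\|Vv\|^2+\|(I-P_1)C_\varphi v\|^2=\|C_\varphi v\|^2=\|C_\varphi^*v\|^2=\|V^*v\|^2,
$$
so $V^*V\le VV^*$ on the finite-dimensional space $H_n$. I expect the main obstacle to be converting this scalar inequality into the operator identity $V^*V=VV^*$; my plan is to use the trace identity $\mathrm{tr}(VV^*)=\mathrm{tr}(V^*V)$, which combined with $VV^*-V^*V\ge 0$ forces $V^*V=VV^*$. Then $\|Vv\|=\|V^*v\|$ for every $v\in H_n$ and the displayed equation forces $(I-P_1)C_\varphi v=0$; thus $\varphi_j\in H_n$, i.e.\ $\varphi_j(X)=\sum_i a_{ij}X_i$, and $\varphi(X)=[X_1,\ldots,X_n]V$ with $V$ normal. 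Finally $\|V\|\le 1$ follows by evaluating $\varphi$ at scalar row contractions $\lambda\in\BB_n$ and letting $\|\lambda\|_2\to 1$.
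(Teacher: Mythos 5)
Your proof is correct, but the converse direction follows a genuinely different route from the paper's. For the ``if'' direction the paper does not use the decomposition $C_\varphi=\bigoplus_{k\ge 0}A^{\otimes k}$; instead it computes $C_\varphi^*e_\beta$ entrywise, identifies $C_\varphi^*=C_\psi$ with $\psi(X)=[X_1,\ldots,X_n]A^*$, and deduces normality from $C_\varphi C_\varphi^*=C_{\psi\circ\varphi}=C_{\varphi\circ\psi}=C_\varphi^*C_\varphi$, which requires only that $\varphi\circ\psi=\psi\circ\varphi$, i.e.\ $AA^*=A^*A$. Your tensor-power picture gives the same conclusion more transparently and makes boundedness visible as $\sup_k\|A\|^k\le 1$. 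For the converse, the paper gets $\varphi(0)=0$ from the eigenvector identity $\ker(N-1)=\ker(N^*-1)$ for normal $N$ (you get it from $\|C_\varphi^*1\|=\|C_\varphi 1\|$ and the explicit geometric series --- equally valid), and then extracts linearity of $\varphi$ by showing that the finite-dimensional subspace $\cM=\operatorname{span}\{e_{g_1},\ldots,e_{g_n}\}$ is invariant under $C_\varphi^*$ and invoking the fact that a finite-dimensional invariant subspace of a normal operator is reducing; normality of $A$ then comes from a second application of the $C_\varphi^*=C_\psi$ identity. Your argument replaces both of these steps with the single Pythagoras computation $\|Vv\|^2+\|(I-P_1)C_\varphi v\|^2=\|V^*v\|^2$, which yields hyponormality of $V^*$, hence (by the trace argument) normality of $V$, and then forces $(I-P_1)C_\varphi v=0$, i.e.\ linearity of $\varphi$, in one stroke. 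This is more elementary --- it avoids both the reducing-subspace lemma and the explicit construction of the adjoint symbol $\psi$ --- at the cost of being specific to the degree-one block rather than exhibiting $C_\varphi^*$ as a composition operator, which the paper's proof does and which is of independent interest.
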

\begin{proof} Assume that  $A=[a_{ij}]_{n\times n}$  is a scalar matrix and  $\|A\|\leq 1$. Then it
is clear that the relation  $$ \varphi(X_1,\ldots, X_n)=[X_1,\ldots,
X_n] A,\qquad (X_1,\ldots, X_n)\in [B(\cH)^n]_1, $$
 defines a
bounded free holomorphic function $\varphi :[B(\cH)^n]_1\to
[B(\cH)^n]_1$. According to Theorem \ref{comp4}, the composition
operator $C_\varphi$ is bounded on $H^2_{\bf ball}$. Setting
$\varphi=(\varphi_1,\ldots, \varphi_n)$, we have  the Fock
representation  $\varphi_j=\sum_{p=1}^n a_{pj}e_{p}$ for each
$j=1,\ldots, n$. Fix $\beta=g_{i_1}\cdots g_{i_k}\in \FF_n^+$ and
let $\alpha=e_{j_1}\cdots e_{j_k}$. Note that $\left<e_\beta,
\varphi_\gamma\right>=0$ if $|\alpha|\neq |\gamma|$, $\gamma\in
\FF_n^+$, and
$$
\left<e_\beta, \varphi_\alpha\right>=\overline{a}_{i_1 j_1}\cdots
\overline{a}_{i_k j_k}.
$$
Consequently,  using  Proposition \ref{adjoint}, we deduce that
$$
C_\varphi^* e_\beta=\sum_{|\alpha|=k}\left<e_\beta,
\varphi_\alpha\right>e_\alpha=\sum_{\alpha=e_{j_1}\cdots e_{j_k},
i_1,\ldots i_k\in\{1,\ldots,n\}}\overline{a}_{i_1 j_1}\cdots
\overline{a}_{i_k j_k}e_\alpha.
$$
Now, define
 $$ \psi(X_1,\ldots, X_n)=[X_1,\ldots,
X_n] A^*,\qquad (X_1,\ldots, X_n)\in [B(\cH)^n]_1, $$
 and note that $\psi:[B(\cH)^n]_1\to
[B(\cH)^n]_1$ is a bounded free holomorphic function. Once again.
Theorem \ref{comp4} shows that  the composition operator $C_\psi$ is
bounded on   $H^2_{\bf ball}$. Setting $\psi=(\psi_1,\ldots,
\psi_n)$, we have  the Fock representation $\psi_i=\sum_{j=1}^n
\overline{a}_{ij}e_{j}$ for each $i=1,\ldots, n$. Hence, if
$\beta=g_{i_1}\cdots g_{i_k}\in \FF_n^+$, we  have
$$
C_\psi(e_\beta)=\psi_{i_1}\cdots
\psi_{i_k}=\sum_{\alpha=e_{j_1}\cdots e_{j_k}, i_1,\ldots
i_k\in\{1,\ldots,n\}}\overline{a}_{i_1 j_1}\cdots \overline{a}_{i_k
j_k}e_\alpha.
$$
This shows that $C_\varphi^*=C_\psi$. If we assume that $A$ is a
normal  matrix, then $\varphi\circ \psi=\psi\circ \varphi$. Indeed,
 for any $(X_1,\ldots, X_n)\in [B(\cH)^n]_1$, we have
 $$
(\varphi\circ \psi)(X_1,\ldots, X_n)=[X_1,\ldots, X_n]
A^*A=[X_1,\ldots, X_n]AA^*=(\psi\circ \varphi)(X_1,\ldots, X_n).
$$
Consequently, we deduce that
$$
C_\varphi C_\varphi^*= C_\varphi C_\psi=C_{\psi\circ
\varphi}=C_{\varphi\circ \psi} =C_\psi C_\varphi= C_\varphi^*
C_\varphi.
$$

Now we prove the direct implication. Assume that $\varphi$ is  a
free holomorphic self-map of  the noncommutative ball $[B(\cH)^n]_1$
and  the composition operator $C_\varphi$ is normal. Since
$C_\varphi 1=1$, the vector  $1\in F^2(H_n)$ is also an eigenvector
for  $C_\varphi^*$.  Since, due to Theorem \ref{adjoint},
$C_\varphi^*1 = \sum_{\alpha\in \FF_n^+}
\left<1,\varphi_\alpha\right> e_\alpha$, we deduce that
$\left<1,\varphi_\alpha\right>=0$ for all $\alpha\in \FF_n^+$ with
$|\alpha|\geq 1$. In particular, we have
$\left<1,\varphi_i\right>=0$
 which implies   $\varphi_i(0)=0$ for $i=1,\ldots, n$. Therefore
 $\varphi(0)=0$ and $C_\varphi^*1=1$. Consequently, we have
 $$
 \varphi(X_1,\ldots, X_n)=[X_1,\ldots, X_n]A+(\psi_1,\ldots, \psi_n)
 $$
 for some matrix  $A\in M_{n\times n}$ and  bounded free holomorphic
 functions $\psi_i=\sum_{|\alpha|\geq 2}
 c_\alpha^{(i)} e_\alpha$, $i=1,\ldots,n$. Consequently, using again the
 Fock space representation
 formula  for the adjoint of $C_\varphi$, we obtain
 $$
 C_\varphi^* (e_{g_i})=\sum_{\alpha\in \FF_n^+}
\left<e_{g_i},\varphi_\alpha\right> e_\alpha,
$$
which implies  that the subspace $\cM:=\text{\rm span} \{e_{g_i}:\
i=1,\ldots,n\}$ is invariant under $C_\varphi^*$. Since $\cM$ is
finite dimensional, it is  also invariant under $C_\varphi$ and
$C_\varphi|_{\cM}$ is a normal operator. This implies that, for each
$j=1,\ldots,n$,  $C_\varphi(e_j)$ is a linear combination of
$e_1,\ldots, e_n$ and, consequently, $\varphi(X_1,\ldots,
X_n)=[X_1,\ldots, X_n]A$ for $(X_1,\ldots, X_n)\in [B(\cH)^n]_1$.
Since $\varphi:[B(\cH)^n]_1\to [B(\cH)^n]_1$, we must have
$\|A\|\leq 1$. Setting $ \psi(X_1,\ldots, X_n)=[X_1,\ldots, X_n]
A^*$  for  $(X_1,\ldots, X_n)\in [B(\cH)^n]_1$, the first part of
the proof shows that $C_\psi$ is a bounded operator on $H^2_{\bf
ball}$ and $C_\varphi^*=C_\psi$. Since $C_\varphi$ is normal, we
have
$$
C_{\psi\circ \varphi}=C_\varphi C_\psi=C_\varphi C_\varphi^*
=C_\varphi^* C_\varphi=C_\psi C_\varphi=C_{\varphi\circ \psi},
$$
which implies $\psi\circ \varphi (X)=\varphi\circ \psi(X)$, $X\in
[B(\cH)^n]_1$. Hence, we deduce that $[X_1,\ldots,
X_n]A^*A=[X_1,\ldots, X_n]AA^*$ for any  $(X_1,\ldots, X_n)\in
[B(\cH)^n]_1$, which implies $A^*A=AA^*$. The proof is complete.
\end{proof}

 Due to Theorem \ref{normal}, characterizations of self-adjoint or
 unitary composition operators  on $H^2_{\bf ball}$ are now obvious.

\begin{lemma}\label{kernel}
Let $\varphi$ be a   free holomorphic self-map of  the
noncommutative ball $[B(\cH)^n]_1$ and let $C_\varphi$ be the
composition operator on $H^2_{\bf ball}$. If the kernel of
$C_\varphi^*$ is finite dimensional, then the scalar representation
of $\varphi$  is one-to-one.
\end{lemma}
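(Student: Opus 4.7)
The plan is to prove the contrapositive: assuming the scalar representation $\BB_n\ni \lambda\mapsto \varphi(\lambda)\in \BB_n$ fails to be injective, I will exhibit an infinite linearly independent family inside $\ker C_\varphi^*$. The key tool is the identity $C_\varphi^* Z_\mu = Z_{\varphi(\mu)}$ for $\mu\in \BB_n$, already established in \eqref{CZ}, where $Z_\mu\in H^2_{\bf ball}$ corresponds under the canonical identification with $F^2(H_n)$ to the vector $z_\mu=\sum_{\alpha\in \FF_n^+} \overline{\mu}_\alpha e_\alpha$. In particular, any pair of distinct points $a\neq b$ in $\BB_n$ with $\varphi(a)=\varphi(b)$ produces a vector $Z_a-Z_b \in \ker C_\varphi^*$.

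A preliminary lemma records that the kernel vectors are linearly independent: for any distinct $\mu_1,\ldots,\mu_N\in \BB_n$, the vectors $Z_{\mu_1},\ldots,Z_{\mu_N}$ are linearly independent in $H^2_{\bf ball}$. Indeed, if $f=\sum_\alpha f_\alpha X_\alpha\in H^2_{\bf ball}$ then $\langle f,Z_\mu\rangle=\sum_\alpha f_\alpha \mu_\alpha$, so a relation $\sum_i c_i Z_{\mu_i}=0$ gives $\sum_i c_i \overline{\mu_{i,\beta}}=0$ for every $\beta\in \FF_n^+$. Since the scalar monomials $\mu\mapsto \mu_\beta$ range over all commutative monomials in $\lambda_1,\ldots,\lambda_n$ and evaluation at distinct points is linearly independent on the polynomial algebra (Lagrange interpolation), all $c_i$ must vanish.

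Now suppose $\varphi(a)=\varphi(b)$ with $a\neq b$. If the fiber $\varphi^{-1}(\varphi(a))$ is infinite, pick distinct points $a_1,a_2,\ldots$ in it; the lemma shows that $\{Z_{a_1}-Z_{a_k}:k\geq 2\}\subset \ker C_\varphi^*$ is linearly independent, so $\ker C_\varphi^*$ is infinite dimensional. Otherwise every fiber is finite, and I would use a dimension count on the holomorphic map $\Xi:\BB_n\times \BB_n\to \CC^n$ given by $\Xi(x,y):=\varphi(x)-\varphi(y)$. Its zero set $\Xi^{-1}(0)$ is the disjoint union of the diagonal and the set $E:=\{(x,y):x\neq y,\ \varphi(x)=\varphi(y)\}$; as $\Xi$ is cut out by $n$ holomorphic equations in a $2n$-dimensional complex manifold, each irreducible component of $\Xi^{-1}(0)$ has pure dimension at least $n$. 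The diagonal is one such $n$-dimensional component; since $(a,b)\in E$, there is a second component not contained in the diagonal, also of dimension $\geq n\geq 1$, hence $E$ is uncountable. Because all fibers are finite, each slice $E\cap (\{p\}\times \BB_n)$ and $E\cap (\BB_n\times \{p\})$ is finite, so an inductive selection yields pairs $(a_k,b_k)\in E$ whose coordinates $\{a_k,b_k:k\in \NN\}$ are pairwise distinct. The linear independence lemma applied to $\{Z_{a_k},Z_{b_k}\}_{k\in \NN}$ then implies that $\{Z_{a_k}-Z_{b_k}\}_{k\in \NN}\subset \ker C_\varphi^*$ is linearly independent, contradicting finite dimensionality.

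The main obstacle is the finite-fiber case, specifically the dimension estimate for $\Xi^{-1}(0)$; this relies on the standard fact from several complex variables that a subvariety of an $N$-dimensional complex manifold cut out by $m$ holomorphic equations has pure dimension at least $N-m$ at every point, together with the observation that the resulting uncountability of $E$ combined with finite fibers allows the inductive selection of pairs with pairwise distinct coordinates.
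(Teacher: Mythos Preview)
Your argument is correct, and the overall architecture---reproducing-kernel identity $C_\varphi^* Z_\mu=Z_{\varphi(\mu)}$, linear independence of the $Z_\mu$, and a case split on whether some fiber is infinite---matches the paper's. The genuine divergence is in the finite-fiber case. The paper invokes Rudin's open mapping theorem (a holomorphic self-map of $\BB_n$ with finite fibers is open), then observes that if $\psi(u)=\psi(v)$ with $u\neq v$ one can take disjoint neighborhoods $U\ni u$, $V\ni v$; openness makes $\psi(U)\cap\psi(V)$ a nonempty open set, from which one extracts sequences $\{\lambda^{(j)}\}\subset U$, $\{\mu^{(j)}\}\subset V$ of distinct points with $\psi(\lambda^{(j)})=\psi(\mu^{(j)})$. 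Your route instead looks at the incidence variety $\Xi^{-1}(0)\subset \BB_n\times\BB_n$ and uses the analytic-dimension bound (an analytic set cut out by $n$ equations in a $2n$-dimensional manifold has dimension $\geq n$ at every point) to force the off-diagonal part $E$ to be uncountable; finiteness of fibers then makes the inductive selection of pairs with pairwise distinct coordinates go through. Both arguments rest on standard several-complex-variables facts of comparable depth; the paper's version is perhaps more direct in that openness immediately hands you an open set's worth of collisions, while your version trades that for a clean dimension count and a short combinatorial selection.
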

\begin{proof}
Let $\lambda^{(j)}=(\lambda^{(j)}_1,\ldots, \lambda^{(j)}_n)$,
$j=1,\ldots, k$, be $k$ distinct points in  $\BB_n$ and fix $p\in
\{1,\ldots, k\}$. For each $ j\in \{1,\ldots, k\}$  with $j\neq p$,
there exists $q_j\in \{1,\ldots,n\}$ such that
$\lambda^{(p)}_{q_j}\neq \lambda^{(j)}_{q_j}$. Define the free
holomorphic function  $\varphi_p:[B(\cH)^n]_1\to B(\cH)$ by setting
$$
\varphi_p(X_1,\ldots, X_n)=\prod_{ j\in \{1,\ldots, k\}, j\neq
p}\frac{1}
{\lambda^{(p)}_{q_j}-\lambda^{(j)}_{q_j}}\left(X_{q_j}-\lambda^{(j)}_{q_j}I\right).
$$
Note that $\varphi_p(\lambda^{(p)})=1$ and
$\varphi_p(\lambda^{(j)})=0$  for any $j\in \{1,\ldots, k\}$ with $
j\neq p$.

For each $\mu:=(\mu_1,\ldots, \mu_n)\in \BB_n$, we define  the
vector $z_\mu:=\sum_{k=0}\sum_{|\alpha|=k} \overline{\mu}_\alpha
e_\alpha$, where $\mu_\alpha:=\mu_{i_1}\cdots \mu_{i_p}$ if
$\alpha=g_{i_1}\cdots g_{i_p}\in \FF_n^+$ and $i_1,\ldots, i_p\in
\{1,\ldots, n\}$, and $\mu_{g_0}=1$. Since  $z_\mu \in F^2(H_n)$ and
$S_i^*z_\mu=\overline{\mu}_i z_\mu$, one can see that $q(S_1,\ldots,
S_n)^* z_\mu=\overline{q(\mu)} z_\mu$ for any noncommutative
polynomial $q$.  Now we prove that  the vectors $z_{\lambda^{(1)}},
\ldots, z_{\lambda^{(k)}}$ are linearly independent. Let
$a_1,\ldots, a_k\in \CC$  be such that $a_1 z_{\lambda^{(1)}}+\cdots
+ a_kz_{\lambda^{(k)}}=0$. Due to the properties of the free
holomorphic function $\varphi_p$, $p\in \{1,\ldots, k\}$, we deduce
that
\begin{equation*}
\begin{split}
\varphi_p(S_1,\ldots, S_n)^*(a_1 z_{\lambda^{(1)}}+\cdots +
a_kz_{\lambda^{(k)}})&= a_1\overline{\varphi_p(\lambda^{(1)})}
z_{\lambda^{(1)}}+\cdots +
a_k \overline{\varphi_p(\lambda^{(k)})}z_{\lambda^{(k)}}\\
&=a_p
\overline{\varphi_p(\lambda^{(p)})}z_{\lambda^{(p)}}=a_pz_{\lambda^{(p)}}=0.
\end{split}
\end{equation*}
Hence, we deduce that $a_1=\cdots =a_k=0$, which proves  our
assertion.

Let $\psi:\BB_n\to \BB_n$ be the scalar representation of $\varphi$,
i.e., $\psi(\lambda)=\varphi(\lambda)$, $\lambda\in \BB_n$. Assume
that there is $\xi\in \BB_n$ such that $\psi^{-1}(\xi)$ is an
infinite set. Let $\{\lambda^{(j)}\}_{k\in \NN}\subset
\psi^{-1}(\xi)$ be a sequence of distinct points. Due to relation
\eqref{CZ}, we have
$C_\varphi^*(z_{\lambda^{(j)}})=C_\varphi^*(z_{\lambda^{(k)}})
=z_\xi$, which implies $z_{\lambda^{(j)}}-z_{\lambda^{(k)}}\in \ker
C_\varphi^*$. As shown above, $\{z_{\lambda^{(j)}}\}_j\in \NN$ is a
set of linearly independent vectors. Consequently, $\ker
C_\varphi^*$ is infinite dimensional, which contradicts the
hypothesis. Therefore, for each $\xi\in \BB_n$, the inverse image
$\psi^{-1}(\xi)$ is a finite set. According to Rudin's result
(Theorem 15.1.6 from \cite{Ru2}),  $\psi:\BB_n\to \BB_n$ is an open
map. Suppose that $\psi$ is not one-to-one. Let $u,v\in \BB_n$,
$u\neq v$,  be such that $\psi(u)=\psi(v)$, and let $U,V$ be open
sets in $\BB_n$ with the property that $u\in U$, $v\in V$, and
$U\cap V\neq \emptyset$. Since $\psi$ is an open map, we deduce that
$\psi(U)\cap \psi(V)$ is a nonempty open set. Consequently, we can
find sequences $\{\lambda^{(j)}\}_{j\in \NN}\subset U$ and
$\{\mu^{(j)}\}_{j\in \NN}\subset V$ of distinct points such that
$\psi(\lambda^{(j)})=\psi(\mu^{(j)})$ for all $j\in \NN$. As above,
we deduce that $z_{\lambda^{(j)}}-z_{\mu^{(j)}}\in \ker C_\varphi^*$
for $j\in \NN$.  Using the linear independence of the set
$\{z_{\lambda^{(j)}}\}_{j\in \NN}\cup \{z_{\mu^{(j)}}\}_{j\in \NN}$,
we  deduce that $\ker C_\varphi^*$ is infinite dimensional, which
contradicts the hypothesis. Therefore, $\psi$ is a one-to-one map.
The proof is complete.
\end{proof}

Note that, unlike the  single variable case, if $n\geq 2$, then the
composition operator  $C_\varphi$ is not  one-to-one on $H^2_{\bf
ball}$. For example, one can take $\varphi=(\varphi_1,
\varphi_1):[B(\cH)^2]_1\to [B(\cH)^2]_1$ and $f=e_1 e_2-e_2 e_1$,
and note that $C_\varphi f =0$.

We remark that if $\varphi\in Aut([B(\cH)^n]_1)$, then the
composition operator $C_\varphi$ is invertible on $H^2_{\bf ball}$
and therefore Fredholm. It will be interesting to see if the
converse is true. At the moment, we can prove the following result.

\begin{theorem} \label{Fredholm} Let $\varphi$ be a   free holomorphic self-map of  the
noncommutative ball $[B(\cH)^n]_1$.  If  $C_\varphi$ is a Fredholm
operator on $H^2_{\bf ball}$, then the  scalar representation of
$\varphi$ is a holomorphic automorphism of $\BB_n$.
\end{theorem}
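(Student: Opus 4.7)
My approach is to reduce the Fredholm hypothesis on $C_\varphi$ to an invertibility statement for a classical composition operator on the Drury--Arveson space, and then deduce that the symbol must be an automorphism of $\BB_n$. Since $C_\varphi$ is Fredholm, $\ker C_\varphi^*$ is finite-dimensional, so Lemma~\ref{kernel} immediately yields that the scalar representation $\psi:\BB_n\to\BB_n$ of $\varphi$ is one-to-one. An injective holomorphic self-map of $\BB_n$ that is also surjective is automatically a biholomorphic automorphism, so it suffices to show $\psi(\BB_n)=\BB_n$.

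The key reduction goes as follows. By relation \eqref{CZ}, $C_\varphi^*z_\mu=z_{\psi(\mu)}$ for every $\mu\in\BB_n$, so the ``symmetric'' closed subspace $\mathcal{S}:=\overline{\mathrm{span}}\{z_\mu:\mu\in\BB_n\}\subset F^2(H_n)$ is $C_\varphi^*$-invariant and $\mathcal{S}^\perp$ is $C_\varphi$-invariant. With respect to the orthogonal decomposition $F^2(H_n)=\mathcal{S}\oplus\mathcal{S}^\perp$, $C_\varphi$ has block lower-triangular form
\[
C_\varphi=\begin{pmatrix} A & 0 \\ E & D\end{pmatrix},
\]
and a standard Fredholm argument for triangular block operators shows that the corner $A$ on $\mathcal{S}$ is Fredholm. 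The unitary identification $z_\mu\leftrightarrow k_\mu(\lambda)=(1-\langle\lambda,\mu\rangle)^{-1}$ of $\mathcal{S}$ with the Drury--Arveson space $H_n^2$ turns $A$ into the classical composition operator $C_\psi$ on $H_n^2$, which is therefore Fredholm.

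Now argue by contradiction: assume $\psi(\BB_n)\subsetneq\BB_n$. The range of $C_\psi^*$ contains all reproducing kernels $k_{\psi(\mu)}$, whose closed span is the whole of $H_n^2$ --- any vector orthogonal to them corresponds to a holomorphic function on $\BB_n$ vanishing on the open set $\psi(\BB_n)$ (open by Rudin's theorem for injective holomorphic maps, as used in Lemma~\ref{kernel}), hence vanishing identically. By Fredholmness the range is closed, so $C_\psi^*$ is surjective; and injectivity of $\psi$ combined with the identity principle forces $C_\psi$ itself to be injective. Therefore $C_\psi$ is invertible on $H_n^2$. Plugging each coordinate function $\pi_i\in H_n^2$ into the inverse produces $g_i\in H_n^2$ with $g_i\circ\psi=\pi_i$, so $\chi:=(g_1,\ldots,g_n)$ is a holomorphic left inverse of $\psi$ on $\BB_n$. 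The most delicate step --- the principal obstacle in this plan --- is to verify that $\chi$ genuinely takes its values in $\BB_n$, so that $\chi$ is a holomorphic self-map of the ball; once this is in hand, injectivity of $\psi$ together with $\chi\circ\psi=\mathrm{id}_{\BB_n}$ force $\psi\circ\chi=\mathrm{id}_{\BB_n}$, hence $\psi(\BB_n)=\BB_n$, contradicting the hypothesis.
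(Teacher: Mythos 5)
Your reduction breaks down at its first structural step. From the invariance of $\cS:=\overline{\operatorname{span}}\{z_\mu:\mu\in\BB_n\}$ under $C_\varphi^*$ you write $C_\varphi=\left(\begin{smallmatrix} A&0\\ E&D\end{smallmatrix}\right)$ and assert that Fredholmness passes to the corner $A$ by ``a standard Fredholm argument for triangular block operators.'' No such general fact exists. Triangularity only gives you information about $A^*=C_\varphi^*|_{\cS}$: its kernel is $\ker C_\varphi^*\cap\cS$, hence finite-dimensional, so $\overline{\operatorname{ran}A}$ has finite codimension; it gives no control on $\ker A$ or on the closedness of $\operatorname{ran}A$. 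Indeed, even an \emph{invertible} lower-triangular block operator can have a $(1,1)$ corner that is surjective with infinite-dimensional kernel: take $A:\ell^2\to\ell^2$, $A(x_1,x_2,\dots)=(x_2,x_4,\dots)$, let $W:\ker A\oplus H_2\to H_2$ be unitary, and set $E|_{\ker A}=W|_{\ker A\oplus 0}$, $E|_{(\ker A)^\perp}=0$, $D=W|_{0\oplus H_2}$; then $(x,y)\mapsto(Ax,Ex+Dy)$ is a bounded bijection while $A$ is not Fredholm. So the Fredholmness of the Drury--Arveson composition operator $C_\psi$ does not follow by this route, and everything downstream is unsupported. A second, independent gap is the one you flag yourself: even granting that $C_\psi$ is invertible on ${\bf H}_n^2$, you never show that $\chi=(C_\psi^{-1}\pi_1,\dots,C_\psi^{-1}\pi_n)$ maps $\BB_n$ into $\BB_n$. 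This is not a routine verification --- elements of ${\bf H}_n^2$ need not even be bounded on $\BB_n$, and knowing $\|\chi(\lambda)\|<1$ on the open set $\psi(\BB_n)$, where $\chi=\psi^{-1}$, does not propagate to the rest of the ball.

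The paper's proof avoids both difficulties and never leaves $F^2(H_n)$. After obtaining injectivity of $\psi$ from Lemma \ref{kernel} exactly as you do, it argues: if $\psi$ were not onto, there is a sequence $\lambda^{(k)}\to\zeta\in\partial\BB_n$ with $\psi(\lambda^{(k)})\to w\in\BB_n$; the normalized kernels $f_k:=z_{\lambda^{(k)}}/\|z_{\lambda^{(k)}}\|$ converge weakly to $0$, while $\|C_\varphi^*f_k\|=\|z_{\psi(\lambda^{(k)})}\|/\|z_{\lambda^{(k)}}\|\to 0$ because $\|z_{\psi(\lambda^{(k)})}\|\to\|z_w\|<\infty$ and $\|z_{\lambda^{(k)}}\|\to\infty$. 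Choosing $\Lambda$ with $\Lambda C_\varphi^*=I+K$, $K$ compact (this uses only that $C_\varphi$ is Fredholm), one gets $\|\Lambda C_\varphi^*f_k\|\to 0$ while $\|f_k+Kf_k\|^2=1+\|Kf_k\|^2+2\Re\left<f_k,Kf_k\right>\to 1$, a contradiction. You should either adopt this argument or supply a correct proof that the compression to $\cS$ is Fredholm together with the missing self-map property of $\chi$.
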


\begin{proof} Let $\psi:\BB_n\to \BB_n$ be the scalar representation of $\varphi$, i.e., $\psi(\lambda):=\varphi(\lambda)$, $\lambda\in \BB_n$. Due to Lemma \ref{kernel},
$\psi$ is a one-to-one  holomorphic map.  We need to prove that
$\psi$ is surjective. To this end, assume that  $\psi$ is not
surjective. Then there is a sequence $\{\lambda^{(k)}\}\subset
\BB_n$ and $\zeta\in
\partial\BB_n$ such that $\lambda^{(k)}\to \zeta$ as $k\to \infty$
and $\psi(\lambda^{(k)})\to w$ for some $w\in \BB_n$.

As  we will see in the proof of Theorem \ref{angular} (see relation
\eqref{weakly}), $\frac{z_{\lambda^{(k)}}}{\|z_{\lambda^{(k)}}\|}\to
0$ weakly as $k\to \infty$.  On the other hand  taking into account
relation \eqref{CZ}, we have
\begin{equation*}
C_\varphi^*z_{\lambda^{(k)}}=\sum_{k=0}\sum_{|\alpha|=k}
\overline{\varphi_\alpha({\lambda^{(k)}})}e_\alpha=z_{\varphi({\lambda^{(k)}})},\qquad
k\in \NN.
\end{equation*}
Hence, we get
$$\left\|C_\varphi^*\left(\frac{z_{\lambda^{(k)}}}{\|z_{\lambda^{(k)}}\|}\right)\right\|
=\frac{\|z_{\varphi({\lambda^{(k)}})}\|} {\|z_{\lambda^{(k)}}\|}.
$$
Since $\|z_{\varphi({\lambda^{(k)}})}\|\to \| z_w\|<\infty$  and
$\|z_{\lambda^{(k)}}\|\to \infty$ as $k\to \infty$, we deduce that
$\left\|C_\varphi^*\left(\frac{z_{\lambda^{(k)}}}
{\|z_{\lambda^{(k)}}\|}\right)\right\|\to 0$ as $k\to\infty$.
 Denote $f_k:=\frac{z_{\lambda^{(k)}}}{\|z_{\lambda^{(k)}}\|}$.
 Since  $C_\varphi$ is a Fredholm
operator on $H^2_{\bf ball}$, there is an operator $\Lambda\in
B(F^2(H_n))$ such that  $\Lambda C_\varphi^*-I=K$ for some compact
operator $K\in B(F^2(H_n))$. Consequently, we have
\begin{equation}\label{Lambda}
\begin{split}
\|\Lambda C_\varphi^*f_k\|^2&= \|f_k+Kf_k\|^2 =\|f_k\|^2+\|Kf_k\|^2+
2 \Re \left< f_k,K f_k\right>.
\end{split}
\end{equation}
Since $K$ is a compact operator, $\|f_k\|=1$ and $f_k\to 0$ weakly
as $k\to \infty$, we must have $\|Kf_k\|\to 0$. Consequently, we
have $|\Re \left< f_k,K f_k\right>|\leq \|f_k\|\|Kf_k\|\to 0$ as
$k\to\infty$. On the other hand,  we have
 $\|C_\varphi^*f_k\|\to 0$. Now it is easy to see that relation
 \eqref{Lambda} leads to a contradiction. Therefore, $\psi$ is  surjective.
 In conclusion  $\psi$ is an automorphism of $\BB_n$.
 \end{proof}

 \bigskip

\section{Compact  composition operators on $H^2_{\bf ball}$}

 In this section
   we obtain a formula for the
  essential norm of the
composition operators $C_\varphi$ on $H^2_{\bf ball}$. In
particular, this implies  a characterization of  compact composition
operators. We show that if $C_\varphi$ is  a compact operator on
$H^2_{\bf ball}$, then the scalar representation of $\varphi$ is a
holomorphic self-map of $\BB_n$ which
  cannot have
 finite angular derivative at any point of $\partial \BB_n$ and
   has exactly one fixed
point in the open ball $\BB_n$. As a consequence, we deduce that
every  compact composition operator on $H^2_{\bf ball}$ is similar
to a contraction. In the end of this section, we prove that the  set
of compact composition operators on $H^2_{\bf ball}$ is arcwise
connected in  the set    of all composition operators.

We recall that the essential norm  of a bounded operator $T\in
B(\cH)$ is defined by
$$
\|T\|_e:=\inf \{ \|T-K\|:\ K\in B(\cH)  \text{ is compact } \}.
$$

\begin{theorem}\label{essential} Let $\varphi$ be a   free holomorphic self-map of  the
noncommutative ball $[B(\cH)^n]_1$. Then  the essential norm of the
composition operator $C_\varphi$ on $H^2_{\bf ball}$ satisfies the
equality
$$\|C_\varphi\|_e=\lim_{k\to\infty} \sup_{f\in H^2_{\bf ball}, \|f\|_2\leq 1}
\left(\sum_{ |\alpha|\geq k}|\left<f,\varphi_\alpha\right>|^2\right)^{1/2}.
$$
 Consequently,  $C_\varphi$  is a compact operator
if and only if
$$
\lim_{k\to\infty} \sup_{f\in H^2_{\bf ball}, \|f\|_2\leq 1}
 \sum_{|\alpha|\geq k}|\left<f,\varphi_\alpha\right>|^2 =0.
$$
\end{theorem}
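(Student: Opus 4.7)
The plan is to adapt Shapiro's classical essential-norm argument (see \cite{Sha}) to the Fock space model of $H^2_{\bf ball}$. For each integer $k\geq 0$, let $Q_k$ denote the orthogonal projection of $F^2(H_n)$ onto $\overline{\text{span}}\{e_\alpha:\ |\alpha|\geq k\}$, and set $P_k:=I-Q_k$, which is the finite-rank projection onto polynomials of degree less than $k$. Using Proposition~\ref{adjoint} together with the identification $\cU:H^2_{\bf ball}\to F^2(H_n)$, we have $C_\varphi^* f=\sum_{\alpha\in \FF_n^+}\left<f,\varphi_\alpha\right>e_\alpha$, whence $\|Q_k C_\varphi^* f\|^2=\sum_{|\alpha|\geq k}|\left<f,\varphi_\alpha\right>|^2$. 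Consequently the sup on the right-hand side of the theorem is precisely $\|Q_k C_\varphi^*\|=\|C_\varphi Q_k\|$; since $Q_{k+1}=Q_k Q_{k+1}$, the sequence $\{\|C_\varphi Q_k\|\}$ is decreasing and its limit exists. The theorem thus reduces to proving the identity $\|C_\varphi\|_e=\lim_{k\to\infty}\|C_\varphi Q_k\|$.

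For the upper bound I would use the fact that $C_\varphi P_k$ has finite rank (hence is compact) for every $k$, and therefore
$$
\|C_\varphi\|_e\leq \|C_\varphi-C_\varphi P_k\|=\|C_\varphi Q_k\|.
$$
Letting $k\to\infty$ yields $\|C_\varphi\|_e\leq \lim_{k\to\infty}\|C_\varphi Q_k\|$.

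For the reverse inequality I would, for each $k$, pick a unit vector $f_k\in\ran Q_k$ with $\|C_\varphi f_k\|\geq \|C_\varphi Q_k\|-\frac{1}{k}$. Since $\left<f_k,e_\alpha\right>=0$ whenever $|\alpha|<k$, each coefficient $\left<f_k,e_\alpha\right>$ tends to $0$; combined with $\|f_k\|=1$ and the density of the linear span of $\{e_\alpha\}_{\alpha\in \FF_n^+}$, this forces $f_k\to 0$ weakly in $F^2(H_n)$. For any compact $K\in B(H^2_{\bf ball})$ one then has $\|Kf_k\|\to 0$, so
$$
\|C_\varphi-K\|\geq \|(C_\varphi-K)f_k\|\geq \|C_\varphi f_k\|-\|Kf_k\|\geq \|C_\varphi Q_k\|-\tfrac{1}{k}-\|Kf_k\|.
$$
Passing to the limit in $k$ and then taking the infimum over compact $K$ yields $\|C_\varphi\|_e\geq \lim_{k\to\infty}\|C_\varphi Q_k\|$. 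The compactness characterization is then immediate from $C_\varphi\text{ compact}\iff \|C_\varphi\|_e=0$. The only moderately delicate point is the weak convergence $f_k\to 0$, but this is a routine density argument rather than a genuine obstacle; beyond keeping careful track of the identification $\cU$ and the use of Proposition~\ref{adjoint}, I anticipate no substantial difficulty.
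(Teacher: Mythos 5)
Your argument is correct and follows the same route as the paper: identify the supremum with $\|Q_kC_\varphi^*\|=\|C_\varphi Q_k\|$ via Proposition~\ref{adjoint}, get the upper bound from the finite rank of $C_\varphi(I-Q_k)$, and get the lower bound by showing compact operators are asymptotically killed along the filtration. The only (immaterial) difference is in that last step: the paper proves $\|KQ_k\|\to 0$ directly for every compact $K$ by a compactness-of-$K^*$ contradiction argument, whereas you test against near-maximizing unit vectors $f_k\in\ran Q_k$ and use that they tend to zero weakly --- both are standard and both work.
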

\begin{proof}
Let $\varphi$ be a   free holomorphic self-map of  the
noncommutative ball $[B(\cH)^n]_1$. Since $C_\varphi$ is a bounded
composition operator on $H^2_{\bf ball}$
 (see Theorem \ref{comp4}), one can use standard arguments (see  Proposition 5.1 from \cite{Sha}) to show   that the essential norm of the
composition operator $C_\varphi$ on $H^2_{\bf ball}$ satisfies the
equality
\begin{equation}\label{ess-lim}
\|C_\varphi\|_e=\lim_{k\to\infty} \|C_\varphi P_{ k}\|,
 \end{equation}
 where $P_{k}$ is the orthogonal projection of $F^2(H_n)$ onto the closed linear span
  of  all $e_\alpha$ with  $\alpha\in \FF_n^+$ and $|\alpha|\geq k$.   Indeed, note that
   the sequence $\{\|C_\varphi P_{ k}\|\}_{k=1}^\infty$ is decreasing and,
 due to the fact that $I-P_k$ is a finite rank projection, we  have
$\|C_\varphi\|_e \leq \|C_\varphi P_{k}\|$ for any $k\in \NN$. Hence
$ \|C_\varphi\|_e \leq \lim_{k\to\infty} \|C_\varphi P_{ k}\|. $ On
the other hand, let $K$ be a compact operator and
 $a:=\lim_{k\to \infty} \|KP_k\|$. Assume that  $a>0$ and let $\epsilon>0$ with $0<a-\epsilon$. Then there   is a sequence   $h_{k}\in F^2(H_n)$ with $\|h_{k}\|\leq 1$, such that $\|P_{k}K^*h_{k}\|\geq a-\epsilon$ for any $k\geq N$ and some $N\in \NN$.
 Since $K^*$ is a compact operator, there is a subsequence
 $k_m\in \NN$ such that $K^*h_{k_m}\to v$ for some $v\in F^2(H_n)$. Consequently, taking into account that $P_{k_m} v\to 0$, $\|P_k\|\leq 1$, and
 $$
 \|P_{k_m}K^*h_{k_m}\|\leq \|P_{k_m} v\|+\|P_{k_m}\|\|v-K^*h_{k_m}\|,
 $$
 we deduce that $P_{k_m}K^*h_{k_m}\to 0$, which is a contradiction.
 Therefore, $\lim_{k\to \infty} \|KP_k\|=0$.  Note also that
 $$
 \|C_\varphi-K\|\geq \|(C_\varphi-K)P_k\|\geq \|C_\varphi P_k\|-\|P_kK^*\|.
 $$
 Now,  taking $k\to\infty$, we obtain $\|C_\varphi-K\|\geq \lim_{k\to\infty} \|C_\varphi P_{ k}\|$,
 which proves relation \eqref{ess-lim}.

According  to Proposition \ref{adjoint} and the remarks that follow,
we have
$$
P_k C_\varphi^*f=\sum_{|\alpha|\geq k } \left< f,\varphi_\alpha\right> e_\alpha,\qquad f\in F^2(H_n),
$$
where  $P_{k}$ is the orthogonal projection of the full Fock space
$F^2(H_n)$ onto the closed span of the vectors $ \{e_\alpha:\
\alpha\in \FF_n^+, |\alpha|\geq k\}$, and  $f$, $\varphi_1, \ldots,
\varphi_n$ are seen as elements of the Fock space $F^2(H_n)$.
 Hence, we deduce that
$$
\|P_k C_\varphi^*\|=\sup_{f\in H^2_{\bf ball}, \|f\|\leq 1}
\left( \sum_{|\alpha|\geq k } |\left< f,\varphi_\alpha\right>|^2\right)^{1/2}.
$$
Combining this  result with relation \eqref{ess-lim}, we  obtain the
formula for the essential norm of $C_\varphi$. The last part of the
theorem is now obvious.
\end{proof}

\begin{proposition} \label{comp-trace} Let $\varphi:=(\varphi_1,\ldots, \varphi_n)$ be a   free holomorphic self-map of  the
noncommutative ball $[B(\cH)^n]_1$ and let $C_\varphi$ be the
composition operator on $H^2_{\bf ball}$.  Then the following
statements hold.
\begin{enumerate}
\item[(ii)] If $\varphi$ is inner then $C_\varphi$   is  not compact.
\item[(ii)] If   \,$\|\varphi\|_\infty<1$
then  $C_\varphi$   is compact.
\item[(iii)] If $\|\varphi_1\|_\infty+\cdots +\|\varphi_n\|_\infty<1$, then $C_\varphi$ is a trace class operator.
\item[(iv)] If $\|\varphi_1\|_\infty^2+\cdots +\|\varphi_n\|_\infty^2<1$, then $C_\varphi$
 is a Hilbert-Schmidt  operator.
\end{enumerate}
\end{proposition}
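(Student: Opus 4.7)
The plan is to dispatch the four parts separately, leveraging earlier machinery in the paper.

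For (i), I would invoke Theorem \ref{comp3}, which gives the lower bound $\|C_\varphi f\| \geq c \|f\|$ for every $f \in H^2_{\bf ball}$, with $c := \left(\frac{1-\|\varphi(0)\|}{1+\|\varphi(0)\|}\right)^{1/2} > 0$. Applied to the orthonormal basis $\{e_\alpha\}_{\alpha \in \FF_n^+}$ of $H^2_{\bf ball} \simeq F^2(H_n)$, this yields $\|C_\varphi e_\alpha - C_\varphi e_\beta\| \geq c\sqrt{2}$ for all $\alpha \neq \beta$, so the image of a bounded infinite set has no Cauchy subsequence, precluding compactness.

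For (ii), I would feed the essential-norm formula from Theorem \ref{essential} a decay estimate on $\sum_{|\alpha|\geq k}|\langle f, \varphi_\alpha\rangle|^2$. Setting $r := \|\varphi\|_\infty < 1$, the boundary row $[\widetilde\varphi_1,\ldots,\widetilde\varphi_n]$ on $F^2(H_n)$ has norm $r$, hence $\sum_i \widetilde\varphi_i\widetilde\varphi_i^* \leq r^2 I$. Induction using
$$\sum_{|\alpha|=k}\widetilde\varphi_\alpha\widetilde\varphi_\alpha^* \;=\; \sum_{i=1}^n \widetilde\varphi_i\Bigl(\sum_{|\beta|=k-1}\widetilde\varphi_\beta\widetilde\varphi_\beta^*\Bigr)\widetilde\varphi_i^*$$
gives $\sum_{|\alpha|=k}\widetilde\varphi_\alpha\widetilde\varphi_\alpha^* \leq r^{2k} I$ for every $k$. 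Since $|\langle f,\varphi_\alpha\rangle| = |\langle \widetilde\varphi_\alpha^* f, 1\rangle| \leq \|\widetilde\varphi_\alpha^* f\|$, summing yields $\sum_{|\alpha|=k}|\langle f, \varphi_\alpha\rangle|^2 \leq r^{2k}\|f\|^2$, hence $\sum_{|\alpha|\geq k}|\langle f, \varphi_\alpha\rangle|^2 \leq \frac{r^{2k}}{1-r^2}\|f\|^2 \to 0$, so $\|C_\varphi\|_e = 0$.

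For (iii) and (iv), the workhorse is the single estimate $\|\varphi_\alpha\|_2 \leq \|\varphi_\alpha\|_\infty \leq \|\varphi_{i_1}\|_\infty\cdots\|\varphi_{i_k}\|_\infty$ for $\alpha = g_{i_1}\cdots g_{i_k}$, which comes from $\|\widetilde\varphi_\alpha 1\| \leq \|\widetilde\varphi_\alpha\|_{\mathrm{op}}$ together with multiplicativity of the boundary-function identification $H^\infty_{\bf ball} \cong F_n^\infty$. For (iv) the Hilbert--Schmidt norm is $\|C_\varphi\|_{HS}^2 = \sum_\alpha \|C_\varphi e_\alpha\|_2^2 = \sum_\alpha \|\varphi_\alpha\|_2^2$, dominated by the geometric series $\sum_{k\geq 0}\bigl(\sum_i\|\varphi_i\|_\infty^2\bigr)^k$. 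For (iii) I would write $C_\varphi = \sum_\alpha \langle \,\cdot\, , e_\alpha\rangle \varphi_\alpha$ as an absolutely summable series of rank-one operators whose trace-norms are $\|\varphi_\alpha\|_2$, giving $\|C_\varphi\|_1 \leq \sum_\alpha \|\varphi_\alpha\|_2 \leq \sum_{k\geq 0}\bigl(\sum_i\|\varphi_i\|_\infty\bigr)^k < \infty$.

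The only genuine obstacle is the row-contraction induction in (ii): the hypothesis $\|\varphi\|_\infty < 1$ bounds the \emph{row} norm of $[\widetilde\varphi_1,\ldots,\widetilde\varphi_n]$, not just the individual $\|\widetilde\varphi_i\|$, and this row bound is precisely what propagates through the iterated products to produce exponential decay uniformly in $f$. Once that inequality is in hand, everything collapses into routine estimates driven by geometric series.
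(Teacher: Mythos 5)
Your proof is correct, and for parts (i) and (ii) it takes a genuinely different route from the paper's. For (i) the paper does not invoke the lower bound of Theorem \ref{comp3} at all: it first treats the case $\varphi(0)=0$, where $\{\varphi_\alpha\}_{\alpha\in\FF_n^+}$ is an orthonormal set, and uses that orthonormality together with the essential-norm formula to show $\|C_\varphi\|_e=1$ exactly; the general case is then reduced to this one via $C_{\Phi_\xi\circ\varphi}=C_\varphi C_{\Phi_\xi}$ with $\xi=\varphi(0)$. Your observation that Theorem \ref{comp3} makes $C_\varphi$ bounded below by $c=\bigl(\tfrac{1-\|\varphi(0)\|}{1+\|\varphi(0)\|}\bigr)^{1/2}>0$, and that a bounded-below operator on an infinite-dimensional space is never compact, is shorter and gives the conclusion in one stroke, though it yields only non-compactness rather than the exact value of the essential norm. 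For (ii) the paper argues directly: from $\|[\widetilde\varphi_\alpha:\ |\alpha|=k]\|\le s^k$ it shows that the finite-rank truncations $G_m g=\sum_{|\alpha|\le m}\langle g,e_\alpha\rangle\,\widetilde\varphi_\alpha 1$ converge to $C_{\widetilde\varphi}$ in operator norm; you instead push the row-contraction estimate $\sum_{|\alpha|=k}\widetilde\varphi_\alpha\widetilde\varphi_\alpha^*\le r^{2k}I$ through the essential-norm criterion of Theorem \ref{essential}. Both are valid and of comparable length (the two bounds are really the same inequality read in two ways); the paper's version has the mild advantage of not depending on Theorem \ref{essential}, while yours produces the explicit uniform tail bound $\sup_{\|f\|_2\le1}\sum_{|\alpha|\ge k}|\langle f,\varphi_\alpha\rangle|^2\le r^{2k}/(1-r^2)$. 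Parts (iii) and (iv) coincide with the paper's proof: the estimate $\|\varphi_\alpha\|_2\le\|\widetilde\varphi_{i_1}\|\cdots\|\widetilde\varphi_{i_k}\|$ and the resulting geometric series are exactly what the paper uses, and your justification of the trace-class conclusion through the rank-one decomposition $C_\varphi=\sum_\alpha\langle\cdot,e_\alpha\rangle\varphi_\alpha$ makes explicit a step the paper leaves implicit.
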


\begin{proof}
To prove item (i),  assume first that $\varphi$ is an inner  free
holomorphic self-map of the  noncommutative ball $[B(\cH)^n]_1$ with
$\varphi(0)=0$. As in the proof of Theorem \ref{comp3},
$\{\varphi_\alpha\}_{\alpha\in \FF_n^+}$ is an orthonormal set in
$H^2_{\bf ball}$.   Consequently, if $\{a_\alpha\}_{|\alpha|\geq
k}\subset \CC$ is such that $\sum_{|\alpha|\geq k} |a_\alpha|^2=1$,
then $g:=\sum_{|\beta|\geq k}a_\beta \varphi_\beta$ is in $F^2(H_n)$
and $\|g\|_2=1$. Note also that
$$
\sum_{|\alpha|\geq k}|\left< g,\varphi_\alpha \right>|^2= \sum_{|\alpha|\geq k} |a_\alpha|^2=1.
$$
Since $\{\varphi_\alpha\}_{\alpha\in \FF_n^+}$ is an orthonormal set
in $H^2_{\bf ball}$, we have $\sum_{|\alpha|\geq k } |\left<
f,\varphi_\alpha\right>|^2\leq \|f\|_2$ for any $f\in H^2_{\bf
ball}$. Now, one can deduce that
$$
\sup_{f\in H^2_{\bf ball}, \|f\|\leq 1}
\left( \sum_{|\alpha|\geq k } |\left< f,\varphi_\alpha\right>|^2\right)^{1/2}=1.
$$
Due to Theorem \ref{essential}, we deduce that $\|C_\varphi\|_e=1$.
Now, we consider the case when $\xi:=\varphi(0)\neq 0$. Since the
involutive free holomorphic automorphism $\Phi_\xi$ is inner and the
composition of inner free holomorphic functions is inner (see
\cite{Po-holomorphic2}), we deduce that $\Psi:=\Phi_\xi\circ
\varphi$ is an inner  free holomorphic self-map of $[B(\cH)^n]_1$.
Since $\Psi(0)=0$,   the first part of the proof shows that $C_\Psi$
is not compact. Taking into account that $C_\Psi=C_\varphi
C_{\Phi_\xi}$, we deduce that $C_\varphi$ is not compact.

 To prove item (ii),
 let $\widetilde \varphi:=(\widetilde\varphi_1,\ldots, \widetilde\varphi_n)$ be the boundary function with respect to the left creation operators $S_1,\ldots, S_n$, and set $\|\widetilde \varphi\|=s<1$. It is easy to see that
$ \|[\widetilde\varphi_\alpha: \ |\alpha|=k]\|\leq
\|[\widetilde\varphi_1,\ldots, \widetilde\varphi_n]\|^k= s^k$, $k\in
\NN.$ For any $g\in F^2(H_n)$ and $m\in \NN$, we have
\begin{equation*}
\begin{split}
\left\| C_{\widetilde\varphi} g-\sum_{k=0}^m \sum_{|\alpha|=k}
\left<g, e_\alpha\right> \widetilde\varphi_\alpha(1)\right\| &=
\left\|\sum_{k=m+1}\sum_{|\alpha|=k}\left<g, e_\alpha\right> \widetilde\varphi_\alpha (1)\right\|\\
&\leq
\sum_{k=m+1}\left\|[\widetilde\varphi_\alpha: \ |\alpha|=k] \left[\begin{matrix}
\left<g, e_\alpha\right>\\
\vdots\\
|\alpha|=k\end{matrix}\right]\right\|\\
&\leq  \sum_{k=m+1} s^k  \left(\sum_{|\alpha|=k}|\left<g, e_\alpha\right>|^2\right)^{1/2}\\
&\leq \left(\sum_{k=m+1} s^{2k}\right)^{1/2}\left(\sum_{k=m+1}\sum_{|\alpha|=k}
|\left<g, e_\alpha\right>|^2 \right)^{1/2}\\
&\leq \|g\|_2 \frac{s^m}{\sqrt{1-s^2}}.
\end{split}
\end{equation*}
Consequently,  the operator $G_m:F^2(H_n)\to F^2(H_n)$  defined by
$G_m(g):=\sum_{k=0}^m \sum_{|\alpha|=k} \left<g, e_\alpha\right>
\widetilde\varphi_\alpha (1)$ has finite rank and  converges to the
composition operator $C_{\widetilde\varphi}$ in the operator norm
topology. Therefore, $C_\varphi$ is a compact operator.

To prove item (iii), note that
\begin{equation*}
\begin{split}
\sum_{\alpha\in \FF_n^+} \|C_{\widetilde \varphi}
e_\alpha\|&=\sum_{\alpha\in \FF_n^+} \|\widetilde \varphi_\alpha
(1)\| \leq \sum_{k=0}^\infty \sum_{|\alpha|=k}\|\widetilde
\varphi_\alpha\|\\
&\leq \sum_{k=0}^\infty (\|\widetilde \varphi_1\|+\cdots
+\|\widetilde \varphi_n\|)^k<\infty.
\end{split}
\end{equation*}
Consequently, $C_\varphi$ is a trace class operator. Finally, we
prove  item (iv).  First, note that  $C_\varphi$  is a
Hilbert-Schmidt operator   if and only if  \ $ \sum_{\alpha\in
\FF_n^+} \|\varphi_\alpha\|_2^2<\infty. $ On the other hand, as
above,  one ca show that
$$
\sum_{\alpha\in \FF_n^+} \|C_{\widetilde \varphi} e_\alpha\|^2 \leq
\sum_{k=0}^\infty (\|\widetilde \varphi_1\|^2+\cdots +\|\widetilde
\varphi_n\|^2)^k<\infty,
$$
which shows that $C_\varphi$ is a Hilbert-Schmidt  operator. The
proof is complete.
\end{proof}

\begin{corollary} If $\varphi$ is an inner    free holomorphic self-map of  the
noncommutative ball $[B(\cH)^n]_1$ such that $\varphi(0)=0$, then
the essential norm of the composition operator $C_\varphi$ on
$H^2_{\bf ball}$ is $1$.
\end{corollary}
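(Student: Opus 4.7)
The plan is to deduce this directly from the essential-norm formula of Theorem \ref{essential} combined with the orthonormality established in (the proof of) Theorem \ref{comp1-iso}. Since $\varphi$ is inner and $\varphi(0)=0$, the family $\{\varphi_\alpha\}_{\alpha\in\FF_n^+}$ is an orthonormal set in $H^2_{\bf ball}$; this was the key computation inside the proof of Theorem \ref{comp1-iso}. Bessel's inequality therefore gives the upper bound
\[
\sum_{|\alpha|\geq k}\bigl|\langle f,\varphi_\alpha\rangle\bigr|^2\leq \|f\|_2^2
\]
for every $f\in H^2_{\bf ball}$ and every $k\in\NN$, so the supremum appearing in Theorem \ref{essential} is at most $1$, hence $\|C_\varphi\|_e\leq 1$. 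Alternatively, this upper bound is immediate from $\|C_\varphi\|_e\leq\|C_\varphi\|=1$, the equality coming from Theorem \ref{comp1-iso} (or from Corollary \ref{contr}(ii)).

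For the matching lower bound, I would fix $k\in\NN$ and any scalars $\{a_\beta\}_{|\beta|\geq k}$ with $\sum_{|\beta|\geq k}|a_\beta|^2=1$, and then set
\[
g:=\sum_{|\beta|\geq k} a_\beta\,\varphi_\beta\ \in\ H^2_{\bf ball}.
\]
By the orthonormality of $\{\varphi_\alpha\}$, one has $\|g\|_2=1$ and $\langle g,\varphi_\alpha\rangle=a_\alpha$ for every $|\alpha|\geq k$, so
\[
\sum_{|\alpha|\geq k}\bigl|\langle g,\varphi_\alpha\rangle\bigr|^2
=\sum_{|\alpha|\geq k}|a_\alpha|^2=1.
\]
Thus the supremum in Theorem \ref{essential} equals $1$ for every $k$, and taking $k\to\infty$ yields $\|C_\varphi\|_e\geq 1$. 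Combining the two bounds gives $\|C_\varphi\|_e=1$.

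The only potential obstacle is a verification issue rather than a mathematical one: one must confirm that this inner $\varphi(0)=0$ case is exactly the argument already carried out in the first paragraph of the proof of Proposition \ref{comp-trace}(i), so the corollary is really a clean isolation of that computation. Because both the upper and lower estimates are essentially free once Theorem \ref{comp1-iso} and Theorem \ref{essential} are invoked, the write-up should be only a few lines.
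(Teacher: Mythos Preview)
Your proof is correct and is essentially identical to the paper's own argument, which is contained in the first paragraph of the proof of Proposition \ref{comp-trace}(i): orthonormality of $\{\varphi_\alpha\}$ gives the upper bound via Bessel (equivalently $\|C_\varphi\|_e\le\|C_\varphi\|=1$), and the test vector $g=\sum_{|\beta|\ge k}a_\beta\varphi_\beta$ with $\sum|a_\beta|^2=1$ shows the supremum in Theorem \ref{essential} equals $1$ for every $k$. You have even correctly identified that the corollary is just isolating that computation.
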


\begin{theorem}\label{angular} Let $\varphi$ be a   free holomorphic self-map of  the
noncommutative ball $[B(\cH)^n]_1$ and let $C_\varphi$ be the
composition operator on $H^2_{\bf ball}$. Then the following
statements hold.
\begin{enumerate}
\item[(i)]  The essential norm of $C_\varphi$ on $H^2_{\bf ball}$ satisfies the
inequality
$$
\|C_\varphi\|_e\geq \limsup_{\|\lambda\|\to 1}
\left(\frac{1-\|\lambda\|^2}{1-\|\varphi(\lambda)\|^2}\right)^{1/2}.
$$
\item[(ii)] If $C_\varphi$ is  a compact  operator on $H^2_{\bf ball}$, then the scalar
representation of $\varphi$  cannot have
 finite angular derivative at any point of $\partial \BB_n$.
 \end{enumerate}
\end{theorem}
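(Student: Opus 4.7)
The natural strategy is to test the essential norm against the normalized reproducing kernels $k_\lambda := z_\lambda/\|z_\lambda\|$ from the proof of Theorem~\ref{comp2}, and then extract (ii) from (i) via a Julia--Carath\'eodory type statement. Since $\|C_\varphi\|_e = \|C_\varphi^*\|_e$, I will work on the adjoint side, where we already know from \eqref{CZ} that $C_\varphi^* z_\lambda = z_{\varphi(\lambda)}$. A direct calculation gives
$$
\frac{\|C_\varphi^* z_\lambda\|}{\|z_\lambda\|} = \left(\frac{1-\|\lambda\|^2}{1-\|\varphi(\lambda)\|^2}\right)^{1/2},\qquad \lambda\in\BB_n.
$$
So the inequality in (i) should fall out once one knows that $k_\lambda$ converges weakly to $0$ as $\|\lambda\|\to 1$.

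\textbf{Weak convergence of normalized kernels.} I would first verify that $k_\lambda \to 0$ weakly in $F^2(H_n)$ as $\|\lambda\|\to 1$. For any $\alpha\in\FF_n^+$ one computes
$$
\left\langle k_\lambda, e_\alpha\right\rangle = \overline{\lambda_\alpha}\sqrt{1-\|\lambda\|^2},
$$
which is dominated by $\|\lambda\|^{|\alpha|}\sqrt{1-\|\lambda\|^2}\to 0$. Since $\|k_\lambda\|=1$ and the linear span of $\{e_\alpha\}$ is dense in $F^2(H_n)$, a standard $\varepsilon/3$ argument yields the weak convergence. Given any compact operator $K$, this forces $\|K k_\lambda\|\to 0$ as $\|\lambda\|\to 1$, and hence
$$
\|C_\varphi^* - K\| \;\ge\; \limsup_{\|\lambda\|\to 1}\|C_\varphi^* k_\lambda - K k_\lambda\| \;\ge\; \limsup_{\|\lambda\|\to 1}\|C_\varphi^* k_\lambda\|.
$$
Taking the infimum over compact $K$ and using the identity for $\|C_\varphi^* k_\lambda\|$ above proves (i).

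\textbf{Deducing (ii).} To pass from (i) to (ii), I would argue by contradiction. Suppose the scalar representation $\psi(\lambda) := \varphi(\lambda)$ has finite angular derivative at some $\zeta \in \partial \BB_n$. Then, by the Julia--Carath\'eodory theory in $\BB_n$ (see, e.g., Rudin~\cite{Ru2}), there is a sequence $\lambda^{(k)}\to\zeta$ with
$$
\frac{1-\|\psi(\lambda^{(k)})\|^2}{1-\|\lambda^{(k)}\|^2}\longrightarrow \alpha_\zeta < \infty.
$$
In particular $\alpha_\zeta>0$ (this follows from the Julia type lemma recalled in Section~3), so
$$
\limsup_{k\to\infty}\left(\frac{1-\|\lambda^{(k)}\|^2}{1-\|\varphi(\lambda^{(k)})\|^2}\right)^{1/2} = \frac{1}{\sqrt{\alpha_\zeta}}>0.
$$
By part (i), $\|C_\varphi\|_e \ge 1/\sqrt{\alpha_\zeta}>0$, which contradicts the assumption that $C_\varphi$ is compact. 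Therefore no such $\zeta$ can exist.

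\textbf{Main obstacle.} The only non-routine point is the rigorous invocation of the Julia--Carath\'eodory framework in the several-variable setting: one must be sure that the classical definition of finite angular derivative on $\BB_n$ is precisely the condition that $\liminf_{\lambda\to\zeta}(1-\|\psi(\lambda)\|^2)/(1-\|\lambda\|^2)$ is finite (through an admissible approach region), so that a genuine sequence along which the ratio stays bounded is produced. The weak-convergence step and the norm computation for $C_\varphi^* k_\lambda$ are straightforward; the essential-norm manipulation $\|C_\varphi\|_e = \|C_\varphi^*\|_e$ is standard. Thus, modulo correctly citing the multivariable Julia--Carath\'eodory result, the proof should reduce to the reproducing-kernel trick outlined above.
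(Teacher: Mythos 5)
Your proposal is correct and follows essentially the same route as the paper: testing $C_\varphi^*$ against the normalized kernels $z_\lambda/\|z_\lambda\|$, establishing their weak convergence to $0$ as $\|\lambda\|\to 1$ to kill the compact perturbation, using $C_\varphi^* z_\lambda = z_{\varphi(\lambda)}$ to compute the norm ratio, and then deducing (ii) from the Julia--Carath\'eodory characterization of finite angular derivative on $\BB_n$ exactly as the paper does.
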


\begin{proof} For each $\mu:=(\mu_1,\ldots, \mu_n)\in \BB_n$, we define  the vector
$z_\mu:=\sum_{k=0}^\infty\sum_{|\alpha|=k} \overline{\mu}_\alpha
e_\alpha$, where $\mu_\alpha:=\mu_{i_1}\cdots \mu_{i_p}$ if
$\alpha=g_{i_1}\cdots g_{i_p}\in \FF_n^+$ and $i_1,\ldots, i_p\in
\{1,\ldots, n\}$, and $\mu_{g_0}=1$. Since  $z_\mu \in F^2(H_n)$ and
$S_i^*z_\mu=\overline{\mu}_i z_\mu$, one can see that $q(S_1,\ldots,
S_n)^* z_\mu=\overline{q(\mu)} z_\mu$ for any noncommutative
polynomial $q$.  Let $\lambda^{(j)}:=(\lambda^{(j)}_1,\ldots,
\lambda^{(j)}_n)\in \BB_n$ be such that $\|\lambda^{(j)}\|\to 1$ as
$j\to\infty$.  Since $\|z_\mu\|=\frac{1}{\sqrt{1-\|\mu\|^2}}$, we
deduce that
$$
\lim_{j\to\infty}\left<q,
\frac{z_{\lambda^{(j)}}}{\|z_{\lambda^{(j)}}\|}\right>=\lim_{j\to\infty}
\frac{q(\lambda^{(j)})}{\|z_{\lambda^{(j)}}\|}=0,
$$
where $q$ is seen as a noncommutative polynomial in $F^2(H_n)$.
Consequently,  since the unit ball of $F^2(H_n)$ is weakly compact
and the polynomials are dense in $F^2(H_n)$, there is a subsequence
$\frac{z_{\lambda^{(j_k)}}}{\|z_{\lambda^{(j_k)}}\|}$ which
converges weakly to $0$ as $j_k\to\infty$.  Since this is true for
any subsequence, we deduce that
\begin{equation} \label{weakly}
\frac{z_{\lambda^{(j)}}}{\|z_{\lambda^{(j)}}\|}\to 0 \ \text{ weakly
as } \ \|\lambda^{(j)}\|_2\to 1.
\end{equation}
If $K\in B(F^2(H_n))$ is an arbitrary compact operator, then
$\lim_{\|\lambda^{(j)}\|\to
1}\|K^*\left(\frac{z_{\lambda^{(j)}}}{\|z_{\lambda^{(j)}}\|}\right)\|=0$.
On the other hand,  due to relation \eqref{CZ}, we have
$$
\|C_\varphi^*z_
{\lambda^{(j)}}\|=\left(\frac{1}{1-\|\varphi(\lambda^{(j)})\|^2}\right)^{1/2}.
$$
Using all these facts, we deduce that
\begin{equation*}
\begin{split}
\|C_\varphi\|_e&=\inf \{ \|T-K\|:\ K\in B(\cH)
  \text{ is compact}\}\\
  &\geq \limsup_{\|\lambda^{(j)}\|\to
1}
\left\|(C_\varphi-K)^*\left(\frac{z_{\lambda^{(j)}}}{\|z_{\lambda^{(j)}}\|}\right)\right\|\\
&=\limsup_{\|\lambda^{(j)}\|\to 1}
\left\|C_\varphi^*\left(\frac{z_{\lambda^{(j)}}}{\|z_{\lambda^{(j)}}\|}\right)\right\|\\
&=\limsup_{\|\lambda^{(j)}\|\to 1}
\left(\frac{1-\|\lambda^{(j)}\|^2}{1-\|\varphi(\lambda^{(j)})\|^2}\right)^{1/2},
\end{split}
\end{equation*}
which proves  item (i).

To prove part (ii),  we recall that the Julia-Carath\' eodory
theorem in $\BB_n$ asserts that if $\psi:\BB_n\to \BB_n$ is analytic
and $\xi\in \partial \BB_n$, then $\psi$ has finite angular
derivative  at $\xi$ if and only if
$$
\liminf_{\lambda\to
\xi}\frac{1-\|\psi(\lambda)\|}{1-\|\lambda\|}<\infty,
$$
where the limit is taking as  $\lambda\to \xi$ unrestrictedly in
$\BB_n$. If $C_\varphi$ is  a compact  operator on $H^2_{\bf ball}$,
then according to part (i), we have
$$
\limsup_{{\lambda\to \xi}} \left(\frac{1-\|\lambda
\|^2}{1-\|\varphi(\lambda )\|^2}\right)^{1/2}=0.
$$
Now,  combining these results when  $\psi:\BB_n\to \BB_n$  is
defined by $\psi(\lambda):=\varphi(\lambda)$, $\lambda\in \BB_n$,
the result in part (ii) follows.
  The
proof is complete.
\end{proof}

 We need the following lemma which can be extracted from \cite{MC2}.
We include a proof for completeness.
\begin{lemma}\label{slice}
Let $\psi=(\psi_1,\ldots, \psi_n)$ be  a holomorphic self-map of the open unit ball $\BB_n$
  with the property  that
 $\psi(E(L,\zeta_1))\subseteq E(L,\zeta_1)$ for
each ellipsoid
$$
E(L,\zeta_1):=\left\{ \lambda\in \BB_n:\ |1-\left<\lambda,
\zeta_1\right>|^2\leq L(1-\|\lambda\|^2)\right\},\qquad L>0,
$$
  where
$\zeta_1:=(1,0,\ldots,n)\in \BB_n$. Then the slice function
$\phi_{\zeta_1}:\DD\to \DD$ defined by
$\phi_{\zeta_1}(z):=\psi_1(z,0\ldots,0)$, $z\in \DD$,  has the
property that
$$
\liminf_{z\to 1}\frac{1-|\phi_{\zeta_1}(z)|}{1-|z|}\leq 1.
$$
\end{lemma}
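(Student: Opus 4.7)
The plan is to reduce the ellipsoid invariance hypothesis to a one-dimensional horocyclic invariance statement on the slice, and then test along a radial sequence. First I would parametrize the slice: for $z\in\DD$, set $\lambda:=(z,0,\ldots,0)$. Then $\|\lambda\|^2=|z|^2$ and $\langle\lambda,\zeta_1\rangle=z$, so the condition $(z,0,\ldots,0)\in E(L,\zeta_1)$ becomes the scalar horocyclic inequality $|1-z|^2\le L(1-|z|^2)$. Writing $D(L):=\{z\in\DD:|1-z|^2\le L(1-|z|^2)\}$, a direct computation (completing the square in $|z-\tfrac{1}{1+L}|^2$) shows that $D(L)$ is the closed disc with center $\tfrac{1}{1+L}$ and radius $\tfrac{L}{1+L}$, tangent to $\partial\DD$ at $1$.

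Next I would unpack the hypothesis $\psi(E(L,\zeta_1))\subseteq E(L,\zeta_1)$ on the slice. If $z\in D(L)$, then $\lambda\in E(L,\zeta_1)$, so $\psi(\lambda)\in E(L,\zeta_1)$, which reads
\[
|1-\phi_{\zeta_1}(z)|^2=|1-\langle\psi(\lambda),\zeta_1\rangle|^2\le L(1-\|\psi(\lambda)\|^2)\le L(1-|\phi_{\zeta_1}(z)|^2),
\]
the last inequality because $\|\psi(\lambda)\|^2\ge|\psi_1(\lambda)|^2=|\phi_{\zeta_1}(z)|^2$. Thus $\phi_{\zeta_1}(D(L))\subseteq D(L)$ for every $L>0$; that is, the slice function sends each horocycle at $1$ into itself.

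Now I would test this on the real axis. For $r\in(0,1)$, the smallest $L$ with $r\in D(L)$ is $L(r)=\frac{|1-r|^2}{1-r^2}=\frac{1-r}{1+r}$, and the corresponding disc $D(L(r))$ has center $\frac{1+r}{2}$ and radius $\frac{1-r}{2}$. The membership $\phi_{\zeta_1}(r)\in D(L(r))$ then forces, by the triangle inequality,
\[
|\phi_{\zeta_1}(r)|\ge \tfrac{1+r}{2}-\tfrac{1-r}{2}=r,
\]
so $0\le 1-|\phi_{\zeta_1}(r)|\le 1-r$. Dividing and letting $r\to 1^-$ along the real axis gives
\[
\liminf_{z\to 1}\frac{1-|\phi_{\zeta_1}(z)|}{1-|z|}\le\lim_{r\to 1^-}\frac{1-|\phi_{\zeta_1}(r)|}{1-r}\le 1,
\]
which is the desired conclusion. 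The only place where care is needed is in the direction of the inequality when replacing $\|\psi(\lambda)\|$ by $|\phi_{\zeta_1}(z)|$; once that is handled correctly, the rest is a routine geometric computation of horocycles, and I expect no further obstacle.
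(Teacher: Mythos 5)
Your proof is correct and follows essentially the same route as the paper: both arguments specialize to the radial point $w=(r,0,\ldots,0)$ with the extremal parameter $L=\frac{1-r}{1+r}$, use the ellipsoid invariance together with $\|\psi(w)\|\geq|\psi_1(w)|$ to conclude $|\phi_{\zeta_1}(r)|\geq r$, and deduce that the ratio is at most $1$ along the radius. Your horodisc/triangle-inequality packaging is just a geometric rephrasing of the paper's algebraic step $\frac{1-|\psi_1(w)|}{1+|\psi_1(w)|}\leq\frac{1-r}{1+r}$ (only note that the final radial $\lim$ should be a $\liminf$ or $\limsup$, since the limit need not exist, which does not affect the conclusion).
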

\begin{proof}   Note that  when  $w=(r,0,\ldots, 0)\in \BB_n$ with $r\in (0,1)$ and
 $L:=\frac{1-r}{1+r}$, the inclusion  $\psi(E(L,\zeta_1))\subseteq E(L,\zeta_1)$  implies
 $$
 \frac{|1-\psi_1(w)|^2}{1-\|\psi(w)\|^2}\leq L.
 $$
Hence, and using the inequality $1-|\psi_1(w)|\leq |1-\psi_1(w)|$,
we obtain
$$
\frac{1-|\psi_1(w)|}{1+|\psi_1(w)|}\leq \frac{1-r}{1+r},
$$
which implies $|\psi_1(w)|\geq r=\|w\|$ and, therefore,
$$
\frac{1-|\psi_1(w)|}{1-\|w\|}\leq 1
$$
for $w=(r,0,\ldots, 0)\in \BB_n$. The latter inequality  can  be used to complete the proof.
\end{proof}

In what follows we  also need the following lemma. Since the proof
is straightforward, we shall omit it. We denote by $H^2([B(\cH)]_1)$
the Hilbert space of all free holomorphic functions on $[B(\cH)]_1$
of the form $f(X)=\sum_{k=0}^\infty c_k X^k$ with $\sum_{k=0}^\infty
|a_k|^2<\infty$. It is easy to see that $H^2([B(\cH)]_1)$ can be
identified with the classical Hardy space $H^2(\DD)$.

\begin{lemma} Let $F:[B(\cH)^n]_1\to B(\cH)$ be a free holomorphic function and let
$\zeta_1:=(1,0,\ldots,0)\in \partial \BB_n$.  The slice function
$F_{\zeta_1}:[B(\cH)]_1\to B(\cH)$ defined by
$$
F_{\zeta_1}(Y):=F(\zeta_1Y),\qquad Y\in [B(\cH)]_1,
$$
has the  following properties.
\begin{enumerate}
\item[(i)]  $F_{\zeta_1}$ is a free holomorphic function on $[B(\cH)]_1$.
\item[(ii)] If $F\in H^2_{\bf ball} $ then $F_{\zeta_1}\in H^2([B(\cH)]_1)$ and
$\|F_{\zeta_1}\|_2\leq \|F\|_2$.
\item[(iii)] The inclusion $H^2([B(\cH)]_1)\subset H^2_{\bf ball}$ is an isometry.
\item[(iv)] Under the identification of $H^2_{\bf ball}$ with the full  Fock space  $F^2(H_n)$,
$$F_{\zeta_1}=P_{F^2(H_1)} F,
$$
where $P_{F^2(H_1)}$ is the orthogonal projection of $F^2(H_n)$ onto
$F^2(H_1)\subset F^2(H_n)$.
\item[(v)] If $F$ is bounded on $[B(\cH)^n]_1$, then $F_{\zeta_1}$ is bounded on $[B(\cH)]_1$ and
 $\|F_{\zeta_1}\|_\infty\leq \|F\|_\infty$.
\end{enumerate}
\end{lemma}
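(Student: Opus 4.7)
The plan is to reduce every item to the power series representation of $F$. Writing $F(X_1,\ldots,X_n)=\sum_{k=0}^\infty \sum_{|\alpha|=k} a_\alpha X_\alpha$, the substitution $X=\zeta_1 Y=(Y,0,\ldots,0)$ annihilates every monomial $X_\alpha$ whose multi-index $\alpha$ contains any generator other than $g_1$. Thus formally
$$F_{\zeta_1}(Y)=\sum_{k=0}^\infty a_{g_1^k}\, Y^k,$$
and items (i)--(v) then follow essentially by bookkeeping. The only subtle point I foresee is justifying the termwise substitution; this is routine, since the partial sums $\sum_{|\alpha|\leq N} a_\alpha X_\alpha$ converge in operator norm at every $X\in [B(\cH)^n]_1$, and at $X=(Y,0,\ldots,0)$ they reduce to $\sum_{k\leq N} a_{g_1^k}Y^k$.

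For item (i), I would note that since $\limsup_{k\to\infty}(\sum_{|\alpha|=k}|a_\alpha|^2)^{1/2k}\leq 1$ and $|a_{g_1^k}|^2\leq \sum_{|\alpha|=k}|a_\alpha|^2$, the series $\sum_k a_{g_1^k}Y^k$ has radius of convergence at least $1$, hence $F_{\zeta_1}$ is a free holomorphic function on $[B(\cH)]_1$. Item (ii) is then immediate from
$$\|F_{\zeta_1}\|_2^2=\sum_{k=0}^\infty|a_{g_1^k}|^2\leq \sum_{\alpha\in\FF_n^+}|a_\alpha|^2=\|F\|_2^2.$$
Item (iii) amounts to the observation that any $\sum_k c_k Y^k$ with $\sum|c_k|^2<\infty$ can be viewed as an element of $H^2_{\bf ball}$ by setting $a_\alpha=c_k$ when $\alpha=g_1^k$ and $a_\alpha=0$ otherwise, and the $H^2$-norm is the same in both spaces.

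Item (iv) follows by identifying $F$ with $\sum_\alpha a_\alpha e_\alpha\in F^2(H_n)$ and $F^2(H_1)\subset F^2(H_n)$ with the closed span of $\{e_{g_1^k}:k\geq 0\}$; the orthogonal projection extracts precisely the coefficients $a_{g_1^k}$, and under the identification of part (iii) this corresponds to $F_{\zeta_1}$. Finally, for item (v) I would use that $(Y,0,\ldots,0)\in [B(\cH)^n]_1$ whenever $Y\in[B(\cH)]_1$, since the row operator $[Y\ 0\cdots 0]$ satisfies $\|[Y\ 0\cdots 0]\|=\|Y\|<1$; consequently
$$\|F_{\zeta_1}(Y)\|=\|F(Y,0,\ldots,0)\|\leq \|F\|_\infty,$$
and taking the supremum over $Y\in[B(\cH)]_1$ gives $\|F_{\zeta_1}\|_\infty\leq \|F\|_\infty$.
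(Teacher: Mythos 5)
Your proof is correct. The paper explicitly omits the proof of this lemma as straightforward, and your coefficient-level argument — observing that the substitution $X=(Y,0,\ldots,0)$ kills every monomial except those indexed by $g_1^k$, so that $F_{\zeta_1}(Y)=\sum_k a_{g_1^k}Y^k$, and then reading off (i)--(v) from the inequalities $|a_{g_1^k}|^2\leq\sum_{|\alpha|=k}|a_\alpha|^2$ and $\|[Y\ 0\cdots 0]\|=\|Y\|$ — is exactly the routine verification the author had in mind.
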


Now, we have all the ingredients to prove the following result.

\begin{theorem}\label{compact} Let $\varphi=(\varphi_1,\ldots, \varphi_n) $
be a free holomorphic  self-map of  the noncommutative ball
$[B(\cH)^n]_1$.  If $C_\varphi$ is a compact composition operator on
$H^2_{\bf ball}$, then the scalar representation of $\varphi$ is a holomorphic self-map of $\BB_n$ which  has
exactly one fixed point in the open ball $\BB_n$.
\end{theorem}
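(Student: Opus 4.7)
I would prove both existence and uniqueness of the fixed point by contradiction, using Theorem~\ref{angular}(i) as the common engine. Compactness of $C_\varphi$ forces $\|C_\varphi\|_e=0$, so that theorem yields the quantitative statement
$$\lim_{\|\lambda\|\to 1}\frac{1-\|\lambda\|^2}{1-\|\varphi(\lambda)\|^2}=0,$$
where $\varphi$ is identified with its scalar representation $\psi:\BB_n\to\BB_n$. Both halves of the argument will manufacture a sequence violating this limit.

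For \emph{existence}, suppose $\psi$ has no fixed point in $\BB_n$. The noncommutative Wolff theorem (Theorem~\ref{Wolff}) and the remark following it produce a Denjoy-Wolff point $\zeta\in\partial\BB_n$ together with a dilatation coefficient
$$0<\liminf_{z\to\zeta}\frac{1-\|\varphi(z)\|^2}{1-\|z\|^2}=\alpha\leq 1.$$
Picking a sequence $z_k\to\zeta$ along which this liminf is attained, one obtains $(1-\|z_k\|^2)/(1-\|\varphi(z_k)\|^2)\to 1/\alpha>0$, so $\limsup_{\|\lambda\|\to 1}\tfrac{1-\|\lambda\|^2}{1-\|\varphi(\lambda)\|^2}\geq 1/\alpha>0$, in direct conflict with the displayed limit. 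Hence $\psi$ admits at least one fixed point $\xi_1\in \BB_n$.

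For \emph{uniqueness}, assume $\psi$ has two distinct fixed points $\xi_1,\xi_2\in \BB_n$. I would conjugate by the involutive free holomorphic automorphism $\Phi_{\xi_1}$, setting $\tilde\varphi:=\Phi_{\xi_1}\circ\varphi\circ\Phi_{\xi_1}$. A direct computation, using $\Phi_{\xi_1}(0)=\xi_1$, $\Phi_{\xi_1}(\xi_1)=0$ and $\Phi_{\xi_1}\circ\Phi_{\xi_1}=\mathrm{id}$, shows the scalar representation $\tilde\psi$ satisfies $\tilde\psi(0)=0$ and $\tilde\psi(\eta)=\eta$, where $\eta:=\Phi_{\xi_1}(\xi_2)\neq 0$. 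Because $C_{\tilde\varphi}=C_{\Phi_{\xi_1}}C_\varphi C_{\Phi_{\xi_1}}$ with $C_{\Phi_{\xi_1}}$ boundedly invertible (Theorem~\ref{comp2}), $C_{\tilde\varphi}$ is compact as well. I then invoke the classical Schwarz lemma for the ball: a holomorphic self-map of $\BB_n$ fixing $0$ and a nonzero point $\eta$ must fix the entire complex line $\{\lambda e:\lambda\in\DD\}$ with $e:=\eta/\|\eta\|$. Sketch: the function $h(\lambda):=\langle\tilde\psi(\lambda e),e\rangle$ maps $\DD$ into $\overline{\DD}$ with $h(0)=0$ and $h(\|\eta\|)=\|\eta\|$, so the one-variable Schwarz lemma forces $h(\lambda)=\lambda$; combined with $\|\tilde\psi(\lambda e)\|\leq |\lambda|$ (Schwarz for the ball), the equality case of Cauchy-Schwarz gives $\tilde\psi(\lambda e)=\lambda e$. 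Letting $\lambda\to 1^-$ along this disc,
$$\frac{1-\|\lambda e\|^2}{1-\|\tilde\psi(\lambda e)\|^2}=1,$$
so Theorem~\ref{angular}(i) yields $\|C_{\tilde\varphi}\|_e\geq 1$, contradicting compactness. The principal obstacle is this rigidity input: the paper's Wolff/Julia-type machinery does not by itself rule out multiple interior fixed points, and one must borrow the classical geodesic-rigidity statement from Rudin~\cite{Ru2} and transfer its conclusion to $C_{\tilde\varphi}$ via the automorphism-conjugation trick so that Theorem~\ref{angular}(i) can be applied.
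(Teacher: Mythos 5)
Your proposal is correct, but it reaches the conclusion by a genuinely different route than the paper. For \emph{existence}, the paper does not argue through Theorem \ref{angular}(i) at the Denjoy--Wolff point; instead it slices: after normalizing $\zeta=\zeta_1$, Lemma \ref{slice} gives $\liminf_{z\to 1}\frac{1-|\phi_{\zeta_1}(z)|}{1-|z|}\leq 1$ for the slice $\phi_{\zeta_1}(z)=\psi_1(z,0,\dots,0)$, then a projection argument shows that compactness of $C_\varphi$ on $F^2(H_n)$ forces compactness of the induced one-variable operator $C_\Gamma$ on $F^2(H_1)\cong H^2(\DD)$ (with $\Gamma=P_{F^2(H_1)}\varphi_1$), contradicting the classical fact that compact composition operators on $H^2(\DD)$ admit no finite angular derivative. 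Your argument stays in several variables: compactness kills the essential norm, Theorem \ref{angular}(i) forces $\frac{1-\|\lambda\|^2}{1-\|\varphi(\lambda)\|^2}\to 0$ as $\|\lambda\|\to 1$, and the dilatation coefficient $\alpha\in(0,1]$ from the remark after Theorem \ref{Wolff} supplies a sequence along which this ratio tends to $1/\alpha\geq 1$. This is shorter, avoids the slice machinery entirely, and is in effect the multivariable version of the angular-derivative obstruction that the paper only proves by descent to $n=1$. For \emph{uniqueness}, the paper takes the affineness of the fixed-point set from Rudin, observes $C_\varphi^* z_\xi=z_\xi$ for every fixed point $\xi$, and uses the linear independence of the $z_\xi$ (from Lemma \ref{kernel}) to make $\ker(I-C_\varphi^*)$ infinite-dimensional, which is impossible for a compact operator. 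You instead conjugate by $\Phi_{\xi_1}$, prove the needed rigidity directly by the one-variable Schwarz lemma plus the equality case of Cauchy--Schwarz (so the whole geodesic $\{\lambda e\}$ is fixed), and then feed the resulting ratio $\equiv 1$ back into Theorem \ref{angular}(i); the conjugation step $C_{\tilde\varphi}=C_{\Phi_{\xi_1}}C_\varphi C_{\Phi_{\xi_1}}$ correctly preserves compactness. Both uniqueness arguments rest on the same geodesic-rigidity input, but yours trades the infinite-dimensional-eigenspace contradiction for a second application of the essential-norm bound; the paper's version has the advantage of reusing machinery already set up in Lemma \ref{kernel}, while yours is more self-contained and uniform, using a single quantitative tool for both halves.
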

\begin{proof}

Let $\psi=(\psi_1,\ldots, \psi_n)$ be the scalar representation of $\varphi$,
  i.e.
the map  $\psi:\BB_n\to \BB_n$ defined by
$\psi(\lambda):=\phi(\lambda)$, $\lambda\in \BB_n$.
 It is clear that $\psi$ is a holomorphic self-map of the open unit ball $\BB_n$.
  Assume that $\psi$ has no fixed points in $\BB_n$. According to \cite{MC1} (see also Theorem \ref{Wolff}),
   there exists a unique  Denjoy-Wolff point $\zeta\in \partial \BB_n$ such that
    $\psi(E(L,\zeta))\subseteq E(L,\zeta)$ for
each ellipsoid $E(L,\zeta)$,  $L>0$. Without loss of generality we
can assume that $\zeta=\zeta_1:=(1,0,\ldots,0)\in \BB_n$. Then, due
to Lemma \ref{slice},
 the slice function $\phi_{\zeta_1}:\DD\to \DD$ defined by $\phi_{\zeta_1}(z):=\psi_1(z,0\ldots,0)$ has the property that
$$
\liminf_{z\to 1}\frac{1-|\phi_{\zeta_1}(z)|}{1-|z|}\leq 1.
$$
According to Julia-Carath\' eodory  theorem (see \cite{Ru2}),
$\phi_{\zeta_1}$ has finite angular derivative at $1$  which is less
than or equal to $1$. On the other hand, it is well-known  (see also
Theorem \ref{angular} when $n=1$) that if  a   composition operator
is compact on $H^2(\DD)$, then its symbol cannot have a finite
angular derivative at any point. Consequently, $C_{\phi_{\zeta_1}}$
is not a compact operator on $H^2(\DD)$.

Under the identification of $H^2_{\bf ball}$ with the full  Fock
space  $F^2(H_n)$, set
\begin{equation}
\label{GA}
\Gamma=P_{F^2(H_1)} \varphi_1,
\end{equation}
where $P_{F^2(H_1)}$ is the orthogonal projection of $F^2(H_n)$ onto $F^2(H_1)\subset F^2(H_n)$.
 According to Lemma \ref{slice}, $\Gamma:[B(\cH)]_1\to [B(\cH)]_1$ is a bounded
  free holomorphic function. Now we show that $C_{\Gamma}$ is a compact composition
  operator on $F^2(H_1)$. Let $\{f^{(m)}\}_{m=1}^\infty$ be a bounded sequence in
$F^2(H_1)$  such that $f^{(m)}\to 0$ weakly in $F^2(H_1)$. Since
$F^2(H_1)\subset F^2(H_n)$  and $F^2(H_n)=F^2(H_1)\oplus F^2(H_1)^\perp$, it is easy
to see that $f^{(m)}\to 0$ weakly in $F^2(H_n)$. Due to the compactness of $C_\varphi$
 on $F^2(H_n)$, we must have
\begin{equation}
\label{norm-lim}
\|C_\varphi f^{(m)}\|_{F^2(H_n)}\to 0\quad \text{ as }\ m\to \infty.
\end{equation}
Since $f^{(m)}\in F^2(H_1)$, it has the representation
$f^{(m)}=\sum_{k=0}^\infty a_k^{(m)}e_1^k$ for some coefficients
$a_k^{(m)}\in \CC$ with $\sum_{k=0}^\infty |a_k|^2<\infty$. Hence
$C_\varphi f^{(m)}=\sum_{k=0}^\infty a_k^{(m)} \varphi_1^k$, where
$\varphi_1$ is seen  in $F^2(H_n)$, i.e.,
$\varphi_1^k:=\widetilde\varphi_1^k (1)$, and  the convergence of
the series  is in $F^2(H_n)$. Note also that,  due to \eqref{GA},
for each $k\in \NN$, $\varphi_1^k =\Gamma^k+\chi_k$  for some
$\chi_k\in F^2(H_n)\ominus F^2(H_1)$. Consequently, we have
\begin{equation*}
\begin{split}
C_\varphi f^{(m)}&=\sum_{k=0}^\infty a_k^{(m)} \varphi_1^k=\sum_{k=0}^\infty a_k^{(m)} \Gamma^k +g\\
&=f^{(m)}\circ \Gamma +g
\end{split}
\end{equation*}
for some $g\in F^2(H_n)\ominus F^2(H_1)$. Hence, we deduce that $
\|C_\Gamma f^{(m)}\|_{F^2(H_1)}\leq \|C_\varphi
f^{(m)}\|_{F^2(H_n)}. $ Using relation \eqref{norm-lim}, we have
$\|C_\Gamma f^{(m)}\|_{F^2(H_1)}\to 0$ as $m\to\infty$. This proves
that the composition operator $C_\Gamma$ is compact on $F^2(H_1)$.
Note also that, under the natural identification of $F^2(H_1)$ with
$H^2(\DD)$, i.e., $f=\sum_{k=0}^\infty c_k e_1^k\mapsto g(z)=
\sum_{k=0}^\infty c_k z^k$, the composition operator $C_\Gamma$ on
$F^2(H_1)$ is unitarily equivalent to the composition operator
$C_{\phi_\zeta}$ on $H^2(\DD)$. Consequently, $C_{\phi_\zeta}$ is
compact, which is a contradiction. Therefore the map  $\psi$ has
fixed points in $\BB_n$.

Now we prove that $\psi$ has  only one fixed point in $\BB_n$. Assume that
 there are two distinct points $\xi^{(1)}, \xi^{(2)}\in \BB_n$ such that
  $\psi(\xi^{(1)})=\xi^{(1)}$ and $\psi(\xi^{(2)})=\xi^{(2)}$.  It is well-known (\cite{Ru2})
   that the fixed point set of the map  $\psi$ is affine.
As in the proof of Theorem \ref{comp2}, we have
\begin{equation*}
C_\varphi^*z_\mu=\sum_{k=0}\sum_{|\alpha|=k}
 \overline{\varphi_\alpha(\mu)}e_\alpha=z_{\varphi(\mu)},\qquad  \mu:=(\mu_1,\ldots, \mu_n)\in \BB_n,
\end{equation*}
where  the vector $z_\mu\in F^2(H_n)$ is defined by
$z_\mu:=\sum_{k=0}^\infty\sum_{|\alpha|=k} \overline{\mu}_\alpha e_\alpha$.
 As a consequence, we deduce that $C_\varphi^*z_\xi=z_\xi$ for any $\xi$ in
 the fixed point set $\Lambda$  of $\psi$. Since $\Lambda$  is infinite and
  according to the proof of   Lemma \ref{kernel} the vectors
$\{z_\xi\}_{\xi\in \Lambda}$ are linearly independent, we deduce that $\ker (I-C_\varphi^*)$
 is infinite dimensional. This contradicts the fact that $C_\varphi$
  is a compact operator on $ H^2_{\bf ball}$.
   In conclusion, $\psi$ has exactly on fixed point in $\BB_n$.  This  completes the proof.
\end{proof}

Combining now  Theorem \ref{compact} and Theorem \ref{similarity},
we can deduce the following similarity result.

\begin{corollary}  Every  compact composition operator on
$H^2_{\bf ball}$ is similar to a contraction.
\end{corollary}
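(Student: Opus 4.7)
The plan is to deduce this corollary directly by chaining two results already established in the paper, so the argument is essentially a one-step invocation. First I would observe that the hypothesis, namely that $C_\varphi$ is compact on $H^2_{\bf ball}$, places us precisely in the setting of Theorem \ref{compact}. That theorem guarantees that the scalar representation of $\varphi$, viewed as a holomorphic self-map of $\BB_n$, possesses (exactly) one fixed point $\xi \in \BB_n$; in particular, condition (iv) of Theorem \ref{similarity} is satisfied.

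Next I would simply invoke the equivalence $(iv)\Longleftrightarrow (i)$ from Theorem \ref{similarity}: the existence of an interior fixed point $\xi \in \BB_n$ with $\varphi(\xi) = \xi$ is equivalent to $C_\varphi$ being similar to a contraction. Concretely, one sets $\Psi := \Phi_\xi \circ \varphi \circ \Phi_\xi$, where $\Phi_\xi$ is the involutive free holomorphic automorphism of $[B(\cH)^n]_1$ associated with $\xi$; then $\Psi(0) = 0$, so the noncommutative Littlewood subordination principle (Theorem \ref{comp1}) yields $\|C_\Psi\| \leq 1$. Since $\Phi_\xi \circ \Phi_\xi = \mathrm{id}$, one has $C_\varphi = C_{\Phi_\xi}^{-1} C_\Psi C_{\Phi_\xi}$, exhibiting the similarity explicitly.

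There is no real obstacle here beyond verifying that the two cited theorems apply verbatim, which they do: Theorem \ref{compact} is stated for an arbitrary free holomorphic self-map $\varphi$ of $[B(\cH)^n]_1$ whose composition operator is compact, and Theorem \ref{similarity} is stated in the same generality. Hence the chain of implications \emph{compact} $\Longrightarrow$ \emph{fixed point in} $\BB_n$ $\Longrightarrow$ \emph{similar to a contraction} completes the proof.
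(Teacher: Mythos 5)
Your proposal is correct and follows exactly the paper's own route: the corollary is obtained by combining Theorem \ref{compact} (compactness of $C_\varphi$ forces the scalar representation of $\varphi$ to have a fixed point in $\BB_n$) with the implication $(iv)\Rightarrow(i)$ of Theorem \ref{similarity}. The explicit similarity $C_\varphi = C_{\Phi_\xi}^{-1} C_\Psi C_{\Phi_\xi}$ you record is precisely the one used in the proof of Theorem \ref{similarity}, so nothing further is needed.
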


\begin{theorem}\label{connected}
The  set of compact composition operators on $H^2_{\bf ball}$ is
arcwise connected, with respect to the operator norm topology, in
the set    of all composition operators.
\end{theorem}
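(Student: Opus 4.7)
My plan is to show that every compact composition operator $C_\varphi$ on $H^2_{\bf ball}$ can be joined to a single fixed compact composition operator, namely $C_0$ associated with the constant zero self-map $0$ of $[B(\cH)^n]_1$ (note that $C_0f=f(0)\cdot 1$ has rank one, hence is compact, and by Proposition \ref{comp-trace}(ii) it is a compact composition operator). Arcwise connectivity then follows by concatenating paths through $C_0$. For $t\in[0,1]$ I would set $\psi_t(X):=tX$ and $\varphi_t:=\psi_t\circ\varphi$, so each $\varphi_t$ is a free holomorphic self-map of $[B(\cH)^n]_1$ (composition of free holomorphic maps) with $\|\varphi_t\|_\infty\leq t$, and with $\varphi_0\equiv 0$, $\varphi_1=\varphi$.

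Each $C_{\varphi_t}$ is compact: for $t\in[0,1)$ this follows from Proposition \ref{comp-trace}(ii) since $\|\varphi_t\|_\infty<1$, while $C_{\varphi_1}=C_\varphi$ is compact by hypothesis. Moreover, from $\varphi_t=\psi_t\circ\varphi$ one obtains $C_{\varphi_t}=C_\varphi C_{\psi_t}$, so the question reduces to understanding $t\mapsto C_{\psi_t}$. Under the identification $\cU:H^2_{\bf ball}\to F^2(H_n)$, the operator $C_{\psi_t}$ is the self-adjoint diagonal contraction $D_t$ defined by $D_te_\alpha=t^{|\alpha|}e_\alpha$, since $X_\alpha\circ\psi_t=t^{|\alpha|}X_\alpha$.

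For $s\in[0,1)$ operator-norm continuity is elementary: one has $\|D_t-D_s\|=\sup_{k\geq 0}|t^k-s^k|$, and for $t$ in a small neighborhood of $s$ inside $[0,s+\epsilon]\subset[0,1)$ the estimate $|t^k-s^k|\leq k|t-s|(s+\epsilon)^{k-1}$ together with the boundedness of the sequence $\{kr^{k-1}\}_{k\geq 1}$ for $r<1$ gives $\sup_k|t^k-s^k|\to 0$ as $t\to s$. Consequently $\|C_{\varphi_t}-C_{\varphi_s}\|\leq\|C_\varphi\|\,\|D_t-D_s\|\to 0$, establishing continuity on $[0,1)$.

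The main obstacle is continuity at $s=1$, where $\sup_k|t^k-1|\equiv 1$ and the previous bound fails completely. Here I would use the compactness of $C_\varphi$ together with the classical fact that if $K$ is a compact operator and $T_\tau\to T$ strongly with $\|T_\tau\|$ uniformly bounded, then $T_\tau K\to TK$ in operator norm (since $K$ sends the unit ball to a precompact set, on which strongly convergent bounded nets converge uniformly). A quick dominated-convergence argument on $F^2(H_n)$ shows $D_t\to I$ strongly as $t\to 1$; applying the previous fact with $T_\tau=D_t-I$ (which is self-adjoint) and $K=C_\varphi^*$ gives $(D_t-I)C_\varphi^*\to 0$ in norm, and taking adjoints yields $C_\varphi(D_t-I)\to 0$ in norm, that is $C_{\varphi_t}\to C_\varphi$ in operator norm. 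Combining all cases, $t\mapsto C_{\varphi_t}$ is a norm-continuous arc through compact composition operators joining $C_0$ to $C_\varphi$, which proves the theorem.
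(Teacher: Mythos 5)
Your proof is correct, and it follows a genuinely different (and in places cleaner) route than the paper's. The paper dilates the symbol on the inside, setting $\varphi_r(X):=\varphi(rX)$, which under the Fock-space identification amounts to the family $D_rC_\varphi$ rather than your $C_\varphi D_t$; it then verifies norm continuity by a hands-on subsequence/contradiction argument exploiting the relative compactness of $C_\varphi$ applied to the unit ball, and — because the $r=0$ endpoint is the constant map $\varphi(0)$ rather than a single hub — it must add a second path segment joining the constant symbols $\varphi(0)$ and $\chi(0)$ along a line segment in $\BB_n$, with a separate continuity estimate via the noncommutative Cauchy kernel; it also has to invoke Theorem \ref{strict} to guarantee $\|\varphi_r\|_\infty<1$ for nonconstant symbols. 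Your choice of $\varphi_t:=t\varphi$ buys three simplifications: the bound $\|\varphi_t\|_\infty\le t$ is immediate with no case distinction on whether $\varphi$ is constant; the endpoint at $t=0$ is always the single operator $C_0$, so the second path segment disappears entirely; and the continuity at $t=1$ is packaged in the standard lemma that a uniformly bounded, strongly convergent net composed with a fixed compact operator converges in norm (the adjoint/self-adjointness step to move $D_t-I$ to the correct side of $C_\varphi$ is handled properly). The underlying mechanism at the delicate endpoint — upgrading strong convergence to norm convergence by pushing everything through the precompact image of the unit ball under the compact operator — is the same in both proofs, but your factorization $C_{\varphi_t}=C_\varphi D_t$ makes it essentially a one-line application of a classical fact.
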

\begin{proof}
Let $\varphi=(\varphi_1,\ldots, \varphi_n) $ be a  nonconstant free
holomorphic  self-map of  the noncommutative ball $[B(\cH)^n]_1$
such that  $C_\varphi$ is a compact composition operator on
$H^2_{\bf ball}$.   For each $r\in [0,1]$, consider the free
holomorphic  map $\varphi_r:[B(\cH)^n]_1\to [B(\cH)^n]_1$ defined by
$\varphi_r(X)=\varphi(rX)$, $X\in [B(\cH)^n]_1$. If
$\|\varphi\|_\infty<1$, then $\|\varphi_r\|_\infty<1$ and due to
Proposition \ref{comp-trace}, the operator $C_{\varphi_r}$ is
compact on $H^2_{\bf ball}$. Now assume that $\|\varphi\|_\infty=1$.
Since $\varphi$ is nonconstant, Theorem \ref{strict} implies
 $\|\varphi(0)\|<1$ and
the map $[0,1)\ni r\mapsto \|\varphi_r\|_\infty$ is strictly
increasing. Therefore $\|\varphi_r\|_\infty<1$ for all $r\in [0,1)$.
Using again  Proposition \ref{comp-trace},  we deduce that the
operator $C_{\varphi_r}$ is compact on $H^2_{\bf ball}$ for any
$r\in [0,1)$. Let $\cK(H^2_{\bf ball})$  denote the algebra of all
compact operators on $H^2_{\bf ball}$ and define the function
$\gamma:[0,1]\to \cK(H^2_{\bf ball})$ by setting
$\gamma(r):=C_{\varphi_r}$. Now we show that $\gamma$ is a
continuous map in the operator norm topology. Fix $r_0\in [0,1]$.
For any $g(X):=\sum_{\alpha\in \FF_n^+} a_\alpha X_\alpha \in
H^2_{\bf ball}$ set $g_r(X):=\sum_{\alpha\in \FF_n^+} a_\alpha
r^{|\alpha|}X_\alpha \in H^2_{\bf ball}$ and note that
\begin{equation} \label{gr}
\|g_r-g_{r_0}\|_2\to 0\quad \text{ as } r\to r_0.
\end{equation}
In particular, taking $g=C_\varphi f$ where $f\in H^2_{\bf ball}$
and  $\|f\|_2\leq 1$, we have $
\|(f\circ\varphi)_r-(f\circ\varphi)_{r_0}\|_2\to 0$ as  $ r\to r_0.
$ We need to show that the latter convergence is uniform with
respect to $f\in H^2_{\bf ball}$ with $\|f\|_2\leq 1$. Indeed, if we
assume the contrary, then there is $\epsilon_0>0$ such that for any
$n\in \NN$ there is $r_n\in [0,1]$ with $|r_n-r_0|<\frac{1}{n}$ and
there exists $f_n \in H^2_{\bf ball}$ with $\|f_n\|_2\leq 1$ such
that
\begin{equation}
\label{rn}
\|(f_n\circ\varphi)_{r_n}-(f_n\circ\varphi)_{r_0}\|_2>\epsilon_0.
\end{equation}
Since $C_\varphi$ is a compact operator  the image of the unit ball
of $H^2_{\bf ball}$ under $C_\varphi$ is relatively compact.
Therefore there is a subsequence $\{f_{n_k}\}$ such that
 \begin{equation}\label{fnk}
f_{n_k}\circ \varphi\to \psi\in H^2_{\bf ball}.
\end{equation}
Now, note that
\begin{equation*}
\begin{split}
\|(f_{n_k}\circ\varphi)_{r_{n_k}}-(f_{n_k}\circ\varphi)_{r_0}\|_2
&\leq \|(f_{n_k}\circ\varphi)_{r_{n_k}}-\psi_{r_{n_k}}\|_2
+\|\psi_{r_{n_k}}-\psi_{r_0}\|_2+
\|\psi_{r_0}-(f_{n_k}\circ\varphi)_{r_0}\|_2\\
&\leq  2\|f_{n_k}\circ\varphi -\psi \|_2
+\|\psi_{r_{n_k}}-\psi_{r_0}\|_2.
\end{split}
\end{equation*}
Due to  relations \eqref{gr} and \eqref{fnk}, we deduce  that
$$
\|(f_{n_k}\circ\varphi)_{r_{n_k}}-(f_{n_k}\circ\varphi)_{r_0}\|_2\to
0\quad \text{ as } r\to r_0,
$$
which contradicts  relation \eqref{rn}. Therefore
$\|C_{\varphi_r}-C_{\varphi_{r_0}}\|\to 0$ as $r\to r_0$, which
proves the continuity of the map $\gamma$. Let $\chi=(\chi_1,\ldots,
\chi_n) $ be another  nonconstant free holomorphic  self-map of  the
noncommutative ball $[B(\cH)^n]_1$ such that  $C_\chi$ is a compact
composition operator on $H^2_{\bf ball}$. As above, the  function
$\ell:[0,1]\to \cK(H^2_{\bf ball})$ given by  $\ell(r):=C_{\chi_r}$
is continuous in the operator norm topology. It remains to show that
there is a continuous mapping $\omega:[0,1]\to \cK(H^2_{\bf ball})$
such that $\omega(0)=C_{\varphi_0}$ and $\omega(1)=C_{\chi_0}$. To
this end, since $\|\varphi(0)\|<1$ and $\|\chi(0)\|<1$, we can
 define   the map $\sigma:
[0,1]\to \BB_n$ by setting $\sigma(t):=(1-t)\varphi(0) +t\chi(0)$
for  $t\in [0,1]$. Using again Proposition \ref{comp-trace}, we
deduce that $C_{\sigma(t)I}$ is a compact composition operator on
$H^2_{\bf ball}$ for any $t\in [0,1]$. Now we define
$\omega:[0,1]\to \cK(H^2_{\bf ball})$ by setting
$\omega(t):=C_{\sigma(t)I}$. To prove continuity of this map in the
operator norm topology, note that
\begin{equation}\label{C-C}
\begin{split}
\|C_{\sigma(t)I}f-C_{\sigma(t')I} f\|&=|\left<f,
z_{\sigma(t)}-z_{\sigma(t')}\right>| \leq
\|f\|_2\|z_{\sigma(t)}-z_{\sigma(t')}\|_2,
\end{split}
\end{equation}
where $z_\lambda=\sum_{\alpha\in \FF_n^+} \overline{\lambda}_\alpha
e_\alpha$ for $\lambda\in \BB_n$. On the other hand,  consider the
noncommutative Cauchy  kernel ${\bf
C}_\lambda:=(I-\overline{\lambda}_1S_1-\cdots-\overline{\lambda}_nS_n)^{-1}$,
$\lambda:=(\lambda_1,\ldots, \lambda_n)\in \BB_n$. Note that
$\|\overline{\lambda}_1S_1+\cdots+\overline{\lambda}_nS_n\|=\|\lambda\|_2<1$
and  ${\bf C}_\lambda\in F_n^\infty$ for any  $\lambda\in \BB_n$. We
have
\begin{equation*}
\begin{split}
\|z_{\sigma(t)}-z_{\sigma(t')}\|_2 &=\|({\bf C}_{\sigma(t)}-{\bf
C}_{\sigma(t')})1\|\leq \|{\bf C}_{\sigma(t)}-{\bf
C}_{\sigma(t')}\|\\
&\leq \|{\bf C}_{\sigma(t)}\|\|{\bf
C}_{\sigma(t')}\|\|\sigma(t)-\sigma(t')\|_2.
\end{split}
\end{equation*}
Consequently, since $\BB_n\ni \lambda\mapsto {\bf C}_\lambda\in
F_n^\infty$ is continuous, we deduce that $[0,1]\ni t\mapsto
z_{\sigma(t)}\in F^2(H_n)$ is continuous as well. Combining this
result  with  relation \eqref{C-C}, we deduce  the continuity of
$\omega$, which completes the proof.
\end{proof}

\bigskip

\section{  Schr\" oder equation  for noncommutative  power series and spectra  of composition operators }

In this section,   we consider a noncommutative multivariable
  Schr\" oder   type equation   and  use it to obtain results
concerning the spectrum of composition operators on $H^2_{\bf
ball}$. As a consequence,  using the results from the previous
section, we determine the spectra of compact composition operators
on $H^2_{\bf ball}$.

 First,  we provide  the following  noncommutative
 Schr\" oder   (\cite{Schr}) type result.

\begin{theorem}
\label{Schroder} Let $A\in M_{n\times n}$ be a scalar  matrix
and let $\Lambda=(\Lambda_1,\ldots\Lambda_n)$ be an $n$-tuple of
power series in noncommuting indeterminates $Z_1,\ldots, Z_n$,  of
the form
$$
\Lambda=[Z_1,\ldots, Z_n]A+[\Gamma_1,\ldots, \Gamma_n],
$$
where $\Gamma_1, \ldots, \Gamma_n$ are  noncommutative power series
containing only monomials  of degree greater than or equal to $2$.
If there is a noncommutative power series $F$ which is not
identically zero and satisfies the Schr\" oder type equation
$$
F\circ \Lambda=cF
$$
for some $c\in \CC$, then either $c=1$ or $c$ is a product of
eigenvalues of the matrix $A$.

\end{theorem}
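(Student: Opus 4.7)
The plan is to reduce the equation $F\circ \Lambda = cF$ to a linear eigenvalue problem on a tensor power of $\CC^n$, exploiting the fact that $\Lambda(0)=0$ and that the linear part of $\Lambda$ is exactly $[Z_1,\ldots,Z_n]A$.

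First I would expand $F=\sum_{k=0}^\infty F_k$ as a sum of its homogeneous components $F_k=\sum_{|\alpha|=k}c_\alpha Z_\alpha$ and let $k_0:=\min\{k:F_k\neq 0\}$. Since $\Gamma_j$ has no terms of degree $<2$ and $\Lambda(0)=0$, substituting $\Lambda$ into a homogeneous polynomial of degree $k$ produces a series whose lowest-order homogeneous component has degree $k$ and coincides with what one gets by substituting only the linear part $L:=[Z_1,\ldots,Z_n]A$. Therefore, equating the homogeneous degree-$k_0$ parts of $F\circ \Lambda$ and $cF$ yields
\begin{equation*}
F_{k_0}\circ L \;=\; c\,F_{k_0}.
\end{equation*}
If $k_0=0$, then $F_{k_0}$ is a nonzero constant, and the above identity forces $c=1$.

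So from now on assume $k_0\geq 1$. The key step is to recognize the operator $P\mapsto P\circ L$ acting on the space $\cH_{k_0}$ of homogeneous degree-$k_0$ noncommutative polynomials as a tensor power of $A$. Identify $\cH_{k_0}$ with $(\CC^n)^{\otimes k_0}$ by sending $Z_\alpha$ to $e_\alpha=e_{i_1}\otimes\cdots\otimes e_{i_{k_0}}$ for $\alpha=g_{i_1}\cdots g_{i_{k_0}}$. Writing $A=[a_{ij}]$, so that $L_j=\sum_i a_{ij}Z_i$, a direct expansion gives
\begin{equation*}
L_{i_1}\cdots L_{i_{k_0}} \;=\; \sum_{j_1,\ldots,j_{k_0}} a_{j_1 i_1}\cdots a_{j_{k_0} i_{k_0}}\, Z_{j_1}\cdots Z_{j_{k_0}},
\end{equation*}
which is precisely the action of $A^{\otimes k_0}$ on $e_\alpha$ under the above identification. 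Hence the equation $F_{k_0}\circ L=cF_{k_0}$ is equivalent to saying that the nonzero vector $v:=\sum_{|\alpha|=k_0}c_\alpha e_\alpha\in(\CC^n)^{\otimes k_0}$ is an eigenvector of $A^{\otimes k_0}$ with eigenvalue $c$.

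The final step is to invoke the standard fact that the spectrum of $A^{\otimes k_0}$ consists of all products $\lambda_{i_1}\cdots \lambda_{i_{k_0}}$ where $\lambda_1,\ldots,\lambda_n$ are the eigenvalues of $A$ counted with multiplicity; this is immediate when $A$ is diagonalizable (using the tensor product basis of eigenvectors) and follows in general by passing to Jordan form or by a density argument. This yields that $c$ is a product of $k_0$ eigenvalues of $A$, completing the proof. The main conceptual point of the argument is the identification in step two; the rest is routine bookkeeping, and the only mild care needed is in fixing the convention that makes the composition-with-$L$ operator correspond to $A^{\otimes k_0}$ rather than to $(A^T)^{\otimes k_0}$, but either way the spectrum is the same set of products of $k_0$ eigenvalues.
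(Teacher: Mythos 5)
Your proof is correct, and it takes a genuinely different route from the paper's. The paper first conjugates by a unitary to make $A$ upper triangular, introduces a total order on $\FF_n^+$ (by length, then lexicographically), writes $\Lambda_\alpha=\Psi^{<\alpha}+d_A(\alpha)X_\alpha+\chi^{(\alpha)}$ with $d_A(\alpha)$ a product of diagonal entries, and then, assuming $c\neq 1$ and $c$ is not a product of eigenvalues, kills the coefficients of $F$ one monomial at a time by a double induction (over the degree, and within each degree downward from the largest word), contradicting $F\not\equiv 0$. You instead isolate the lowest nonzero homogeneous component $F_{k_0}$, observe that since $\Lambda$ has no constant term and $\Gamma_j$ starts in degree $2$ the degree-$k_0$ part of $F\circ\Lambda$ is exactly $F_{k_0}\circ L$ with $L=[Z_1,\ldots,Z_n]A$, and identify the map $P\mapsto P\circ L$ on homogeneous degree-$k_0$ polynomials with $A^{\otimes k_0}$ on $(\CC^n)^{\otimes k_0}$, so that $c\in\sigma(A^{\otimes k_0})$, whose elements are exactly the products of $k_0$ eigenvalues of $A$ (and the case $k_0=0$ forces $c=1$). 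Your argument is shorter and more conceptual, avoids the triangularization and the word order entirely, and even gives the slightly sharper statement that $c$ is a product of exactly $k_0$ eigenvalues where $k_0$ is the order of vanishing of $F$ at $0$; it also handles $c=0$ gracefully since $0\in\sigma(A^{\otimes k_0})$ forces $0\in\sigma(A)$. What the paper's machinery buys in exchange is that the same triangularization and monomial order are reused almost verbatim in Theorem \ref{point-spectrum} to exhibit the block-triangular structure of $C_\varphi^*$ on the graded pieces $\cK_k\ominus\cK_{k-1}$ and thereby to show that all such products actually occur in the point spectrum, which your eigenvalue-containment argument by itself does not give.
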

\begin{proof}  Since $A\in M_{n\times n}$ there is a unitary matrix $U\in M_{n\times n}$ such that $U^{-1} A U$ is an upper triangular matrix. Setting
$\Phi_U=[Z_1,\ldots, Z_n] U$, the equation $F\circ \Lambda=cF$ is
equivalent to $F'\circ \Lambda'=cF'$, where $F':=\Phi_U\circ F \circ
\Phi_{U^{-1}}$ and
$$
\Lambda':=\Phi_U\circ \Lambda \circ \Phi_{U^{-1}}
=[Z_1,\ldots, Z_n]U^{-1}AU+U^{-1}[\Gamma_1,\ldots, \Gamma_n] U.
$$
 Therefore,  we can assume that $A=[a_{ij}]\in M_{n\times n}$ is  an upper triangular matrix.
We introduce a total order $\leq $ on the free semigroup $\FF_n^+$
as follows. If $\alpha, \beta\in \FF_n^+$ with $|\alpha|\leq
|\beta|$ we say that $\alpha<\beta$. If $\alpha,\beta\in \FF_n^+$
are such that $|\alpha|=|\beta|$, then $\alpha=g_{i_1}\cdots
g_{i_k}$ and $\beta=g_{j_1}\cdots g_{j_k}$ for some $i_1,\ldots,
i_k, j_1,\ldots,j_k\in \{1,\ldots,k\}$. We say that $\alpha<\beta$
if either $i_1<j_1$ or there exists $p\in \{2,\ldots,k\}$ such that
$ i_1=j_1,\ldots, i_{p-1}=j_{p-1}$ and $i_p<j_p$. It is easy to see
that relation $\leq$ is a total order on $\FF_n^+$.

According to the hypothesis and due to the fact that  $A$ is an
upper triangular matrix, we have
\begin{equation} \label{La}
\Lambda_j=\sum_{i=1}^j a_{ij} X_i+ \Gamma_j,\qquad j=1,\ldots,n.
\end{equation}
Consequently, if $\alpha=g_{i_1}\cdots g_{i_k}\in \FF_n^+$,
$i_1,\ldots i_k\in \{1,\ldots,n\}$, then
\begin{equation} \label{La-al}
\Lambda_\alpha:=\Lambda_{i_1}\cdots
\Lambda_{i_k}=\Psi^{<\alpha}+a_{i_1 i_1}\cdots a_{i_k i_k} X_\alpha
+\chi^{(\alpha)},
\end{equation}
where $\Psi^{<\alpha}$ is a power series containing only monomials
$X_\beta$ such that $|\beta|=|\alpha|$ and  $\beta<\alpha$, and
$\chi^{(\alpha)}$ is a power series containing only monomials
$X_\gamma$ with $|\gamma|\geq |\alpha|+1$.

Let  $F=\sum_{p=0}^\infty \sum_{|\alpha|=p} c_\alpha Z_\alpha$,
$c_\alpha\in \CC$,  be  a noncommutative power series   and assume
that it  satisfies the Schr\" oder type equation $ F\circ
\Lambda=\lambda F $ for some $\lambda\in \CC$ such that
$\lambda\neq 1$ and  $\lambda$ is  not a product of eigenvalues of
the matrix $A$.   We will show by induction over $p$, that
 $\sum_{|\alpha|=p} c_\alpha Z_\alpha=0$ for any $p=0,1,\ldots$. Note that the above-mentioned equation is equivalent to
 \begin{equation}\label{Sch2}
 \sum_{p=0}^\infty \sum_{|\alpha|=p} c_\alpha \Lambda_\alpha=\lambda \sum_{p=0}^\infty \sum_{|\alpha|=p} c_\alpha Z_\alpha.
 \end{equation}
Due to relation \eqref{La}, we have $c_0=\lambda c_0$. Since $\lambda\neq 1$,
 we deduce that $c_0=0$. Assume that  $c_\alpha=0$ for any $\alpha\in \FF_n^+$ with $|\alpha|< k$.
  According to  equations \eqref{La-al} and \eqref{Sch2}, we have

\begin{equation*}
\sum_{|\alpha|=k} c_\alpha \left(\Psi^{<\alpha}+ d_A(\alpha) X_\alpha
+\chi^{(\alpha)}\right)+ \sum_{p=k+1}^\infty \sum_{|\alpha|=p} c_\alpha \Lambda_\alpha=\lambda \sum_{|\alpha|=k} c_\alpha Z_\alpha +
\lambda \sum_{p=k+1}^\infty \sum_{|\alpha|=p} c_\alpha Z_\alpha,
 \end{equation*}
where $d_A(\alpha):=a_{i_1 i_1}\cdots a_{i_k i_k}$ if $\alpha=g_{i_1}\cdots g_{i_k}\in \FF_n^+$ and
$i_1,\ldots i_k\in \{1,\ldots,n\}$. Since $\chi^{(\alpha)}$ is a power series containing only monomials
$X_\gamma$ with $|\gamma|\geq |\alpha|+1$, and  the power series $\Lambda_\alpha$, $|\alpha|\geq k+1$, contains only monomials $X_\sigma$ with $|\sigma|\geq k+1$, we deduce that

\begin{equation} \label{sum-red}
\sum_{|\alpha|=k} c_\alpha \left(\Psi^{<\alpha}+ d_A(\alpha) X_\alpha
 \right) =\lambda \sum_{|\alpha|=k} c_\alpha Z_\alpha.
 \end{equation}
We arrange  the elements of  the set
$\{\alpha\in \FF_n^+: |\alpha|=k\}$ increasingly with respect to the total  order, i.e., $\beta_1<\beta_2<\cdots <\beta_{n^k}$.  Note that $\beta_1= g_1^k$ and $\beta_{n^k}=g_n^k$. The relation \eqref{sum-red} becomes

\begin{equation} \label{sum-red2}
\sum_{j=1}^{n^k}\left(c_{\beta_j} \Psi^{< \beta_j}+ c_{\beta_j} d(\beta_j) X_{\beta_{\beta_j}}\right)
=\lambda \sum_{j=1}^{n^k} c_{\beta_j} X_{\beta_j}.
\end{equation}
Taking into account that $\Psi^{<\alpha}$ is a power series containing only monomials
$X_\beta$ such that $|\beta|=|\alpha|$ and  $\beta<\alpha$, one can see that the
 monomial $X_{\beta_{n^k}}$ occurs just once in the left-hand side of  relation \eqref{sum-red2}.
Identifying the coefficients of the monomial $X_{\beta_{n^k}}$ in the equality
\eqref{sum-red2}, we deduce that
$$
c_{\beta_{n^k}} d(\beta_{n^k})=\lambda c_{\beta_{n^k}}.
 $$
 Since $\lambda\neq a_{nn}^k=d(\beta_{n^k})$, we must have $c_{\beta_{n^k}}=0$.
 Consequently, equation \eqref{sum-red2} becomes
 \begin{equation*}
\sum_{j=1}^{n^k-1}\left(c_{\beta_j} \Psi^{< \beta_j}+ c_{\beta_j} d(\beta_j) X_{\beta_{\beta_j}}\right)
=\lambda \sum_{j=1}^{n^k-1} c_{\beta_j} X_{\beta_j}.
\end{equation*}
Continuing the process, we deduce that $c_{\beta_j}=0$ for $j=1,\ldots, n^k$. Therefore $c_\alpha=0$ for any $\alpha\in \FF_n^+$ with $|\alpha|=k$, which completes our induction.
The proof is complete.
\end{proof}

\begin{corollary}\label{Sch-holo} Let $\varphi=(\varphi_1,\ldots, \varphi_n)
$ be a free holomorphic  self-map of  the noncommutative ball
$[B(\cH)^n]_1$  such that
$\varphi(\xi)=\xi$ for some $\xi\in \BB_n$.
If there is a free holomorphic function $f:[B(\cH)^n]_1\to B(\cH)$ such that
$$
f\circ \varphi =cf
$$
for some $c\in \CC$, then either $c=1$ or $c$ is a product of
eigenvalues of the matrix
$$
\left[\left<\psi_i,e_j\right>\right]_{n\times n},
$$
 where $\psi=(\psi_1,\ldots, \psi_n):=\Phi_\xi\circ \varphi \circ
\Phi_\xi$ and  $\Phi_\xi$ is the involutive free holomorphic
automorphism of $[B(\cH)^n]_1$ associated with $\xi\in \BB_n$, and
$\psi_1,\ldots, \psi_n$ are seen as elements in the Fock space
$F^2(H_n)$.
 \end{corollary}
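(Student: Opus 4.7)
The strategy is to conjugate by the involutive automorphism $\Phi_\xi$ in order to move the fixed point of $\varphi$ to the origin, and then invoke the noncommutative Schr\"oder-type result of Theorem \ref{Schroder}. Set
$$
\psi := \Phi_\xi \circ \varphi \circ \Phi_\xi, \qquad g := f \circ \Phi_\xi.
$$
Since $\Phi_\xi$ is a free holomorphic automorphism of $[B(\cH)^n]_1$ (properties (iii)--(iv) recalled in Section~2) and since the composition of free holomorphic functions is free holomorphic, both $\psi=(\psi_1,\ldots,\psi_n)$ and $g$ are well-defined free holomorphic functions. Using $\Phi_\xi(0)=\xi$ together with $\Phi_\xi\circ\Phi_\xi=\mathrm{id}$ and $\varphi(\xi)=\xi$, one checks that $\psi(0)=0$ and that
$$
g\circ \psi = f\circ \Phi_\xi \circ \Phi_\xi\circ \varphi\circ \Phi_\xi = f\circ \varphi\circ \Phi_\xi = c\,(f\circ \Phi_\xi) = c\,g.
$$
Moreover $g\not\equiv 0$ because $\Phi_\xi$ is a bijection of $[B(\cH)^n]_1$ and $f\not\equiv 0$.

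Next I would unpack the expansion of $\psi$. Because $\psi(0)=0$, each $\psi_i$, viewed as an element of the Fock space $F^2(H_n)$, satisfies $\left<\psi_i,1\right>=0$, so in the noncommutative power series representation it takes the form
$$
\psi_i(Z_1,\ldots,Z_n) \;=\; \sum_{j=1}^n \left<\psi_i,e_j\right> Z_j \;+\; \Gamma_i(Z_1,\ldots,Z_n),
$$
where $\Gamma_i$ contains only monomials of degree $\geq 2$. Writing $\psi$ as a row, this reads
$$
\psi \;=\; [Z_1,\ldots,Z_n]\,A^{t} + [\Gamma_1,\ldots,\Gamma_n]
$$
with $A:=\bigl[\left<\psi_i,e_j\right>\bigr]_{n\times n}$, so that the ``linear matrix'' appearing in Theorem~\ref{Schroder} is $A^t$.

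Now I would apply Theorem~\ref{Schroder} to $g$ and $\psi$: since $g\circ\psi=cg$ with $g$ a nonzero noncommutative power series, either $c=1$ or $c$ equals a product of eigenvalues of $A^t$. As a matrix and its transpose have the same spectrum, this gives exactly the statement: $c=1$ or $c$ is a product of eigenvalues of the matrix $\bigl[\left<\psi_i,e_j\right>\bigr]_{n\times n}$.

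The only mildly subtle point is the verification that $g$ and $\psi$ are bona fide free holomorphic objects with convergent power series expansions on $[B(\cH)^n]_1$ (so that the formal Schr\"oder identity used in Theorem~\ref{Schroder} is legitimate), but this is immediate from the free holomorphic functional calculus and the automorphism properties of $\Phi_\xi$ established in Section~2; everything else is a clean reduction to the already-proved Theorem~\ref{Schroder}.
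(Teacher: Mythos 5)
Your proof is correct and follows essentially the same route as the paper: conjugate by $\Phi_\xi$ to move the fixed point to the origin and reduce to Theorem \ref{Schroder}. You are in fact slightly more careful than the paper at two points --- you use the correct conjugate $g=f\circ\Phi_\xi$ (the paper writes $f':=\Phi_\xi\circ f\circ\Phi_\xi$, which does not typecheck since $f$ is $B(\cH)$-valued and is evidently a slip), and you explicitly note that the matrix $\left[\left<\psi_i,e_j\right>\right]$ is the transpose of the one appearing in Theorem \ref{Schroder}, which is harmless since a matrix and its transpose have the same spectrum.
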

\begin{proof} Note that $\psi(0)=0$ and the equation  $f\circ \varphi =cf$ is equivalent to the equation $f'\circ \psi =cf'$, where
$f':=\Phi_\xi\circ f \circ\Phi_\xi$. Applying Theorem \ref{Schroder}
to the power series associated with $\psi$ and $f'$ the result follows.
\end{proof}

\begin{theorem}
\label{point-spectrum}   Let $\varphi=(\varphi_1,\ldots, \varphi_n)
$ be a free holomorphic  self-map of  the noncommutative ball
$[B(\cH)^n]_1$  such that $\varphi(0)=0$, and let $C_\varphi$ be the
associated composition operator on $H^2_{\bf ball}$.   Then the
point spectrum of $C_\varphi^*$ contains   the conjugates of all
possible products of the eigenvalues of the matrix
$$
\left[\left<\varphi_i,e_j\right>\right]_{n\times n},
$$
where $\psi_1,\ldots, \psi_n$ are seen as elements in the Fock space
$F^2(H_n)$.
\end{theorem}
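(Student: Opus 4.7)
My plan is to exploit the adjoint formula from Proposition \ref{adjoint} to reveal a finite-dimensional block upper-triangular structure for $C_\varphi^*$ and to identify the diagonal blocks as tensor powers of $\bar A$, where $A:=[\langle\varphi_i,e_j\rangle]_{n\times n}$.

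First I would introduce, for each $k\geq 0$, the homogeneous piece $F_k:=\mathrm{span}\{e_\alpha:|\alpha|=k\}$ and the finite-dimensional polynomial space $V_k:=F_0\oplus F_1\oplus\cdots\oplus F_k\subset F^2(H_n)$. Since $\varphi(0)=0$, each $\varphi_i$ has no constant term in $F^2(H_n)$, so the vector $\varphi_\alpha=\widetilde\varphi_{i_1}\cdots\widetilde\varphi_{i_p}(1)$ has minimum homogeneous degree $\geq|\alpha|$. Substituting this into the formula $C_\varphi^* e_\beta=\sum_\alpha\langle e_\beta,\varphi_\alpha\rangle\, e_\alpha$ of Proposition \ref{adjoint} gives $\langle e_\beta,\varphi_\alpha\rangle=0$ whenever $|\alpha|>|\beta|$, hence $C_\varphi^*(V_k)\subseteq V_k$.

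Next I would identify the action induced on the graded quotient $V_k/V_{k-1}\cong F_k$. The degree-$1$ term of $\varphi_i\in F^2(H_n)$ is $L_i:=\sum_j A_{ij}e_j$, and since $S_j$ prepends $e_j$, a direct computation with each $\widetilde\varphi_i$ replaced by its linear part shows that the $F_k$-component of $\varphi_\alpha$ for $\alpha=g_{i_1}\cdots g_{i_k}$ is $L_{i_1}\otimes\cdots\otimes L_{i_k}$ under the identification $F_k\cong H_n^{\otimes k}$. Pairing with $e_\beta=e_{\beta_1}\otimes\cdots\otimes e_{\beta_k}$ factors over the tensor slots, and re-summing over $\alpha$ gives that the $F_k$-component of $C_\varphi^* e_\beta$ equals $(\bar A\, e_{\beta_1})\otimes\cdots\otimes(\bar A\, e_{\beta_k})=\bar A^{\otimes k}(e_\beta)$.

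Finally, with respect to $V_k=F_0\oplus\cdots\oplus F_k$ the operator $C_\varphi^*|_{V_k}$ is block upper-triangular with diagonal blocks $\bar A^{\otimes j}$, $j=0,1,\ldots,k$. Since $V_k$ is finite-dimensional, its spectrum equals its point spectrum and coincides with $\bigcup_{j=0}^k\sigma(\bar A^{\otimes j})$, which consists of all conjugates $\bar\lambda_{i_1}\cdots\bar\lambda_{i_j}$ of $j$-fold products of eigenvalues of $A$ (with $j=0$ contributing the eigenvalue $1$). Any eigenvector of $C_\varphi^*|_{V_k}$ is an eigenvector of $C_\varphi^*$, so letting $k$ vary yields the claim. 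The main delicate step is the middle one: verifying that the top-degree part of $\varphi_\alpha$ factors as a clean tensor product of the linear parts, which requires careful tracking of the inner-product convention in $H^2_{\bf ball}$ and of the fact that the left creation operators prepend rather than append basis vectors. Once that identification is in place, the block-triangular argument produces all the desired eigenvalues at once.
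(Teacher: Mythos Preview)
Your proposal is correct. Both you and the paper use the same backbone: the condition $\varphi(0)=0$ forces $C_\varphi^*$ to preserve the degree filtration $V_k=\bigoplus_{j\le k}F_j$, so $C_\varphi^*|_{V_k}$ is block upper-triangular and its (point) spectrum is the union of the spectra of the induced maps on the graded pieces $F_j$. The difference lies in how those diagonal blocks are analysed. The paper first conjugates $\varphi$ by a linear automorphism $\Phi_U$ to reduce to the case where $A=[\langle\varphi_i,e_j\rangle]$ is upper triangular, and then introduces a total order on $\{\alpha:|\alpha|=k\}$ to display the matrix of $P_{F_k}C_\varphi^*|_{F_k}$ as triangular with diagonal entries $\overline{a_{i_1i_1}\cdots a_{i_ki_k}}$. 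Your identification of the $k$-th diagonal block with $\bar A^{\otimes k}$ under $F_k\cong H_n^{\otimes k}$ is a cleaner, coordinate-free route: it bypasses both the similarity reduction and the combinatorial ordering, and the eigenvalue statement then follows from the standard fact that $\sigma(\bar A^{\otimes k})$ consists of all $k$-fold products of eigenvalues of $\bar A$. Your computation of the top-degree part of $\varphi_\alpha$ as $L_{i_1}\otimes\cdots\otimes L_{i_k}$ is exactly right (left creation prepends, so the tensor factors appear in the order $i_1,\ldots,i_k$), and this is precisely the point where the paper's relation \eqref{La-al1} encodes the same information in triangular-matrix language.
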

\begin{proof}
For each $m=0,1,\ldots$, consider the subspace  $\cK_m:=\text{\rm
span} \{ e_\alpha:\ \alpha\in \FF_n^+, |\alpha|\leq m\}$. Since
$\varphi(0)=0$, we have $\left< C_\varphi^* e_\alpha,
e_\beta\right>=\left< e_ \alpha, \varphi_\beta\right> =0$ for any
$\alpha,\beta\in \FF_n^+$ with $|\alpha|\leq m$ and $|\beta|\geq
m+1$. This implies $C_\varphi^*(\cK_m)\subseteq \cK_m$ and
$C_\varphi^*$ has the matrix representation
$$
C_\varphi^*=\left[\begin{matrix} C_\varphi^*|_{\cK_m}&*\\
0&P_{F^2(H_n)\ominus \cK_m}C_\varphi^*|_{F^2(H_n)\ominus
\cK_m}\end{matrix}\right]
$$
with respect to the orthogonal decomposition  $F^2(H_n)=\cK_m\oplus
(F^2(H_n)\ominus \cK_m)$, and $\sigma_p(C_\varphi^*|_{\cK_m})\subset
\sigma_p(C_\varphi^*)$, where $\sigma_p(T)$ denotes the point
spectrum of $T$. Moreover, since $\cK_m$ is finite dimensional, we
have
\begin{equation*}
  \sigma(C_\varphi^*)=\sigma(C_\varphi^*|_{\cK_m})\cup
\sigma(P_{F^2(H_n)\ominus \cK_m}C_\varphi^*|_{F^2(H_n)\ominus
\cK_m}).
\end{equation*}
Since $C_\varphi^*(\cK_{m-1})\subseteq \cK_{m-1}$ we have the matrix
decomposition
$$
C_\varphi^*|_{\cK_m}=\left[\begin{matrix} C_\varphi^*|_{\cK_m}&*\\
0&P_{\cK_m\ominus \cK_{m-1}}C_\varphi^*|_{\cK_m\ominus
\cK_{m-1}}\end{matrix}\right]
$$
with respect to the orthogonal decomposition  $F^2(H_n)=\cK_m\oplus
(\cK_m\ominus \cK_{m-1})$. Consequently, we have
$$
\sigma_p(C_\varphi^*|_{\cK_m})=\sigma_p(C_\varphi^*|_{\cK_{m-1}})\cup
\sigma_p(P_{\cK_m\ominus \cK_{m-1}}C_\varphi^*|_{\cK_m\ominus
\cK_{m-1}})
$$
for any $m=1,2\ldots$. Iterating  this formula, we get
\begin{equation}
\label{sig} \sigma_p(C_\varphi^*|_{\cK_m})=\{1\}\cup
\bigcup_{j=1}^m\sigma_p(P_{\cK_j\ominus
\cK_{j-1}}C_\varphi^*|_{\cK_j\ominus \cK_{j-1}}).
\end{equation}
Now, we determine $\sigma_p(P_{\cK_k\ominus
\cK_{k-1}}C_\varphi^*|_{\cK_k\ominus \cK_{k-1}})$ for
$k=1,2,\ldots$. As in the proof of Theorem \ref{Schroder}, we can
assume that
$$\varphi(X)=[X_1,\ldots, X_n]A+ (\Gamma_1(X),\ldots, \Gamma_n(X)), \qquad X=(X_1,\ldots,
X_n)\in [B(\cH)^n]_1,
$$
where $A=[a_{ij}]\in M_{n\times n}$ is an upper triangular scalar
matrix and $\Gamma_1,\ldots, \Gamma_n$ are free holomorphic
functions on $[B(\cH)^n]_1$ containing only monomials of degree
greater  than or equal to $2$. Consequently, using the Fock space
representation of $\varphi_1,\ldots, \varphi_n$ and
$\Gamma_1,\ldots, \Gamma_n$, we
  have
\begin{equation} \label{La1}
\varphi_j=\sum_{i=1}^j a_{ij} e_i+ \Gamma_j,\qquad j=1,\ldots,n,
\end{equation}
where $\Gamma_j\in F^2(H_n)\ominus \text{\rm span}\{e_\alpha:\
|\alpha|\leq 1\}$. Note that  the matrix $
\left[\left<\varphi_i,e_j\right>\right]_{n\times n} $ is upper
triangular and its eigenvalues are $a_{11},\ldots, a_{nn}$. Using
relation \eqref{La1}, one can see that  if $\alpha=g_{i_1}\cdots
g_{i_k}\in \FF_n^+$, $i_1,\ldots i_k\in \{1,\ldots,n\}$, then
\begin{equation} \label{La-al1}
\varphi_\alpha:=\varphi_{i_1}\cdots
\varphi_{i_k}=\psi^{<\alpha}+a_{i_1 i_1}\cdots a_{i_k i_k} e_\alpha
+\chi^{(\alpha)},
\end{equation}
where $\psi^{<\alpha}\in  \text{\rm span}\{e_\beta:\
|\beta|=|\alpha| \text{  and  } \beta<\alpha\}$ and
$\chi^{(\alpha)}\in  \text{\rm span}\{e_\gamma:\
|\gamma|\geq|\alpha|+1 \}$.

We arrange  the elements of  the set $\{\alpha\in \FF_n^+:
|\alpha|=k\}$ increasingly with respect to the total  order
introduced in the proof of Theorem \ref{Schroder}, i.e.,
$\beta_1<\beta_2<\cdots <\beta_{n^k}$. We denote
$d_A(\alpha):=a_{i_1 i_1}\cdots a_{i_k i_k}$ if
$\alpha=g_{i_1}\cdots g_{i_k}\in \FF_n^+$ and $i_1,\ldots i_k\in
\{1,\ldots,n\}$.
 Note that $\varphi_{\beta_1}= d(\beta_1)e_{\beta_1} +\chi^{\beta_1}$ and
 $$
 \varphi_{\beta_i}=\left(\sum_{j=1}^i b_{\beta_{j-1}} e_{\beta_{j-1}}\right)+d(\beta_i)e_{\beta_i} +\chi^{\beta_i}\qquad \text{ if  } \ 2\leq i\leq n^k,
 $$
for some $b_{\beta_{j-1}}\in \CC$, $j=1,\ldots,i$. Using these
relations, we deduce that
\begin{equation*}
\begin{split}
\left<P_{\cK_k\ominus \cK_{k-1}}C_\varphi^*|_{\cK_k\ominus
\cK_{k-1}} e_{\beta_j},
e_{\beta_i}\right>&=\overline{\left<\varphi_{\beta_i},e_{\beta_j}\right>}
=\begin{cases} \overline{d(\beta_i)}\quad  &\text{ if } i=j\\
0\quad &\text{ if } i< j.
\end{cases}
\end{split}
\end{equation*}
This shows that  the matrix of $P_{\cK_k\ominus
\cK_{k-1}}C_\varphi^*|_{\cK_k\ominus \cK_{k-1}}$  with respect to
the orthonormal  basis $\{e_{\beta_i}\}_{i=1}^{n^k}$ is lower
triangular with the diagonal entries $\overline{d(\beta_1)},\ldots,
\overline{d(\beta_{n^k})}$. Therefore $\sigma_p(P_{\cK_k\ominus
\cK_{k-1}}C_\varphi^*|_{\cK_k\ominus \cK_{k-1}})$ consists of these
diagonal entries. On the other hand, due to  relation \eqref{sig},
we have
$$
\{1\}\cup \bigcup_{j=1}^\infty\sigma_p(P_{\cK_j\ominus
\cK_{j-1}}C_\varphi^*|_{\cK_j\ominus \cK_{j-1}})\subset
\sigma_p(C_\varphi^*).
$$
The proof is complete.
\end{proof}

Theorem \ref{point-spectrum} and Corollary  \ref{Sch-holo} imply the
following result concerning the spectrum of composition operators on the noncommutative Hardy space
$H^2_{\bf ball}$.

\begin{theorem} \label{inclusions} Let $\varphi
$ be a free holomorphic  self-map of  the noncommutative ball
$[B(\cH)^n]_1$  such that  its scalar representation has a fixed
point $\xi\in \BB_n$, and let $C_\varphi$ be the associated
composition operator on $H^2_{\bf ball}$. Then
$$
\sigma_p(C_\varphi)\subseteq \{1\}\cup \cP_{eig}\subseteq
\sigma(C_\varphi),
$$
where $\cP_{eig}$ is the set of all possible products of eigenvalues
of the matrix
$
\left[\left<\psi_i,e_j\right>\right]_{n\times n},
$
 where $\psi=(\psi_1,\ldots, \psi_n):=\Phi_\xi\circ \varphi \circ
\Phi_\xi$ and  $\Phi_\xi$ is the involutive free holomorphic
automorphism of $[B(\cH)^n]_1$ associated with $\xi\in \BB_n$.
\end{theorem}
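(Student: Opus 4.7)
The plan is to split the two inclusions and dispose of each using one of the results already proved in this section. For the upper bound $\sigma_p(C_\varphi)\subseteq\{1\}\cup\cP_{eig}$, suppose $c$ is an eigenvalue of $C_\varphi$ with nonzero eigenvector $f\in H^2_{\bf ball}$. Then $f\circ\varphi=cf$ as elements of $H^2_{\bf ball}$, hence as free holomorphic functions on $[B(\cH)^n]_1$. Since the scalar representation of $\varphi$ fixes $\xi\in\BB_n$, Corollary \ref{Sch-holo} applies directly and yields either $c=1$ or $c\in\cP_{eig}$.

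For the lower bound $\{1\}\cup\cP_{eig}\subseteq\sigma(C_\varphi)$, note first that $C_\varphi 1=1$, so $1\in\sigma_p(C_\varphi)\subseteq\sigma(C_\varphi)$. To pick up the remaining points, I would reduce to the origin-fixing case by conjugation. Because $\Phi_\xi$ is an involutive free holomorphic automorphism of $[B(\cH)^n]_1$, Theorem \ref{comp2} guarantees that $C_{\Phi_\xi}$ is a bounded invertible operator on $H^2_{\bf ball}$ with $C_{\Phi_\xi}^{-1}=C_{\Phi_\xi}$. Setting $\psi:=\Phi_\xi\circ\varphi\circ\Phi_\xi$, we have $\psi(0)=0$ and
\[
C_\varphi=C_{\Phi_\xi}\,C_\psi\,C_{\Phi_\xi},
\]
so $C_\varphi$ and $C_\psi$ are similar and share the same spectrum.

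Now apply Theorem \ref{point-spectrum} to $\psi$ (which satisfies $\psi(0)=0$): it shows $\overline{\cP_{eig}}\subseteq\sigma_p(C_\psi^*)\subseteq\sigma(C_\psi^*)$. Using the standard identity $\sigma(T^*)=\{\overline{\lambda}:\lambda\in\sigma(T)\}$ for bounded operators on Hilbert space and passing to conjugates, we obtain $\cP_{eig}\subseteq\sigma(C_\psi)=\sigma(C_\varphi)$, which together with $1\in\sigma(C_\varphi)$ completes the lower bound.

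Since the substantive work has already been done by Theorem \ref{Schroder} and Corollary \ref{Sch-holo} (for the upper bound) and by Theorem \ref{point-spectrum} (for the lower bound), the only real obstacle in this wrap-up is bookkeeping: one must keep the matrix $[\langle\psi_i,e_j\rangle]_{n\times n}$ correctly tied to $\psi$, not to $\varphi$, when invoking the Schr\"oder machinery, and verify that the similarity $C_\varphi=C_{\Phi_\xi}C_\psi C_{\Phi_\xi}$ transports $\sigma(C_\varphi)=\sigma(C_\psi)$ while leaving $\cP_{eig}$ unchanged, which it does by construction since $\cP_{eig}$ is defined through $\psi$ from the outset.
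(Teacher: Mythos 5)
Your proposal is correct and follows essentially the same route as the paper: the first inclusion from Corollary \ref{Sch-holo}, and the second by noting $C_\varphi 1=1$ and conjugating by the involution $C_{\Phi_\xi}$ to reduce to $C_\psi$ with $\psi(0)=0$, where Theorem \ref{point-spectrum} applies (and one passes from $\sigma_p(C_\psi^*)$ back to $\sigma(C_\psi)$ by taking complex conjugates). The only caution is notational: your symbol $\overline{\cP_{eig}}$ should be read as the set of complex conjugates, not the closure, which is what Theorem \ref{point-spectrum} actually delivers.
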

\begin{proof} The first inclusion follows from Corollary  \ref{Sch-holo}. To prove the second inclusion note that $C_\varphi 1=1$ and $C_\psi=C_{\Phi_\xi} C_\varphi C_{\Phi_\xi}^{-1}$. Consequently,
$1\in \sigma(C_\varphi)=\sigma(C_\psi)$.  Since $\psi(0)=0$, we can apply Theorem \ref{point-spectrum} to the composition operator $C_\psi$ and complete the proof.
\end{proof}

Now we can determine the spectra of compact composition operators on $H^2_{\bf ball}$.
\begin{theorem}   Let $\varphi$
be a free holomorphic  self-map of  the noncommutative ball
$[B(\cH)^n]_1$.  If $C_\varphi$ is a compact composition operator on
$H^2_{\bf ball}$, then   the scalar representation of $\varphi$ has a unique fix point $\xi\in \BB_n$  and   the spectrum $
\sigma(C_\varphi)$ consists of \ $0$, $1$, and
 all
possible products of the eigenvalues of the matrix
$$
\left[\left<\psi_i,e_j\right>\right]_{n\times n},
$$
 where $\psi=(\psi_1,\ldots, \psi_n):=\Phi_\xi\circ \varphi \circ
\Phi_\xi$ and  $\Phi_\xi$ is the involutive free holomorphic
automorphism of $[B(\cH)^n]_1$ associated with $\xi\in \BB_n$, and
$\psi_1,\ldots, \psi_n$ are seen as elements in the Fock space
$F^2(H_n)$.
\end{theorem}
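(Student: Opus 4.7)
The plan is to combine three ingredients already in place: Theorem~\ref{compact} (existence of a unique fixed point in $\BB_n$), Theorem~\ref{inclusions} (the sandwich $\sigma_p(C_\varphi)\subseteq\{1\}\cup\cP_{eig}\subseteq\sigma(C_\varphi)$), and the classical spectral structure of compact operators on an infinite-dimensional Hilbert space.

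First, since $C_\varphi$ is compact, Theorem~\ref{compact} supplies a unique $\xi\in\BB_n$ with $\varphi(\xi)=\xi$, so $\Phi_\xi$ and hence $\psi:=\Phi_\xi\circ\varphi\circ\Phi_\xi$ are well-defined and $\psi(0)=0$. Because $C_{\Phi_\xi}$ is invertible on $H^2_{\bf ball}$ (being induced by an involution of the ball) and $C_\psi=C_{\Phi_\xi}C_\varphi C_{\Phi_\xi}^{-1}$, the operators $C_\varphi$ and $C_\psi$ are similar, so they have the same spectrum.

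Next, since $C_\varphi$ is a compact operator on the infinite-dimensional Hilbert space $H^2_{\bf ball}$, the Riesz–Schauder theorem gives
\[
\sigma(C_\varphi)\setminus\{0\}=\sigma_p(C_\varphi)\setminus\{0\},\qquad 0\in\sigma(C_\varphi).
\]
Applying Theorem~\ref{inclusions}, we obtain
\[
\sigma(C_\varphi)\setminus\{0\}=\sigma_p(C_\varphi)\setminus\{0\}\subseteq\{1\}\cup\cP_{eig}\subseteq\sigma(C_\varphi),
\]
where $\cP_{eig}$ denotes the set of all finite products of eigenvalues of the matrix $[\langle\psi_i,e_j\rangle]_{n\times n}$. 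Adjoining $0$ (which is in the spectrum by the previous display) yields
\[
\sigma(C_\varphi)=\{0\}\cup\{1\}\cup\cP_{eig},
\]
which is exactly the claimed description.

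The only subtlety is verifying that Theorem~\ref{inclusions} is applicable at $\xi$, but this is automatic from Theorem~\ref{compact}. No further estimates are needed: the two key inputs are already established, and the compactness of $C_\varphi$ converts the outer inclusion $\{1\}\cup\cP_{eig}\subseteq\sigma(C_\varphi)$ into an equality after adjoining $0$, since a compact operator on an infinite-dimensional space has no spectrum outside the closure of its (necessarily countable, accumulating only at $0$) set of nonzero eigenvalues.
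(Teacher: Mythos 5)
Your proposal is correct and follows essentially the same route as the paper: invoke Theorem \ref{compact} for the unique fixed point, use the Riesz--Schauder fact that every nonzero spectral point of a compact operator is an eigenvalue, and then squeeze with the two inclusions of Theorem \ref{inclusions}, adjoining $0$ since a compact operator on the infinite-dimensional space $H^2_{\bf ball}$ cannot be invertible. The only difference is cosmetic: you spell out the similarity $C_\psi=C_{\Phi_\xi}C_\varphi C_{\Phi_\xi}^{-1}$ and the membership $0\in\sigma(C_\varphi)$ explicitly, which the paper leaves implicit.
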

\begin{proof} If $C_\varphi$ is a compact composition operator on
$H^2_{\bf ball}$, then, according to  Theorem \ref{compact}, the scalar representation of $\varphi$ has a unique fix point $\xi\in \BB_n$. On the other hand, it is well-known that any nonzero  point in the spectrum  of a compact operator is an eigenvalue. Using Theorem \ref{inclusions}, we deduce that
$$
\sigma_p(C_\varphi)\subseteq \{1\}\cup \cP_{eig}\subseteq
\{0\}\cup \sigma_p(C_\varphi),
$$
where $\cP_{eig}$ is the set of all possible products of eigenvalues
of the matrix
$
\left[\left<\psi_i,e_j\right>\right]_{n\times n}.
$
Hence the result follows and the proof is complete.
\end{proof}

In \cite{MC2},  MacCluer  determined the spectrum of
 composition operators on $H^2(\BB_n)$ when the
symbols are  automorphisms of $\BB_n$ which fix at least one point
in $\BB_n$. The following theorem is an extension of this result to
 compositions operators on $H^2_{\bf ball}$ induced by free holomorphic automorphisms of $[B(\cH)^n]_1$.

\begin{theorem} \label{one-auto} Let $\varphi\in Aut(B(\cH)^n_1)$ be such that
$\varphi(\xi)=\xi$ for some $\xi\in \BB_n$.
Then the spectrum  of the composition operator $ C_\varphi $  on $H^2_{\bf ball}$  is the closure of all
possible products of the eigenvalues of the matrix
$$
\left[\left<\psi_i,e_j\right>\right]_{n\times n},
$$
 where $\psi=(\psi_1,\ldots, \psi_n):=\Phi_\xi\circ \varphi \circ
\Phi_\xi$  and  $\Phi_\xi$ is the involutive free holomorphic
automorphism of $[B(\cH)^m]_1$ associated with $\xi\in \BB_n$.
Moreover, $ \sigma(C_\varphi)$ is either the unit circle $\TT$, or a
finite subgroup of  $\TT$.

\end{theorem}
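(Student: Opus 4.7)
The strategy is to reduce to the case $\xi=0$ via similarity, invoke the structure theorem for automorphisms of $[B(\cH)^n]_1$ fixing the origin, and then diagonalize the resulting unitary matrix so the spectrum becomes explicit. To that end, I would first set $\psi:=\Phi_\xi\circ\varphi\circ\Phi_\xi$, so that $\psi\in Aut([B(\cH)^n]_1)$ with $\psi(0)=0$ and $C_\varphi$ is similar to $C_\psi$; in particular $\sigma(C_\varphi)=\sigma(C_\psi)$. By the description of $Aut([B(\cH)^n]_1)$ recalled in Section~2, any automorphism fixing the origin has the form $\psi(X_1,\ldots,X_n)=[X_1,\ldots,X_n]A$ for a unitary scalar matrix $A\in M_{n\times n}$. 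Viewing $\psi_i$ inside the Fock space $F^2(H_n)$, one checks that $[\langle\psi_i,e_j\rangle]_{n\times n}=A^T$, so it has the same eigenvalues $\lambda_1,\ldots,\lambda_n$ as $A$, all of modulus one.

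Next, following the computation in the proof of Theorem~\ref{normal}, $C_\psi^*=C_\chi$ with $\chi(X):=[X]A^*$; since $A$ is unitary, $\chi\circ\psi=\psi\circ\chi=\mathrm{id}$, hence $C_\psi^*C_\psi=C_\psi C_\psi^*=I$ and $C_\psi$ is unitary, giving $\sigma(C_\psi)\subseteq\TT$. Now I would pick a unitary $V\in M_{n\times n}$ with $VAV^{-1}=D:=\mathrm{diag}(\lambda_1,\ldots,\lambda_n)$. The map $\Phi_V(X):=[X]V$ is an automorphism of $[B(\cH)^n]_1$, so $C_{\Phi_V}$ is invertible, and a direct computation gives $C_{\Phi_V}C_\psi C_{\Phi_V}^{-1}=C_{\psi'}$ where $\psi'(X)=[X]D$; thus $\sigma(C_\varphi)=\sigma(C_{\psi'})$.

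With $\psi'_i=\lambda_i X_i$, one has $C_{\psi'}e_\alpha=\lambda_\alpha e_\alpha$ for every $\alpha=g_{i_1}\cdots g_{i_k}\in \FF_n^+$, where $\lambda_\alpha:=\lambda_{i_1}\cdots\lambda_{i_k}$ and $\lambda_{g_0}:=1$. So $C_{\psi'}$ is diagonal in the orthonormal basis $\{e_\alpha\}_{\alpha\in\FF_n^+}$ with eigenvalue set $\cP:=\{\lambda_\alpha:\alpha\in\FF_n^+\}$, which is exactly the set of all finite products of the eigenvalues of the matrix $[\langle\psi_i,e_j\rangle]_{n\times n}$. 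Since a diagonal operator has spectrum equal to the closure of its diagonal entries, $\sigma(C_\varphi)=\overline{\cP}$.

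For the final assertion, $\cP$ is a subsemigroup of the compact group $\TT$ containing $1$. Given $\mu\in\cP$, compactness of $\TT$ provides a subsequence with $\mu^{n_k}$ convergent; then $\mu^{n_k-n_j}\to 1$ for large $k>j$, and consequently $\mu^{n_k-n_j-1}\to\mu^{-1}$ with $\mu^{n_k-n_j-1}\in\cP$, so $\mu^{-1}\in\overline{\cP}$. Hence $\overline{\cP}$ is a closed subgroup of $\TT$, and every such subgroup is either $\TT$ itself or a finite cyclic group. The only genuinely non-formal step is this last compactness argument identifying the closure as a subgroup; everything else is bookkeeping using the structure of $Aut([B(\cH)^n]_1)$, Theorem~\ref{normal}, and Corollary~\ref{isometry}.
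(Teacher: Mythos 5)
Your proof is correct, and its core mechanism differs from the paper's in a way worth noting. Both arguments begin identically: conjugate by $C_{\Phi_\xi}$ to reduce to $\psi$ with $\psi(0)=0$, invoke the structure theorem to write $\psi(X)=[X_1,\ldots,X_n]A$ with $A$ unitary, and conjugate once more by $C_{\Phi_W}$ to replace $A$ by its diagonalization $D=\mathrm{diag}(\lambda_1,\ldots,\lambda_n)$. The paper, however, does not exploit the resulting diagonal structure of the composition operator itself: it proves $\sigma(C_\chi)\subseteq\TT$ from the norm estimates of Theorem \ref{comp4} ($\|C_\chi\|=\|C_\chi^{-1}\|=1$), obtains the inclusion $\overline{\cP}_{eig}\subseteq\sigma(C_\psi)$ from the Schr\"oder-equation machinery of Theorem \ref{inclusions}, and then closes the gap by a case analysis: if $\overline{\cP}_{eig}\neq\TT$ it is a finite subgroup $\{z:z^m=1\}$, whence $w_j^m=1$, $C_\chi^m=I$, and $\sigma(C_\chi)\subseteq\overline{\cP}_{eig}$. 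You instead observe that after diagonalizing $A$ the operator $C_{\psi'}$ acts by $C_{\psi'}e_\alpha=\lambda_\alpha e_\alpha$ on the orthonormal basis $\{e_\alpha\}_{\alpha\in\FF_n^+}$, so it is literally a diagonal operator whose spectrum is the closure of $\{\lambda_\alpha\}$; this yields the equality $\sigma(C_\varphi)=\overline{\cP}$ in one stroke, with no appeal to Theorem \ref{inclusions}, no norm estimate, and no case split. Your approach is more elementary and self-contained; what it gives up is only that the paper's route illustrates how the general spectral inclusion theorem (valid for arbitrary self-maps fixing a point) specializes to automorphisms. Your closing argument that $\overline{\cP}$ is a closed subsemigroup of the compact group $\TT$, hence a subgroup, hence $\TT$ or finite cyclic, is a standard compactness argument and supplies a justification for a fact the paper essentially asserts; the only loose phrasing is in extracting $\mu^{-1}$ as a limit of positive powers (one should pass to consecutive differences $n_{k+1}-n_k$ of the convergent subsequence, discarding the trivial case $\mu=1$), but this is easily repaired and does not affect correctness.
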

\begin{proof} Note that $\psi\in Aut(B(\cH)^n_1)$ and $\psi(0)=0$. According to \cite{Po-automorphism},
the free holomorphic automorphism $\psi$ has the form
$\psi(X)=[X_1,\ldots, X_n] U$ for some unitary matrix $U\in
M_{n\times n}$. It is easy to see that
$U=\left[\left<\psi_i,e_j\right>\right]_{n\times n}$. Since $U$ is
unitary there is another unitary matrix $W\in M_{n\times n}$ such
that
$$W^{-1} UW=\left[\begin{matrix} w_1&0&\cdots &0\\
0&w_2&\cdots &0\\
0&0&\cdots & w_n
\end{matrix}
\right],$$ where $w_1,\ldots, w_n$ are the eigenvalues of $U$. Set
$\chi:= \psi_W\circ \psi \circ \psi_W^{-1}$, where
$\psi_W(X):=[X_1,\ldots, X_n] W$ for $X:=[X_1,\ldots, X_n]\in
[B(\cH)^n]_1$. Note that $\chi(X)=[X_1,\ldots, X_n] W^{-1} UW$ and
$C_\chi=C_{\psi_W}^{-1}C_{\phi_\xi}^{-1}C_\varphi C_{\phi_\xi}
C_{\psi_W}$. Hence, $\sigma(C_\chi)=\sigma(\psi)=\sigma(\varphi)$.
Now we determine the spectrum of $C_\chi$. Since $C_\psi$ is
invertible and $\psi(0)=0$,  Theorem \ref{comp4} implies
$\|C_\chi\|=\|C_\psi^{-1}\|=1$. Therefore, $\sigma(C_\chi)\subseteq
\TT$. Using  now  Theorem \ref{inclusions}, we deduce that
 $\overline{\cP}_{eig}\subseteq
\sigma(C_\psi)\subseteq \TT$,
where $\cP_{eig}$ is the set of all possible products of eigenvalues
of the matrix $U$. It is obvious that if $\overline{\cP}_{eig}=\TT$, then $\sigma(C_\psi)= \TT$. When $\overline{\cP}_{eig}\neq \TT$, then $\overline{\cP}_{eig}$ is a finite subgroup  of $\TT$. Consequently, there is $m\in\NN$ such that
$\overline{\cP}_{eig}=\{z\in \TT:\ z^m=1\}$. This implies $w_j^m=1$ for $j=1,\ldots,n$ and $C_\chi^{m}=I$. Consequently, if $\lambda\in \sigma(C_\chi)$ then $\lambda^m\in
\sigma(C_\chi^m)=\{1\}$. This shows that $\lambda\in \overline{\cP}_{eig}$ and completes the proof.
\end{proof}

Comparing our Theorem \ref{one-auto} with MacCluer result (see
Theorem 3.1 from \cite{MC2}),  we are led  to the conclusion that if
$\varphi\in Aut(B(\cH)^n_1)$ has at least one fixed point in
$\BB_n$, then the spectrum  of the composition operator $ C_\varphi
$  on $H^2_{\bf ball}$ coincides  with the spectrum  of the
composition operator $ C_{\varphi^\CC} $  on $H^2(\BB_n)$, where
$\varphi^\CC$ is the scalar representation of $\varphi$.

\begin{theorem} \label{two-auto} If  $\varphi\in Aut(B(\cH)^n_1)$  and there is only one point $\zeta\in \overline{\BB}_n$ such that $\varphi(\zeta)=\zeta$ and $\zeta\in \partial \BB_n$, then  the spectral radius of the composition operator $C_\varphi$ on $H^2_{\bf ball}$ is equal to $1$ and $\sigma(C_\varphi)\subseteq \TT$.
\end{theorem}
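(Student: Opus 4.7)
The plan is to reduce to Theorem~\ref{rad=1} by showing that the dilatation coefficient of $\varphi$ at $\zeta$ equals $1$ (so that $\varphi$ is parabolic), and then to exploit the invertibility of $C_\varphi$ to localize the spectrum on $\TT$.

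First I would observe that $\varphi^{-1}\in Aut([B(\cH)^n]_1)$ shares its fixed point set in $\overline{\BB}_n$ with $\varphi$; hence $\zeta$ is also the unique fixed point of $\varphi^{-1}$ and lies on $\partial\BB_n$. Theorem~\ref{Wolff} then identifies $\zeta$ as the Denjoy--Wolff point of both $\varphi$ and $\varphi^{-1}$, with dilatation coefficients $\alpha,\beta\in(0,1]$, respectively.

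The central step is the identity $\alpha\beta=1$. Using the decomposition $\varphi=\Phi_\lambda\circ\Phi_U$ for some $\lambda\in\BB_n$ and unitary $U\in M_{n\times n}$ (Section~2) together with the scalar specialization of \eqref{E1}
$$
1-\|\Phi_\lambda(z)\|^2=\frac{(1-\|\lambda\|^2)(1-\|z\|^2)}{|1-\langle z,\lambda\rangle|^2},\qquad z\in\BB_n,
$$
one computes
$$
\frac{1-\|\varphi(z)\|^2}{1-\|z\|^2}=\frac{1-\|\lambda\|^2}{|1-\langle z,\lambda U^*\rangle|^2}.
$$
Since $\lambda U^*\in\BB_n$, the right-hand side extends continuously to $\overline{\BB}_n$, so the liminf defining $\alpha$ is an honest limit, and the change of variable $w=\varphi(z)$ yields $\beta=\lim_{z\to\zeta}(1-\|z\|^2)/(1-\|\varphi(z)\|^2)=1/\alpha$. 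Combined with $\alpha,\beta\leq 1$, this forces $\alpha=\beta=1$, so $\varphi$ is parabolic in the sense of Section~3. Theorem~\ref{rad=1} then gives $r(C_\varphi)=1$, and the same reasoning applied to $\varphi^{-1}$ gives $r(C_{\varphi^{-1}})=1$.

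Finally, any $\mu\in\sigma(C_\varphi)$ is nonzero (as $C_\varphi$ is invertible with inverse $C_{\varphi^{-1}}$), satisfies $|\mu|\leq r(C_\varphi)=1$, and has $\mu^{-1}\in\sigma(C_{\varphi^{-1}})$, forcing $|\mu|^{-1}\leq 1$; hence $|\mu|=1$ and $\sigma(C_\varphi)\subseteq\TT$. The hard part will be justifying $\alpha\beta=1$: it hinges on the continuous extension of the Julia quotient to $\overline{\BB}_n$, which in turn rests on the explicit M\"obius form of automorphisms. A softer alternative would be to view $\varphi^\CC$ as a Bergman isometry of $\BB_n$ with unique boundary fixed point and hence vanishing translation length, and then apply Theorem~\ref{comp2} to each iterate $\varphi^{[k]}\in Aut([B(\cH)^n]_1)$ to obtain $\|C_\varphi^k\|^{1/k}\to 1$ directly.
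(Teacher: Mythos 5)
Your argument is correct and follows the same overall route as the paper: reduce the spectral--radius claim to the parabolic case of Theorem~\ref{rad=1}, then use $C_{\varphi^{-1}}=C_\varphi^{-1}$ together with $r(C_\varphi)=r(C_{\varphi^{-1}})=1$ to force $\sigma(C_\varphi)\subseteq\TT$. The difference is one of completeness. The paper's proof is two sentences long and simply declares the spectral--radius computation to be ``similar to that of Theorem~\ref{rad=1}, in the parabolic case,'' without checking that an automorphism whose only fixed point in $\overline{\BB}_n$ lies on the boundary is in fact parabolic (dilatation coefficient $\alpha=1$). Your computation supplies exactly that step: writing $\varphi=\Phi_\lambda\circ\Phi_U$ and specializing \eqref{E1} to scalars shows that the Julia quotient $\bigl(1-\|\varphi(z)\|^2\bigr)/\bigl(1-\|z\|^2\bigr)$ extends continuously to $\overline{\BB}_n$, so the dilatation coefficients $\alpha$ of $\varphi$ and $\beta$ of $\varphi^{-1}$ at the common Denjoy--Wolff point are honest limits satisfying $\alpha\beta=1$; since Theorem~\ref{Wolff} gives $\alpha\le 1$ and $\beta\le 1$, both equal $1$. (Do state explicitly why the $\zeta$ of the hypothesis is the Denjoy--Wolff point of Theorem~\ref{Wolff}: the Denjoy--Wolff point is fixed by the homeomorphic extension of $\varphi$ to $\overline{\BB}_n$, so the uniqueness hypothesis identifies it with $\zeta$.) The concluding step, $\mu\in\sigma(C_\varphi)\Rightarrow\mu^{-1}\in\sigma(C_{\varphi^{-1}})$ and hence $|\mu|=1$, matches the paper's. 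Your ``softer alternative'' via Theorem~\ref{comp2} and the escape rate of $\varphi^{[k]}(0)$ would still require a quantitative substitute for parabolicity, so keep the Julia--quotient computation as the primary argument.
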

\begin{proof} The proof  that the spectral radius is $1$ is similar to that of Theorem \ref{rad=1},
 in the parabolic case. The inclusion  $\sigma(C_\varphi)\subseteq \TT$ is due to the fact that
$\varphi^{-1}(\zeta)=\zeta$ and, according  to the first part of the
theorem we have $r(C_\varphi^{-1})=r(C_\varphi)=1$.
\end{proof}

\bigskip
\section{Composition operators on Fock spaces associated to noncommutative varieties}

In this section, we consider composition operators on Fock spaces
associated to noncommutative varieties in unit ball $[B(\cH)^n]_1$
and obtain results concerning boundedness, norm estimates, and
spectral radius. In particular, we show that
 many of our  results have
commutative counterparts  for composition operators on the symmetric
Fock space   and  on spaces of analytic functions in the unit ball
of $\CC^n$. In particular, we obtain new proofs for some of Jury's
(\cite{Ju}) recent results concerning compositions operators on
$\BB_n$.

Let $\cP_0$ be a set on noncommutative polynomials in $n$ indeterminates  such that $p(0)=0$
for all $p\in \cP_0$.
Consider the noncomutative variety $\cV_{\cP_0}(\cH)\subseteq [B(\cH)^n]_1$ defined by
$$
\cV_{\cP_0}(\cH):=\{(X_1,\ldots, X_n)\in [B(\cH)^n]_1:\ p(X_1,\ldots, X_n)=0 \text{ for all } p\in \cP_0\}.
$$
Let
$$\cM_{\cP_0}:=\overline{\text{\rm span}}\,\{S_\alpha p(S_1,\ldots, S_n)S_\beta 1:\ p\in \cP_0, \alpha,\beta\in \FF_n^+\}
$$
and $\cN_{\cP_0}:=F^2(H_n)\ominus \cM_{\cP_0}$. We remark that $1\in
\cN_{\cP_0}$ and the subspace $\cN_{\cP_0}$ is  invariant under
$S_1^*,\ldots, S_n^*$ and  $R_1^*,\ldots, R_n^*$. Define the {\it
constrained  left} (resp.~{\it right}) {\it creation operators} by
setting
$$B_i:=P_{\cN_{\cP_0}} S_i|_{\cN_{\cP_0}}\quad \text{and}\quad W_i:=P_{\cN_{\cP_0}} R_i|_{\cN_{\cP_0}},\quad i=1,\ldots, n.
$$
We proved in \cite{Po-varieties} that the $n$-tuple $(B_1,\ldots,
B_n)\in \cV_{\cP_0}(\cN_{\cP_0})$ is the universal model associated
with  the noncommutative variety $\cV_{\cP_0}(\cH)$. Let $F_n^\infty
(\cV_{\cP_0})$ be the $w^*$-closed algebra generated by $B_1,\ldots,
B_n$ and the identity. The $w^*$ and WOT topologies coincide on this
algebra  and
$$
F_n^\infty (\cV_{\cP_0})=P_{\cN_{\cP_0}}F_n^\infty
|_{\cN_{\cP_0}}=\{f(B_1,\ldots, B_n):\ f \in F_n^\infty\},
$$
where if $f$ has the Fourier representation $\sum_{\alpha\in
\FF_n^+} a_\alpha S_\alpha$ then
$$f(B_1,\ldots, B_n)=\text{\rm SOT-}\lim\limits_{r\to 1} \sum_{k=0}^\infty \sum_{|\alpha|=k}
r^{|\alpha|} a_\alpha B_\alpha.
$$
The latter limit exists due   to the $F_n^\infty$-functional
calculus for row contractions \cite{Po-funct}. Similar results hold
for $R_n^\infty (\cV_{\cP_0})$, the $w^*$-closed algebra generated
by $W_1,\ldots, W_n$ and the identity. Moreover,
$$
F_n^\infty (\cV_{\cP_0})^\prime=R_n^\infty (\cV_{\cP_0})\ \text{ and } \ R_n^\infty (\cV_{\cP_0})^\prime=F_n^\infty (\cV_{\cP_0}),
$$
where $^\prime$ stands for the commutant. According to
\cite{Po-varieties}, each  $\widetilde\chi\in F_n^\infty
(\cV_{\cP_0})$ generates a mapping $\chi:\cV_{\cP_0}(\cH)\to B(\cH)$
given by
$$
\chi(X_1,\ldots, X_n):={\bf P}_X[\widetilde\chi], \qquad
X:=(X_1,\ldots, X_n)\in \cV_{\cP_0}(\cH),
$$
where ${\bf P}_X$ is the noncommutative  Poisson transform
associated with $\cV_{\cP_0}(\cH)$. On the other hand, since
$\widetilde \chi=P_{\cN_{\cP_0}} \widetilde\phi |_{\cN_{\cP_0}}$ for
some $\widetilde\phi=\sum_{\alpha\in \FF_n^+} a_\alpha S_\alpha$ in
$F_n^\infty$, we have
$$
\chi(X_1,\ldots, X_n)=\sum_{k=0}^\infty \sum_{|\alpha|=k}
  a_\alpha X_\alpha,\qquad (X_1,\ldots, X_n)\in \cV_{\cP_0}(\cH),
  $$
  where the convergence is in the operator norm topology. This shows that $\chi$ is the restriction to
  $\cV_{\cP_0}(\cH)$ of a  bounded free holomorphic function on $[B(\cH)^n]_1$, namely
  $X\mapsto \phi (X)={\bf P}_X[\widetilde \psi]$.  We remark that the map
  $\chi$ does not depend on the choice of $\widetilde\phi\in F_n^\infty$  with the
  property
  that  $\widetilde
\chi=P_{\cN_{\cP_0}} \widetilde\phi |_{\cN_{\cP_0}}$. Note also
that $\chi(0)=\left< \widetilde\chi 1,1\right>$.

We remark that when $f\in F^2(H_n)$ and $f=\sum_{k=0}^\infty \sum_{|\alpha|=k} a_\alpha e_\alpha$,
then $f\in \cN_{\cP_0}$ if and only if $$\sum_{k=0}^\infty \sum_{|\alpha|=k} a_\alpha e_\alpha
=\sum_{k=0}^\infty \sum_{|\alpha|=k} a_\alpha B_\alpha 1.$$
We say that $\widetilde\psi\in F_n^\infty (\cV_{\cP_0})\otimes \CC^n$ is non-scalar operator if
 it does not have the form $(a_1I_{\cN_{\cP_0}},\ldots, a_nI_{\cN_{\cP_0}})$ for some
$a_i\in \CC$. The main result of this section is the following.

\bigskip
\bigskip

\begin{theorem}\label{comp-variety}
Let $\widetilde\psi=(\widetilde\psi_1,\ldots, \widetilde\psi_n)\in F_n^\infty (\cV_{\cP_0})\otimes \CC^n$
 be a non-scalar  operator  with $\|\widetilde\psi\|\leq 1$.
Then the following statements hold.
\begin{enumerate}
\item[(i)] If $g\in \cN_{\cP_0}$ has the representation\  $\sum_{k=0}^\infty \sum_{|\alpha|=k}
c_\alpha e_\alpha $   then
    $$
    g\circ \widetilde\psi:=\sum_{k=0}^\infty \sum_{|\alpha|=k} c_\alpha \widetilde\psi_\alpha
    1 \in \cN_{\cP_0},
    $$
      where the convergence of the series is in
    $F^2(H_n)$.
\item[(ii)] The composition operator $C_{\widetilde\psi} : \cN_{\cP_0}\to \cN_{\cP_0}$  defined by
$$C_{\widetilde\psi}g:=g\circ
\widetilde\psi, \qquad   g\in \cN_{\cP_0}, $$
 is bounded. Moreover,
$$
\|P_{\cN_{\cP_0}} z_{\psi(0)}\|
\leq
\sup_{\lambda\in
\cV_{\cP_0}(\CC)}  \frac{\|P_{\cN_{\cP_0}} z_{\psi(\mu)}\|}{\|z_\mu\|}
\leq
\|C_{\widetilde\psi}\|\leq
\left(\frac{1+\|\psi(0)\|}{1-\|\psi(0)\|}\right)^{1/2}.
$$
\item[(iii)] The adjoint of the composition operator $C_{\widetilde\psi} : \cN_{\cP_0}\to \cN_{\cP_0}$ satisfies the formula
    $$
    C_{\widetilde\psi}^* g=\sum_{k=0}^\infty \sum_{|\alpha|=k}
     \left<g, \widetilde \psi_\alpha (1)\right> P_{\cN_{\cP_0}}e_\alpha , \qquad g\in \cN_{\cP_0}.
    $$

    \end{enumerate}
\end{theorem}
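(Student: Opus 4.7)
The plan is to reduce everything to the unconstrained setting handled by Theorem \ref{comp4} and Section 4, via the noncommutative commutant lifting theorem. Given $\widetilde\psi\in F_n^\infty(\cV_{\cP_0})\otimes\CC^n$ with $\|\widetilde\psi\|\leq 1$, each $\widetilde\psi_i$ commutes with $W_1,\ldots,W_n$ and hence with $R_n^\infty(\cV_{\cP_0})$, so invoking commutant lifting \cite{Po-isometric} I produce $\widetilde\phi=(\widetilde\phi_1,\ldots,\widetilde\phi_n)\in F_n^\infty\otimes\CC^n$ with $\|\widetilde\phi\|=\|\widetilde\psi\|\leq 1$ and $P_{\cN_{\cP_0}}\widetilde\phi_i|_{\cN_{\cP_0}}=\widetilde\psi_i$ for $i=1,\ldots,n$. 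Since $\widetilde\psi$ is non-scalar, so is $\widetilde\phi$, and Theorem \ref{strict}(iv) implies that $\widetilde\phi$ is the model boundary function of a free holomorphic self-map $\phi$ of $[B(\cH)^n]_1$. Moreover $\phi(0)=\psi(0)$, because $\phi_i(0)=\langle\widetilde\phi_i 1,1\rangle=\langle\widetilde\psi_i 1,1\rangle=\psi_i(0)$ (the vector $1$ lies in $\cN_{\cP_0}$).

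The key compatibility is a triangular block decomposition. By definition of $\cM_{\cP_0}:=F^2(H_n)\ominus\cN_{\cP_0}$, each creation operator $S_i$ leaves $\cM_{\cP_0}$ invariant, hence so does $\widetilde\phi_i\in F_n^\infty$, as the latter is an SOT-limit of noncommutative polynomials in $S_1,\ldots,S_n$. Relative to $F^2(H_n)=\cN_{\cP_0}\oplus\cM_{\cP_0}$, each $\widetilde\phi_i$ is therefore block-triangular with $\widetilde\psi_i$ as its $(1,1)$-entry, and a short induction on $|\alpha|$ gives
$$
P_{\cN_{\cP_0}}\widetilde\phi_\alpha h=\widetilde\psi_\alpha h,\qquad h\in\cN_{\cP_0},\ \alpha\in\FF_n^+.
$$
In particular $\widetilde\psi_\alpha 1=P_{\cN_{\cP_0}}\widetilde\phi_\alpha 1$. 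For $g=\sum c_\alpha e_\alpha\in\cN_{\cP_0}$, Corollary \ref{comp5} applied to $\phi$ shows that the series $C_{\widetilde\phi}g=\sum c_\alpha\widetilde\phi_\alpha 1$ converges in $F^2(H_n)$; applying the bounded projection $P_{\cN_{\cP_0}}$ termwise yields
$$
g\circ\widetilde\psi=\sum c_\alpha\widetilde\psi_\alpha 1=P_{\cN_{\cP_0}}C_{\widetilde\phi}g\in\cN_{\cP_0},
$$
which proves (i) and gives the operator identity $C_{\widetilde\psi}=J^*C_{\widetilde\phi}J$, where $J:\cN_{\cP_0}\hookrightarrow F^2(H_n)$ is the inclusion. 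Boundedness and the upper bound in (ii) then follow from Theorem \ref{comp4}:
$$
\|C_{\widetilde\psi}g\|\leq\|C_{\widetilde\phi}g\|\leq\|C_\phi\|\,\|g\|\leq\left(\frac{1+\|\psi(0)\|}{1-\|\psi(0)\|}\right)^{1/2}\|g\|.
$$

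For the remaining norm estimate and the adjoint formula I mimic the unconstrained computations from Sections 2 and 4. A direct check using $p(\mu)=0$ for all $p\in\cP_0$ shows $z_\mu=\sum\bar\mu_\alpha e_\alpha\in\cN_{\cP_0}$ whenever $\mu\in\cV_{\cP_0}(\CC)$; combining the reproducing identity $\langle h,z_\mu\rangle=h(\mu)$ for $h\in\cN_{\cP_0}$ with $(\widetilde\psi_\alpha 1)(\mu)=\psi(\mu)^\alpha$ (immediate from $\widetilde\psi_\alpha 1=P_{\cN_{\cP_0}}\widetilde\phi_\alpha 1$ and $z_\mu\in\cN_{\cP_0}$) yields $C_{\widetilde\psi}^* z_\mu=P_{\cN_{\cP_0}}z_{\psi(\mu)}$ for $\mu\in\cV_{\cP_0}(\CC)$, so that $\|C_{\widetilde\psi}\|\geq\|C_{\widetilde\psi}^* z_\mu\|/\|z_\mu\|$ supplies the lower bound in (ii) after specializing to $\mu=0$ and taking the supremum. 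For (iii), $C_{\widetilde\psi}^*=J^*C_{\widetilde\phi}^*J$; Proposition \ref{adjoint} gives $C_{\widetilde\phi}^*g=\sum_\alpha\langle g,\widetilde\phi_\alpha 1\rangle e_\alpha$, and the observation $\langle g,\widetilde\phi_\alpha 1\rangle=\langle g,\widetilde\psi_\alpha 1\rangle$ for $g\in\cN_{\cP_0}$ (since the two differ by a vector in $\cM_{\cP_0}$) then produces the claimed series $C_{\widetilde\psi}^* g=\sum_\alpha\langle g,\widetilde\psi_\alpha 1\rangle P_{\cN_{\cP_0}}e_\alpha$. The main obstacle is the first paragraph — correctly invoking commutant lifting in this row-contraction form and establishing the triangular block structure — after which everything reduces to already-proved facts in the unconstrained case.
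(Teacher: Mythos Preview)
Your proof is correct and follows essentially the same route as the paper: lift $\widetilde\psi$ to $\widetilde\phi\in F_n^\infty\otimes\CC^n$ via the noncommutative commutant lifting theorem, use the $F_n^\infty$-invariance of $\cM_{\cP_0}$ to get $P_{\cN_{\cP_0}}\widetilde\phi_\alpha|_{\cN_{\cP_0}}=\widetilde\psi_\alpha$, and then compress the results of Theorem~\ref{comp4}, Corollary~\ref{comp5}, and Proposition~\ref{adjoint}. Your treatment of the block-triangular structure and of the reproducing vectors $z_\mu$ for $\mu\in\cV_{\cP_0}(\CC)$ is in fact a bit more explicit than the paper's, but the argument is the same.
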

\begin{proof}
Since $R_n^\infty (\cV_{\cP_0})^\prime=F_n^\infty (\cV_{\cP_0})$, the operator
 $\widetilde\psi:\cN_{\cP_0}\otimes \CC^n\to \cN_{\cP_0}$ satisfies the commutation
 relations
  $$
  \widetilde \psi (W_i\otimes I_{\CC^n})=W_i\widetilde\psi,\qquad i=1,\ldots, n.
  $$
Since $W_i:=P_{\cN_{\cP_0}} R_i|_{\cN_{\cP_0}}$, $ i=1,\ldots, n$,
it is clear that $[R_1\otimes I_{\CC_n},\ldots, R_1\otimes
I_{\CC_n}]$ is an isometric dilation of the row contraction
$[W_1\otimes I_{\CC_n},\ldots, W_1\otimes I_{\CC_n}]$. According to
the noncommutative commutant theorem \cite{Po-isometric}, there
exists $\widetilde \varphi=[\widetilde \varphi_1,\ldots, \widetilde
\varphi_n] : F^2(H_n)\otimes \CC^n\to F^2(H_n)$ with the properties
$\|\widetilde \varphi\|\leq 1$, $\widetilde
\varphi^*|_{\cN_{\cP_0}}=\widetilde \psi^*$, and $\widetilde
\varphi(R_i\otimes I_{\CC^n})=R_i \widetilde \varphi$ for
$i=1,\ldots,n$. Hence, we deduce that $\widetilde
\varphi_j^*|_{\cN_{\cP_0}}=\widetilde \psi_j^*$ and $\widetilde
\varphi_jR_i=R_i \widetilde \varphi_j$ for  $i,j=1,\ldots,n$. Since,
due to \cite{Po-analytic},  the commutant of the right creation
operators $R_1,\ldots, R_n$ coincides with the noncommutative
analytic Toeplitz algebra $F_n^\infty$, we deduce that $\widetilde
\varphi_j\in F_n^\infty$, $j=1,\ldots,n$. Since $\widetilde
\varphi^*|_{\cN_{\cP_0}}=\widetilde \psi^*$ and $\widetilde\psi$ is
a non-scalar  operator,  so is $\widetilde \varphi$.
 According to Theorem \ref{comp4} and Corollary \ref{comp5}, the composition operator
$C_{\widetilde\varphi}:F^2(H_n)\to F^2(H_n)$ satisfies the equation
\begin{equation} \label{C}
 C_{  \widetilde\varphi}\left(\sum_{k=0}^\infty \sum_{|\alpha|=k}a_\alpha e_\alpha\right)=
 \sum_{k=0}^\infty \sum_{|\alpha|=k} a_\alpha (\widetilde\varphi_\alpha
 1)
 \end{equation}
 for any $f=\sum_{k=0}^\infty \sum_{|\alpha|=k}a_\alpha e_\alpha$ in  $F^2(H_n)$.
Since $\widetilde \varphi_j^*|_{\cN_{\cP_0}}=\widetilde \psi_j^*$,
$j=1,\ldots,n$, we have $P_{\cN_{\cP_0}} \widetilde \varphi_\alpha
|_{\cN_{\cP_0}}=\widetilde \psi_\alpha$ for all $\alpha\in \FF_n^+$.
Since $1\in \cN_{\cP_0}$, we  assume that $f\in \cN_{\cP_0}$ in
 relation  \eqref{C} and, taking the projection on $\cN_{\cP_0}$,  we complete the proof of part (i).

Now,  to prove item (ii), note that part (i) implies  $C_{\widetilde\psi}=P_{\cN_{\cP_0}}
 C_{\widetilde\varphi}|_{\cN_{\cP_0}}$. Using  this relation and Theorem \ref{comp4}, we deduce that $\|C_{\widetilde\psi}\|\leq
\left(\frac{1+\|\psi(0)\|}{1-\|\psi(0)\|}\right)^{1/2}$. Recall that
$z_\lambda:=\sum_{\alpha\in \FF_n^+} \overline{\lambda}_\alpha
e_\alpha$, $\lambda\in \BB_n$. Note that if
$\lambda=(\lambda_1,\ldots, \lambda_n)$ is in the scalar
representation of the noncommutative variety $\cV_{\cP_0}$, i.e.,
$$
\cV_{\cP_0}(\CC):=\{(\lambda_1,\ldots, \lambda_n)\in \BB_n:\ p(\lambda_1,\ldots, \lambda_n)=0, p\in \cP_0\},
$$
then  we have
$$\left< [S_\alpha p(S_1,\ldots, S_n) S_\beta](1), z_\lambda \right>= {\lambda}_\alpha
 {p(\lambda)}  {\lambda}_\beta=0,
$$
for any $p\in \cP_0$  and $\alpha,\beta\in \FF_n^+$. Hence
$z_\lambda\in \cN_{\cP_0}$ for any $\lambda\in\cV_{\cP_0}(\CC)$. As
in the proof of Theorem \ref{comp2}, we have
\begin{equation*}
C_{\widetilde\varphi}^*z_\mu=\sum_{k=0}\sum_{|\alpha|=k}
\overline{\varphi_\alpha(\mu)}e_\alpha=z_{\varphi(\mu)},\qquad  \mu:=(\mu_1,\ldots, \mu_n)\in \BB_n,
\end{equation*}
Now, note that
 \begin{equation*}
\begin{split}
\|C_{\widetilde\psi}\| &=\|C_{\widetilde\psi}^*\|\geq \frac{\|C_{\widetilde\psi}^* z_\mu\|}{\|z_\mu\|}\\
&=\frac{\|P_{\cN_{\cP_0}} C_{\widetilde\varphi}^* z_\mu\|}{\|z_\mu\|}=
 \frac{\|P_{\cN_{\cP_0}} z_{\psi(\mu)}\|}{\|z_\mu\|}
\end{split}
\end{equation*}
for any $\lambda\in\cV_{\cP_0}(\CC)$. Since $0\in \cV_{\cP_0}(\CC)$ the first two inequalities in part (ii) follow.

Now, it remains to prove part (iii). According to Proposition \ref{adjoint}, we have
$$C_{\widetilde\psi}^* g=
P_{\cN_{\cP_0}}C_{\widetilde\varphi}^*g =\sum_{\alpha\in \FF_n^+} \left<
g,\widetilde\varphi_\alpha 1\right> P_{\cN_{\cP_0}}e_\alpha,\qquad g\in F^2 (H_n).
$$
Since $P_{\cN_{\cP_0}} \widetilde \varphi_\alpha |_{\cN_{\cP_0}}=\widetilde \psi_\alpha$ for all $\alpha\in \FF_n^+$ and  $1\in \cN_{\cP_0}$, we deduce part (iii).
The proof is complete.
\end{proof}

We remark that under the conditions of Theorem \ref{comp-variety},
we can use Theorem \ref{strict} to show that
$$
\|\psi(X_1,\ldots, X_n)\|<1,\qquad (X_1,\ldots, X_n)\in
\cV_{\cP_0}(\cH).
$$
Consequently, $ g\circ \widetilde\psi$ induces the map
$$
 (g\circ \psi)(X):= \sum_{k=0}^\infty \sum_{|\alpha|=k} c_\alpha
 \psi_\alpha (X),\qquad X\in \cV_{\cP_0}(\cH),
 $$
 where the convergence is in the operator norm topology. Using
 Corollary \ref{comp5}, we deduce that
 $$\lim_{r\to 1}(g\circ \psi)(rB_1,\ldots, rB_n) 1=g\circ
 \widetilde\psi.
 $$
Moreover, the map $g\circ \psi $ is
 the restriction  to $\cV_{\cP_0}(\cH)$ of the free holomorphic
 function $g\circ \varphi$ on $[B(\cH)^n]_1$, where $\varphi$ was introduced
 in the proof of Theorem \ref{comp-variety}.

\begin{corollary}  \label{spec-for}Let
 $\widetilde\psi=(\widetilde\psi_1,\ldots, \widetilde\psi_n)\in F_n^\infty (\cV_{\cP_0})\otimes \CC^n$
  be a non-scalar operator  with $\|\widetilde\psi\|\leq 1$ and $p(\psi(0))=0$
   for all $p\in \cP_0$. Then the  norm of composition operator
   $C_{\widetilde\psi} : \cN_{\cP_0}\to \cN_{\cP_0}$ satisfies the inequalities
$$
\frac{1}{(1-\|\psi(0)\|^2)^{1/2}}
\leq
\|C_{\widetilde\psi}\|\leq
\left(\frac{1+\|\psi(0)\|}{1-\|\psi(0)\|}\right)^{1/2}.
$$
Moreover, the spectral radius of $C_{\widetilde\psi}$ satisfies the relation
$$r(C_{\widetilde\psi})=\lim_{k\to\infty}(1-\|\varphi^{[k]}(0)\|)^{-1/2k}. $$
\end{corollary}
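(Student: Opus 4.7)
The upper bound $\|C_{\widetilde\psi}\|\leq \left(\frac{1+\|\psi(0)\|}{1-\|\psi(0)\|}\right)^{1/2}$ is the right-hand inequality of Theorem \ref{comp-variety}(ii). For the lower bound I would specialize the inner chain of inequalities of that part to $\mu=0$: since $z_0=1$ has norm $1$, this yields $\|P_{\cN_{\cP_0}} z_{\psi(0)}\|\leq \|C_{\widetilde\psi}\|$. The hypothesis $p(\psi(0))=0$ for every $p\in\cP_0$ places $\psi(0)$ in $\cV_{\cP_0}(\CC)$, and, as observed in the proof of Theorem \ref{comp-variety}, any such point has $z_{\psi(0)}\in \cN_{\cP_0}$; hence the projection acts as the identity and $\|P_{\cN_{\cP_0}} z_{\psi(0)}\|=\|z_{\psi(0)}\|=(1-\|\psi(0)\|^2)^{-1/2}$, which is exactly the desired lower bound.

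For the spectral radius formula I would follow the argument of Theorem \ref{spectral radius}. The first step is the semigroup identity $C_{\widetilde\psi}^{\,k}=C_{\widetilde{\psi^{[k]}}}$, where $\psi^{[k]}$ denotes the $k$-fold composition of $\psi$ as a free holomorphic self-map of the variety. This is an easy induction from the formula in Theorem \ref{comp-variety}(i), which gives $(g\circ \widetilde\psi)\circ \widetilde\psi = g\circ \widetilde{\psi\circ\psi}$ inside $\cN_{\cP_0}$. Applying the first part of the corollary now to each iterate yields
\begin{equation*}
\frac{1}{(1-\|\psi^{[k]}(0)\|^2)^{1/2}}\leq \|C_{\widetilde\psi}^{\,k}\|\leq \left(\frac{1+\|\psi^{[k]}(0)\|}{1-\|\psi^{[k]}(0)\|}\right)^{1/2}.
\end{equation*}
Taking $k$-th roots, letting $k\to\infty$, and using that $(1+\|\psi^{[k]}(0)\|)^{1/2k}\to 1$, both extremes converge to $\lim_{k\to\infty}(1-\|\psi^{[k]}(0)\|)^{-1/2k}$, and therefore so does $r(C_{\widetilde\psi})=\lim_k\|C_{\widetilde\psi}^{\,k}\|^{1/k}$. (I read the $\varphi^{[k]}$ in the statement as a typographical slip for $\psi^{[k]}$, since $\varphi$ is not named in the corollary.)

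The principal obstacle, and the only step that goes beyond a direct appeal to Theorem \ref{comp-variety}, is verifying that each iterate $\psi^{[k]}$ remains admissible for the first part of the corollary: one needs $\widetilde{\psi^{[k]}}\in F_n^\infty(\cV_{\cP_0})\otimes\CC^n$ non-scalar, with norm at most $1$, and $p(\psi^{[k]}(0))=0$ for every $p\in\cP_0$ and every $k\in\NN$. The norm bound and non-scalarity are inherited from $\widetilde\psi$ through the $F_n^\infty(\cV_{\cP_0})$-functional calculus; the variety constraint at the base point propagates by induction from $p(\psi(0))=0$ together with the fact that the scalar representation of a symbol in $F_n^\infty(\cV_{\cP_0})\otimes\CC^n$ sends $\cV_{\cP_0}(\CC)$ into $\cV_{\cP_0}(\CC)$. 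Once this closure under iteration is in place, the squeezing in the preceding paragraph finishes the proof.
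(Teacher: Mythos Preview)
Your argument matches the paper's own proof: the norm inequalities are obtained exactly as you describe (using $\psi(0)\in\cV_{\cP_0}(\CC)$ so that $z_{\psi(0)}\in\cN_{\cP_0}$ and the projection is trivial), and for the spectral radius the paper simply says ``similar to the proof of Theorem \ref{spectral radius}'' without further detail. Your discussion of the iteration step is in fact more explicit than the paper's, which does not spell out the closure of the hypotheses under iteration at all; your reading of $\varphi^{[k]}$ as $\psi^{[k]}$ is also correct.
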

\begin{proof} Since $p(\psi(0))=0$
   for all $p\in \cP_0$, we have $\psi(0)\in \cV_{\cP_o}(\CC)$ and,
   as in the proof of Theorem \ref{comp-variety}, we deduce that
   $z_{\psi(0)}\in \cN_{\cP_0}$. Consequently,
   $$
   \|P_{\cN_{\cP_0}}z_{\psi(0)}\|=\|z_{\psi(0)}\|=\frac{1}{(1-\|\psi(0)\|^2)^{1/2}}.
   $$
   Combining this  relation with part (ii) of Theorem \ref{comp-variety},  we deduce the
   inequalities above. The proof of the last part of this corollary is similar to the
   proof of Theorem \ref{spectral radius}.
\end{proof}

Now we consider an important particular case.
If
$\cP_c:=\{X_iX_j-X_jX_i:\ i,j=1,\ldots, n\}$, then $\cN_{\cP_c}=\overline{\text{\rm span}}\{z_\lambda: \ \lambda\in
\BB_n\}=F^2_s$, the symmetric Fock space.
For each $\lambda=(\lambda_1,\ldots,
\lambda_n)$ and each $n$-tuple ${\bf k}:=(k_1,\ldots, k_n)\in
\NN_0^n$, where $\NN_0:=\{0,1,\ldots \}$, let $\lambda^{\bf
k}:=\lambda_1^{k_1}\cdots \lambda_n^{k_n}$. For each ${\bf k}\in
\NN_0^n$, we denote
$$
\Lambda_{\bf k}:=\{\alpha\in \FF_n^+: \ \lambda_\alpha =\lambda^{\bf
k} \text{ for all } \lambda\in \CC^n\}
$$
and define the vector
$$
w^{\bf k}:=\frac{1}{\gamma_{\bf k}} \sum_{\alpha \in \Lambda_{\bf
k}}  e_\alpha\in F^2(H_n), \quad  \text{ where } \
\gamma_{\bf k}:=\text{\rm card}\, \Lambda_{\bf k}.
$$
 The set  $\{w^{\bf k}:\ {\bf
k}\in \NN_0^n\}$ consists  of orthogonal vectors in $F^2(H_n)$ which span the symmetric Fock space $F^2_s$ and
$\|w^{\bf k}\|=\frac{1}{\sqrt{\gamma_{\bf k}}}$. The symmetric  Fock space $F_s^2$ can be identified with
the Drury-Arveson
 space ${\bf H}_n^2$ of all functions
$\varphi:\BB_n\to \CC$ which admit a power series representation
$\varphi(\lambda)=\sum_{{\bf k}\in \NN_0} c_{\bf k} \lambda^{\bf k}$
with
$$
\|\varphi\|_2=\sum_{{\bf k}\in \NN_0}|c_{\bf
k}|^2\frac{1}{\gamma_{\bf k}}<\infty.
$$
More precisely, every  element  $\varphi=\sum_{{\bf k}\in \NN_0}
c_{\bf k} w^{\bf k}$ in $F_s^2$  has a functional
representation on $\BB_n$ given by
\begin{equation} \label{fila}
\varphi(\lambda):=\left<\varphi, z_\lambda\right>=\sum_{{\bf k}\in
\NN_0} c_{\bf k} \lambda^{\bf k}, \qquad \lambda=(\lambda_1,\ldots,
\lambda_n)\in \BB_n,
\end{equation}
and
$$
|\varphi(\lambda)|\leq \frac{\|\varphi\|_2}{\sqrt{1-
\|\lambda\|^2}},\qquad \lambda=(\lambda_1,\ldots, \lambda_n)\in
\BB_n.
$$
 Arveson showed  that the algebra
$F_n^\infty (\cV_{\cP_c})$
      can be identified with  the algebra of all  multipliers  of ${\bf H}_n^2$.
       Under this identification the creation operators $L_i:=P_{F_s^2} S_i|_{F_s^2}$, $i=1,\dots, n$,  on the symmetric Fock space become the multiplication operators $M_{z_1},\ldots, M_{z_n}$ by the coordinate functions $z_1,\ldots, z_n$ of $\CC^n$.

\begin{theorem}\label{symmetric} Let
 $\widetilde\psi=(\widetilde\psi_1,\ldots, \widetilde\psi_n)\in F_n^\infty (\cV_{\cP_c})\otimes \CC^n$
 be a non-scalar operator  with $\|\widetilde\psi\|\leq 1$. Under
 the identification of the symmetric Fock space $F_s^2$ with the  Drury-Arveson space
   ${\bf H}_n^2$, the composition operator $C_{\widetilde\psi} :F_s^2\to F_s^2$
has the functional representation
$$
(C_{\widetilde\psi} f)(\lambda)=f(\psi(\lambda)), \qquad \lambda\in \BB_n.
$$
Moreover, if $f\in F^2_s$, then
$$
(C_{\widetilde\psi}^* f)(\lambda)=\left<f, z_\lambda\circ\widetilde\psi\right>, \qquad \lambda\in \BB_n,
$$
where $z_\lambda:=\sum_{\alpha\in \FF_n^+} \overline{\lambda}_\alpha e_\alpha$.
\end{theorem}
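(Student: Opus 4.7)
The plan is to reduce everything to the ``ambient'' composition operator on $F^2(H_n)$ and then evaluate inner products against the reproducing vectors $z_\lambda$. Recall that for $f\in F_s^2$ the Drury--Arveson functional representation in \eqref{fila} asserts $f(\lambda)=\langle f,z_\lambda\rangle$, and that $z_\lambda\in F_s^2=\cN_{\cP_c}$ for every $\lambda\in\BB_n$ (noted in the proof of Theorem \ref{comp-variety} since $p(\lambda)=0$ for all $p\in\cP_c$ when $\lambda\in\CC^n$). This will let me pass between the abstract operator-theoretic description of $C_{\widetilde\psi}$ and the classical pointwise formula.

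First I would lift $\widetilde\psi$ to $\widetilde\varphi=(\widetilde\varphi_1,\dots,\widetilde\varphi_n)\in F_n^\infty\otimes\CC^n$ by the commutant lifting argument already carried out in the proof of Theorem \ref{comp-variety}: one has $\|\widetilde\varphi\|\le 1$, $\widetilde\varphi_j^*|_{\cN_{\cP_c}}=\widetilde\psi_j^*$, and consequently $C_{\widetilde\psi}=P_{F_s^2}C_{\widetilde\varphi}|_{F_s^2}$. Let $\varphi$ denote the free holomorphic self-map of $[B(\cH)^n]_1$ determined by $\widetilde\varphi$. Using $S_i^* z_\lambda=\overline{\lambda}_i z_\lambda$ and the $F_n^\infty$-functional calculus, one sees $\widetilde\varphi^* z_\lambda=\overline{\varphi(\lambda)}z_\lambda$; together with $\widetilde\varphi^*|_{F_s^2}=\widetilde\psi^*$ and $z_\lambda\in F_s^2$, this gives
\[
\psi(\lambda)=\langle \widetilde\psi\,1,z_\lambda\rangle=\langle 1,\widetilde\psi^*z_\lambda\rangle=\langle 1,\widetilde\varphi^*z_\lambda\rangle=\varphi(\lambda),\qquad \lambda\in\BB_n,
\]
so the scalar representations of $\psi$ and of $\varphi$ agree on $\BB_n$.

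Now for $f\in F_s^2$ and $\lambda\in\BB_n$, using that $z_\lambda\in F_s^2$ (so the projection $P_{F_s^2}$ disappears under the inner product with $z_\lambda$) and invoking relation \eqref{CZ} applied to $\varphi$, I compute
\[
(C_{\widetilde\psi}f)(\lambda)=\langle C_{\widetilde\psi}f,z_\lambda\rangle=\langle C_{\widetilde\varphi}f,z_\lambda\rangle=\langle f,C_{\widetilde\varphi}^* z_\lambda\rangle=\langle f,z_{\varphi(\lambda)}\rangle=f(\varphi(\lambda))=f(\psi(\lambda)),
\]
which is the first formula. For the adjoint formula, the same strategy yields
\[
(C_{\widetilde\psi}^*f)(\lambda)=\langle C_{\widetilde\psi}^*f,z_\lambda\rangle=\langle f,C_{\widetilde\psi}z_\lambda\rangle,
\]
and by part (i) of Theorem \ref{comp-variety} applied to $g=z_\lambda=\sum_{\alpha\in\FF_n^+}\overline{\lambda}_\alpha e_\alpha\in\cN_{\cP_c}$, one has $C_{\widetilde\psi}z_\lambda=\sum_\alpha\overline{\lambda}_\alpha\widetilde\psi_\alpha 1=z_\lambda\circ\widetilde\psi$, giving the claimed expression.

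There is no serious obstacle: the work is really in assembling the pieces — the commutant lifting that relates $C_{\widetilde\psi}$ to $C_{\widetilde\varphi}$, the eigenvector identity $\widetilde\varphi^* z_\lambda=\overline{\varphi(\lambda)}z_\lambda$, and the membership $z_\lambda\in F_s^2$ — and then exploiting the reproducing property $f(\lambda)=\langle f,z_\lambda\rangle$. The only subtle point worth double-checking is that the scalar representation of $\psi$ really is well defined on $\BB_n$ (independently of the choice of lift $\widetilde\varphi$), but this follows from the displayed computation $\psi(\lambda)=\langle\widetilde\psi 1,z_\lambda\rangle$ which involves only $\widetilde\psi$.
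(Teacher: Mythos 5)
Your proof is correct and follows essentially the same route as the paper: commutant lifting of $\widetilde\psi$ to $\widetilde\varphi\in F_n^\infty\otimes\CC^n$, the membership $z_\lambda\in F_s^2$, the identity $C_{\widetilde\varphi}^*z_\lambda=z_{\varphi(\lambda)}$, and the reproducing property $f(\lambda)=\left<f,z_\lambda\right>$. The only (harmless) variation is in the adjoint formula, where you use the duality $\left<C_{\widetilde\psi}^*f,z_\lambda\right>=\left<f,C_{\widetilde\psi}z_\lambda\right>$ together with part (i) of Theorem \ref{comp-variety} directly, whereas the paper expands the series from part (iii); both are immediate.
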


\begin{proof} As in the proof of Theorem \ref{comp-variety},
due  to the noncommutative commutant lifting theorem, there is
$\widetilde\varphi=(\widetilde\varphi_1,\ldots,
\widetilde\varphi_n)\in F_n^\infty \otimes \CC^n$  a non-scalar
 operator  with $\|\widetilde\varphi\|\leq 1$, such
that $\widetilde\varphi_i^*|_{F^2_s}=\widetilde\psi_i^*$,
$i=1,\ldots,n$. In particular,  due to \eqref{fila}, we have
$\varphi(\lambda)=\psi(\lambda)$, $\lambda\in \BB_n$. Fix
$f=\sum_{\alpha\in \FF_n^+} a_\alpha e_\alpha \in F^2_s$ and
$\lambda\in \BB_n$. Since $z_\lambda\in F^2_s$ and $P_{F^2_s}
\widetilde \varphi_\alpha|_{F^2_s}=\widetilde \psi_\alpha$ for all
$\alpha\in \FF_n^+$, we can  use  relations \eqref{fila},
\eqref{CZ}, as well as Corollary \ref{comp5} and Theorem
\ref{comp-variety}, to obtain
\begin{equation*}
\begin{split}
f(\psi(\lambda))&=\left< f, z_{\psi(\lambda)}\right>=\left< f, z_{\varphi(\lambda)}\right>=\left< f, C_{\widetilde \varphi}^*z_\lambda
\right>\\
&=\left< C_{\widetilde \varphi}f, z_\lambda
\right>=\left< \sum_{\alpha\in \FF_n^+} a_\alpha \widetilde \varphi_\alpha 1, z_\lambda
\right>\\
&=\left< \sum_{\alpha\in \FF_n^+} a_\alpha  P_{F^2_s}\widetilde\varphi_\alpha 1, z_\lambda
\right>=\left< \sum_{\alpha\in \FF_n^+} a_\alpha \widetilde \psi_\alpha 1, z_\lambda
\right>\\
&=\left< C_{\widetilde \psi}f, z_\lambda\right>=(C_{\widetilde\psi}
f)(\lambda).
\end{split}
\end{equation*}
Therefore, the first part of the theorem holds. To prove the second
part, note that
 according to item
(iii) of Theorem \ref{comp-variety}, we have
\begin{equation}\label{C*f}
    C_{\widetilde\psi}^* f=\sum_{k=0}^\infty \sum_{|\alpha|=k}
     \left<f, \widetilde \psi_\alpha (1)\right> P_{F_s^2} e_\alpha, \qquad f\in F^2_s.
    \end{equation}
On the other hand, since  $z_\lambda\in F^2_s$,  part (i) of Theorem
\ref{comp-variety} implies $z_\lambda\circ \widetilde \psi\in F^2_s$
and
$$z_\lambda\circ \widetilde \psi=\sum_{k=0}^\infty \sum_{|\alpha|=k} \overline{\lambda}_\alpha
\widetilde \psi_\alpha 1,$$ where the convergence is in $F^2(H_n)$.
Consequently,  using relations \eqref{C*f} and \eqref{fila}, we
deduce that
\begin{equation*}
\begin{split}
\left<f, z_\lambda\circ\widetilde\psi\right> &= \left<f,
\sum_{k=0}^\infty \sum_{|\alpha|=k} \overline{\lambda}_\alpha
\widetilde \psi_\alpha 1\right> =
\sum_{k=0}^\infty \sum_{|\alpha|=k}\left< f, \widetilde \psi_\alpha 1\right> \lambda_\alpha\\
&=\left< \sum_{k=0}^\infty \sum_{|\alpha|=k} \left<f, \widetilde \psi_\alpha (1)\right>
e_\alpha, z_\lambda\right>\\
&=(C_{\widetilde\psi}^* f)(\lambda)
\end{split}
\end{equation*}
for any $\lambda\in \BB_n$.
The proof is complete.
\end{proof}

Since $\psi(\lambda)\in \cV_{\cP_c}$ for all $\lambda\in \BB_n$ part
(ii) of Theorem \ref{comp-variety} implies the following result
concerning the composition operators on the symmetric Fock space
$F_s^2$ and, consequently,  on the Drury-Arveson space ${\bf
H}_n^2$. The next result was obtained by Jury (\cite{Ju}) using
different methods.

\begin{corollary}\label{comp6}
Let
 $\widetilde\psi=(\widetilde\psi_1,\ldots, \widetilde\psi_n)\in F_n^\infty (\cV_{\cP_c})\otimes \CC^n$
 be a non-scalar operator  with $\|\widetilde\psi\|\leq 1$. Then
 the composition operator $C_{\widetilde\psi} :F_s^2\to F_s^2$ is
  bounded  and
$$
\frac{1}{(1-\|\psi(0)\|^2)^{1/2}}\leq \sup_{\lambda\in
\BB_n}\left(\frac{1-\|\lambda\|^2}{1-\|\psi(\lambda)\|^2}\right)^{1/2}\leq
\|C_{\widetilde\psi}\|\leq
\left(\frac{1+\|\psi(0)\|}{1-\|\psi(0)\|}\right)^{1/2}.
$$
\end{corollary}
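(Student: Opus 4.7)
The plan is to derive Corollary \ref{comp6} as a direct specialization of Theorem \ref{comp-variety}(ii) to the commutator variety $\cV_{\cP_c}$. The upper bound $\|C_{\widetilde\psi}\| \leq \left(\frac{1+\|\psi(0)\|}{1-\|\psi(0)\|}\right)^{1/2}$ is already literally the upper bound furnished by Theorem \ref{comp-variety}(ii), so nothing needs to be done there. The task reduces to identifying the middle quantity $\sup_{\mu \in \cV_{\cP_0}(\CC)} \|P_{\cN_{\cP_0}} z_{\psi(\mu)}\|/\|z_\mu\|$ from that theorem with $\sup_{\lambda \in \BB_n}\left(\frac{1-\|\lambda\|^2}{1-\|\psi(\lambda)\|^2}\right)^{1/2}$ in the present setting, and then evaluating at $\mu = 0$ to obtain the leftmost bound.

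The first step is to observe that $\cV_{\cP_c}(\CC) = \BB_n$, since the defining polynomials $X_i X_j - X_j X_i$ vanish identically on scalar $n$-tuples. Next, as noted in the proof of Theorem \ref{comp-variety}, for any $\lambda \in \cV_{\cP_0}(\CC)$ the vector $z_\lambda = \sum_{\alpha \in \FF_n^+} \overline{\lambda}_\alpha e_\alpha$ lies in $\cN_{\cP_0}$; applied with $\cP_0 = \cP_c$ this gives $z_\lambda \in F_s^2 = \cN_{\cP_c}$ for every $\lambda \in \BB_n$. Since $\psi$ is the scalar representation of $\widetilde\psi$ and, by Theorem \ref{strict}(i) together with Theorem \ref{comp-variety}, $\psi$ maps $\BB_n$ strictly inside $\BB_n$, the value $\psi(\mu)$ lies in $\BB_n$ for each $\mu \in \BB_n$; hence $z_{\psi(\mu)} \in F_s^2$, so that $P_{\cN_{\cP_c}} z_{\psi(\mu)} = z_{\psi(\mu)}$.

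Combining these observations with the well-known identity $\|z_\lambda\|^2 = (1-\|\lambda\|^2)^{-1}$ for $\lambda \in \BB_n$, we obtain
\begin{equation*}
\frac{\|P_{\cN_{\cP_c}} z_{\psi(\mu)}\|}{\|z_\mu\|} = \frac{\|z_{\psi(\mu)}\|}{\|z_\mu\|} = \left(\frac{1-\|\mu\|^2}{1-\|\psi(\mu)\|^2}\right)^{1/2}
\end{equation*}
for every $\mu \in \BB_n$. Specializing to $\mu = 0$ yields the leftmost quantity $(1-\|\psi(0)\|^2)^{-1/2}$, and taking the supremum over $\mu \in \BB_n$ recovers the middle term. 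Substituting directly into the chain of inequalities in Theorem \ref{comp-variety}(ii) finishes the proof; the boundedness of $C_{\widetilde\psi}$ is likewise inherited.

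There is no real obstacle here—the only things to verify carefully are the two identifications $\cV_{\cP_c}(\CC) = \BB_n$ and $\cN_{\cP_c} = F_s^2$, both of which are recorded in the discussion preceding Theorem \ref{symmetric}, together with the routine fact that $z_\lambda \in F_s^2$ whenever $\lambda \in \BB_n$. Everything else is a transcription of Theorem \ref{comp-variety}(ii) into the commutative language.
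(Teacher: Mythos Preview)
Your proof is correct and follows essentially the same approach as the paper: the paper simply remarks that $\psi(\lambda)\in\cV_{\cP_c}$ for all $\lambda\in\BB_n$ and then invokes Theorem~\ref{comp-variety}(ii), which is precisely what you spell out in detail. Your explicit identification of $\cV_{\cP_c}(\CC)=\BB_n$, the fact that $z_{\psi(\mu)}\in F_s^2$ (so the projection is the identity), and the computation of the norm ratio are exactly the ingredients the paper is tacitly using.
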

It is obvious now that the  formula for the spectral radius of
$C_{\widetilde\psi}$   (see Corollary \ref{spec-for}) holds. We also
remark that  one can deduce commutative versions of Corollary
\ref{contr}, Theorem \ref{similarity}, and Corollary \ref{sp1}. We
leave this task to the reader.

\bigskip

      %\Refs
      %\widestnumber\key{BFPQR}
      %\def\n{\key}
       %

      \end{document}